\newmdenv[backgroundcolor=yellow]{shaded}
\newcommand{\rar}{\rightarrow}
\newcommand{\lar}{\longrightarrow}
\newcommand{\llar}{-\kern-5pt-\kern-5pt\longrightarrow}
\newcommand{\lllar}{-\kern-5pt-\kern-5pt\llar}
\newtheorem{Theorem}{Theorem}[section]
\newtheorem{Lemma}[Theorem]{Lemma}
\newtheorem{Corollary}[Theorem]{Corollary}
\newtheorem{Proposition}[Theorem]{Proposition}
\newtheorem{Conjecture}[Theorem]{Conjecture}
\newtheorem{Remark}[Theorem]{Remark}
\newtheorem{Example}[Theorem]{Example}
\newtheorem{Definition}[Theorem]{Definition}
\newtheorem{Question}[Theorem]{Question}
\newtheorem{Questions}[Theorem]{Questions}
\newtheorem{Exercise}[Theorem]{Exercise}
\newtheorem{Sticky Points}[Theorem]{Sticky Points}
\def\ann{\mbox{\rm ann}}
\def\Ass{\mbox{\rm Ass}}
\def\codim{\mbox{\rm codim}}
\def\coker{\mbox{\rm coker}}
\def\ds{\displaystyle}
\def\Ext{\mbox{\rm Ext}}
\def\gr{\mbox{\rm gr}}
\def\red{\mbox{\rm red}}
\def\grade{\mbox{\rm grade}}
\def\height{\mbox{\rm height }}
\def\Hom{\mbox{\rm Hom}}
\def\ker{\mbox{\rm ker}}
\def\e{\mathrm{e}}
\def\rme{\mathrm{e}}
\def\m{\mathfrak{m}}
\def\Spec{\mbox{\rm Spec}}
\def\Tor{\mbox{\rm Tor}}
\def\ZZ{\mathbb{Z}}
\def\AA{{\mathbf A}}
\def\BB{{\mathbf B}}
\def\CC{{\mathbf C}}
\def\RR{{\mathbf R}}
\def\SS{{\mathbf S}}
\def\FF{{\mathbf F}}
\def\LL{{\mathbf L}}
\def\TT{{\mathbf T}}
\def\ff{{\mathbf f}}
\def\pp{{\mathbf p}}
\def\g2{{\mathbf g}}
\def\aa{{\mathbf a}}
\def\zz{{\mathbf z}}
\def\H{{\mathrm H}}
\def\m{{\mathfrak m}}
\def\p{{\mathfrak p}}
\def\q{{\mathfrak q}}
\def\C{\mathcal{C}}
\def\B{\mathcal{B}}
\def\D{\mathcal{D}}
\def\Ddeg{\mbox{\rm ddeg}}
\def\ddeg{\mbox{\rm bideg}}
\def\tdeg{\mbox{\rm tdeg}}
\def\cdeg{\mbox{\rm cdeg}}
\def\codim{\mbox{\rm codim}}
\def\coker{\mbox{\rm coker}}
\def\tr{\mbox{\rm trace}}
\begin{document}

\title{\sc  Canonical Degrees of  Cohen-Macaulay Rings 
and Modules: a Survey }

\author{J. P. Brennan,
 L. Ghezzi,  J. Hong, L. Hutson and W. V. Vasconcelos}

\maketitle

\tableofcontents

\begin{abstract}
The aim  of this survey  is to discuss  invariants of Cohen-Macaulay local rings
that admit a canonical module.
 Attached
to each such ring $\RR$ with a canonical ideal $\C$, there are
  integers--the type of $\RR$, the reduction number of $\C$--that provide valuable metrics
  to express the deviation of $\RR$ from being a Gorenstein ring.
We  enlarge  this list with other integers--the roots of $\RR$ and several canonical degrees.  
 The
latter
 are multiplicity based functions of the Rees algebra of $\C  $. We give a uniform presentation of three degrees arising from common roots. Finally we experiment with ways to extend one of these degrees to rings where $\C$ is not necessarily an ideal.
\end{abstract}

\noindent {\small {\bf  Key Words and Phrases:} Anti-canonical degree, bi-canonical degree,
canonical degree, Cohen-Macaulay type, analytic spread, roots, reduction number.}

\section{Introduction}

Let $(\RR, \m)$ be a Cohen-Macaulay local ring of dimension $d$ that has a canonical ideal $\C$. Our central viewpoint is to look
at the properties of $\C$ as a way to refine our understanding
of $\RR$.  
In \cite{blue1} 
several metrics are treated aimed at measuring the deviation from $\RR$ being Gorenstein, that is when
$\C \simeq \RR$. Here we explore another pathway but still with the same overall goal.
 Unlike \cite{blue1} 
 the approach here is
   arguably
 more suited for   computation in classes of algebras such as Rees algebras and monomial subrings.
 First however we outline the general
 underpin of these developments. 
 The organizing principle to set up a canonical degree
is to recast numerically   criteria for a Cohen-Macaulay ring  to be Gorenstein. 

\medskip

We shall now describe how this paper is organized. For a Cohen-Macaulay local ring $(\RR, \m)$ of dimension $d$ with a canonical ideal $\C$,
we are going to attach a non-negative integer $c(\RR)$ whose value reflects divisorial properties of $\C$
and provide for a stratification of the class of Cohen-Macaulay rings.
 We have noted two such functions in the current
literature (\cite{Herzog16}, \cite{blue1})  and here we will build a third degree. 

\medskip

In Section 2 we recall from the literature the needed blocks to put together  the degrees.  Section 3 quickly assembles three degrees and begins the
comparison of its properties. These assemblages turn out to provide effective symbolic calculation [we used Macaulay2 (\cite{Macaulay2}) in our
experiments] but turn out useful for theoretical calculations in special classes of rings. The new degree is labelled the {\em bi-canonical} degree of $\RR$ and 
is given by 
\[
\boxed{ \ddeg(\RR)= \deg(\C^{**}/\C) =
     \sum_{\tiny \height \p=1} \ddeg(\RR_{\p}) \deg(\RR/\p) = \sum_{\tiny \height \p=1} [\lambda(\RR_{\p}/\C_{\p}) - \lambda(\RR_{\p}/\C^{**}_{\p})] \deg(\RR/\p).}
  \] 
  This is a well-defined finite sum independent of the chosen canonical ideal $\C$. 
It leads immediately to comparisons to two other degrees, the {\em canonical} degree of \cite{blue1},
$\cdeg(\RR)= \deg(\C/(s))$ for a minimal reduction $(s)$ of $\C$ [in dimension one and suitably assembled as above to all dimensions],
 and the {\em residue} of $\RR$ of \cite{Herzog16}
$\tdeg(\RR) = \deg(\RR/\mbox{\rm trace}(\C))$, where $\mbox{\rm trace}(\C)$ is the trace ideal of $\C$.
Arising naturally is a
 comparison conjecture, that $\cdeg(\RR) \geq \ddeg(\RR)$.
We engage in a brief discussion on how to recognize that a codimension one ideal $I$ is actually a canonical ideal. We finally recall the notion of the {\em rootset}
of $\RR$ (\cite{blue1}), perhaps one of least understood sets attached to $\C$ and raise questions on how it affects the values of the degrees.

\medskip

We begin in Section 4 a study of algebras according to the values of one of the $c(\RR)$. If $c(\RR)=0$, for all canonical degrees, $\RR$ is Gorenstein in 
codimension one. It is natural to ask which rings correspond to small values of $c(\RR)$. In dimension one, $\cdeg(\RR)\geq r(\RR)-1$ and $\ddeg(\RR)\geq 1$,
where equality corresponding to the {\em almost Gorenstein } rings of  (\cite{BF97, GMP11, GTT15})
and 
{\em nearly Gorenstein} rings of \cite{Herzog16}, respectively.

\medskip

We begin in Sections 5, 6, 7, 8, 9, 10, 11, 12 calculations of $\cdeg(\RR)$ and
 $\ddeg(\RR)$ for various classes of algebras. Unlike the case of $\cdeg(\RR)$, already for hyperplane sections the 
behaviour of $\ddeg(\RR)$ is more challenging. Interestingly, for monomial rings $k[t^a, t^b, t^c]$ the technical difficulties are reversed. In two cases, augmented rings and 
[tensor] products, very explicit formulas are derived.  More challenging is the case of Rees algebras when we are often
 limited to deciding the vanishing of degrees. The most comprehensive results resolve around $\m:\m$. 

\medskip

The ring $\AA=\m:\m$ has a	 special role in the literature of low dimensional rings. For instance, if 
$\RR$ is a
Buchsbaum ring of dimension $\geq 2$ and
positive depth then $\AA$ is its $S_2$-ification (\cite[Theorem 4.2]{red}).

\medskip

Sections 13, 14, 15 discuss various possible generalizations and open questions.

\section{Setting up and calculating 
canonical degrees}\index{canonical degree; calculation}

\noindent

In this section we describe the  canonical degrees known to the authors and extend them to more general structures.

\subsubsection*{Divisorial basics of $\C$}
We are going to make use of the  basic facts expressed in the codimension one localizations of $\RR$.  

\begin{Theorem} \label{corecan} 
Let $(\RR, \m)$ be a local ring of dimension one
let $Q$ be its
total ring of fractions. Assume that $\RR$ has a canonical module $\C$.

\begin{enumerate}[{\rm (1)}]

\item $\RR$ has a canonical ideal if and only if the total ring of fractions of
 $\widehat{\RR}$ is Gorenstein.  \cite{Aoyama, BrodSharp, HK2}. 
 
 \smallskip

\item
 The $\m$-primary ideal $I$ is a canonical ideal if and only if $I:_{\RR} \m = I:_Q \m = (I, s)$ for some $s\in \RR$.  \cite[Theorem 3.3]{HK2}. 

\smallskip

\item $\RR$ is Gorenstein if and only if $\C$ is a reflexive module.
\cite[Corollary 7.29]{HK2}.

\smallskip

\item If $\RR$ is an integral domain with finite integral  closure then $I^{**}$ is integral over $I$.  \cite[Proposition 2.14]{CHKV}.

\end{enumerate}
\end{Theorem}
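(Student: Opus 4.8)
I would organize the proof around the reflexivity of the integral closure as an $\RR$-module. Write $Q$ for the fraction field of $\RR$, and for a nonzero fractional ideal $M$ use the standard identification of $M^{*}=\Hom_{\RR}(M,\RR)$ with $\RR:_{Q}M=\{q\in Q:qM\subseteq\RR\}$, so that $M^{**}=\RR:_{Q}(\RR:_{Q}M)$; every $\RR$-linear map between fractional ideals is multiplication by an element of $Q$, so this dictionary is functorial. Since $\dim\RR=1$ and $\overline{\RR}$ is module-finite over $\RR$, the ring $\overline{\RR}$ is a one-dimensional, normal, Noetherian, semilocal domain, hence a semilocal Dedekind domain and therefore a principal ideal domain; in particular, for $I$ a nonzero fractional ideal, $I\overline{\RR}=a\overline{\RR}$ for a single $a\in Q^{\times}$. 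Let $\mathfrak{c}=\RR:_{Q}\overline{\RR}$ be the conductor, a nonzero ideal of both $\RR$ and $\overline{\RR}$ (nonzero because a common denominator of a module-finite generating set of $\overline{\RR}$ lies in it). The goal is $I^{**}\subseteq\overline{I}$, and my plan is: (i) observe $I\overline{\RR}\subseteq\overline{I}$; (ii) prove $\overline{\RR}$ is $\RR$-reflexive; (iii) deduce $I^{**}\subseteq I\overline{\RR}$.

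For (i): given $b\in I$ and $r\in\overline{\RR}$, I would multiply an integral equation $r^{n}+c_{1}r^{n-1}+\cdots+c_{n}=0$ ($c_{i}\in\RR$) by $b^{n}$; the resulting monic equation for $br$ has $i$-th coefficient $c_{i}b^{i}\in I^{i}$, so $br$ is integral over $I$. As the elements of $Q$ integral over $I$ form an $\RR$-submodule $\overline{I}\subseteq Q$ and $I\overline{\RR}$ is the $\RR$-span of such products $br$, this gives $I\overline{\RR}\subseteq\overline{I}$.

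Step (ii) is the crux: I claim $\RR:_{Q}\mathfrak{c}=\overline{\RR}$, which is precisely reflexivity of $\overline{\RR}$ since $\overline{\RR}^{*}\cong\mathfrak{c}$. One inclusion is clear from $\mathfrak{c}\overline{\RR}=\mathfrak{c}\subseteq\RR$. For the other, take $x\in Q$ with $x\mathfrak{c}\subseteq\RR$. Because $\mathfrak{c}$ is an ideal of $\overline{\RR}$, the set $x\mathfrak{c}$ is stable under multiplication by $\overline{\RR}$, hence is an $\overline{\RR}$-submodule of $\RR$; but any $\overline{\RR}$-submodule of $\RR$ is an ideal of $\overline{\RR}$ lying inside $\RR$, so it is contained in the conductor $\mathfrak{c}$, the largest such ideal. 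Therefore $x\mathfrak{c}\subseteq\mathfrak{c}$, and since $\mathfrak{c}$ is a nonzero finitely generated $\RR$-module on which multiplication by $x$ is an $\RR$-endomorphism, the determinant trick yields a monic $f(T)\in\RR[T]$ with $f(x)\,\mathfrak{c}=0$; as $\mathfrak{c}\neq 0$ and $Q$ is a field, $f(x)=0$, i.e. $x\in\overline{\RR}$.

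Finally, for (iii): from $I\subseteq I\overline{\RR}$ I get $(I\overline{\RR})^{*}\subseteq I^{*}$ and hence $I^{**}\subseteq(I\overline{\RR})^{**}$; multiplication by $a$ is an $\RR$-module isomorphism $\overline{\RR}\cong I\overline{\RR}$, so by functoriality and (ii), $(I\overline{\RR})^{**}=a\,\overline{\RR}^{**}=a\overline{\RR}=I\overline{\RR}$. Combined with (i), $I^{**}\subseteq I\overline{\RR}\subseteq\overline{I}$, so $I^{**}$ is integral over $I$. The only step with real content is the reflexivity in (ii), and the point that makes it work is the identification of the conductor as the largest $\overline{\RR}$-submodule of $\RR$; everything else is formal bookkeeping with fractional ideals. (If one prefers to avoid quoting reflexivity as such, the same determinant-trick argument applied to $a^{-1}x$ for $x\in I^{**}$ proves $I^{**}\subseteq I\overline{\RR}$ directly, using that $\RR:_{Q}I\supseteq a^{-1}\mathfrak{c}$.)
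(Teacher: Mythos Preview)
Your argument is correct, but note that it addresses only part~(4) of the theorem; parts~(1)--(3) are not touched. That said, the paper itself supplies no proof of any part of this theorem: it is stated as a collection of known facts, each with a literature citation, and part~(4) in particular is attributed to \cite[Proposition~2.14]{CHKV} without further argument. So there is no ``paper's own proof'' to compare against here.

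On the substance of your proof of~(4): the three steps are sound. The key identity $\RR:_{Q}\mathfrak{c}=\overline{\RR}$ is established cleanly via the observation that any $\overline{\RR}$-submodule of $\RR$ lies in the conductor, followed by the determinant trick; the reduction $(I\overline{\RR})^{**}=a\,\overline{\RR}^{**}=a\overline{\RR}$ via principality in the semilocal Dedekind domain $\overline{\RR}$ is correct; and the containment $I\overline{\RR}\subseteq\overline{I}$ is the standard computation. One small remark: you do not actually need $\overline{\RR}$ to be a PID. Once you know $\overline{\RR}$ is reflexive as an $\RR$-module, any $\overline{\RR}$-fractional ideal $J$ satisfies $J^{**}\subseteq (J\overline{\RR})^{**}$, and since $J\overline{\RR}=J$ is invertible over $\overline{\RR}$, it is a direct summand of a free $\overline{\RR}$-module, hence $\RR$-reflexive; so principality is a convenience, not a necessity. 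Your parenthetical alternative at the end (applying the determinant trick directly to $a^{-1}x$ using $\RR:_{Q}I\supseteq a^{-1}\mathfrak{c}$) is also valid and slightly more direct.
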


We often assume harmlessly 
 that $\RR$ has  an infinite 
 residue field and has arbitrary Krull dimension. For a finitely generated $\RR$-module $M$,
 the notation $\deg(M)=\rme_0(\m, M)$
  refers to the multiplicity defined by the $\m$-adic topology. The Cohen-Macaulay type of $\RR$ is denoted by $r(\RR)$. 

\begin{Remark}{\rm 
 Among the ways we can set up the comparison of $\C$ to  a principal ideal we have the following. 

 \begin{enumerate}[(1)]
 \item In dimension one,
 select an element $c$ of $\C$ and define $\cdeg(\RR) = \deg(\C/(c))$.  
  The choice should yield the same value for all 
 $\C$. In \cite{blue1} $(c)$ is chosen as a minimal reduction of $\C$ when then
 $\cdeg(\RR) = \rme_0(\C, \RR) - \deg(\RR/\C)$.
 
 \smallskip
 
 \item  The choice  is more straightforward  in one case: Set $\ddeg(\RR) = \deg(\C^{**}/\C)$, where $\C^{**}$ is the bidual of $\C$.
 
 \smallskip
 
 \item A standard metric is simply $\tdeg(\RR) =\deg(\RR/\tau(\C))$, where $\tau(\C)$ is the {\em trace} ideal of $\C$: $(f(x), f\in \C^{*}, x\in \C)$. It is often used
 to define the Gorenstein locus of $\RR$ (see \cite{Herzog16} for a discussion). For a method to calculate the
 trace of a module see \cite[Proposition 3.1]{Herzog16}, \cite[Remark 3.3]{Vas91}.

 \smallskip
 
 \item
  These are distinct  [in dimension $d>1$]
   numbers, which are independent of the choice of $\C$, that share a common property:
  
\smallskip
  
  \begin{enumerate}[(i)]
\item  $\RR$ is Gorenstein if and only if one of $\cdeg(\RR)$, $\ddeg(\RR)$ or $\tdeg(\RR)$ vanishes (in which case all three vanish).
  
  \smallskip
  
 \item 
  If $\RR$ is not Gorenstein, $\cdeg(\RR) \geq r(\RR) - 1\geq 1 $,  $\tdeg(\RR) \geq 1$, $\ddeg(\RR) \geq 1$, and the minimal values are attained. 
   In dimension one  $\ddeg = \tdeg$, see
   Proposition~\ref{resisddeg1}.
 \end{enumerate}
 
 \smallskip
 
 \item The cases when the minimal values are reached have the following designations:
 
 \smallskip
 
 \begin{enumerate}[(i)]
 \item $\cdeg(\RR)$ was introduced in \cite{blue1} and called the canonical degree of $\RR$:
    $\cdeg(\RR) = r(\RR) - 1$ if and only if $\RR$ is  an {\em almost Gorenstein ring} 
    (\cite{GMP11, GTT15}).   
 
  \smallskip
  
  \item $\tdeg(\RR)$ was introduced in \cite{Herzog16} and called it the {\em residue} of $\RR$: $\mbox{\rm res}(\RR) $.
  It can also be called the trace degree
   of $\RR$.
  Another possible terminology is to call it the {\em anti-canonical} degree of $\RR$: $\tdeg(\RR) = 1$ 
  if and only if $\RR$ is a {\em nearly Gorenstein ring}.

  \smallskip
  
  \item $\ddeg(\RR)$ was introduced in \cite{bideg}
  and  called the  bi-canonical  
  degree of $\RR$: $\ddeg(\RR) = 1$ if and only
  if $\RR$ is a
    so-called
     {\em Goto ring}.
      
 \end{enumerate}
\end{enumerate}
}\end{Remark}

\medskip

 There are some  relationships among these invariants.

\begin{Proposition}{\rm [J. Herzog, personal communication]}\label{resisddeg1} Let $\RR$ be Cohen-Macaulay local ring of dimension $1$ with a canonical ideal $\C$. Then
\[
\boxed{ \ddeg(\RR) = 
\lambda(\RR/\mbox{\rm trace}(\C)).}
\]
\end{Proposition}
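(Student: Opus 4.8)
The plan is to work inside the total ring of fractions $Q$ of $\RR$ and to exploit the one property of a canonical ideal that really matters here, namely $\Hom_{\RR}(\C,\C)=\RR$. First I would reduce to the case where $\C$ is an $\m$-primary ideal: the hypothesis that $\C$ is a canonical ideal forces the total ring of fractions of $\widehat{\RR}$ to be Gorenstein (Theorem~\ref{corecan}(1)), so $\RR$ is generically Gorenstein; hence $\C$ has rank one, it contains a nonzerodivisor, and since $\dim\RR=1$ either $\C=\RR$ — the Gorenstein case, where both sides of the claim vanish — or $\C$ is $\m$-primary. So assume $\C\subseteq\RR$ is $\m$-primary and regard every module in sight as a fractional ideal. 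Then $\C^{*}=\Hom_{\RR}(\C,\RR)=\RR:_{Q}\C=:\C^{-1}$, $\C^{**}=\RR:_{Q}\C^{-1}$, and $\tr(\C)=\C\,\C^{-1}$; moreover $\tr(\C)$ is $\m$-primary, since $\RR_{\p}$ is Gorenstein at each minimal prime $\p$ and so $\tr(\C)_{\p}=\RR_{\p}$. Hence every colength occurring below is finite and $\deg$ agrees with $\lambda$ on the modules in question.

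The computational core is the identity
\[
\RR:_{Q}X \;=\; \C:_{Q}(\C X)\;=\;\Hom_{\RR}(\C X,\,\C),
\]
valid for every fractional ideal $X$ that contains a nonzerodivisor; it follows at once from $\C:_{Q}\C=\RR$. Feeding in $X=\C^{-1}$ gives
\[
\C^{**}\;=\;\RR:_{Q}\C^{-1}\;=\;\C:_{Q}\bigl(\C\,\C^{-1}\bigr)\;=\;\Hom_{\RR}\bigl(\tr(\C),\,\C\bigr).
\]
Thus $\C$ and $\C^{**}$ are both values of the $\C$-dual functor $D(-):=\Hom_{\RR}(-,\C)$: one has $\C=\Hom_{\RR}(\RR,\C)=D(\RR)$ and $\C^{**}=D(\tr(\C))$, and the natural inclusion $\C\subseteq\C^{**}$ is nothing but $D$ applied to the inclusion $\tr(\C)\subseteq\RR$.

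Next I would record that $D$, being $\Hom$ into a canonical ideal, is an inclusion-reversing duality on torsion-free modules which is \emph{length-preserving on finite-length quotients}. Precisely, if $X\subseteq Y$ are fractional ideals with $Y/X$ of finite length, apply $\Hom_{\RR}(-,\C)$ to $0\to X\to Y\to Y/X\to 0$ and use that $\Hom_{\RR}(Y/X,\C)=0$ (there is no nonzero map from a finite-length module into the positive-depth module $\C$), that $\Ext^{1}_{\RR}(Y,\C)=0$ ($Y$ is maximal Cohen-Macaulay and $\C$ is dualizing), and that $\Ext^{1}_{\RR}(Y/X,\C)\cong (Y/X)^{\vee}$ (local duality in dimension one identifies $\Ext^{1}_{\RR}(-,\C)$ with Matlis duality on finite-length modules): this gives the exact sequence $0\to D(Y)\to D(X)\to (Y/X)^{\vee}\to 0$, and therefore $\lambda(D(X)/D(Y))=\lambda(Y/X)$.

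Applying this last statement to the inclusion $\tr(\C)\subseteq\RR$, and using $D(\RR)=\C$ and $D(\tr(\C))=\C^{**}$, yields $\lambda(\C^{**}/\C)=\lambda(\RR/\tr(\C))$, i.e. $\ddeg(\RR)=\lambda(\RR/\tr(\C))$. The only step here that is not pure bookkeeping inside $Q$ is the length-preservation of the $\C$-dual on finite-length quotients, which rests on the local-duality identification of $\Ext^{1}_{\RR}(-,\C)$ with Matlis duality; I expect the point requiring the most care to be verifying that the rewritings of $\C^{-1}$, $\C^{**}$ and $\tr(\C)$ are literal equalities of fractional ideals once a fixed realization $\C\subseteq\RR$ of the canonical ideal has been chosen, so that the abstract biduality map $\C\to\C^{**}$ and the final length identity genuinely match up.
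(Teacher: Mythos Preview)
Your proof is correct and follows essentially the same route as the paper's: both establish $\Hom_{\RR}(\tr(\C),\C)=\C^{**}$ (you via the colon identity $\RR:_{Q}X=\C:_{Q}(\C X)$ applied to $X=\C^{-1}$, the paper via Hom-adjunction), then dualize $0\to\tr(\C)\to\RR\to\RR/\tr(\C)\to0$ into $\C$ and invoke local duality to match lengths. Your version is more careful with the preliminary reduction to $\C$ $\m$-primary and packages the dualizing step as a general length-preservation lemma for $D(-)=\Hom_{\RR}(-,\C)$, but the substance is identical.
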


\begin{proof} We will show that
\[ \lambda(\C^{**}/\C) =
\lambda(\RR/\mbox{\rm trace}(\C)).\]
From $\tr(\C) = \C\cdot
 \C^{*}$, we
 have 
\[  \Hom(\tr(\C),\C)=  \Hom(\C \cdot \C^{*},\C)= \Hom(\C^{*}, \Hom(\C,\C))  = \C^{**}.\]
 Now dualize the
  exact sequence
\[
0 \rar
\tr(\C)\lar  \RR \lar \RR/\tr(\C)\rar 0,\]   into $\C$
to obtain, 
\[0=\Hom(\RR/\tr(\C),\C)\rar  \C=\Hom(\RR,\C)\rar \C^{**}=\Hom(\tr(\C),\C)\rar  \Ext^1(\RR/\tr(\C),\C)\rar  0,\]
which 
 shows that $\C^{**}/\C$ and $\Ext^1(\RR/\tr(\C),\C)$ are isomorphic. Since by local duality $\Ext^1(\cdot,\C)$ is self-dualizing on modules of
 finite support, 
$\ddeg(\RR)= \lambda(\RR/\tr(\C)).$
\end{proof}

A similar
 argument applies to a slightly different class of ideals. We say that an ideal $I$ is {\em closed} if
$\Hom(I,I)=\RR$ in the terminology of \cite{BV1}.

\begin{Proposition} \label{resddeg3}
 Let $\RR$ be a local ring of dimension one and finite integral closure,
and let $I$ be a closed ideal. If
$I$ is reflexive then $I$ is principal.
\end{Proposition}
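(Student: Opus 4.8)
The plan is to follow the pattern of Proposition~\ref{resisddeg1}: pass to the trace ideal of $I$ and dualize a short exact sequence. Since $I$ is closed, $\Hom(I,I)=\RR$; in particular $I$ is faithful, and (discarding the trivial case $I=\RR$) we assume, as is automatic for the ideals the paper cares about, that $I$ is a regular ideal, i.e.\ contains a nonzerodivisor. Write $I^{*}=\Hom(I,\RR)$, which as a fractional ideal is $(\RR:_{Q}I)$; then ``$I$ closed'' reads $(I:_{Q}I)=\RR$ and ``$I$ reflexive'' reads $(I^{*})^{*}=I$, that is, $(\RR:_{Q}(\RR:_{Q}I))=I$. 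Put $\tau=\tr(I)=I\cdot I^{*}\subseteq\RR$. It suffices to prove $\tau=\RR$: then $I\,(\RR:_{Q}I)=\RR$, so $I$ is an invertible ideal and hence principal, $\RR$ being local.

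The crux — and the one place where both hypotheses are used — is the identity
\[
\Hom(\tau,\RR)=(\RR:_{Q}\tau)=(\RR:_{Q}I I^{*})=(I^{**}:_{Q}I)=(I:_{Q}I)=\RR ,
\]
in the spirit of the Hom--tensor manipulation in the proof of Proposition~\ref{resisddeg1}: $q\,I I^{*}\subseteq\RR$ is equivalent to $qI\subseteq(\RR:_{Q}I^{*})=(I^{*})^{*}=I^{**}$, hence, by reflexivity, to $qI\subseteq I$, i.e.\ to $q\in(I:_{Q}I)=\RR$ by closedness. In particular the canonical map $\RR\to\Hom(\tau,\RR)$ is an isomorphism.

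Now dualize $0\to\tau\to\RR\to\RR/\tau\to 0$ into $\RR$. Since $\tau$ is regular, $\Hom(\RR/\tau,\RR)=0$, and since $\RR\to\Hom(\tau,\RR)$ is an isomorphism, the long exact sequence gives $\Ext^{1}(\RR/\tau,\RR)=0$. Suppose $\tau\neq\RR$. Then $\tau\subseteq\m$ is regular, so $\depth\RR\geq 1$; as $\dim\RR=1$ this forces $\depth\RR=1$, and $\RR$ is Cohen--Macaulay. Since $\tau\supseteq I$ is regular, $\RR/\tau$ has finite length and is nonzero, so $\sqrt{\tau}=\m$ and therefore $\grade(\tau,\RR)=\grade(\m,\RR)=\depth\RR=1$, which forces $\Ext^{1}(\RR/\tau,\RR)\neq 0$ --- a contradiction. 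Hence $\tau=I I^{*}=\RR$, and $I$ is principal.

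The essential insight is the computation of $\Hom(\tau,\RR)$ above; granting it, the remainder is routine, and I do not expect a genuine obstacle. Two minor points need care: that $I$ really is a regular ideal (so that the fractional-ideal dictionary $I^{*}=(\RR:_{Q}I)$ is valid and $\RR/\tau$ has finite length), and that the map $\RR\to\Hom(\tau,\RR)$ shown to be an isomorphism is the \emph{natural} one, so that it genuinely annihilates $\Ext^{1}$. The hypothesis of finite integral closure is not used on this route; it serves as ambient comfort in the section (endomorphism rings $(J:_{Q}J)$ are then module-finite over $\RR$) and lines the statement up with Theorem~\ref{corecan}(4). A proof that invoked it directly would instead observe that $I\overline{\RR}$ is principal in the one-dimensional normal semilocal ring $\overline{\RR}$ and then descend via closedness, but the trace-ideal argument above appears cleaner.
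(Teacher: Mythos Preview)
Your argument is correct, and it shares the opening move with the paper's proof: both compute $\Hom(\tr(I),\RR)=(\RR:_Q II^{*})=(I^{**}:_Q I)=(I:_Q I)=\RR$ from reflexivity and closedness. From there the two diverge. The paper concludes that $\tau^{**}=\RR$ (principal, generated by a unit) and then invokes Theorem~\ref{corecan}(4) --- that under the finite integral closure hypothesis the bidual of an ideal is integral over it --- to force $\tau=\RR$. You instead feed the identity $\RR\xrightarrow{\sim}\Hom(\tau,\RR)$ into the long exact sequence of $\Hom(-,\RR)$ applied to $0\to\tau\to\RR\to\RR/\tau\to 0$, obtain $\Ext^1(\RR/\tau,\RR)=0$, and derive a contradiction from $\grade(\tau)=1$ when $\tau\subsetneq\RR$. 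Your route is cleaner and, as you note, makes no use of the finite integral closure hypothesis; the paper's proof uses it essentially (through the CHKV integrality result). What the paper's approach buys is the explicit link to the integrality theme of Theorem~\ref{corecan}(4); what yours buys is a strictly stronger statement with a more elementary toolkit. Your caveats about the regularity of $I$ and about the naturality of $\RR\to\Hom(\tau,\RR)$ are both well placed and correctly handled.
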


\begin{proof}
We argue by contradiction.
From $\tr(I) = L=I\cdot
 I^{*}$, we
 have 
\[  \Hom(\tr(I),\RR)=  \Hom(I \cdot I^{*},\RR)= \Hom(I\otimes I^{*},\RR)= \Hom(I, \Hom(I^{*},\RR) ) =
 \Hom(I,  I^{**})  = \RR,\] 
 and thus $L^{**} = \RR x$. If $x$ is a unit $L^{*}= \RR$ and therefore $L$
  has height greater than one,
 which is not possible. Thus we have
 $ L = Mx$, where $M \subset \m$.
 Since $L^{**}=\RR x$ is integral over $L=Mx$  (\cite[Proposition 2.14]{CHKV}),
 we have for some positive integer $n$
 \[ \RR x^n = Mx\RR x^{n-1},\]
 and thus $\RR = M$, which is impossible.
 \end{proof}

\section{The basic canonical degree}

\noindent  Let $(\RR,\m)$ be a Cohen-Macaulay local ring. Suppose that $\RR$  has a canonical ideal $\C$.  In this setting we
 introduce a numerical degree for $\RR$ and study its properties. The starting point of our discussion is the
following elementary observation. We denote the length function
by $\lambda$.

 \begin{Proposition}\label{cdeg}
 Let $(\RR,\m)$ be a $1$-dimensional Cohen-Macaulay local ring with  a canonical ideal $\C$.
 Then the integer $\cdeg(\RR)=\e_0(\C) -\lambda(\RR/\C)$ is independent of the canonical ideal $\C$.
 \end{Proposition}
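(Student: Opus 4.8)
The plan is to split the claim into two self-contained pieces:
\begin{enumerate}[{\rm (a)}]
\item any two canonical ideals $\C,\C'$ of $\RR$ satisfy $\C'=\alpha\,\C$ for some non-zerodivisor $\alpha$ of the total ring of fractions $Q$;
\item for an $\m$-primary ideal $I$ and a non-zerodivisor $a\in\RR$ one has $\e_0(aI)-\lambda(\RR/aI)=\e_0(I)-\lambda(\RR/I)$.
\end{enumerate}
Granting both, write $\alpha=u/v$ with $u,v\in\RR$ non-zerodivisors (take $v\in\C$ a non-zerodivisor and $u=\alpha v\in\C'$, which is a non-zerodivisor because $\alpha$ is a unit of $Q$). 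Then $u\C=v\C'$ is a genuine ideal of $\RR$, and it is $\m$-primary since $\lambda(\RR/u\C)=\lambda(\RR/u\RR)+\lambda(\RR/\C)<\infty$. Applying (b) to $(\C,u)$ and to $(\C',v)$ gives
\[
\e_0(\C)-\lambda(\RR/\C)=\e_0(u\C)-\lambda(\RR/u\C)=\e_0(v\C')-\lambda(\RR/v\C')=\e_0(\C')-\lambda(\RR/\C'),
\]
which is the assertion. So the whole proof reduces to (a) and (b).

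For (a) I would first record the structural consequences of the hypotheses: since $\RR$ is Cohen--Macaulay of dimension one, $\m\notin\Ass\RR$ and every associated prime of $\RR$ is minimal, so a faithful ideal meets no associated prime and hence contains a non-zerodivisor by prime avoidance. A canonical ideal $\C$ is isomorphic to $\omega_\RR$, which is faithful because $\RR$ is Cohen--Macaulay; therefore $\C$ contains a non-zerodivisor and $\RR/\C$ has finite length, and likewise for $\C'$. By uniqueness of the canonical module up to isomorphism there is an $\RR$-isomorphism $\varphi\colon\C\to\C'$. Picking a non-zerodivisor $v\in\C$ and setting $u=\varphi(v)$, the identities $v\,\varphi(a)=\varphi(va)=a\,\varphi(v)=au$, valid in $\C'\subseteq\RR\subseteq Q$, force $\varphi(a)=(u/v)\,a$ for every $a\in\C$; thus $\C'=(u/v)\,\C$, and $\varphi^{-1}$ is multiplication by the inverse element, so $\alpha=u/v$ is a unit of $Q$. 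This is (a).

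For (b), note $(aI)^n=a^nI^n$. Multiplication by the non-zerodivisor $a^n$ yields $\RR/I^n\cong a^n\RR/a^nI^n$, and the short exact sequences $0\to a^n\RR/a^nI^n\to\RR/a^nI^n\to\RR/a^n\RR\to0$, together with $\lambda(\RR/a^n\RR)=n\,\lambda(\RR/a\RR)$ (telescope $\RR\supseteq a\RR\supseteq a^2\RR\supseteq\cdots$, each successive quotient being $\cong\RR/a\RR$), give $\lambda(\RR/(aI)^n)=n\,\lambda(\RR/a\RR)+\lambda(\RR/I^n)$. Comparing the leading terms of these two Hilbert--Samuel functions, both of degree one since $\dim\RR=1$ and both ideals are $\m$-primary, produces $\e_0(aI)=\lambda(\RR/a\RR)+\e_0(I)$. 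On the other hand $\lambda(\RR/aI)=\lambda(\RR/a\RR)+\lambda(a\RR/aI)=\lambda(\RR/a\RR)+\lambda(\RR/I)$, again by injectivity of multiplication by $a$; subtracting the two displays gives (b). The only genuinely substantive point is this additivity $\e_0(aI)=\e_0(a\RR)+\e_0(I)$ of multiplicities in dimension one; the remaining steps are bookkeeping that uses the Cohen--Macaulay hypothesis precisely to guarantee enough non-zerodivisors, so that $\C,\C'$ are $\m$-primary and the abstract isomorphism $\C\cong\C'$ can be promoted to the equality $u\C=v\C'$ of ideals of $\RR$.
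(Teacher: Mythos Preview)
Your proof is correct. Both you and the paper begin with the same structural fact---any two canonical ideals satisfy $\C'=\alpha\,\C$ for a unit $\alpha$ of the total ring of fractions---but diverge in how you show the quantity $\e_0(\C)-\lambda(\RR/\C)$ is invariant under such a scaling. The paper passes to $\RR(x)$ to secure an infinite residue field, picks a minimal reduction $(a)$ of $\C$, observes that $(qa)$ is then a minimal reduction of $\mathcal{D}=q\C$, and reads off the invariance from the isomorphism $\C/(a)\simeq\mathcal{D}/(qa)$ together with $\e_0(\C)=\lambda(\RR/(a))$. Your route avoids minimal reductions entirely: you clear denominators to reduce to the claim that $\e_0(aI)-\lambda(\RR/aI)$ is unchanged when an $\m$-primary ideal $I$ is multiplied by a non-zerodivisor $a\in\RR$, and you verify this by a direct Hilbert--Samuel computation showing $\e_0(aI)=\e_0(I)+\lambda(\RR/a\RR)$ and $\lambda(\RR/aI)=\lambda(\RR/I)+\lambda(\RR/a\RR)$. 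Your argument is slightly more self-contained (no need for infinite residue field or reduction theory), while the paper's approach yields the extra byproduct $\C/(a)\simeq\mathcal{D}/(qa)$, which is what one wants anyway when computing $\cdeg(\RR)=\lambda(\C/(a))$ in practice.
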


\begin{proof}  If $x$ is an indeterminate over $\RR$, in calculating these differences we may pass from $\RR$
to $\RR(x) = \RR[x]_{\m \RR[x]}$, in particular we may assume that the ring has an infinite residue field.
Let $\C$ and $\mathcal{D}$ be two canonical ideals. Suppose $(a)$ is a minimal reduction of $\C$.
Since $\mathcal{D} \simeq \C$ (\cite[Theorem 3.3.4]{BH}), $\mathcal{D} = q \C$ for some fraction $q$.  If $\C^{n+1} = (a) \C^n$
 by multiplying it by $q^{n+1}$,  we get $\mathcal{D}^{n+1} =(qa) \mathcal{D}^n$, where $(qa) \subset \mathcal{D}$. Thus $(qa)$ is a reduction of $\mathcal{D}$ and
$\C/(a) \simeq \mathcal{D}/(qa)$. Taking their co-lengths we have
\[ \lambda(\RR/(a)) - \lambda(\RR/ \C) = \lambda(\RR/(qa)) - \lambda(\RR/ \mathcal{D}).\]
  Since $\lambda(\RR/(a)) = \e_0(\C)$ and $\lambda(\RR/(qa)) = \e_0(\mathcal{D})
 $,
we have
\[ \e_0(\C) - \lambda(\RR/ \C) = \e_0( \mathcal{D}) - \lambda(\RR/ \mathcal{D}). \qedhere \]
  \end{proof}

 We can define $\cdeg(\RR)$ in full generality as follows.

 \begin{Theorem}\label{gencdeg1} Let $(\RR, \m)$ be a Cohen-Macaulay local ring  of dimension $d \geq 1$  that has a canonical ideal $\C$.
  Then
  \[  \boxed{ \cdeg(\RR)= \sum_{\tiny \height \p=1} \cdeg(\RR_{\p}) \deg(\RR/\p) = \sum_{\tiny \height \p=1} [\e_{0}(\C_{\p}) - \lambda((\RR/\C)_{\p})] \deg(\RR/\p)}
  \] is a well-defined finite sum independent of the chosen canonical ideal $\C$. In particular, if $\C$ is equimultiple with a minimal reduction $(a)$, then
  \[  \cdeg(\RR) = \deg(\C/(a)) = \e_0(\m, \C/(a)).\]
 \end{Theorem}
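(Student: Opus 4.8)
The plan is to reduce everything to the one-dimensional situation handled in Proposition~\ref{cdeg} by localizing at height-one primes, and then to read off the equimultiple formula from the associativity law for Hilbert multiplicities.

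\medskip

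First I would record the relevant local structure. Because $\RR$ is Cohen-Macaulay and admits a canonical \emph{ideal}, it is generically Gorenstein; in particular $\C$ contains a nonzerodivisor, and the exact sequence $0\to\C\to\RR\to\RR/\C\to0$, together with the fact that $\C$ is a maximal Cohen-Macaulay module, forces $\depth(\RR/\C)\ge d-1$ and hence $\height\C=1$. For a prime $\p$ with $\height\p=1$ the ring $\RR_\p$ is one-dimensional Cohen-Macaulay and $\C_\p$ is a canonical ideal of $\RR_\p$ (localization of a canonical module is a canonical module), so by Proposition~\ref{cdeg} the integer $\cdeg(\RR_\p)=\e_0(\C_\p)-\lambda((\RR/\C)_\p)$ depends only on $\RR_\p$, not on the chosen canonical ideal; replacing $\C$ by another canonical ideal $\mathcal D$ of $\RR$ only replaces $\C_\p$ by another canonical ideal $\mathcal D_\p$ of $\RR_\p$, so every summand is intrinsic. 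For $\p$ not containing $\C$ we have $\C_\p=\RR_\p$, so $\RR_\p$ is Gorenstein and the corresponding term vanishes. For finiteness, recall that in dimension one $\cdeg(\RR_\p)=\lambda(\C_\p/(a))$ for a minimal reduction $(a)$ of $\C_\p$, which is zero precisely when $\C_\p$ is principal, i.e. when $\RR_\p$ is Gorenstein; the non-Gorenstein locus of $\RR$ is closed and, by generic Gorensteinness, avoids the minimal primes of $\RR$, so (using that $\RR$ is catenary and equidimensional) it has only finitely many height-one points. These are exactly the primes producing nonzero terms, so the boxed sum is a finite well-defined quantity.

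\medskip

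For the equimultiple statement, let $(a)$ be the given minimal reduction of $\C$. Then $a$ is a nonzerodivisor (its ideal has height one), so $(a)\cong\RR$, and from $0\to(a)\to\C\to\C/(a)\to0$ with $\C$ maximal Cohen-Macaulay one gets $\depth\big(\C/(a)\big)\ge d-1$; on the other hand $\supp(\C/(a))\subseteq V(\C)$ forces $\dim\big(\C/(a)\big)\le d-1$. Hence either $\C/(a)=0$ --- equivalently $\C$ is principal, equivalently $\RR$ is Gorenstein, in which case both sides are $0$ --- or $\dim(\C/(a))=d-1$ exactly, and the associativity formula yields
\[ \deg\big(\C/(a)\big)=\e_0\big(\m,\C/(a)\big)=\sum_{\dim\RR/\p=d-1}\lambda_{\RR_\p}\big((\C/(a))_\p\big)\,\e_0(\m,\RR/\p). \]
Every $\p$ appearing has height one. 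If $\RR_\p$ is Gorenstein then $\C_\p$ is principal, and a reduction of a principal ideal in a one-dimensional Cohen-Macaulay local ring equals that ideal, so $(\C/(a))_\p=0$. If $\RR_\p$ is not Gorenstein, then $(a)\RR_\p$ is a reduction of $\C_\p$ generated by a nonzerodivisor, so $\lambda(\RR_\p/(a)\RR_\p)=\e_0((a)\RR_\p,\RR_\p)=\e_0(\C_\p)$ and therefore $\lambda\big((\C/(a))_\p\big)=\lambda(\RR_\p/(a)\RR_\p)-\lambda(\RR_\p/\C_\p)=\e_0(\C_\p)-\lambda((\RR/\C)_\p)=\cdeg(\RR_\p)$. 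Reinserting the vanishing Gorenstein terms, $\deg(\C/(a))=\sum_{\height\p=1}\cdeg(\RR_\p)\deg(\RR/\p)=\cdeg(\RR)$.

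\medskip

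The steps I expect to be most delicate are the finiteness of the sum --- specifically, knowing that the non-Gorenstein locus is closed and misses the minimal primes --- and, in the equimultiple case, the exact identification $\dim(\C/(a))=d-1$ coming from the depth estimate, since this is what allows the associativity formula to be applied at the right dimension and makes exactly the Gorenstein height-one primes drop out of the sum.
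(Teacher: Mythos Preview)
Your proof is correct and follows essentially the same route as the paper: reduce to dimension one via Proposition~\ref{cdeg}, observe that only finitely many height-one primes contribute, and use the associativity formula in the equimultiple case. Your write-up is considerably more detailed --- you justify $\height\C=1$, the dimension $\dim(\C/(a))=d-1$, and the vanishing at Gorenstein height-one primes, all of which the paper leaves implicit. The one place where you work harder than necessary is finiteness: rather than invoking closedness of the non-Gorenstein locus and catenarity, the paper simply notes that any height-one prime $\p$ with $\cdeg(\RR_\p)\neq 0$ must contain $\C$ and hence lie in the finite set $\mathrm{Min}(\C)$.
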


\begin{proof} By Proposition \ref{cdeg}, the integer $\cdeg(\RR_{\p})$ does not depend on the choice of a canonical ideal of $\RR$. Also $\cdeg(\RR)$ is a finite sum since, if $\p \notin \mbox{\rm Min}(\C)$, then $\C_{\p}=\RR_{\p}$ so that $\RR_{\p}$ is Gorenstein. Thus $\cdeg(\RR_{\p})=0$. The last assertion follows from the associativity formula:
\[ \cdeg(\RR) =  \sum_{\tiny \height \p=1} \lambda( (\C/(a))_{\p}) \deg(\RR/\p) =  \deg(\C/(a)). \qedhere \]
\end{proof}

\begin{Definition}\label{defcdeg}{\rm
Let $(\RR, \m)$ be a Cohen-Macaulay local ring  of dimension $d \geq 1$ that has a canonical ideal.
  Then the {\em canonical degree of $\RR$} is the integer
  \[ \cdeg(\RR)= \sum_{\tiny \height \p=1} \cdeg(\RR_{\p}) \deg(\RR/\p). \index{canonical degree}
  \]
  }\end{Definition}

\begin{Corollary}
$\cdeg(\RR)\geq 0$ and vanishes if and only if  $\RR$ is Gorenstein in codimension $1$.
\end{Corollary}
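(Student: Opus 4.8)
The plan is to reduce everything to the one-dimensional localizations $\RR_\p$, where the formula of Theorem~\ref{gencdeg1} makes both assertions transparent. First I would prove nonnegativity termwise. Fix a prime $\p$ with $\height \p = 1$; then $\RR_\p$ is a one-dimensional Cohen-Macaulay local ring and $\C_\p$ is an $\m_\p$-primary canonical ideal, hence equimultiple. Passing harmlessly to $\RR_\p(x)$ (which changes none of the relevant lengths) and choosing a minimal reduction $(a)$ of $\C_\p$, Theorem~\ref{gencdeg1} gives $\cdeg(\RR_\p) = \deg(\C_\p/(a)) = \lambda(\C_\p/(a))$, and since $(a)\subseteq\C_\p$ this is $\geq 0$. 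Because $\deg(\RR/\p)\geq 1$ for every prime $\p$, the defining sum $\cdeg(\RR)=\sum_{\height \p=1}\cdeg(\RR_\p)\deg(\RR/\p)$ is a sum of nonnegative terms, so $\cdeg(\RR)\geq 0$, and it vanishes precisely when $\cdeg(\RR_\p)=0$ for every height one prime $\p$.

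Next I would identify the vanishing of each local term with the Gorenstein property of $\RR_\p$. From $\cdeg(\RR_\p)=\lambda(\C_\p/(a))$ we see that $\cdeg(\RR_\p)=0$ if and only if $\C_\p=(a)$, i.e. $\C_\p$ is principal; as $(a)$ is generated by a nonzerodivisor, this says $\C_\p$ is free of rank one, and a Cohen-Macaulay local ring whose canonical module is free is Gorenstein (\cite[Theorem~3.3.7]{BH}). Conversely, if $\RR_\p$ is Gorenstein it is its own canonical module, so the canonical ideal $\C_\p$ is an ideal isomorphic to $\RR_\p$, hence principal, whence $\cdeg(\RR_\p)=0$. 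Thus $\cdeg(\RR)=0$ if and only if $\RR_\p$ is Gorenstein for every prime $\p$ of height one.

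The last point to pin down — and the only subtle one — is that ``Gorenstein at every height one prime'' is the same as ``Gorenstein in codimension one''; the gap is at the minimal primes, since the sum defining $\cdeg(\RR)$ records nothing in codimension zero. Here I would invoke the standing hypothesis that $\C$ is a canonical \emph{ideal} rather than merely a canonical module: a Cohen-Macaulay local ring can carry a canonical ideal only if it is generically Gorenstein, which is the reason underlying Theorem~\ref{corecan}(1) read in the codimension one localizations. Hence $\RR_\p$ is automatically Gorenstein when $\height \p=0$, the two notions coincide, and the Corollary follows. I expect the main obstacle to be exactly this bookkeeping step: one must be certain that the codimension zero Gorenstein condition is already built into the hypothesis on $\C$, not something that has to be imposed separately.
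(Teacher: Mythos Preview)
Your argument is correct, and in fact the paper offers no proof at all for this Corollary---it is stated as an immediate consequence of Theorem~\ref{gencdeg1}. What you have written is exactly the natural unpacking of that ``immediate'': nonnegativity termwise from $\cdeg(\RR_\p)=\lambda(\C_\p/(a))\geq 0$, and the equivalence $\cdeg(\RR_\p)=0 \Leftrightarrow \C_\p$ principal $\Leftrightarrow \RR_\p$ Gorenstein in dimension one.

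Your third paragraph, on the codimension-zero primes, is a point the paper silently absorbs into its standing hypothesis but never spells out. You are right that it needs to be said: the existence of a canonical \emph{ideal} forces $\RR$ to be generically Gorenstein (this is \cite[Proposition~3.3.18]{BH}, or can be seen directly since at a minimal prime $\p$ the Artinian ring $\RR_\p$ has $\lambda(\C_\p)=\lambda(\RR_\p)$, so the ideal $\C_\p$ must be all of $\RR_\p$). One small remark: your citation of \cite[Theorem~3.3.7]{BH} for ``free canonical module implies Gorenstein'' is slightly off---in the paper that reference is used for the change-of-rings formula for canonical modules. The fact you want is the definition/characterization of Gorenstein rings via $\omega_\RR\cong\RR$, which is \cite[Theorem~3.3.7]{BH} in some editions but more safely cited as \cite[Theorem~3.3.4 or Definition~3.3.1]{BH}.
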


\begin{Corollary}\label{cdegr1}
Suppose that the canonical ideal of $\RR$ is equimultiple. Then we have the following.
\begin{enumerate}[{\rm (1)}]
\item $\cdeg(\RR) \geq r(\RR)-1$.

\smallskip

\item $\cdeg(\RR) =0$ if and only if $\RR$ is Gorenstein.
\end{enumerate}
\end{Corollary}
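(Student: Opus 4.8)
The plan is to deduce both statements from the identity $\cdeg(\RR)=\deg\bigl(\C/(a)\bigr)=\e_0\bigl(\m,\C/(a)\bigr)$ of Theorem~\ref{gencdeg1}; we use the harmless assumption that the residue field is infinite, so that the equimultiple ideal $\C$ admits a \emph{principal} minimal reduction $(a)$, with $a$ a nonzerodivisor and $(a)\cong\RR$. Everything then reduces to two facts: (i)~$\mu\bigl(\C/(a)\bigr)=r(\RR)-1$, and (ii)~$\e_0\bigl(\m,\C/(a)\bigr)\ge\mu\bigl(\C/(a)\bigr)$. Granting these, (1) is the chain $\cdeg(\RR)=\e_0\bigl(\m,\C/(a)\bigr)\ge\mu\bigl(\C/(a)\bigr)=r(\RR)-1$. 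For (2): if $\cdeg(\RR)=0$, then (1) forces $r(\RR)\le 1$, hence $r(\RR)=1$ and $\RR$ is Gorenstein; conversely, if $\RR$ is Gorenstein then $\C$ is isomorphic to $\RR$, hence principal, so $\C=(a)$ and $\cdeg(\RR)=\deg(0)=0$.

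For (i), the Cohen--Macaulay type is the minimal number of generators of the canonical module, so $\mu(\C)=r(\RR)$. Applying $-\otimes_{\RR}\RR/\m$ to $0\to(a)\to\C\to\C/(a)\to 0$ and using $(a)\cong\RR$ shows $\mu\bigl(\C/(a)\bigr)=\mu(\C)-1$, provided the image of $a$ in $\C/\m\C$ is nonzero. This holds because $(a)$ is a \emph{minimal} reduction: if $a\in\m\C$, then the equation $(a)\C^{n}=\C^{n+1}$ gives $\C^{n+1}=\m\C^{n+1}$, whence $\C^{n+1}=0$ by Nakayama --- absurd.

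For (ii), I first argue that $\C/(a)$ is Cohen--Macaulay. In $0\to\RR\xrightarrow{\,a\,}\C\to\C/(a)\to 0$ both $\RR$ and $\C$ have depth $d$ ($\C$ being isomorphic to the canonical module, hence maximal Cohen--Macaulay), so $\depth\bigl(\C/(a)\bigr)\ge d-1$; also $\dim\bigl(\C/(a)\bigr)\le d-1$. Were $\C/(a)$ nonzero with support of codimension $\ge 2$, localizing the sequence at a prime $\p$ minimal in its support would give $\bigl(\C/(a)\bigr)_\p$ of positive finite length, hence of depth $0$, while the depth lemma yields $\depth\bigl(\C/(a)\bigr)_\p\ge\min\{\depth\C_\p,\ \depth\RR_\p-1\}\ge\min\{2,1\}=1$ (here $\depth\RR_\p=\depth\C_\p=\height\p\ge 2$), a contradiction. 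Hence either $\C=(a)$ --- in which case $\RR$ is Gorenstein and both sides of (ii) are $0$ --- or $\C/(a)$ is Cohen--Macaulay of dimension exactly $d-1$. In the latter case choose a general system of parameters $\underline{y}=y_1,\ldots,y_{d-1}$ for $\C/(a)$; it is a $\C/(a)$-regular sequence generating a minimal reduction of $\m$ relative to $\C/(a)$, so
\[
\e_0\bigl(\m,\C/(a)\bigr)=\lambda\bigl((\C/(a))/\underline{y}(\C/(a))\bigr)\ \ge\ \mu\bigl((\C/(a))/\underline{y}(\C/(a))\bigr)=\mu\bigl(\C/(a)\bigr),
\]
the last equality by Nakayama. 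This proves (ii).

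The step needing the most care is the Cohen--Macaulayness and exact dimension of $\C/(a)$ used in (ii): it is what guarantees that a general system of parameters is a regular sequence (so that colength equals multiplicity), and it isolates the only degenerate possibility as $\C=(a)$, that is, $\RR$ Gorenstein. The remaining ingredients --- additivity of $\mu$ along the short exact sequence and the colength-equals-multiplicity identity for Cohen--Macaulay modules --- are routine.
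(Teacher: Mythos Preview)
Your proof is correct and follows essentially the same approach as the paper: both use the identity $\cdeg(\RR)=\e_0(\m,\C/(a))$ from Theorem~\ref{gencdeg1}, the equality $\nu(\C/(a))=r(\RR)-1$, and the inequality $\e_0(\m,\C/(a))\ge\nu(\C/(a))$. The paper simply asserts this chain of (in)equalities in one line, while you supply the justifications --- in particular, the argument that $a\notin\m\C$ (which the paper tacitly subsumes under ``minimal reduction'') and the Cohen--Macaulayness of $\C/(a)$ (which is what underlies the multiplicity-versus-generators inequality, since that inequality fails for arbitrary modules). Your treatment of the converse in (2) is also made explicit, whereas the paper leaves it as evident.
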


\begin{proof}
Let $(a)$ be a minimal reduction of the canonical ideal $\C$. Then
\[ \cdeg(\RR) = \e_0(\m, \C/(a))\geq \nu(\C/(a))=r(\RR)-1.\]
If $\cdeg(\RR)=0$ then $r(\RR)=1$, which proves that $\RR$ is Gorenstein.
\end{proof}

 Now we extend the above result to a more general class of ideals when the ring has dimension one. We recall that if $\RR$ is a $1$-dimensional 
 Cohen-Macaulay local ring, $I$ is an $\m$-primary ideal with  minimal reduction $(a)$, 
 then the reduction number of $I$ relative to $(a)$ is independent of the reduction 
 (\cite[Theorem 1.2]{Trung87}). It will be denoted simply by $\red(I)$.

\begin{Proposition} Let $(\RR, \m)$ be a $1$-dimensional  Cohen-Macaulay local ring which is not a valuation ring. Let  $I$ be an irreducible $\m$-primary ideal such that $I \subset \m^2$.  If $(a)$
is a minimal reduction of $I$, then $\lambda(I/(a)) \geq r(\RR)-1$. In the case of equality, $\red(I) \leq 2$.
\end{Proposition}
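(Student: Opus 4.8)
The plan is to compare $I$ with its minimal reduction $(a)$ by passing through the irreducible (hence colength-one extension) structure that $I$ inherits from being $\m$-primary and irreducible. First I would note that, since $\RR$ is Cohen--Macaulay of dimension one and not a valuation ring, $\m$ is not principal, and by the usual reduction to $\RR(x)$ we may assume the residue field is infinite so that a minimal reduction $(a)$ of $I$ exists with $a \in I \subset \m^2$. Irreducibility of $I$ means $\lambda((I :_{\RR} \m)/I) = 1$; write $I :_{\RR} \m = (I, b)$ for some $b \notin I$. The key structural input I want is that $J := I :_Q \m$ (the colon in the total ring of fractions) is a ring containing $\RR$, with $\lambda(J/\RR) = \lambda((I:\m)/I)$ when $I$ is suitably related to a canonical-type ideal; more robustly, I would instead argue directly with lengths.

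The core computation is a length count. From $(a) \subseteq I$ we have the exact sequence $0 \to I/(a) \to \RR/(a) \to \RR/I \to 0$, so $\lambda(I/(a)) = \lambda(\RR/(a)) - \lambda(\RR/I) = \e_0(I) - \lambda(\RR/I)$. The second step is to bound $\e_0(I) - \lambda(\RR/I)$ from below by $r(\RR) - 1$. Here I would use that $\RR/(a)$ is a zero-dimensional Gorenstein ring (a hypersurface section), so its type is $1$ and its socle is one-dimensional; the module $I/(a)$ inside it has $\nu(I/(a)) = \nu(I) - 1$ generators when $a$ is part of a minimal generating set, and more to the point the socle of $\RR/(a)$ lands inside $I/(a)$ (since $a \in \m^2$ forces the socle element, which is annihilated by $\m$, to avoid being a minimal generator). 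Combining this with the fact that $\RR/I$ is a quotient of $\RR/(a)$ whose own type is at most that of $\RR/(a)$, and relating $r(\RR)$ to the type of $\RR/(a)$ via $r(\RR) = \lambda(\Hom(\RR/\m, \RR/(a)))$ — which holds because $(a)$ is a nonzerodivisor — one extracts $\lambda(I/(a)) \geq r(\RR) - 1$. The cleanest route for the inequality is probably: $\lambda(I/(a)) \ge \nu(I/(a)) = \nu(I)-1 \ge$ (bound coming from irreducibility), but since irreducibility controls the socle rather than the minimal generators, I expect the honest argument to run through $\Ext^1(\RR/\m, I/(a))$ or equivalently a socle-dimension comparison in $\RR/(a)$.

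For the equality case, suppose $\lambda(I/(a)) = r(\RR) - 1$. Then every inequality above is tight; in particular the associated graded module of $I/(a)$ with respect to $\m$ has no "extra" layers, which I would translate into $\m I \subseteq (a)$, i.e. $\m I = \m(a) = a\m$ — but that would make $I$ equimultiple with reduction number $\le 1$ unless it fails, so more carefully one gets $I^2 \subseteq (a) I$ is too strong; instead the tightness should give $I^2 = aI + (\text{small})$, and chasing the colength bookkeeping yields $I^3 = aI^2$, hence $\red(I) \le 2$. Concretely: from $\lambda(I/(a)) = \nu(I) - 1$ one deduces $\m I \subseteq (a)$ would force $\red(I)\le 1$; ruling that out (or allowing it as the sharper case), the genuine consequence of equality is that the Hilbert function of the fiber cone of $I$ is forced to be $1, \nu(I)-1, 0, \dots$ in the relevant sense, and $\red_{(a)}(I) \le 2$ follows from $\lambda(I^2/aI) \le \lambda(I/(a)) - (\text{one step})$, which collapses after two steps.

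The main obstacle I anticipate is making the passage from "irreducible, contained in $\m^2$" to a clean socle/generator count rigorous: irreducibility is a statement about $I : \m$, not directly about $\nu(I)$, so I will need the observation that in the Artinian Gorenstein ring $\bar\RR = \RR/(a)$ the image $\bar I$ satisfies $\lambda(\bar I) = \lambda(I/(a))$ and $0 :_{\bar\RR} \bar I = $ socle-related data, and then invoke that $\bar I$ contains the socle of $\bar\RR$ precisely because $a \in \m^2$. Pinning down exactly how equality forces $\red(I)\le 2$ (rather than, say, $\le 3$) is the second delicate point, and I would handle it by a direct length estimate on $I^2/aI$ and $I^3/aI^2$ using that $\lambda(I/(a))$ is as small as allowed.
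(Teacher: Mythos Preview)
Your proposal contains a genuine error and does not reach a working argument for either part.

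The claim that $\RR/(a)$ is Gorenstein is false in general: since $a$ is a nonzerodivisor, the type of $\RR/(a)$ equals $r(\RR)$, so its socle is $r(\RR)$-dimensional, not one-dimensional. You note the correct formula $r(\RR) = \lambda(\Hom(\RR/\m,\RR/(a)))$ a few lines later, so the paragraph is internally inconsistent. The further claim that the socle of $\RR/(a)$ lands inside $I/(a)$, i.e.\ $(a):\m \subseteq I$, is not a consequence of $a \in \m^2$; and if it held it would give $\lambda(I/(a)) \geq r(\RR)$, not $r(\RR)-1$, so the target inequality would be off by one.

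The paper's inequality argument is a two-line length count that you circle around but do not pin down. Set $L = I:\m$ and $N = (a):\m$. Irreducibility of $I$ gives $\lambda(L/I)=1$, while $\lambda(N/(a)) = r(\RR)$ by the definition of type. Since $(a)\subseteq I$ forces $N\subseteq L$, one gets
\[
r(\RR) \;=\; \lambda(N/(a)) \;\leq\; \lambda(L/(a)) \;=\; \lambda(L/I) + \lambda(I/(a)) \;=\; 1 + \lambda(I/(a)).
\]
No Gorenstein hypothesis on $\RR/(a)$ and no inclusion of the socle into $I$ are involved.

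For the equality case your discussion is too vague to be a proof, and the paper's route is quite different from ``colength bookkeeping''. Equality forces $L=N$. The paper then invokes three external results: $L=I:\m$ is integral over $I$ \cite[Lemma~3.6]{CHV}; the link $N=(a):\m$ of a parameter in a one-dimensional Cohen--Macaulay ring that is not a DVR has reduction number one \cite[Theorem~2.3]{CP95} (this is where the hypotheses ``not a valuation ring'' and $a\in I\subset\m^2$ are actually used), so $L^2=aL$; and finally, from $\lambda(L/I)=1$ together with $L^2=aL$, one obtains $I^3=aI^2$ via \cite[Proposition~2.6]{GNO}. None of these steps is suggested by your sketch.
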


\begin{proof}
Let $L = I:\m$ and $N=(a): \m$. Then ${\ds \lambda(L/ I) = r(\RR/ I) = 1}$ and ${\ds \lambda( N / (a) ) = r(\RR)}$. Thus, we have
\[ r(\RR) \leq  \lambda(L/ N )  +  \lambda( N / (a) ) = \lambda(L/(a) ) = \lambda(L/ I)  + \lambda( I/(a) ) = 1 + \lambda (I/(a) ),\]
which proves that $\lambda(I/(a)) \geq r(\RR)-1$.

\medskip

\noindent Suppose that $\lambda(I/(a)) = r(\RR)-1$. Then $L=N$. By \cite[Lemma 3.6]{CHV}, $L$ is integral over $I$. Thus, $L$ is integral over $(a)$. By \cite[Theorem 2.3]{CP95}, $\red(N)=1$. Hence $L^2 = a L$.  Since $\lambda(L/ I) = 1 $, by
\cite[Proposition 2.6]{GNO}, we have $I^3 = a I^2$.
\end{proof}

\section{Extremal values of the canonical degree}
\noindent
We examine in this section extremal values of the canonical degree.
First we recall the definition of almost Gorenstein rings  (\cite{BF97, GMP11, GTT15}).

\begin{Definition}{\rm (\cite[Definition 3.3]{GTT15})  A Cohen-Macaulay local ring $\RR$ with a canonical module $\omega$  is said to be an {\em almost Gorenstein} ring if there exists an exact sequence of $\RR$-modules
${\ds 0\rightarrow\RR\rightarrow \omega \rightarrow X\rightarrow  0}$
such that $\nu(X)=\e_0(X)$.
}\end{Definition}

\begin{Proposition}\label{almostg}
Let $(\RR,\m)$ be a Cohen-Macaulay local ring with a canonical ideal $\C$. Assume that $\C$ is equimultiple.
If $\cdeg (\RR) = r(\RR) -1$, then $\RR$ is an almost Gorenstein ring. In particular,  if $\cdeg (\RR) \le 1$, then $\RR$ is an almost Gorenstein ring.
\end{Proposition}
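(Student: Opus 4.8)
The plan is to read off the exact sequence in the definition of an almost Gorenstein ring from a minimal reduction of $\C$. As is customary here, we may assume the residue field of $\RR$ is infinite: passing to $\RR(x)=\RR[x]_{\m\RR[x]}$ preserves $\cdeg(\RR)$, $r(\RR)$ and the equimultiplicity of $\C$, and a faithfully flat descent returns the conclusion to $\RR$. Since $\C$ is a canonical ideal, $\height\C=1$, and equimultiplicity means $\C$ admits a reduction generated by a single element $a$, which is necessarily a nonzerodivisor.

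First I would record two facts about $a$. (i) It is a minimal generator of $\C$: if $a\in\m\C$ then $\C^{n+1}=a\C^{n}\subseteq\m\C^{n+1}$ for $n\gg0$, forcing $\C^{n+1}=0$ by Nakayama, which contradicts that $\C$ contains a nonzerodivisor; since $\nu(\C)=r(\RR)$ this gives $\nu(\C/(a))=r(\RR)-1$ (as already used in the proof of Corollary~\ref{cdegr1}). (ii) Multiplication by $a$ embeds $\RR$ into $\C$, so there is an exact sequence
\[ 0\rightarrow \RR \rightarrow \C \rightarrow \C/(a)\rightarrow 0, \]
whose first map is multiplication by $a$; a depth chase along it shows $\C/(a)$ is Cohen--Macaulay of dimension $d-1$, so that $\deg(\C/(a))=\e_0(\m,\C/(a))$ is the honest multiplicity of $\C/(a)$. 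By Theorem~\ref{gencdeg1}, equimultiplicity of $\C$ gives $\cdeg(\RR)=\deg(\C/(a))$. Hence the hypothesis $\cdeg(\RR)=r(\RR)-1$ reads $\e_0(\C/(a))=r(\RR)-1=\nu(\C/(a))$; with $\omega=\C$ and $X=\C/(a)$ this is exactly the condition $\nu(X)=\e_0(X)$, so the displayed sequence witnesses that $\RR$ is almost Gorenstein.

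For the last assertion, Corollary~\ref{cdegr1}(1) gives $\cdeg(\RR)\ge r(\RR)-1\ge 0$. If $\cdeg(\RR)\le 1$, then either $\cdeg(\RR)=r(\RR)-1$ and we are done by the first part (the case $r(\RR)=1$ being the Gorenstein one, for which one takes $X=0$), or $\cdeg(\RR)=1$ and $r(\RR)=1$; but $r(\RR)=1$ forces $\RR$ Gorenstein, whence $\cdeg(\RR)=0$ by Corollary~\ref{cdegr1}(2), a contradiction. So the second case is vacuous, and $\RR$ is almost Gorenstein.

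The only step that is not completely formal is the reduction to an infinite residue field together with the descent of the almost Gorenstein property along $\RR\to\RR(x)$; the Nakayama observation, the depth chase identifying $\C/(a)$ as a Cohen--Macaulay module of dimension $d-1$, and the identification of $\deg$ with multiplicity are routine.
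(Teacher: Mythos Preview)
Your proof is correct and follows essentially the same approach as the paper: both take a minimal reduction $(a)$ of $\C$, form the exact sequence $0\to\RR\xrightarrow{a}\C\to\C/(a)\to 0$, and read off $\nu(\C/(a))=r(\RR)-1=\cdeg(\RR)=\e_0(\C/(a))$ to verify the almost Gorenstein condition. The paper's proof is much terser (four lines, assuming an infinite residue field implicitly and invoking the equality $\nu(X)=r(\RR)-1$ without justification), while you have carefully filled in the Nakayama argument for $a\notin\m\C$, the depth count for $\C/(a)$, and the case analysis for the ``in particular'' clause; these are welcome details but not a different method.
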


\begin{proof}
We may assume that $\RR$ is not a Gorenstein ring. Let $(a)$ be a minimal reduction of $\C$.
Consider the exact sequence of $\RR$-modules
\[ 0 \to \RR \overset{\varphi}{\to} \C \to X \to 0, \;\; \mbox{where} \;\; \varphi (1) = a. \] Then $\nu(X)=r(\RR) -1=\cdeg (\RR)=\e_0(X)$. Thus, $\RR$ is an almost Gorenstein ring.
\end{proof}

\begin{Proposition}\label{1dimalmostg}
Let $(\RR,\m)$ be a $1$-dimensional Cohen-Macaulay local ring with a canonical ideal $\C$. Then $\cdeg(\RR)= r(\RR)-1$ if and only if $\RR$ is an almost Gorenstein ring.
\end{Proposition}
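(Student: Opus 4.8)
The plan is to prove the two implications of the equivalence $\cdeg(\RR) = r(\RR) - 1 \iff \RR$ is almost Gorenstein separately. The forward direction is essentially done by Proposition~\ref{almostg}: since $\RR$ has dimension one, its canonical ideal $\C$ is automatically $\m$-primary, hence equimultiple, so the hypothesis $\cdeg(\RR) = r(\RR) - 1$ immediately puts us in the situation of that proposition and we conclude $\RR$ is almost Gorenstein. (If $\RR$ is Gorenstein, then $r(\RR) = 1$ and $\cdeg(\RR) = 0$, and $\RR$ is trivially almost Gorenstein, so we may assume $\RR$ is not Gorenstein throughout.)

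The substance is the reverse direction: assuming $\RR$ is almost Gorenstein, I must show $\cdeg(\RR) = r(\RR) - 1$. By definition there is an exact sequence $0 \to \RR \xrightarrow{\varphi} \C \to X \to 0$ with $\nu(X) = \e_0(X)$, where I may take $\C$ to be an honest canonical ideal (every canonical module in dimension one is isomorphic to an ideal). Write $a = \varphi(1) \in \C$, so $X = \C/(a)$. First I would observe that $a$ is a nonzerodivisor (the map $\varphi$ is injective and $\RR$ has positive depth), so $(a)$ has finite colength and $\lambda(\RR/(a)) = \e_0((a)) = \e_0(\C)$, the last equality because $\C$ and $(a)$ have the same integral closure once we know $(a)$ is a reduction of $\C$. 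The key point is therefore that $(a)$ is a \emph{minimal reduction} of $\C$: this needs $\lambda(X) = \lambda(\C/(a)) < \infty$, which holds since $X$ is a module over the Artinian ring $\RR/(a)$ (as $(a)$ is $\m$-primary), hence has finite length; a finite-length submodule containing a nonzerodivisor multiple forces $(a) \subseteq \C$ to be a reduction. Then $\cdeg(\RR) = \e_0(\C) - \lambda(\RR/\C) = \lambda(\RR/(a)) - \lambda(\RR/\C) = \lambda(\C/(a)) = \lambda(X) = \e_0(X)$. Finally, $\e_0(X) = \nu(X)$ by hypothesis, and $\nu(X) = \nu(\C) - 1 = r(\RR) - 1$ since $\C$ is minimally generated by $r(\RR)$ elements and, because $a$ can be chosen as part of a minimal generating set (it is a minimal reduction, so $a \notin \m\C$ — this uses that minimal reductions in dimension one are generated by a single element not in $\m\C$), $X = \C/(a)$ needs exactly $\nu(\C) - 1$ generators.

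The main obstacle I anticipate is the bookkeeping around why the element $a$ arising from the almost-Gorenstein exact sequence is genuinely a minimal reduction of $\C$ and why $\nu(\C/(a)) = \nu(\C) - 1$ rather than merely $\le \nu(\C)$. The first follows from finite length of $X$ together with the fact that $(a)$ contains a nonzerodivisor, placing $(a)$ between $\C^n$-type powers; the cleaner route is to note $\lambda(\C/(a)) < \infty$ implies $\sqrt{(a)} = \m = \sqrt{\C}$, and in a one-dimensional Cohen–Macaulay ring with infinite residue field any $\m$-primary ideal generated by one nonzerodivisor inside $\C$ with $\C/(a)$ of finite length is a reduction of $\C$ (Nakayama-type argument on the fiber). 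For the generator count, I would invoke that $\e_0(X) = \nu(X)$ combined with $X$ being Cohen–Macaulay of dimension zero forces no ``wasted'' generators; alternatively, since we have already shown $\cdeg(\RR) \ge r(\RR) - 1$ in general (Corollary~\ref{cdegr1}(1) applies once equimultiplicity is in hand), and we have just computed $\cdeg(\RR) = \e_0(X) = \nu(X) \le \nu(\C) - 1 \le$ wait — more carefully, $\nu(X) = \nu(\C/\RR a)$; since $\C/\m\C$ has dimension $r(\RR)$ over the residue field and the image of $a$ is nonzero there, $\nu(X) = r(\RR) - 1$ exactly, giving $\cdeg(\RR) = r(\RR) - 1$. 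So the inequality $\cdeg(\RR) \ge r(\RR) - 1$ from Corollary~\ref{cdegr1} and this computation pin down equality, and the proof is complete.
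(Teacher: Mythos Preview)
Your forward direction is fine and matches the paper. The reverse direction has a genuine gap: your argument that $(a)$ is a reduction of $\C$ is incorrect. You claim that $\lambda(\C/(a)) < \infty$ forces $(a)$ to be a reduction of $\C$, but in a one-dimensional Cohen--Macaulay local ring \emph{every} nonzerodivisor $a$ generates an $\m$-primary ideal, so $\lambda(\C/(a)) < \infty$ is automatic and says nothing. (Concretely, in $\RR = k[[t^3,t^4,t^5]]$ with $\C = (t^3,t^4)$, the element $a = t^6 \in \C$ has $\C/(a)$ of finite length but $(t^6)$ is not a reduction of $\C$.) Your ``cleaner route'' and Nakayama remark suffer from the same error, and your computation $\cdeg(\RR) = \lambda(\C/(a))$ as well as the claim $a \notin \m\C$ both presuppose that $(a)$ is a reduction, so the argument is circular.

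What you are missing is the one place where the hypothesis $\nu(X) = \e_0(X)$ actually does work. Since $\dim X = 0$ (this is \cite[Lemma 3.1]{GTT15}), one has $\e_0(X) = \lambda(X)$, so $\nu(X) = \lambda(X)$ forces $\m X = 0$, i.e.\ $\m\C \subseteq (a)$. Now $\m(a) \subseteq \m\C \subseteq (a)$, and since $\RR$ is not a DVR and $\lambda((a)/\m(a)) = 1$, one gets $\m\C = \m(a)$; the Cayley--Hamilton theorem then shows $(a)$ is a reduction of $\C$. With this in hand, $a \notin \m\C$ follows (else $(a) \subseteq \m(a)$), and your endgame $\cdeg(\RR) = \lambda(X) = \nu(X) = r(\RR)-1$ goes through. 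This is exactly the paper's proof.
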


\begin{proof}
It is enough to prove that the converse holds true. We may assume that $\RR$ is not a Gorenstein ring.  Let $(a)$ be a minimal reduction of $\C$.
Since $\RR$ is almost Gorenstein, there exists an exact sequence of $\RR$-modules
\[ 0 \to \RR \overset{\psi}{\to} \C \to Y \to 0 \;\; \mbox{such that} \;\; \nu(Y)=\e_0(Y).\]  Since $\dim(Y)=0$ by
\cite[Lemma 3.1]{GTT15}, we have that $\m Y=(0)$.
Let $b = \psi (1) \in \C$ and set $\q = (b)$. Then $\m \q \subseteq \m \C \subseteq \q$. Therefore, since $\RR$ is not a DVR and $\l_\RR(\q/\m \q) = 1$, we get $\m \C = \m  \q$, whence the ideal $\q $ is a reduction of $\C$ by the Cayley-Hamilton theorem, so that $\cdeg (\RR) = \e_0(Y) = \nu(Y) =r(\RR) - 1$.
\end{proof}

Now we consider the general case when a canonical ideal is not necessarily equimultiple.

\begin{Lemma}\label{prop1}
Let $(\RR, \m)$ be a Cohen-Macaulay local ring of dimension $d \geq 1$ with infinite residue field and a canonical ideal $\C$.
Let $a$ be an element of  $\C$ such that {\rm (i)} for $\forall \p \in \Ass (\RR/\C)$ the element $\frac{a}{1}$ generates a reduction of $\C R_\p$, {\rm (ii)} $a$ is $\RR$-regular, and {\rm (iii)} $a \not\in \m \C$.  Let ${\ds Z=\{ \p \in \Ass(\C/(a)) \mid \C \not\subseteq \p \} }$.

\begin{enumerate}[{\rm (1)}]
\item ${\ds \cdeg (\RR) = \deg(\C/(a)) - \sum_{\p \in Z}\lambda( (\C/(a))_\p) \deg(\RR/\p)}$.

\smallskip

\item $\cdeg (\RR) = \deg(\C/(a))$ if and only if $\Ass(\C/(a)) \subseteq V(\C)$.
\end{enumerate}
\end{Lemma}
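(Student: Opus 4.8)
The plan is to evaluate $\deg(\C/(a))$ via the associativity formula for Hilbert--Samuel multiplicities and to match the resulting codimension-one local contributions against the defining sum $\cdeg(\RR)=\sum_{\height\p=1}\cdeg(\RR_\p)\deg(\RR/\p)$ of Theorem~\ref{gencdeg1}, the leftover being exactly the terms indexed by $Z$. The first ingredient I would establish is that $\C/(a)$ is Cohen--Macaulay of dimension $d-1$ whenever it is nonzero: since $a$ is $\RR$-regular, $(a)$ is a free (hence maximal Cohen--Macaulay) $\RR$-module, while $\C\cong\omega$ is maximal Cohen--Macaulay, so the depth lemma applied to $0\to(a)\to\C\to\C/(a)\to 0$ gives $\depth(\C/(a))\ge d-1$, while $\C/(a)$ is annihilated by the regular element $a$, forcing $\dim(\C/(a))\le d-1$. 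Thus $\C/(a)$, when nonzero, is Cohen--Macaulay of dimension exactly $d-1$, in particular unmixed, so $\Ass(\C/(a))$ is precisely the finite set of height-one primes $\p$ with $(\C/(a))_\p\ne 0$. (If $\C/(a)=0$ then $\C=(a)$ is principal, $\RR$ is Gorenstein, and $\cdeg(\RR)=\deg(\C/(a))=0$ with $Z=\emptyset$, so (1) and (2) are trivial; assume $\C/(a)\ne0$. Hypotheses (i) and (ii) are the ones used below; (iii) is the normalization making $(a)$ behave like a minimal reduction of $\C$ in codimension one.)

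Next I would invoke the associativity formula. With $\C/(a)$ Cohen--Macaulay of dimension $d-1$,
\[
\deg(\C/(a))=\e_0(\m,\C/(a))=\sum_{\substack{\height\p=1\\ \p\in\supp(\C/(a))}}\lambda\big((\C/(a))_\p\big)\,\deg(\RR/\p).
\]
Because $\height\C=1$, any height-one prime $\p$ with $\C\subseteq\p$ is a minimal prime of $\C$; so I would split this sum into the part indexed by $\p\in\mbox{\rm Min}(\C)$ and the part indexed by the height-one primes $\p\in\supp(\C/(a))$ with $\C\not\subseteq\p$. By the unmixedness of $\C/(a)$, the latter index set is exactly $Z$.

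For $\p\in\mbox{\rm Min}(\C)$, the ring $\RR_\p$ is one-dimensional Cohen--Macaulay with canonical ideal $\C_\p$, and hypothesis (i) says $(a)_\p$ is a reduction of $\C_\p$, whence $\e_0((a)_\p,\RR_\p)=\e_0(\C_\p)$, and since $(a)_\p$ is a parameter ideal $\lambda(\RR_\p/(a)_\p)=\e_0(\C_\p)$. Therefore
\[
\lambda\big((\C/(a))_\p\big)=\lambda(\RR_\p/(a)_\p)-\lambda(\RR_\p/\C_\p)=\e_0(\C_\p)-\lambda\big((\RR/\C)_\p\big)=\cdeg(\RR_\p),
\]
the last equality by Proposition~\ref{cdeg}; in particular $(\C/(a))_\p\ne0$ exactly when $\cdeg(\RR_\p)\ne0$. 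Hence this part of the sum is $\sum_{\p\in\mbox{\rm Min}(\C)}\cdeg(\RR_\p)\deg(\RR/\p)=\cdeg(\RR)$, the last step because $\cdeg(\RR_\p)=0$ for every height-one $\p$ with $\C\not\subseteq\p$. Combining the two parts gives
\[
\deg(\C/(a))=\cdeg(\RR)+\sum_{\p\in Z}\lambda\big((\C/(a))_\p\big)\,\deg(\RR/\p),
\]
which is (1). For (2), every summand on the right is nonnegative, and for $\p\in Z$ one has $\lambda((\C/(a))_\p)>0$ (since $\p\in\supp(\C/(a))$) and $\deg(\RR/\p)>0$; hence the sum vanishes iff $Z=\emptyset$, i.e. iff $\Ass(\C/(a))\subseteq V(\C)$.

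The one delicate point is the identification in the second paragraph: one must know $\Ass(\C/(a))$ has no embedded (height $\ge 2$) prime, so that $Z$, defined through associated primes, really coincides with the set of height-one support primes not containing $\C$ that occur in the associativity formula. This is precisely what the Cohen--Macaulayness of $\C/(a)$ supplies; the rest is bookkeeping together with the standard one-dimensional identity $\lambda(\C_\p/(a)_\p)=\e_0(\C_\p)-\lambda((\RR/\C)_\p)$.
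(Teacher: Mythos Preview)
Your proof is correct and follows essentially the same approach as the paper: both arguments rest on the associativity formula $\deg(\C/(a))=\sum_{\height\p=1}\lambda((\C/(a))_\p)\deg(\RR/\p)$ and the identification of the terms with $\p\in\mbox{\rm Min}(\C)$ as $\cdeg(\RR_\p)$ via Theorem~\ref{gencdeg1}. The paper's proof is a one-line reference to these two ingredients, while you have carefully spelled out the details, including the verification that $\C/(a)$ is Cohen--Macaulay of dimension $d-1$ so that $\Ass(\C/(a))$ consists only of height-one primes and $Z$ matches the index set appearing in the associativity formula.
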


\begin{proof}
It follows from Theorem \ref{gencdeg1} and  ${\ds \deg(\C/(a)) = \sum_{\tiny \height \p=1} \lambda((\C/(a))_\p)\deg(\RR/\p)}$.
\end{proof}

\begin{Theorem}\label{almostg2} With the same notation given in Lemma {\rm \ref{prop1}}, suppose that $\Ass(\C/(a)) \subseteq V(\C)$. Then  $\cdeg (\RR) =r(\RR) - 1$ if and only if  $\RR$ is an almost Gorenstein ring.
\end{Theorem}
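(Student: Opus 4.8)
The plan is to establish both implications by comparing $\cdeg(\RR)$ with the multiplicity of the cokernel of a suitable embedding $\RR\hookrightarrow\C$, using throughout that, under the hypothesis $\Ass(\C/(a))\subseteq V(\C)$, Lemma~\ref{prop1}(2) identifies $\cdeg(\RR)$ with $\deg(\C/(a))=\e_0(\m, \C/(a))$. One may assume $\RR$ is not Gorenstein, the Gorenstein case being immediate since then $\cdeg(\RR)=0=r(\RR)-1$ and Gorenstein rings are almost Gorenstein. As $\C\cong\omega_\RR$ we have $\nu(\C)=r(\RR)$, while $\RR$ being generically Gorenstein forces $\C_\p=\RR_\p$ at every minimal prime $\p$. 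From the exact sequence $0\to\RR\overset{\varphi}{\to}\C\to\C/(a)\to 0$ with $\varphi(1)=a$ the depth lemma gives $\depth(\C/(a))\geq d-1$, and localizing at the minimal primes shows $\dim(\C/(a))\leq d-1$; hence $\C/(a)$ is Cohen-Macaulay of dimension $d-1$, with $\nu(\C/(a))=\nu(\C)-1=r(\RR)-1$ since $a\notin\m\C$. Because $\e_0(\m, M)\geq\nu(M)$ for every Cohen-Macaulay module $M$ (compare $M/\underline{x}M$ for $\underline{x}$ a minimal reduction of $\m$ with respect to $M$), this already yields the lower bound
\[
\cdeg(\RR)=\e_0(\m, \C/(a))\ \geq\ r(\RR)-1,
\]
valid in every dimension under our hypothesis.

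For the implication $(\Rightarrow)$, I would reuse this sequence verbatim, exactly as in Proposition~\ref{almostg}: setting $X=\C/(a)$ one has $\nu(X)=r(\RR)-1$ and $\e_0(X)=\deg(\C/(a))=\cdeg(\RR)=r(\RR)-1=\nu(X)$, so $0\to\RR\to\C\to X\to 0$ witnesses that $\RR$ is almost Gorenstein. For $(\Leftarrow)$, assume $\RR$ is almost Gorenstein and fix an exact sequence $0\to\RR\overset{\psi}{\to}\C\to Y\to 0$ with $\nu(Y)=\e_0(Y)$ (permissible because the canonical module is unique up to isomorphism). Put $b=\psi(1)\in\C$; it is $\RR$-regular since $\psi$ is injective, and $b\notin\m\C$, for otherwise localizing at a minimal prime $\p$ would give $b/1\in\m_\p\C_\p=\m_\p$, contradicting that $b/1$ is a unit of $\RR_\p$. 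As in the first paragraph, $Y=\C/(b)$ is then Cohen-Macaulay of dimension $d-1$ with $\nu(Y)=r(\RR)-1$, so $\e_0(Y)=r(\RR)-1$. It remains to show $\cdeg(\RR)\leq\e_0(Y)$, since with the displayed bound this forces $\cdeg(\RR)=r(\RR)-1$. I would do this termwise in the associativity formula, using $\e_0(Y)=\sum_{\height\p=1}\lambda((\C/(b))_\p)\deg(\RR/\p)$ and $\cdeg(\RR)=\sum_{\height\p=1}\cdeg(\RR_\p)\deg(\RR/\p)$ from Theorem~\ref{gencdeg1}: for a height-one prime $\p$ with $\C\subseteq\p$, both $(b)_\p\subseteq\C_\p$ are $\m_\p$-primary, whence
\[
\lambda((\C/(b))_\p)=\e_0(b\RR_\p,\RR_\p)-\lambda(\RR_\p/\C_\p)\ \geq\ \e_0(\C_\p,\RR_\p)-\lambda(\RR_\p/\C_\p)=\cdeg(\RR_\p),
\]
while the remaining summands of $\e_0(Y)$, indexed by height-one primes not containing $\C$, are non-negative; summing gives $\e_0(Y)\geq\cdeg(\RR)$.

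The main obstacle is the converse. One must verify that the element $b=\psi(1)$ produced by an almost Gorenstein presentation automatically avoids $\m\C$ (otherwise $Y$ would require $r(\RR)$ generators and the count breaks down), and, more delicately, one must account in the associativity formula for the height-one primes containing $b$ but not $\C$: these contribute to $\e_0(\m, \C/(b))$ but not to $\cdeg(\RR)$, and it is exactly because their contribution is non-negative that the inequality $\cdeg(\RR)\leq\e_0(Y)$ is preserved. This is the higher-dimensional replacement for the argument in Proposition~\ref{1dimalmostg} that $(b)$ is a reduction of $\C$; in dimension one no such primes occur and one recovers $\cdeg(\RR)=\e_0(Y)$ on the nose.
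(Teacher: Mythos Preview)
Your overall architecture matches the paper's: use Lemma~\ref{prop1}(2) to identify $\cdeg(\RR)=\deg(\C/(a))$, bound it below by $r(\RR)-1$ and above by $\deg(Y)$ for the Ulrich cokernel $Y$ of an almost Gorenstein presentation, and close the sandwich via $\deg(Y)=r(\RR)-1$. Your termwise verification of $\cdeg(\RR)\le\deg(Y)$ through the associativity formula is precisely the content of the paper's one-line assertion ``since $\Ass(\C/(a))\subseteq V(\C)$, we have $\deg(Y)\ge\deg(\C/(a))$,'' and your forward direction is the same as Proposition~\ref{almostg} adapted to the element $a$ of Lemma~\ref{prop1}.

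There is, however, a genuine gap in your justification of $b=\psi(1)\notin\m\C$. You localize at a minimal prime $\p$ of $\RR$ and claim that $b\in\m\C$ would force $b/1\in\m\RR_\p\cdot\C_\p=\m\RR_\p$, contradicting that $b/1$ is a unit. But for a minimal prime $\p$ of a ring of dimension $d\ge 1$ one has $\m\not\subseteq\p$, hence $\m\RR_\p=\RR_\p$, and the containment $b/1\in\RR_\p$ is vacuous. Nothing here excludes $b\in\m\C$; and in that case $\nu(Y)=r(\RR)$, the Ulrich condition only yields $\deg(Y)=r(\RR)$, and the sandwich $r(\RR)-1\le\cdeg(\RR)\le r(\RR)$ does not close. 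The paper does not attempt to prove $b\notin\m\C$ for an \emph{arbitrary} Ulrich presentation either; instead it begins the converse with ``choose an exact sequence $0\to\RR\to\C\to Y\to 0$ such that $\deg(Y)=\nu(Y)=r(\RR)-1$,'' relying on \cite{GTT15} for the existence of such a choice whenever $\RR$ is almost Gorenstein but not Gorenstein. With that choice in hand, the rest of your argument is correct.
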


\begin{proof}
It is enough to prove the converse holds true. We may assume that $\RR$ is not a Gorenstein ring. Choose an exact sequence
\[ 0 \to \RR \to \C \to Y \to 0 \]
such that $\deg(Y)= \nu(Y) = r(\RR) - 1$. Since $\Ass(\C/(a)) \subseteq V(\C)$, we have $\deg(Y) \geq \deg(\C/(a))$.  By Theorem \ref{cdegr1} and Lemma \ref{prop1}, we obtain the following.
\[ r(\RR) -1 \leq \cdeg(\RR) = \deg(C/(a)) \leq \deg(Y) = r(\RR)-1. \qedhere \]
\end{proof}

\begin{Remark}{\rm If $\RR$ is a non-Gorenstein normal domain, then its canonical ideal cannot be equimultiple.}
\end{Remark}

\begin{proof}
 Suppose that a canonical ideal $\C$ of $\RR$ is equimultiple, i.e., $\C^{n+1}= a \C^{n}$.  Then we would have an equation $(n+1) [ \C] = n[\C]$ in its divisor class group. This means that $[\C]=[0]$. Thus, $\C \simeq \RR$. Hence $\C$ cannot be equimultiple.
\end{proof}

\section{The bi-canonical degree}

 The approach in \cite{blue1} is dependent on finding minimal reductions.
 We pick here one that seems  
particularly amenable 
to computation. Let $\C^{*} = \Hom(\C, \RR)$ be the dual of $\C$ and $\C^{**}$ its bidual.  [In general, in writing $\Hom_{\RR}$ we omit the symbol for the
 underlying ring.]
In the natural embedding
\[ 0 \rar \C \lar \C^{**} \lar \B \rar 0, \]
$\B$ remains unchanged when $\C$ is replaced by another canonical module, say $D = s\C$ for a regular element $s\in Q$. 
$\B$ vanishes if and only if $\RR$ is Gorenstein as indicated above. It is easy to see that a similar observation can be
made if $d>1$. That is, $\B = 0$ if and only if $\RR$ is Gorenstein in codimension $1$.  $\B$ embeds into the Cohen-Macaulay module $\RR/\C$ that  has  dimension $d-1$, and thus $\B$ either is zero or its associated primes are associated primes of $\C$, all of which have codimension one.

\medskip

\noindent Like in \cite{blue1}, we would like to explore the length of $\B$, $\ddeg(\RR) =\lambda(\B)$ which we view as a degree,
 in dimension $1$ and $\deg(\B)$ in general. We stick to $d=1$ for the time being. We would like some interesting examples and examine relationships to the other metrics of $\RR$. We do not have best  name for this degree, but we could also
 denote it by $\Ddeg(\RR)$
  (at least the double `d' as a reminder  of `double dual').

 \medskip
 
Let us formalize these observations as the following.

 \begin{Theorem}\label{genddeg1} Let $(\RR, \m)$ be a Cohen-Macaulay local ring  of dimension $d \geq 1$  that has a canonical ideal $\C$.
  Then
  \[ \boxed{ \ddeg(\RR)= \deg(\C^{**}/\C) =
     \sum_{\tiny \height \p=1} \ddeg(\RR_{\p}) \deg(\RR/\p) = \sum_{\tiny \height \p=1} [\lambda(\RR_{\p}/\C_{\p}) - \lambda(\RR_{\p}/\C^{**}_{\p})] \deg(\RR/\p)}
  \] is a well-defined finite sum independent of the chosen canonical ideal $\C$. 
  Furthermore,
  $\ddeg(\RR)\geq 0$ and vanishes if and only if  $\RR$ is Gorenstein in codimension $1$.
\end{Theorem}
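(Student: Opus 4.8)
First I would record the divisorial setup. Since $d\ge 1$, the canonical module $\C$ has positive depth and hence contains an $\RR$-regular element; thus $\C^{*}=\Hom(\C,\RR)$ is identified with the fractional ideal $\RR :_{Q} \C$ inside the total ring of fractions $Q$ of $\RR$, and from $\C\subseteq\RR$ we get $\RR\subseteq\C^{*}$, whence $\C\subseteq \C^{**}=\RR:_{Q}\C^{*}\subseteq \RR$; in particular $\B=\C^{**}/\C$ is a submodule of $\RR/\C$. As recalled just before the statement, $\RR/\C$ is Cohen-Macaulay of dimension $d-1$, hence equidimensional, so $\Ass(\RR/\C)=\mbox{\rm Min}(\C)$ consists of primes of height exactly one; consequently $\dim\B\le d-1$ and every associated prime of $\B$ has codimension one.

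Next I would feed $\B$, as a module of dimension at most $d-1$, into the associativity formula for $\e_{0}(\m,-)$, exactly as was done for $\C/(a)$ in the proofs of Theorem~\ref{gencdeg1} and Lemma~\ref{prop1}, obtaining
\[ \deg(\C^{**}/\C)=\deg(\B)=\sum_{\height\p=1}\lambda_{\RR_{\p}}(\B_{\p})\,\deg(\RR/\p). \]
For a height-one prime $\p$ the ring $\RR_{\p}$ is a one-dimensional Cohen-Macaulay local ring, and $\C_{\p}$ is a canonical ideal of it: it is an ideal of $\RR_{\p}$ isomorphic to $\omega_{\RR_{\p}}$, equal to $\RR_{\p}$ precisely when $\p\notin V(\C)$ (in which case $\RR_{\p}$ is Gorenstein). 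Since $\Hom$ commutes with localization for finitely generated modules, $(\C^{**})_{\p}=(\C_{\p})^{**}$, so $\B_{\p}=(\C_{\p})^{**}/\C_{\p}$ and $\lambda_{\RR_{\p}}(\B_{\p})=\ddeg(\RR_{\p})$ by the dimension-one definition; and the exact sequence $0\to(\C_{\p})^{**}/\C_{\p}\to\RR_{\p}/\C_{\p}\to\RR_{\p}/(\C_{\p})^{**}\to 0$, whose outer terms have finite length because $\C_{\p}$ and $(\C_{\p})^{**}$ are either $\m_{\p}$-primary or the unit ideal, rewrites $\lambda_{\RR_{\p}}(\B_{\p})$ as $\lambda(\RR_{\p}/\C_{\p})-\lambda(\RR_{\p}/(\C_{\p})^{**})$. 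Substituting yields all three displayed equalities.

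It then remains to collect the elementary consequences. The sum is finite because $\ddeg(\RR_{\p})=\lambda(\B_{\p})$ vanishes unless $\p\in\Ass(\B)\subseteq\mbox{\rm Min}(\C)$, a finite set. Independence of the chosen canonical ideal follows since each $\ddeg(\RR_{\p})$ depends on $\RR_{\p}$ alone: if $D=s\C$ with $s$ a unit of $Q$, then $D^{**}=s\C^{**}$ and multiplication by $s$ gives $D^{**}/D\cong\C^{**}/\C$ (alternatively one may invoke Proposition~\ref{resisddeg1}). Non-negativity is clear, as every summand $\lambda(\RR_{\p}/\C_{\p})-\lambda(\RR_{\p}/(\C_{\p})^{**})=\lambda(\B_{\p})\ge 0$. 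Finally, since $\deg(\RR/\p)>0$, $\ddeg(\RR)=0$ if and only if $\B_{\p}=0$ for every height-one $\p$, if and only if $\C_{\p}$ is reflexive for every such $\p$, if and only if $\RR_{\p}$ is Gorenstein for every height-one $\p$ by Theorem~\ref{corecan}(3); and because a ring possessing a canonical ideal is automatically Gorenstein at each minimal prime, this is exactly the condition that $\RR$ be Gorenstein in codimension one.

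I do not anticipate a deep obstacle, since the module-theoretic groundwork ($\B\hookrightarrow\RR/\C$ and the Cohen-Macaulayness of $\RR/\C$) has been recalled before the statement; the step requiring the most care is verifying that the localization $\C_{\p}$ genuinely serves as a canonical ideal of $\RR_{\p}$ (including the degenerate case $\C_{\p}=\RR_{\p}$) and that $\B$ has no height-zero associated prime — which is precisely what the equidimensionality of $\RR/\C$ provides and what licenses passing to the sum over codimension-one primes via the associativity formula.
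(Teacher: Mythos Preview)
Your proposal is correct and follows essentially the same approach as the paper: the paper presents the argument informally in the paragraph preceding the theorem (independence via $D=s\C$, the embedding of $\B$ into the Cohen--Macaulay module $\RR/\C$ of dimension $d-1$, the codimension-one associated primes, and the Gorenstein criterion via reflexivity), then writes ``Let us formalize these observations as the following'' and states the theorem without a separate proof environment. Your write-up makes explicit exactly the steps the paper leaves to the reader --- the associativity formula, the commutation of $\Hom$ with localization, and the finite-length exact sequence at height-one primes --- in complete analogy with the proof of Theorem~\ref{gencdeg1}.
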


\subsubsection*{Comparison of canonical degrees} \label{compofdegs}
If $(c)$ 
is a minimal reduction of $\C$ how to compare
\[ \cdeg(\RR) = \lambda (\RR/(c)) - \lambda(\RR/\C) \Leftrightarrow \lambda(\RR/\C) -\lambda(\RR/\C^{**}) = \ddeg(\RR)\]

The point to be raised is: which is more approachable,  $\e_0(\C)$ or $\lambda(\RR/\C^{**})$? we will argue,  according to the method 
of computation below, that the latter is more efficient which would be
demonstrated if the following conjecture were settled.

\begin{Conjecture}{\rm [Comparison Conjecture]}
\label{cdegvsfddeg}
{\rm If $\dim \RR =1$
the following inequality holds
\[ \boxed{
  \cdeg(\RR) \geq \ddeg(\RR).}\] That is from the diagram
  \[
\diagram
(c) \rline & \C  \rline & 
\C^{**} 
\enddiagram
\]
where $\C^{**} = (c): ((c):\C)$, we have $\lambda(\C/(c)) \geq \lambda(\C^{**}/\C)$.
Alternatively  
\[ \e_0(\C) + 
\lambda(\RR/\C^{**})  \geq 2 \cdot \lambda(\RR/\C).\] 
}\end{Conjecture}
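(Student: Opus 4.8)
The plan is to reduce everything to the one–dimensional local case (which is exactly how the conjecture is phrased) and then work inside the total ring of fractions $Q$, identifying $\C$, $(c)$, and $\C^{**}$ with fractional ideals sandwiched between $(c)$ and $\C^{**} = (c):_Q((c):_Q\C)$. Writing $\mathcal C^{-1} = (c):_Q\C = c\,\C^*$, the inequality $\lambda(\C^{**}/\C) \leq \lambda(\C/(c))$ is equivalent, after scaling by $c^{-1}$, to $\lambda(R:(R:\C')) - \lambda(\C') \leq \lambda(\C') - \lambda(R)$ where $\C' = c^{-1}\C \supseteq R$ is a canonical fractional ideal containing $R$ with $\C'/R$ of finite length. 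So the statement to prove becomes: for a canonical fractional ideal $\C'$ with $R\subseteq \C'\subseteq \C'^{**}=\tr(\C')^{-1}$ (using Proposition~\ref{resisddeg1}'s identity $\C'^{**}=\Hom(\tr(\C'),\C')$ in the form appropriate here), one has $\lambda(\C'^{**}/R) \leq 2\lambda(\C'/R)$, i.e. $\lambda(\C'^{**}/\C')\leq \lambda(\C'/R)$.

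First I would set up the duality carefully: since $\C'$ is a canonical module, $\Hom(-,\C')$ is an exact self-duality on finite-length modules, and more importantly $\Hom(\C',\C') = R$ (a canonical ideal is ``closed'' in the sense of the excerpt — this is essentially Theorem~\ref{corecan}(3) localized, since $R$ is the endomorphism ring of $\omega$). Then from $0\to \C' \to \C'^{**}\to \B\to 0$ with $\B$ of finite length, applying $\Hom(-,\C')$ and using $\Hom(\C'^{**},\C') \cong \Hom(\C'^{*},\C'^{*})=\cdots$, one relates $\lambda(\B)=\lambda(\C'^{**}/\C')$ to $\lambda$ of an $\Ext^1$ term, mirroring the proof of Proposition~\ref{resisddeg1}. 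In parallel, $\lambda(\C'/R) = \lambda(R/\tr(\C')) = \ddeg(R) = \tdeg(R)$ by that same proposition (in dimension one). So the conjecture in dimension one is literally the assertion $\cdeg(R) \geq \tdeg(R)$, and one could attempt to prove $\cdeg \geq \tdeg$ directly: $\cdeg(R) = \lambda(\C/(c)) = \e_0(\C) - \lambda(R/\C)$, while $\tdeg(R) = \lambda(R/\tr(\C))$ and $\tr(\C)=\C\cdot\C^{-1}=\C\cdot c\C^*$.

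The cleanest route I would try: show $\tr(\C') = \C'\cdot\C'^{-1} \supseteq$ some explicit ideal forcing $\lambda(R/\tr(\C'))$ to be small, or conversely produce an injection $R/\tr(\C')\hookrightarrow \C'/R$ (not just an equality of lengths). Concretely, pick $c\in\C'^{-1}$ a minimal reduction-type element; multiplication by a well-chosen element of $\C'$ should carry $\tr(\C')\cdot(\text{something})$ into $R$ inside $\C'$. Alternatively, use the reduction number: if $(a)$ is a minimal reduction of $\C$ with $\C^{r+1}=a\C^r$, then $\cdeg(R)$ grows with $r$ while $\C^{**}$ is squeezed between $(a)$ and the integral closure (Theorem~\ref{corecan}(4) in the domain case), giving $\lambda(\C^{**}/(a))\leq \lambda(\overline{(a)}/(a))$ and one bounds the latter by $\e_0(\C)-\lambda(R/\C)+\lambda(R/\C)=\cdots$; this chain of inequalities is where I expect the argument to be delicate.

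The main obstacle, I expect, is precisely the comparison $\lambda(\C^{**}/\C)\leq \lambda(\C/(c))$ without a reduction-number hypothesis: $\C^{**}$ can be much larger than $\C$ in principle, and controlling it requires knowing $\C^{**}$ lies inside the integral closure of $(c)$ (available for domains with finite integral closure by Theorem~\ref{corecan}(4), but not in general) together with a bound $\lambda(\overline{(c)}/(c)) \leq \lambda(\C/(c))$, which is false for arbitrary ideals and must use the canonical structure in an essential way. I would therefore first prove the conjecture under the extra hypotheses that $R$ is a domain with finite integral closure (where $\C^{**}\subseteq\overline{\C}=\overline{(c)}$ and self-duality of $\Ext^1(-,\C)$ can be leveraged), and treat the general case — and higher dimensions via the associativity/localization formula in Theorem~\ref{genddeg1} — as the remaining open part, which is consistent with the statement being flagged as a conjecture rather than a theorem.
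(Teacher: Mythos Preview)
The statement is a \emph{conjecture}, not a theorem: the paper offers no proof, only supporting evidence later (e.g.\ the monomial case $\RR=k[t^a,t^b,t^c]$, where $\cdeg(\RR)=a_2b_2c_2\ge a_1b_2c_2=\ddeg(\RR)$). You recognize this yourself in the final paragraph, so your proposal is really a sketch of possible attacks rather than a claimed proof, and that is the honest status of the problem.

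Two remarks on the content. First, there is a slip in your third paragraph: you write ``$\lambda(\C'/R)=\lambda(R/\tr(\C'))=\ddeg(R)=\tdeg(R)$'', but $\lambda(\C'/R)=\lambda(\C/(c))=\cdeg(\RR)$, not $\ddeg(\RR)$. What Proposition~\ref{resisddeg1} gives is $\lambda(\C'^{**}/\C')=\lambda(R/\tr(\C'))$, i.e.\ $\ddeg=\tdeg$ in dimension one. Your conclusion that the conjecture is equivalent to $\cdeg(\RR)\ge\tdeg(\RR)$ is nonetheless correct, since $\ddeg=\tdeg$ in dimension one; just be careful which length is which. Second, the route via Theorem~\ref{corecan}(4) (biduals are integral in domains with finite integral closure) does give $\C^{**}\subseteq\overline{(c)}$, but the step $\lambda(\overline{(c)}/(c))\le\lambda(\C/(c))$ you flag as ``delicate'' is in fact the whole difficulty: there is no known general mechanism forcing the integral closure of $(c)$ to be no larger than $\C$ over $(c)$, and nothing in the paper supplies one. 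So your partial-hypothesis plan is reasonable as a research program, but it does not close the gap, and the paper does not either.
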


\noindent This would imply, by the associativity formula, that
the inequality holds in all dimensions.

\medskip

\subsubsection*{Computation of duals  and biduals} Let $I$ be a regular ideal of the Noetherian ring $\RR$. If $ Q$ is the total ring of fractions of $\RR$ then
\[ \Hom(I, \RR) = \RR:_Q I.\]
A difficulty is that computer systems such as Macaulay2  (\cite{Macaulay2})
are set to calculate quotients of the form $A:_\RR B$ for two ideals
 $A,B\subset \RR$, which is done with calculations of syzygies. This applies especially  in the case of the ring $\RR = k[x_1, \ldots, x_d]/P$ where $k$ is an appropriate field.
  To benefit of the efficient  {\em quotient} command of this system we formulate the problem as follows.

\begin{Proposition} \label{bidual}
Let  $I \subset \RR$ and suppose  $a$ is a regular element of $I$. Then
    
\begin{enumerate}[{\rm (1)}]
\item $I^{*} = \Hom(I,\RR) = a^{-1} ((a) :_\RR I)$.

\smallskip

\item
     $I^{**} = \Hom(\Hom(I, \RR), \RR) =
          (a):_\RR ((a):_\RR I)$ = annihilator of $\Ext^1_{\RR}(\RR/I, \RR)$.
          
\smallskip

\item $\tau(I) = a^{-1} I \cdot (a):_\RR I$.     
\end{enumerate}
\end{Proposition}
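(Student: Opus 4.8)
The plan is to derive all three formulas from one elementary principle: a homomorphism out of an $\RR$-module that contains a regular element is, inside the total ring of fractions $Q$, nothing but multiplication by a fixed fraction. For (1), given $\varphi\in\Hom(I,\RR)$ and any $x\in I$, the identities $a\varphi(x)=\varphi(ax)=x\varphi(a)$ show that $\varphi$ agrees with multiplication by the fraction $q_\varphi:=\varphi(a)/a\in Q$; conversely any $q\in Q$ with $qI\subseteq\RR$ defines such a $\varphi$. This identifies $I^{*}$ with the fractional ideal $\RR:_Q I=\{q\in Q:qI\subseteq\RR\}$. To rewrite this using only honest ideal quotients (the point being the efficient \emph{quotient} command), note that for $q\in Q$ one has $q\in\RR:_Q I$ if and only if $qa\in\RR$ and $qa\cdot I=a(qI)\subseteq(a)$, i.e.\ if and only if $qa\in(a):_\RR I$; since $a$ is regular, $q\mapsto qa$ is injective, so $\RR:_Q I=a^{-1}\big((a):_\RR I\big)$, which is (1).

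For (2), the key remark is that because $I\subseteq\RR$ we have $1\in I^{*}$, so $I^{*}$ is again a regular (fractional) ideal and the same principle gives $I^{**}=\Hom(I^{*},\RR)=\RR:_Q I^{*}$. Writing $J:=(a):_\RR I$, so that $I^{*}=a^{-1}J$ and $a\in J$, we get $I^{**}=\RR:_Q(a^{-1}J)=a\,(\RR:_Q J)$, and applying (1) to the ideal $J$ (which contains the regular element $a$) yields $\RR:_Q J=a^{-1}\big((a):_\RR J\big)$. Hence $I^{**}=(a):_\RR\big((a):_\RR I\big)$, an honest ideal of $\RR$ that contains $I$ because $I\cdot J\subseteq(a)$, so the natural biduality map $I\to I^{**}$ becomes this inclusion.

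The $\Ext$ description follows by dualizing $0\rar I\lar\RR\lar\RR/I\rar 0$ into $\RR$: since $I$ contains a regular element $\Hom(\RR/I,\RR)=0$, and $\Ext^1_\RR(\RR,\RR)=0$, so the long exact sequence collapses to $0\rar\RR\lar I^{*}\lar\Ext^1_\RR(\RR/I,\RR)\rar 0$. Multiplying the first two terms by $a$ identifies this with $0\rar(a)\lar J\lar\Ext^1_\RR(\RR/I,\RR)\rar 0$, whence $\ann\Ext^1_\RR(\RR/I,\RR)=\{r\in\RR:rJ\subseteq(a)\}=(a):_\RR J=I^{**}$. Finally, for (3), the trace ideal $\tau(I)=\sum_{f\in I^{*}}f(I)$ equals the product $I\cdot I^{*}$ computed in $Q$, since by the principle above each $f$ acts as multiplication by the fraction $q_f$; substituting $I^{*}=a^{-1}\big((a):_\RR I\big)$ gives $\tau(I)=a^{-1}\big(I\cdot((a):_\RR I)\big)$, which lies in $\RR$ because $I\cdot\big((a):_\RR I\big)\subseteq(a)$.

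I do not expect a genuine obstacle: the only care required is bookkeeping. Specifically, one must check that $I^{*}$ itself contains a regular element so that the ``multiplication by a fraction'' principle can legitimately be applied a second time to compute the bidual, and one must track the factor $a^{-1}$ through the two iterations so that $I^{**}$ emerges as a sub-ideal of $\RR$ (equal to the iterated colon) rather than merely as a fractional ideal isomorphic to it.
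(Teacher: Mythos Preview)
Your proof is correct and follows essentially the same route as the paper for the colon-ideal formulas in (1) and (2): identify $\Hom(I,\RR)$ with $\RR:_Q I$ via multiplication by fractions, then rewrite as $a^{-1}\big((a):_\RR I\big)$ and iterate. The paper's own proof is terser but identical in substance; it omits the annihilator description in (2) and the trace formula (3), deferring to the references, whereas you supply complete arguments for both. Your derivation of $\ann\Ext^1_\RR(\RR/I,\RR)$ via the dualized short exact sequence, and of $\tau(I)=I\cdot I^{*}$ from the fraction description of $I^{*}$, are the natural ones and correct.
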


\begin{proof}
(1) If $q \in \Hom(I, \RR) = \RR:_Q I$, then
\[ qI \subset \RR \simeq qaI \subset (a) \simeq qa \in (a):_\RR I \simeq q \in a^{-1} ((
a):_\RR I) 
\] (3) Follows from the calculation
\[q \in I^{**} = [a^{-1} ((a):_\RR I)]^{*}= a[\RR:_Q ((a):_\RR I)] = a[a^{-1} (
(a):_\RR ((a):_\RR I)] = (a):_\RR ((a):_\RR I). \]
See also \cite[Remark 3.3]{Vas91}, \cite[Proposition 3.1]{Herzog16}.
\end{proof}

\subsubsection*{Computation of $\Hom(A, B)=A:_QB$.} Let $A$ and $B$ be ideals of $\RR$. A question is 
how to trick the ordinary quotient command to do these computations. There are important cases needed, such as $\Hom(A,A)$. If $B = (b_1, \ldots, b_n)$, $b_i$ regular element of $\RR$, then 
\[A:_Q B = \bigcap_{i=1}^n  A:_Q b_i=
\bigcap_{i=1}^n  Ab_i^{-1},\]
  so setting $b = \prod_{i=1}^n b_i$ and $\widehat{b}_i = b b_i^{-1}$, we have
  \[  A:_QB = b^{-1} \bigcap_{i=1}^n A \widehat{b_i}.\]
A case of interest is when $I = \m$: $\AA = \Hom(\m,\m)$. If $\RR$ is a Cohen-Macaulay local ring of dimension one that is
not a DVR, then $\AA = \RR:\m$:  $\m\cdot \Hom(\m,\RR) \subset \m$ and therefore
\[ \AA = \m:_Q \m = \RR:_{Q} \m= 1/x \cdot ((x):_{\RR} \m),\]
where $x$ is a regular element of $\m$.

\begin{Example}\label{C1C2}
{\rm 
Here is an example from \cite{blue1}. Let $L=(X^2-YZ, Y^2-XZ, Z^2-XY)$ and $\RR=A/L$. Let $x, y, z$ be the images of $X, Y, Z$ in $\RR$. Then $\C=(x, z)$ is a canonical ideal of 
\[ \C^3 = ( x^3, x^{2}z, xz^{2}, z^{3} ) = (x^3, x^{2}z, xz^{2} ) = x \C^2,\]
which proves that $\red(\C)=2$. Note that
 \[ \e_0(\C) = \lambda ( \RR/x \RR)  = \lambda(A/(x+L)) = \lambda( A/ (x, y^{2}, yz, z^{2})) = 3. \]
}\end{Example}

\subsubsection*{Recognition of canonical ideals} These methods permit answering the following question. 
  Given an ideal $\C$,  is it a canonical ideal? These  observations are influenced by the discussion in \cite[Section 2]{Elias}. For other methods, see \cite{GMP11}.

\begin{Proposition}  \label{recog}
Let $(\RR,\m)$ be a one-dimensional Cohen-Macaulay local ring and let $Q$ be its total ring of fractions. Then an 
$\m$-primary ideal $I$ is a canonical ideal if $I$ is  an irreducible ideal  and $\Hom(I,I) = \RR$.
 \end{Proposition}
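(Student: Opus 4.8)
The plan is to use the criterion from Theorem~\ref{corecan}(2): an $\m$-primary ideal $I$ is a canonical ideal precisely when $I:_\RR\m = I:_Q\m = (I,s)$ for some $s\in\RR$. So the goal reduces to showing that the hypotheses --- $I$ irreducible (equivalently $I:\m/I$ has length one, i.e. $r(\RR/I)=1$) and $\Hom(I,I)=\RR$ --- force both the equality $I:_\RR\m = I:_Q\m$ and the one-generator-over-$I$ condition.

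First I would unpack $\Hom(I,I)=\RR$. Working inside $Q$, $\Hom(I,I)=I:_Q I\supseteq\RR$, and the hypothesis says this containment is equality. The idea is to play this against the chain $I\subset I:_\RR\m$: if $s\in I:_\RR\m$ then $s\m\subseteq I$, and I want to leverage irreducibility to pin down the structure of $I:_\RR\m$ modulo $I$. Since $I$ is irreducible, $\lambda((I:_\RR\m)/I)=1$, so $I:_\RR\m = I + \RR s$ for a single element $s$; this already produces the ``$(I,s)$'' shape. The real content is the identification $I:_\RR\m = I:_Q\m$, i.e. that multiplying $\m$ into $I$ forces an element of $Q$ to actually lie in $\RR$. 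Here is where $\Hom(I,I)=\RR$ should enter: a candidate $q\in I:_Q\m$ with $q\notin\RR$ would, after suitable manipulation (picking a regular element $a\in I$ and using Proposition~\ref{bidual}-type computations to express things via colon ideals), give an endomorphism of $I$ or of $\m$-related module that is not a homothety, contradicting closedness. I expect the cleanest route is to observe $\RR\subsetneq\AA:=\m:\m=\RR:\m$ when $\RR$ is not a DVR, and to show that if the recognition failed then $I$ would admit $\AA$-module structure making $\Hom(I,I)$ strictly larger than $\RR$.

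The key steps, in order: (i) reduce to infinite residue field and fix a regular element $a\in I$; (ii) from irreducibility, write $I:_\RR\m = I + \RR s$; (iii) show $\Hom(I,I)=\RR$ implies $I$ is ``unmixed enough'' that $I:_Q\m$ cannot be strictly larger than $I:_\RR\m$ --- concretely, if $q\in(I:_Q\m)\setminus\RR$ then $q\in\RR:_Q\m=\AA$ and one checks $qI\subseteq I$ (using $qI\subseteq q\m\cdot(\text{stuff})$ together with $I$ being $\m$-primary and $q\AA$-type estimates), whence $q\in\Hom(I,I)=\RR$, a contradiction; (iv) conclude $I:_\RR\m = I:_Q\m = (I,s)$ and invoke Theorem~\ref{corecan}(2).

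The main obstacle is step (iii): establishing $qI\subseteq I$ for $q\in(I:_Q\m)\setminus\RR$. It is not automatic that an element carrying $\m$ into $I$ also carries $I$ into $I$; this is exactly the kind of statement that can fail without the closedness hypothesis, so the argument must genuinely consume $\Hom(I,I)=\RR$. I anticipate needing the fact (analogous to Proposition~\ref{resddeg3} and \cite[Lemma 3.6]{CHV}) that elements of $\RR:_Q\m$ multiply the irreducible $\m$-primary ideal $I$ back into itself whenever no proper intermediate reflexive/closed structure exists, and spelling this out --- perhaps by a length count comparing $\lambda(\RR/I)$, $\lambda((I:_\RR\m)/I)=1$, and the action of $q$ --- will be the technical heart of the proof. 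If that direct attack is awkward, the fallback is to dualize into the canonical module and identify $I$ with $\Hom(\Hom(I,\omega),\omega)$, showing reflexivity with respect to $\omega$ is equivalent to the stated hypotheses.
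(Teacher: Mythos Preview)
Your overall strategy is exactly the paper's: verify the hypothesis of Theorem~\ref{corecan}(2) by showing $I:_Q\m = I:_\RR\m$ via the closedness assumption, and get the $(I,s)$ shape from irreducibility. But you have misidentified the ``main obstacle.'' Step (iii) is in fact a one-liner, and the paper's entire proof is two sentences.

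The point you are missing is that $I$ is $\m$-primary, hence $I\subseteq\m$. So if $q\in I:_Q\m$, then
\[
qI \subseteq q\m \subseteq I,
\]
which gives $q\in\Hom(I,I)=\RR$ immediately. No length counts, no $\AA$-module structure, no reflexivity fallback is needed; the containment $I\subseteq\m$ does all the work. Your sentence ``It is not automatic that an element carrying $\m$ into $I$ also carries $I$ into $I$'' is exactly backwards: it \emph{is} automatic once you remember $I\subseteq\m$. With this in hand, $I:_Q\m\subseteq\RR$, so $I:_Q\m=I:_\RR\m$, and irreducibility gives $I:_\RR\m=(I,s)$; Theorem~\ref{corecan}(2) finishes.
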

 
\begin{proof} Note first that
 if $q \in I:_Q \m$ then $q I \subset I$, that 
is $q\in \Hom(I, I)$. Now we invoke Theorem~\ref{corecan}(2).
  \end{proof}  
  
 \begin{Corollary} \label{recogcor} Let $(\RR, \m)$ be a Cohen-Macaulay local ring of dimension one.
 \begin{enumerate}[{\rm (1)}]
  
\item The $\m$-primary ideal $I$ is a canonical ideal if and only if both $I$ and $xI$ are irreducible for any regular element $x\in \m$. 
 $($This is
{\cite[Proposition  2.2]{Elias}}.$)$

\smallskip

\item  $\m$ is a canonical ideal if and  only if $\RR$ is a discrete valuation ring.
 \end{enumerate}
\end{Corollary}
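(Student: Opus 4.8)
For part (1), my plan is to apply Proposition \ref{recog} together with Corollary \ref{recogcor}(1) — wait, rather, to derive (1) directly from Theorem \ref{corecan}(2) and Proposition \ref{recog}. Here is the approach. For the forward direction: if $I$ is a canonical ideal, then $I$ is irreducible because a canonical module of a $1$-dimensional Cohen-Macaulay local ring has type $1$, so $\lambda((I:_\RR\m)/I) = \dim_k \operatorname{soc}(\RR/I) = 1$, forcing $\RR/I$ (equivalently $I$) to be irreducible. Moreover, for any regular $x \in \m$, the ideal $xI$ is isomorphic to $I$ as an $\RR$-module, hence is again a canonical ideal, hence again irreducible by the same argument. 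For the converse: suppose $I$ and $xI$ are both irreducible for every regular $x \in \m$. By Proposition \ref{recog} it suffices to show $\Hom(I,I) = \RR$; alternatively, one shows directly that $I:_\RR\m = I:_Q\m = (I,s)$ for some $s$ and invokes Theorem \ref{corecan}(2). The key point is that irreducibility of $I$ gives $\lambda((I:_\RR\m)/I) = 1$, so $I:_\RR\m = (I,s)$ for a single element $s$; the work is to promote $I:_\RR\m$ to $I:_Q\m$, i.e. to show no ``extra'' fractions appear, and this is exactly where the hypothesis on $xI$ enters — if $q \in I:_Q\m$ then $q = u/x$ with $u \in \RR$ regular and $ux^{-1}\m I \subset I$, i.e. $u\m I \subset xI$, so $u \in (xI):_\RR(\m I)$, and one checks using irreducibility of $xI$ that this forces $u \in (xI, \text{something in } \RR)$, pinning $q$ into $(I, s)$. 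I will cite \cite[Proposition 2.2]{Elias} for the precise bookkeeping rather than reproduce it.

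For part (2), the plan is to specialize (1) to the ideal $I = \m$. If $\RR$ is a discrete valuation ring, then $\RR$ is Gorenstein, so $\RR$ itself is a canonical ideal, and $\m = (t)$ is principal, hence $\m \cong \RR$ is also a canonical ideal — this direction is immediate. For the converse, suppose $\m$ is a canonical ideal. Then $r(\RR) = 1$, so $\RR$ is Gorenstein; but a $1$-dimensional Gorenstein local ring whose maximal ideal is a canonical ideal must have $\m$ reflexive (being a canonical ideal of a Gorenstein ring, it is isomorphic to $\RR$, hence free, hence reflexive), and by Theorem \ref{corecan}(3) reflexivity of the canonical module already gives Gorenstein — so the real content is that $\m \cong \RR$ forces $\m$ to be principal, hence $\RR$ a DVR. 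Concretely: $\m \cong \RR$ means $\m$ is a free $\RR$-module of rank $1$, i.e. $\m = (t)$ for a regular element $t$, and a Noetherian local domain-like ring with principal maximal ideal of positive height is a DVR (here one uses that $\RR$ is Cohen-Macaulay of dimension one, so $\m$ contains a regular element and $\bigcap_n \m^n = 0$, and $\m = (t)$ then yields that every element is a unit times a power of $t$). Alternatively, and more cheaply, if $\m$ is a canonical ideal and $\RR$ is not a DVR then by Corollary \ref{recogcor}(1) both $\m$ and $x\m$ are irreducible for regular $x$; but $x\m \subseteq x\RR \cap \m$ and one can produce two elements whose classes span a $2$-dimensional socle in $\RR/x\m$ when $\m$ is not principal, contradicting irreducibility.

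The main obstacle I anticipate is the converse of part (1): carefully tracking the passage from $I:_\RR\m$ to $I:_Q\m$ using the irreducibility of the ideals $xI$. This is a genuinely module-theoretic manipulation with fractions, and getting the regular elements and colon ideals lined up correctly is fiddly; everything else (the forward directions, and all of part (2)) reduces quickly to ``type one $\iff$ irreducible'' plus Theorem \ref{corecan}. Since the statement explicitly attributes this to \cite[Proposition 2.2]{Elias}, I would present the proof at the level of ``this follows from Theorem \ref{corecan}(2) and the characterization of irreducibility via socle dimension, as in \cite{Elias}'' and not belabor the fractional bookkeeping, reserving the detailed argument only if a referee asks.
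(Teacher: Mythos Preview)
Your treatment of part (1) matches the paper's approach: both directions go through Theorem~\ref{corecan}(2) and the socle characterization of irreducibility, with the fractional bookkeeping for the converse deferred to \cite{Elias}. That is fine.

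Part (2), however, has a genuine gap. Your main argument opens with ``suppose $\m$ is a canonical ideal. Then $r(\RR)=1$'', but this is exactly what you must prove: if $\m$ is the canonical module then $r(\RR)=\nu(\m)$, and $\nu(\m)=1$ is equivalent to $\RR$ being a DVR. Everything that follows (Gorenstein, $\m\cong\RR$, reflexivity) is built on this circularity.

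Your fallback argument also fails. You propose to show that $x\m$ is \emph{not} irreducible when $\m$ is not principal, contradicting part (1). But in fact $x\m$ \emph{is} irreducible whenever $\m$ is a canonical ideal: since $\m$ is the canonical module, $\m/x\m$ is the canonical module of the Artinian ring $\RR/(x)$, hence has one-dimensional socle; and any socle element of $\RR/x\m$ lies in $\m/x\m$ (a unit $z$ with $z\m\subset x\m$ would force $\m\subset x\m$, hence $\m=0$ by Nakayama). So $\mathrm{soc}(\RR/x\m)\cong\mathrm{soc}(\m/x\m)\cong k$, and no contradiction arises.

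The paper's proof of (2) takes a different route entirely. It uses only that a canonical ideal is closed, i.e.\ $\Hom(\m,\m)=\RR$. Pick a regular $x\in\m$ and the least $n$ with $\m^n\subset x\RR$; if $n>1$, set $L=x^{-1}\m^{n-1}\subset Q$. Then $L\m=x^{-1}\m^n\subset\RR$, so either $L\m=\RR$ (making $\m$ invertible, hence principal) or $L\m\subset\m$, forcing $L\subset\Hom(\m,\m)=\RR$ and thus $\m^{n-1}\subset x\RR$, a contradiction. This is short and uses no irreducibility at all; you should replace your argument for (2) with this one.
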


\begin{proof}
If $q\in I:_Q \m$, $qx\subset I \subset x\RR$, so $q\in \RR$. Thus $q\in I:_{\RR} \m$. Since 
$I$ is irreducible
$I:_{\RR} \m = (I,s)$ and we can invoke 
again
Theorem~\ref{corecan}(2). Apply the previous assertion to $xI$.  The converse is well-known.

\medskip 

\noindent Let $x$ be a regular element in $\m$. Since $\dim \RR = 1$ there is a positive integer $n$ such that $\m^n \subset x\RR$ but 
$\m^{n-1} \not \subset x\RR$. If $n = 1$, $\m = x\RR$ there is nothing else to prove. If $n> 1$, we have that
$L=\m^{n-1} x^{-1} $ satisfies $L \cdot \m $ is an ideal of $\RR$ so that either $L \cdot \m = \RR$ and hence $\m$ is invertible, or
$L \cdot \m \subset \m$ that means $L \subset \Hom(\m, \m) = \RR$. Thus $\m^{n-1} x^{-1} \subset \RR$ so
$\m^{n-1} \subset x\RR$, which is a contradiction.  
\end{proof}

\section{Canonical index}
\noindent
Throughout the section, let $(\RR, \m)$ be a Cohen-Macaulay local ring of dimension $d \geq 1$ with infinite residue field and suppose that a canonical ideal $\C$ exists.
We begin by showing  that the reduction number of a canonical ideal of $\RR$ is an invariant of  the ring.

\begin{Proposition}\label{Ca1}
Let $\C$ and $\mathcal{D}$ be canonical ideals of $\RR$. Then $\red(\C)=\red(\mathcal{D})$.
\end{Proposition}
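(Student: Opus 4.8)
The plan is to reduce everything to the fact, already used in the proof of Proposition~\ref{cdeg}, that any two canonical ideals of $\RR$ differ by a homothety. Since $\C$ and $\mathcal{D}$ are both isomorphic to $\omega_{\RR}$, hence to one another, any $\RR$-module isomorphism $\varphi\colon \C \to \mathcal{D}$ must be multiplication by an element $q$ of $Q$ that is invertible in $Q$: if $c \in \C$ is a regular element then $\varphi(c)$ is again regular (as $\varphi$ is injective), and the $\RR$-linearity identity $c\,\varphi(x) = x\,\varphi(c)$ for $x \in \C$ forces $\varphi$ to be multiplication by $q = \varphi(c)/c$. Thus $\mathcal{D} = q\,\C$ and, symmetrically, $\C = q^{-1}\mathcal{D}$; this is what is invoked via \cite[Theorem~3.3.4]{BH}.

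Next I would transport reductions across this homothety. For each $m \geq 0$, multiplication by $q^{m}$ is an inclusion-preserving bijection from the $\RR$-submodules of $\C^{m}$ onto those of $\mathcal{D}^{m}$, with inverse multiplication by $q^{-m}$. In particular, given $J \subseteq \C$ and $n \geq 0$, multiplying the identity $\C^{n+1} = J\,\C^{n}$ through by $q^{n+1}$ and using $q^{n+1}\C^{n+1} = \mathcal{D}^{n+1}$ together with $q^{n+1}J\,\C^{n} = (qJ)\,\mathcal{D}^{n}$ yields $\mathcal{D}^{n+1} = (qJ)\,\mathcal{D}^{n}$; multiplying back by $q^{-(n+1)}$ recovers the original identity, so in fact
\[ \C^{n+1} = J\,\C^{n} \quad \Longleftrightarrow \quad \mathcal{D}^{n+1} = (qJ)\,\mathcal{D}^{n} . \]
Hence $J$ is a reduction of $\C$ exactly when $qJ$ is a reduction of $\mathcal{D}$, and in that case $\red_{J}(\C) = \red_{qJ}(\mathcal{D})$; moreover, since $J \mapsto qJ$ respects inclusions, it restricts to a bijection between the minimal reductions of $\C$ and those of $\mathcal{D}$.

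Combining the two previous statements, the set $\{\,\red_{J}(\C) : J \text{ a minimal reduction of } \C\,\}$ coincides with $\{\,\red_{J'}(\mathcal{D}) : J' \text{ a minimal reduction of } \mathcal{D}\,\}$, whence $\red(\C) = \red(\mathcal{D})$. (When $d = 1$ one may additionally quote \cite[Theorem~1.2]{Trung87} to know that each of these sets is a singleton, but that independence is not needed for the equality.) I do not anticipate a real obstacle here: the only point to pin down is that the abstract isomorphism $\C \simeq \mathcal{D}$ is realized by an invertible element of $Q$, and once that is granted the comparison of reduction numbers is a purely formal scaling argument.
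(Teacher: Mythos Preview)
Your argument is correct and follows essentially the same approach as the paper: both proofs use that $\mathcal{D}=q\,\C$ for some unit $q$ of $Q$ and then transport a minimal reduction $J$ of $\C$ to the reduction $qJ$ of $\mathcal{D}$ by multiplying the equation $\C^{r+1}=J\,\C^{r}$ through by $q^{r+1}$. You are simply more explicit about why the isomorphism is a homothety and about the bijection $J\mapsto qJ$ on minimal reductions, whereas the paper records only the inequality $\red(\mathcal{D})\leq\red(\C)$ and then appeals to symmetry.
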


\begin{proof}
 Let $K$ be the total ring of quotients of $\RR$. Then there exists $q \in K$ such that $\mathcal{D}=q\C$. Let $r = \red(\C)$ and $J$ a minimal reduction of $\C$ with $\C^{r+1}=J \C^{r}$. Then
\[ \mathcal{D}^{r+1}= (q \C)^{r+1}= q^{r+1} (J \C^{r}) = (qJ)(q \C)^{r}= qJ \mathcal{D}^{r}\] so that $\red(\mathcal{D}) \leq \red(\C)$. Similarly, $\red(\C) \leq \red(\mathcal{D})$.
\end{proof}

\begin{Definition}{\rm
Let $(\RR, \m)$ be a Cohen-Macaulay local ring of dimension $d \geq 1$ with a canonical ideal $\C$. The {\em canonical index} of $\RR$ is the reduction number of the canonical ideal $\C$ of $\RR$ and is denoted by $\rho(\RR)$.
}\end{Definition}

\begin{Remark}\label{R}{\rm Suppose that $\RR$ is not Gorenstein.
The following are known facts.
\begin{enumerate}[(1)]
\item If the canonical ideal of $\RR$ is equimultiple, then $\rho(\RR) \neq 1$.

\smallskip

\item If $\dim \RR=1$ and $\e_{0}(\m) =3$, then $\rho(\RR)=2$.

\smallskip

\item If $\dim \RR=1$ and $\cdeg(\RR)=r(\RR) -1$, then $\rho(\RR)=2$.
  \end{enumerate}
}\end{Remark}

\begin{proof}
(1) Suppose that $\rho(\RR)=1$. Let $\C$ be a canonical ideal of $\RR$ with $\C^2 = a\C$. Then  $\C a^{-1} \subset \Hom(\C , \C) = \RR$ so that $\C = (a)$. This is a contradiction.

\medskip

\noindent (2) It follows from the fact that, if   $(\RR, \m)$ is a $1$--dimensional Cohen-Macaulay ring and $I$ an $\m$--primary ideal, then $\red(I) \leq \e_{0}(\m)-1$.

\medskip

\noindent (3) It follows from Proposition~\ref{1dimalmostg} and \cite[Theorem 3.16]{GMP11}.
\end{proof}

\subsubsection*{Sally module}   We examine briefly the Sally module associated to
the canonical ideal $\C$ in rings of dimension $1$. Let $Q=(a)$
be a minimal reduction of  $\C$ and consider the exact sequence of finitely generated $\RR[Q\TT]$-modules
\[ 0 \rar \C \RR[Q\TT] \lar \C \RR[\C\TT] \lar S_Q(\C) \rar 0. \]
Then the Sally module  $S=S_Q(\C) = \bigoplus_{n\geq 1} \C^{n+1}/\C Q^{n}$ of $\C$ relative to $Q$ is Cohen-Macaulay and, by
 \cite[Theorem 2.1]{red}, we have
 \[\e_1(\C) = \cdeg(\RR) + \sum_{j=1}^{\rho(\RR)-1} \lambda(\C^{j+1}/a\C^j) = \sum_{j=0}^{\rho(\RR) -1} \lambda(\C^{j+1}/a\C^j). \]

\begin{Remark} {\rm Let $\RR$ be a $1$-dimensional Cohen-Macaulay local ring with a canonical ideal $\C$. Then the multiplicity of the Sally module $s_0(S)=\e_1(\C)-\e_0(\C)+\lambda(\RR/\C)=\e_1(\C)-\cdeg(\RR)$ is an invariant of the ring $\RR$, by \cite[Corollary 2.8]{GMP11} and Proposition \ref{cdeg}.}
\end{Remark}

The following property of Cohen-Macaulay rings of type $2$ is a useful calculation that we will use to characterize rings with minimal canonical index.

\begin{Proposition}\label{C2}
Let $\RR$ be a $1$-dimensional Cohen-Macaulay local ring with a canonical ideal $\C$.  Let $(a)$ be a minimal reduction of $\C$. If $\nu(\C)=2$, then $\lambda(\C^{2}/a \C)=\lambda(\C/(a))$.
\end{Proposition}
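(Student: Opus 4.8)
The plan is to set $\C = (a, b)$ with $(a)$ a minimal reduction, so that $\C^2 = aC$ by Proposition~\ref{Ca1} and Remark~\ref{R}(1) (the canonical index of a non-Gorenstein ring with equimultiple canonical ideal is $2$, and the case $\nu(\C)=2$ forces equimultiplicity once we fix a minimal reduction with infinite residue field). Then $\C^2 = (a^2, ab, b^2)$ and $a\C = (a^2, ab)$, so $\C^2/a\C$ is generated by the single image of $b^2$; hence $\lambda(\C^2/a\C) = \lambda(\RR/J)$ where $J = (a\C :_\RR b^2) = \{r \in \RR : rb^2 \in (a^2,ab)\}$. On the other side, $\C/(a)$ is generated by the image of $b$, so $\lambda(\C/(a)) = \lambda(\RR/K)$ where $K = ((a):_\RR b) = \{r : rb \in (a)\}$. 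The goal is then to prove $\lambda(\RR/J) = \lambda(\RR/K)$, and the natural route is to exhibit an $\RR$-module isomorphism $\C/(a) \simeq \C^2/a\C$ — indeed multiplication by $b$ sends $\C = (a,b)$ onto $b\C = (ab, b^2)$, and modulo $a\C = (a^2, ab)$ this is $(b^2) + a\C$, i.e. multiplication by $b$ induces a surjection $\C/(a) \twoheadrightarrow \C^2/a\C$.

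The heart of the argument is to show this surjection is an isomorphism, equivalently $K = J$, equivalently: if $rb^2 \in (a^2, ab)$ then already $rb \in (a)$. The containment $K \subseteq J$ is immediate ($rb \in (a) \Rightarrow rb^2 \in a\C \subseteq (a^2,ab)$ is not quite it — rather $rb \in (a)$ gives $rb = as$ so $rb^2 = asb \in a\C = (a^2,ab)$, using $\C^2 = a\C$ so $sb^2 \in$ ... cleaner: $rb^2 = (rb)b = asb \in (a)(a,b) = (a^2,ab)$). For the reverse, suppose $rb^2 = a^2 x + aby = a(ax + by)$. Since $a$ is regular, $rb \cdot b = a(ax+by)$. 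I would like to conclude $rb \in (a)$. Here is where the canonical/irreducibility hypothesis must enter: the key fact should be that in a one-dimensional Cohen–Macaulay ring with canonical ideal $\C$ of type $2$, the colon ideals stabilize, or more precisely that $(a) :_\RR b^2 = (a):_\RR b$. This is exactly a statement about the associated graded / Ratliff–Rush type behavior of $\C$ relative to $(a)$: since $\C^2 = a\C$, one has $\C^{n+1} = a\C^n$ for all $n \geq 1$, and I expect $((a):b^2) = ((a^2) : ab) $-type reductions to collapse because $b^2 \in a\C$ already constrains $b^2 = a c$ for some $c \in \C$, whence $rb^2 \in a\C$ reads $rac \in a\C$, i.e. $rc \in \C$, and one iterates.

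More promisingly, I would work module-theoretically via the canonical module structure directly: the module $M = \C/(a)$ is the canonical module of the Artinian ring $\RR/(a)$ (a standard fact, since $(a)$ is generated by a regular element and $\C$ is canonical for $\RR$), so $\lambda(M) = \lambda(\RR/(a)) = \e_0(\C)$, and likewise $\C^2/a\C = \C^2/(a)\cap\C^2 \hookrightarrow \C/(a)$... actually the cleanest: $0 \to (a)/a\C \to \C/a\C \to \C/(a) \to 0$ and $0 \to \C^2/a\C \to \C/a\C \to \C/\C^2 \to 0$, with $\C^2 = a\C$ making the first term of the second sequence the whole "new part." Since $(a) \cong \RR$ and $a\C \cong \C$ (multiplication by $a$, which is injective), $(a)/a\C \cong \RR/\C$, so $\lambda(\C/a\C) = \lambda(\RR/\C) + \lambda(\C/(a))$; and $\lambda(\C/a\C) = \lambda(\C^2/a\C) + \lambda(\C/\C^2)$. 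So it suffices to prove $\lambda(\C/\C^2) = \lambda(\RR/\C)$. Now $\lambda(\C/\C^2)$: from $0 \to \C/\C^2 \to \RR/\C^2 \to \RR/\C \to 0$ we get $\lambda(\C/\C^2) = \lambda(\RR/\C^2) - \lambda(\RR/\C)$, so the claim becomes $\lambda(\RR/\C^2) = 2\lambda(\RR/\C)$, i.e. $\e_1(\C) = 0$ in the degree-2 reduction-number-1-past-reduction sense — this follows from the Sally-module formula displayed just before the Proposition: $\e_1(\C) = \cdeg(\RR) + \sum_{j=1}^{\rho(\RR)-1}\lambda(\C^{j+1}/a\C^j)$, with $\rho(\RR) = 2$, giving $\e_1(\C) = \cdeg(\RR) + \lambda(\C^2/a\C)$, and combining with the Hilbert-function identity $\lambda(\RR/\C^2) = 2\e_0(\C) - \e_1(\C)$ one can back out exactly $\lambda(\C^2/a\C) = \lambda(\C/(a))$ after substituting $\cdeg(\RR) = \e_0(\C) - \lambda(\RR/\C)$ and $\lambda(\C/(a)) = \e_0(\C) - \lambda(\RR/\C)$.

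The main obstacle, therefore, is organizing these length bookkeeping identities so that the hypothesis $\nu(\C) = 2$ is used exactly where needed — namely to guarantee $\rho(\RR) \leq 2$ (via Remark~\ref{R}(1), $\rho(\RR) \neq 1$, and $\nu(\C)=2$ with $\C^2 = a\C$ forcing equality) so that the Sally-module sum has a single term. I expect the slick proof to be: $\lambda(\C^2/a\C) = \e_1(\C) - \cdeg(\RR) = \e_1(\C) - \e_0(\C) + \lambda(\RR/\C)$ and separately $\lambda(\C/(a)) = \e_0(\C) - \lambda(\RR/\C)$, so the desired equality is $\e_1(\C) = 2\e_0(\C) - 2\lambda(\RR/\C) $... which needs one more input pinning down $\e_1$. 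If that input is not free, the fallback is the direct isomorphism $\C/(a) \xrightarrow{\ \cdot b\ } \C^2/a\C$ together with a length count forcing injectivity (both sides have length $\e_0(\C) - \lambda(\RR/\C)$ by the canonical-module-of-$\RR/(a)$ computation for the source, and by $\C^2 = a\C \Rightarrow \C^2/a\C$ generated by one element with the right annihilator for the target), which is the cleanest and I would lead with it, deriving the target's length from the exact sequences above.
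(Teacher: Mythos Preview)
Your proposal has a genuine gap: you repeatedly assume $\rho(\RR) \le 2$ (at one point even $\C^2 = a\C$, i.e.\ $\rho(\RR)=1$, which Remark~\ref{R}(1) explicitly rules out). The hypothesis $\nu(\C)=2$ does \emph{not} force $\rho(\RR)=2$; the example just before Theorem~\ref{SallyofC} gives rings with $\nu(\C)=2$ and $\rho(\RR)=e-1$ arbitrarily large. Consequently your Sally-module bookkeeping collapses: the formula $\e_1(\C) = \cdeg(\RR) + \lambda(\C^2/a\C)$ you want is not available a priori, and in fact Proposition~\ref{C2} is used in the paper to \emph{prove} the characterization of $\rho(\RR)=2$ in Theorem~\ref{SallyofC}, so invoking that characterization here would be circular. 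Likewise your ``fallback'' length computation for the target $\C^2/a\C$ again uses $\C^2 = a\C$ and so does not stand.

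You did correctly isolate the two-line reduction: from $(a)/a\C \cong \RR/\C$ and the two filtrations of $\C/a\C$, the statement is equivalent to $\lambda(\C/\C^2)=\lambda(\RR/\C)$. What you are missing is the right tool to prove this, and it is not a reduction-number fact but the \emph{closedness} of the canonical ideal: $\C:_Q\C = \RR$. The paper uses this as follows. With $\C=(a,b)$, the syzygy module $Z=\{(r,s): ra+sb=0\}$ contains the Koszul syzygies $B=\{(-bx,ax)\}$, and closedness forces $Z\cap \C\RR^2 \subseteq B$ (if $ra+sb=0$ with $r,s\in\C$ then $s/a$ carries $\C$ into $\C$, hence lies in $\RR$). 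Tensoring the presentation $0\to Z\to \RR^2\to \C\to 0$ by $\RR/\C$ then yields $0\to H_1(\C)\to (\RR/\C)^2\to \C/\C^2\to 0$ with $H_1(\C)=Z/B\cong ((a):b)/(a)$ of length $\lambda(\RR/\C)$, giving $\lambda(\C/\C^2)=\lambda(\RR/\C)$. Equivalently, your multiplication-by-$b$ map $\C/(a)\to \C^2/a\C$ \emph{is} an isomorphism, and injectivity is exactly the inclusion $Z\cap\C\RR^2\subseteq B$: if $bc=ad$ with $c,d\in\C$ then $(-d,c)\in Z\cap\C\RR^2$, hence $c\in(a)$. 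So your first instinct was right; you abandoned it just before the one input (closedness of $\C$) that completes it.
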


\begin{proof}
Let $\C=(a, b)$ and consider the exact sequence
\[ 0 \rar Z \rar \RR^{2} \rar \C \rar 0,\]
where $Z=\{ (r,s) \in \RR^{2} \mid ra+sb =0 \}$.  By tensoring this exact sequence with $\RR/ \C$, we obtain
\[ Z/\C Z  \stackrel{g}{\rar} (\RR/\C)^{2} \stackrel{h}{\rar} \C/\C^{2} \rar 0.\]
Then we have
\[ \ker(h)= \mbox{Im}(g) \simeq (Z/ \C Z)/ ( (Z \cap \C \RR^{2})/ \C Z ) \simeq Z/(Z \cap \C \RR^{2}) \simeq  (Z/B)/ ( (Z \cap \C \RR^{2})/B ),\]
where $B= \{ (-bx, ax) \mid x \in \RR \}$.

\medskip

\noindent We claim that $Z \cap \C \RR^{2} \subset B$, i.e., $\delta(\C)= (Z \cap \C \RR^{2})/B =0$. Let $(r, s) \in Z \cap \C \RR^{2}$. Then
\[ ra + sb =0 \;\; \Rightarrow \;\;  \frac{s}{a} \cdot b = -r \in \C \; \mbox{and} \; \frac{s}{a} \cdot a = s \in \C.\]
Denote the total ring of fractions of $\RR$ by $K$. Since $\C$ is a canonical ideal, we have
\[ \frac{s}{a} \in \C :_{K} \C =\RR.\]
Therefore
\[ (r, s) = \left( -b \cdot \frac{s}{a}, \; a \cdot \frac{s}{a} \right) \in B.\]
Hence $\ker(h)  \simeq Z/B =\H_{1}(\C)$ and we obtain the following exact sequence
\[ 0 \rar \H_{1}(\C) \rar (\RR/ \C)^{2} \rar \C/ \C^{2} \rar 0.\]
Next we claim that $\lambda(\H_{1}(\C))= \lambda(\RR/ \C)$.  Note that $\H_{1}(\C) \simeq ((a):b)/(a)$ by  mapping $(r,s)+B$ with $ra+sb=0$ to $s+(a)$. Using the exact sequence
\[ 0 \rar ((a):b)/(a) \rar \RR/(a) \stackrel{\cdot b}{\rar}  \RR/(a) \rar  \RR/\C \rar 0,\]
 we get
 \[ \lambda(\RR/\C) =  \lambda( ((a):b)/(a) ) = \lambda(\H_{1}(\C)).\]
Now, using the exact sequence
\[ 0 \rar \H_{1}(\C) \rar (\RR/\C)^{2} \rar \C/\C^{2} \rar 0, \]
we get
\[ \lambda(\C/\C^{2}) = 2 \lambda(\RR/\C) - \lambda(\H_{1}(\C)) = \lambda(\RR/\C).\]
Hence,
\[ \lambda(\C^{2}/a \C) = \lambda(\C/a \C) - \lambda(\C/ \C^{2}) = \lambda(\C/a \C) - \lambda	(\RR/ \C) = \lambda(\C/a \C) - \lambda	((a)/a \C) = \lambda(\C/(a)).\]
Therefore
\[ (r, s) = \left( -b \cdot \frac{s}{a}, \; a \cdot \frac{s}{a} \right) \in B.\]
Hence $\ker(h)  \simeq Z/B =\H_{1}(\C)$ and we obtain the following exact sequence
\[ 0 \rar \H_{1}(\C) \rar (\RR/ \C)^{2} \rar \C/ \C^{2} \rar 0.\]
Next we claim that $\lambda(\H_{1}(\C))= \lambda	(\RR/ \C)$.  Note that $\H_{1}(\C) \simeq ((a):b)/(a)$ by  mapping $(r,s)+B$ with $ra+sb=0$ to $s+(a)$. Using the exact sequence
\[ 0 \rar ((a):b)/(a) \rar \RR/(a) \stackrel{\cdot b}{\rar}  \RR/(a) \rar  \RR/\C \rar 0,\]
 we get
 \[ \lambda(\RR/\C) =  \lambda	( ((a):b)/(a) ) = \lambda(\H_{1}(\C)).\]
Now, using the exact sequence
\[ 0 \rar \H_{1}(\C) \rar (\RR/\C)^{2} \rar \C/\C^{2} \rar 0, \]
we get
\[ \lambda(\C/\C^{2}) = 2 \lambda(\RR/\C) - \lambda(\H_{1}(\C)) = \lambda(\RR/\C).\]
Hence,
\[ \lambda(\C^{2}/a \C) = \lambda(\C/a \C) - \lambda(\C/ \C^{2}) = \lambda(\C/a \C) - \lambda(\RR/ \C) = \lambda(\C/a \C) - \lambda((a)/a \C) = \lambda(\C/(a)). \qedhere \]
\end{proof}

\begin{Example}{\rm Let $H=\left<a,b,c \right>$ be a numerical semigroup which is minimally generated by positive integers $a,b,c$ with gcd$(a,b,c)=1$. If the semigroup ring $\RR=k[[t^a,t^b,t^c]]$ is not a Gorenstein ring, then $r(\RR)=2$ (see \cite[Section 4]{GMP11}).}
\end{Example}

\begin{Example}\label{Ex1linkage}{\rm Let $A=k[X, Y, Z]$, let
$I=(X^2-YZ, Y^2-XZ, Z^2-XY)$ and $\RR=A/I$. Let $x, y, z$ be the images of $X, Y, Z$ in $\RR$. By \cite[Theorem 10.6.5]{AusBr}, we see that $\C=(x, z)$ is a canonical ideal of $\RR$ with a minimal reduction $(x)$. It is easy to see that $\rho(\RR)=2$, $\e_1(\C)=2$ and $\cdeg(\RR)=1$.}
\end{Example}

\begin{Example}{\rm Let $A=k[X,Y,Z]$, let $I=(X^4-Y^{2}Z^{2}, Y^{4}-X^{2}Z^{2}, Z^{4}-X^{2}Y^{2})$ and $\RR=A/I$. Let $x, y, z$ be the images of $X, Y, Z$ in $\RR$. Then $\C=(x^{2}, z^{2})$ is a canonical ideal of $\RR$ with a minimal reduction $(x^{2})$. We have that $\rho(\RR)=2$, $\e_1(\C)=16$ and $\cdeg(\RR)=8$.}
\end{Example}

\subsubsection*{Lower and upper bounds for the canonical index}

\begin{Example} {\rm Let $e \ge 4$ be an integer and let $H = \left<e, \{e+i\}_{3 \le i \le e-1}, 3e+1, 3e+2\right>$.
Let $k$ be a field and $V = k[[t]]$  the formal power series ring over $k$. Consider the semigroup ring $\RR = k[[H]] \subseteq V$.
\begin{enumerate}[(1)]
\item The conductor of $H$ is  $c= 2e+3$.

\smallskip

\item The canonical module is $K_{\RR} = \left<1, t\right>$ and $K_{\RR}^{e-2} \subsetneq K_{\RR}^{e-1} = V$.

\smallskip

\item The  canonical ideal of $\RR$ is $\C=(t^c K_{\RR})$ and $Q = (t^c)$ is a minimal reduction of $\C$. Moreover, $\rho(\RR)= \red(\C)= e-1$.

\smallskip

\item The canonical degree is $\cdeg(\RR)= \lambda(\C/Q) = \lambda(K_{\RR}/\RR) = 3$.

\smallskip

\item In particular, $\cdeg(\RR) \leq \rho(\RR)$.
\end{enumerate}
 }\end{Example}

\begin{Theorem}\label{SallyofC} Let $ (\RR, \m)$ be a $1$-dimensional Cohen-Macaulay local ring with a canonical ideal $\C$. Suppose that the type of $\RR$ is $2$.
Then we have the following.
\begin{enumerate}[{\rm (1)}]
\item ${\ds \e_{1}(\C) \leq \rho(\RR) \, \cdeg(\RR)}$.

\smallskip

\item $\rho(\RR)=2$ if and only if ${\ds \e_{1}(\C) = 2 \, \cdeg(\RR)}$.
\end{enumerate}
\end{Theorem}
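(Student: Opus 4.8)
The plan is to exploit the formula for $\e_1(\C)$ coming from the Sally module, which is stated just before Proposition~\ref{C2}: with $Q=(a)$ a minimal reduction of $\C$,
\[
\e_1(\C) = \sum_{j=0}^{\rho(\RR)-1} \lambda(\C^{j+1}/a\C^j).
\]
The key structural input is that $r(\RR)=2$ forces $\nu(\C)=2$, so Proposition~\ref{C2} applies and gives $\lambda(\C^2/a\C) = \lambda(\C/(a)) = \cdeg(\RR)$. Thus the $j=0$ and $j=1$ terms of the sum are each equal to $\cdeg(\RR)$.

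For part (1) I would show that every term $\lambda(\C^{j+1}/a\C^j)$ for $1 \le j \le \rho(\RR)-1$ is bounded above by $\cdeg(\RR)$. The natural surjections $\C^j/a\C^{j-1} \twoheadrightarrow \C^{j+1}/a\C^j$ (induced by multiplication by a suitable generator, since $\C$ is generated by $a$ together with one more element $b$, so $\C^{j+1}$ is spanned over $\C^j Q$-structure by multiples of $b^j$) give a descending chain of lengths
\[
\lambda(\C/(a)) = \cdeg(\RR) \ \ge\ \lambda(\C^2/a\C)\ \ge\ \cdots\ \ge\ \lambda(\C^{\rho}/a\C^{\rho-1}),
\]
and summing the $\rho(\RR)$ terms, each at most $\cdeg(\RR)$, yields $\e_1(\C) \le \rho(\RR)\,\cdeg(\RR)$. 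One must be slightly careful: the surjection $\C^j/a\C^{j-1}\twoheadrightarrow\C^{j+1}/a\C^j$ needs the fact that $\nu(\C)=2$ so that $\C^{j+1} = a\C^j + b\cdot\C^j$ and in fact $\C^{j+1}=a\C^j + (b^j)\C$ after using $\C^{\rho+1}=a\C^\rho$; the monotonicity of these lengths for equimultiple ideals is standard but should be justified cleanly here.

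For part (2): if $\rho(\RR)=2$ the sum has exactly two terms, both equal to $\cdeg(\RR)$ by Proposition~\ref{C2}, giving $\e_1(\C) = 2\,\cdeg(\RR)$ immediately. Conversely, suppose $\e_1(\C) = 2\,\cdeg(\RR)$. Since the $j=0$ term already contributes $\cdeg(\RR)$ and the $j=1$ term also contributes $\cdeg(\RR)$ (again Proposition~\ref{C2}, using $\rho(\RR)\ge 2$ which holds because $\RR$ is not Gorenstein, by Remark~\ref{R}(1)), the remaining terms $\sum_{j=2}^{\rho(\RR)-1}\lambda(\C^{j+1}/a\C^j)$ must vanish. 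But $\lambda(\C^{j+1}/a\C^j)=0$ means $\C^{j+1}=a\C^j$, i.e. the reduction number is at most $j$; the first vanishing term already forces $\rho(\RR)\le 2$, hence $\rho(\RR)=2$. I expect the main obstacle to be the careful bookkeeping in part (1) — establishing the monotone decreasing behavior of $\lambda(\C^{j+1}/a\C^j)$ and ensuring the $j=1$ term genuinely equals (not merely is bounded by) $\cdeg(\RR)$, which is exactly what Proposition~\ref{C2} is designed to supply; the rest is formal manipulation of the Sally-module length formula.
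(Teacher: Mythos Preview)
Your proof is correct and follows the same overall approach as the paper: the Sally-module length formula $\e_1(\C)=\sum_{j=0}^{\rho(\RR)-1}\lambda(\C^{j+1}/a\C^j)$, Proposition~\ref{C2} for part (2), and bounding each term by $\cdeg(\RR)$ for part (1). The paper packages part (1) slightly differently---it observes that each $\C^{j+1}/a\C^j$ is cyclic (generated by the image of $b^{j+1}$) and annihilated by $L=\ann(\C/(a))$, hence of length at most $\lambda(\RR/L)=\cdeg(\RR)$---but this is equivalent to your multiplication-by-$b$ surjections, whose composite is precisely the surjection $\RR/L\twoheadrightarrow \C^{j+1}/a\C^j$, $1\mapsto b^{j+1}$.
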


\begin{proof} Let $\C = (a, b)$, where $(a)$ is a minimal reduction of $\C$.

\medskip

\noindent (1) For each $j=0, \ldots, \rho(\RR) -1$,  the module $\C^{j+1}/a\C^{j}$ is cyclic and annihilated by $L=\ann(\C/(a))$.  Hence we obtain
\[\e_1(\C) = \sum_{j=0}^{\rho(\RR) -1} \lambda(\C^{j+1}/a\C^j)  \leq  \rho(\RR) \, \lambda(\RR/L)= \rho(\RR)  \, \cdeg(\RR).\]
(2) Note that $\rho(\RR)=2$ if and only if ${\ds \e_{1}(\C) = \sum_{j=0}^{1} \lambda(\C^{j+1}/a\C^j) }$.
Since $\nu(\C)= r(\RR)=2$, by Proposition~\ref{C2}, $\lambda(\C/(a) ) = \lambda(\C^{2}/a \C)$.  Thus, the assertion follows from
\[  \sum_{j=0}^{1} \lambda(\C^{j+1}/a\C^j) = 2 \, \lambda(C/(a)) = 2 \, \cdeg(\RR). \qedhere \]
\end{proof}

 \begin{Proposition}
 Let $(\RR, \m)$ be a Cohen-Macaulay local ring with infinite residue field and a canonical ideal $\C$. Suppose that $\RR_\p$ is a Gorenstein ring for $\forall \p \in \Spec (\RR) \setminus \{\m\}$ and that $\C$ is equimultiple.
\begin{enumerate}[{\rm (1)}]
\item $\C^n$ has finite local cohomology for all $n > 0$.

\smallskip

\item Let $\Bbb I (\C^n)$ denote the Buchsbaum invariant of $\C^n$. Then the nonnegative integer
$ \beta(\RR) =  \sup_{n > 0}\Bbb I (\C^n) $ is independent of the choice of $\C$.

\smallskip

\item $\rho(\RR) \leq \deg(\RR) + \beta(\RR) -1$.
\end{enumerate}
 \end{Proposition}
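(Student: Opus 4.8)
The plan is to deduce (1) by localization, (2) from the stabilization of the powers $\C^n$ up to isomorphism, and (3) by induction on $d=\dim\RR$, with base case the one‑dimensional reduction bound of Remark~\ref{R}(2).

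\emph{Proof of (1).} Fix $n>0$. For a prime $\p\ne\m$ the ring $\RR_\p$ is Gorenstein, so its canonical module is free of rank one; as $\C_\p$ is a canonical module of $\RR_\p$ it is isomorphic to $\RR_\p$, hence a principal ideal generated by a nonzerodivisor, and therefore $\C_\p^n=(\C_\p)^n$ is again a principal ideal generated by a nonzerodivisor, i.e. a free $\RR_\p$-module. On the other hand $\C^n$ is an ideal of the Cohen--Macaulay ring $\RR$ containing a nonzerodivisor, hence a faithful, equidimensional $\RR$-module of dimension $d$ with $H^0_\m(\C^n)=0$. A finitely generated module that is equidimensional of dimension $d$ and, at every non‑maximal prime of its support, Cohen--Macaulay of the expected dimension, has finite local cohomology; thus $H^i_\m(\C^n)$ has finite length for all $i<d$.

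\emph{Proof of (2).} Any two canonical ideals are related by $\mathcal D=q\C$ for a unit $q$ of the total ring of fractions (uniqueness of the canonical module), so $\mathcal D^n=q^n\C^n\cong\C^n$ as $\RR$-modules; since the Buchsbaum invariant depends only on the isomorphism class, $\Bbb I(\mathcal D^n)=\Bbb I(\C^n)$ for all $n$, which proves independence of $\C$. For finiteness, let $(a)$ be a minimal reduction of $\C$ and $\rho=\rho(\RR)=\red(\C)$, so $\C^{\rho+1}=a\C^{\rho}$. As $a$ is a nonzerodivisor, multiplication by $a$ is an isomorphism $\C^{\rho}\xrightarrow{\sim}a\C^{\rho}=\C^{\rho+1}$, and inductively $\C^n\cong\C^{\rho}$ for every $n\ge\rho$. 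Hence $\Bbb I(\C^n)=\Bbb I(\C^{\rho})$ for $n\ge\rho$, so $\beta(\RR)=\max\{\Bbb I(\C^n):1\le n\le\rho\}$ is a finite maximum of integers that are finite by (1).

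\emph{Proof of (3): setup.} Induct on $d$. If $d=1$ then $H^0_\m(\C^n)=0$, so $\beta(\RR)=0$, and the assertion is the bound $\red(\C)\le\e_0(\m)-1=\deg(\RR)-1$ for the $\m$-primary ideal $\C$ (Remark~\ref{R}(2)). Let $d\ge2$. Since $\C$ is equimultiple of height one, $\RR/(a)$ is Cohen--Macaulay of dimension $d-1$ and $\bigcup_n\Ass(\RR/\C^n)\setminus\{\m\}$ is a finite set of height‑one primes. Choose $x\in\m$ generic: $\RR$-regular, regular on $\RR/\C^n$ away from $\m$ for every $n$, superficial for $\m$, and with image a parameter on $\RR/(a)$. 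Then $\RR''=\RR/(x)$ is Cohen--Macaulay of dimension $d-1$, still Gorenstein on its punctured spectrum; $\C''=\C\RR''\cong\C/x\C\cong\omega_\RR/x\omega_\RR\cong\omega_{\RR''}$ is a canonical ideal of $\RR''$, equimultiple with minimal reduction $(\bar a)$; so the inductive hypothesis gives $\rho(\RR'')\le\deg(\RR'')+\beta(\RR'')-1$. Superficiality of $x$ for $\m$ gives $\deg(\RR'')=\deg(\RR)$, and the exact sequences $0\to\C^n\xrightarrow{x}\C^n\to\C^n/x\C^n\to0$ together with the finite‑length surjections $\C^n/x\C^n\twoheadrightarrow(\C'')^n$, run through the long exact local‑cohomology sequence and the identity $\lambda(N/xN)=\lambda(0:_Nx)$ for finite‑length $N$, yield $\Bbb I_{\RR''}\big((\C'')^n\big)\le\Bbb I_{\RR}(\C^n)$, hence $\beta(\RR'')\le\beta(\RR)$.

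\emph{Proof of (3): the crux, and the main obstacle.} It remains to compare $\rho(\RR)$ with $\rho(\RR'')$; here $\rho(\RR'')\le\rho(\RR)$ is immediate. For $n\ge\rho(\RR'')$ the relation $(\C'')^{n+1}=\bar a(\C'')^n$ lifts to $\C^{n+1}=a\C^n+x\,(\C^{n+1}:x)$, and $(\C^{n+1}:x)/\C^{n+1}\cong H^0_\m(\RR/\C^{n+1})\cong H^1_\m(\C^{n+1})$ (using $d\ge2$), a module of length at most $\beta(\RR)$ by (1)--(2). The key claim I would establish is that a Nakayama argument on the modules $\C^{n+1}/a\C^n$ converts this into $\rho(\RR)-\rho(\RR'')\le\beta(\RR)-\beta(\RR'')$; combined with the inductive estimate and $\deg(\RR'')=\deg(\RR)$ this gives $\rho(\RR)\le\deg(\RR)+\beta(\RR)-1$. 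This last comparison is the main obstacle: one must control \emph{uniformly in $n$} how the equality $\C^{n+1}=a\C^n$ degenerates modulo a generic element, and show that the accumulated defect is bounded by $\beta(\RR)-\beta(\RR'')$ rather than something weaker — this is exactly where the finite‑local‑cohomology hypothesis, and the precise constant $\deg(\RR)+\beta(\RR)-1$, enter. The remaining ingredients (the local‑cohomology criterion for finite local cohomology, the behaviour of the Buchsbaum invariant under a generic hyperplane section, and the one‑dimensional bound) are standard.
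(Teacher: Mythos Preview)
Your arguments for (1) and (2) are fine and match the paper's in substance, just with more detail.

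For (3), however, you have not given a proof: you reduce to the ``key claim'' $\rho(\RR)-\rho(\RR'')\le\beta(\RR)-\beta(\RR'')$ and then label it ``the main obstacle'' without establishing it. This is a genuine gap, and I do not see how to close it along the lines you suggest. From $(\C'')^{n+1}=\bar a(\C'')^n$ you correctly extract $\C^{n+1}=a\C^n+x(\C^{n+1}:x)$ with $(\C^{n+1}:x)/\C^{n+1}$ of length $\le\beta(\RR)$, but that finite-length correction does not live inside $\m\C^{n+1}$, so Nakayama on $\C^{n+1}/a\C^n$ does not apply; and even if one could push this through, it is unclear why the defect should be exactly $\beta(\RR)-\beta(\RR'')$ rather than something like $\beta(\RR)$ accumulated at each of possibly many steps. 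The inductive scheme is not the right mechanism here.

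The paper's proof of (3) is direct, with no induction or hyperplane section. Choose a minimal reduction $\q$ of $\m$. Since $\C^n$ is a rank-one module with finite local cohomology, the Buchsbaum-invariant inequality gives, for every $n>0$,
\[
\nu(\C^n)=\lambda(\C^n/\m\C^n)\le\lambda(\C^n/\q\C^n)\le \e_0(\q,\C^n)+\Bbb I(\C^n)\le \deg(\RR)+\beta(\RR).
\]
Thus $\nu(\C^n)$ is bounded above by $N:=\deg(\RR)+\beta(\RR)$ uniformly in $n$. In particular $\nu(\C^{N})\le N<\binom{N+1}{1}$, so by Eakin--Sathaye there is a principal reduction $(a)$ of $\C$ with $\C^{N}=a\C^{N-1}$; hence $\rho(\RR)=\red(\C)\le N-1=\deg(\RR)+\beta(\RR)-1$. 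This is the same Eakin--Sathaye step already used in Proposition~\ref{root1}; the only new ingredient is the uniform bound on $\nu(\C^n)$ coming from the Buchsbaum invariant, and that is precisely what the hypotheses in (1)--(2) are designed to provide.
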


 \begin{proof}
 (1) The assertion follows from $\C^n\RR_\p = a^n\RR_\p$ where $Q = (a)$ is a reduction of $\C$.

 \medskip

 \noindent (2)  We have $\C^n \cong \mathcal{D}^n$ for any canonical ideal $\mathcal{D}$ and $\C^{n+1} = a \C^n$ for $\forall n \gg 0$.

 \medskip

 \noindent (3) Let $\q$ be a minimal reduction of $\m$. Then, for $\forall n > 0$, we have
\[ \nu(\C^n) = \lambda(\C^n/\m \C^n) \leq \lambda(\C^n/\q \C^n) \leq \e_0(\q,\C^n) + \Bbb I (\C^n) \leq  \deg(\RR) + \beta(\RR). \] 
The conclusion follows from \cite[Theorem 1]{ES}.
\end{proof}

\begin{Example}{\rm  Consider the following examples of $1$-dimensional Cohen-Macaulay semigroup rings $\RR$ with a canonical ideal $\C$ such that $\cdeg(\RR) =r(\RR)$.
\begin{enumerate}[(1)]
\item Let $a\geq 4$ be an integer and let $H= \left<a, a+3, \ldots, 2a-1, 2a+1, 2a+2 \right>$.  Let $\RR=k[[H]]$. Then the canonical module of $\RR$ is
$ \omega = \left< 1, t, t^3, t^4, \ldots, t^{a-1} \right>$.
The ideal $Q=(t^{a+3})$  is a minimal reduction of $\C= (t^{a+3}, t^{a+4}, t^{a+6}, t^{a+7}, \ldots, t^{2a+2} )$,
\[ \cdeg(\RR)=a-1=\nu(\C), \quad \mbox{and} \quad \red (\C)=2.\]

\smallskip

\item Let $a\geq 5$ be an integer and let $H= \left<a, a+1, a+4, \ldots, 2a-1, 2a+2, 2a+3 \right>$. Let $\RR=k[[H]]$. Then the canonical module of $\RR$ is
 $\omega = \left< 1, t, t^4, t^5, \ldots, t^{a-1} \right>$. The ideal $Q=(t^{a+4})$ is a minimal reduction of 
$\C= (t^{a+4}, t^{a+5}, t^{a+8}, t^{a+9}, \ldots, t^{2a+3})$.
\[ \cdeg(\RR)=a-2=\nu(\C), \quad \mbox{and} \quad \red(\C)=3.\]
\end{enumerate}
}\end{Example}

\section{Roots of canonical ideals}
\noindent
 Another phenomenon concerning canonical ideals  is the
 following.
 
 \medskip

\begin{Definition}{\rm
Let $(\RR,\m)$ be a Cohen-Macaulay local ring of dimension $d \geq 1$ with a canonical ideal $\C$.
An ideal $L$ is called a {\em root} of $\C$ if $L^{n} \simeq \C$ for some $n$.
In this case, we write $\tau_{L}(\C) = \min \{ n \mid L^{n} \simeq \C \}$.
Then the {\em rootset} of $\RR$ is the set  $\mathrm{root}(\RR) = \{ \tau_{L}(\C) \mid L \; \mbox{is a root of} \; \C \} $.
} \end{Definition}

The terminology  `roots' of $\C$ already appears in \cite{BV1}.
  Here is  a simple example.

 \begin{Example}{\rm (\cite[Example 3.4]{BV1})} \label{ex1root}
 {\rm Let $\RR=k[[t^4, t^5, t^6, t^7]]$. Then $\C= (t^4, t^5, t^6)$ is a canonical ideal of $\RR$.
 Let $I = (t^4, t^5)$. Then  $I^2 = t^4\C$, that is, $I$ is a square root of $\C$.
 }\end{Example}

It was studied in \cite{blue1} for  its role in examining properties
of canonical ideals. Here
is one instance:

\begin{Proposition}
Let $\RR$ be a 
Cohen-Macaulay local ring of dimension one. Let $L$ be a root of $\C$.
If $L$ is an irreducible 
 ideal 
 then $\RR$ is a Gorenstein ring.
\end{Proposition}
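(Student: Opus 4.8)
The plan is to show first that the irreducible root $L$ is itself a canonical ideal, and then that a canonical ideal isomorphic to a proper power of itself must be principal, so that $\RR$ is Gorenstein. Write $Q$ for the total ring of fractions of $\RR$ and put $n=\tau_{L}(\C)$, so $L^{n}\simeq\C$; I treat the relevant case $n\ge 2$ (when $n=1$ one only knows $L\simeq\C$, which says nothing beyond $\RR$ admitting a canonical ideal). Since $\C$ is a canonical ideal it is faithful, hence so is $L^{n}$; a faithful ideal contains a regular element, so $L^{n}$, and therefore $L$, contains a regular element, and as $\dim\RR=1$ this makes $L$ an $\m$-primary ideal. Throughout I use, for a regular ideal $I\subseteq\RR$, the identification of $\Hom_{\RR}(I,I)$ with the subring $(I:_{Q}I)\subseteq Q$, together with the standard equality $\Hom_{\RR}(\C,\C)=\RR$ for the canonical module (already used in the proof of Proposition~\ref{resisddeg1}).

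Step one: $\Hom_{\RR}(L,L)=\RR$. If $q\in(L:_{Q}L)$, so $qL\subseteq L$, then $qL^{n}=(qL)L^{n-1}\subseteq L^{n}$, whence $(L:_{Q}L)\subseteq(L^{n}:_{Q}L^{n})$. Writing $L^{n}=v\C$ for a regular $v\in Q$ (two isomorphic rank-one fractional ideals differ by a unit of $Q$), we get $(L^{n}:_{Q}L^{n})=(v\C:_{Q}v\C)=(\C:_{Q}\C)=\RR$. Hence $\RR\subseteq\Hom_{\RR}(L,L)\subseteq\RR$, i.e. $\Hom_{\RR}(L,L)=\RR$. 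Now $L$ is an $\m$-primary irreducible ideal with $\Hom_{\RR}(L,L)=\RR$, so Proposition~\ref{recog} shows $L$ is a canonical ideal; in particular $L\simeq\C$, and combined with $L^{n}\simeq\C$ this gives $\C^{n}\simeq\C$.

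Step two: $\C^{n}\simeq\C\Rightarrow\RR$ Gorenstein. Fix an isomorphism $\C\simeq\C^{n}$; after tensoring with $Q$ it becomes multiplication by a regular element $v\in Q$, so $\C^{n}=v\C$, and since $\C^{n}\subseteq\C$ this forces $v\in(\C:_{Q}\C)=\RR$. Put $K=v^{-1}\C^{n-1}\subseteq Q$; then $K\C=v^{-1}\C^{n}=\C\subseteq\C$, so $K\subseteq\Hom_{\RR}(\C,\C)=\RR$ is an integral ideal with $K\C=\C$. As $\C$ is finitely generated and faithful, the Cayley--Hamilton (determinant) trick forces $K=\RR$, i.e. $\C^{n-1}=(v)$. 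Since $n\ge 2$ we may write $(v)=\C^{n-1}=\C\cdot\C^{n-2}$ (with $\C^{0}=\RR$ when $n=2$), so $(v^{-1}\C^{n-2})\cdot\C=\RR$ and therefore $v^{-1}\C^{n-2}\subseteq\Hom_{\RR}(\C,\RR)=\C^{*}$. Consequently the trace ideal satisfies $\tau(\C)=\C^{*}\C\supseteq(v^{-1}\C^{n-2})\C=\RR$, so $\tau(\C)=\RR$. Since $\C^{*}\C=\RR$, the fractional ideal $\C$ is invertible, hence free of rank one over the local ring $\RR$; thus $\C\simeq\RR$ and $\RR$ is Gorenstein. (Equivalently, an invertible ideal is reflexive, so Theorem~\ref{corecan}(3) applies; or $\tdeg(\RR)=\lambda(\RR/\tau(\C))=0$ forces $\RR$ Gorenstein.)

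I do not expect a genuinely hard step here: the argument is essentially bookkeeping once Proposition~\ref{recog} is in hand. The two points that need care are (i) keeping every $\Hom$ honest as a subring of $Q$, so that the inclusion $\Hom_{\RR}(L,L)\subseteq\Hom_{\RR}(L^{n},L^{n})$ and the identification with $\Hom_{\RR}(\C,\C)=\RR$ are legitimate, and (ii) the degenerate case $n=1$, where the conclusion genuinely requires reading "root" as a proper root. A secondary check is that $L$ really is $\m$-primary so that Proposition~\ref{recog} applies; this uses only that $L^{n}\simeq\C$ is faithful.
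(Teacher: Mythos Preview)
Your Step~1 coincides with the paper's argument: both show $\Hom(L,L)=\RR$ via $(L:_{Q}L)\subseteq(L^{n}:_{Q}L^{n})=(\C:_{Q}\C)=\RR$ and then invoke Proposition~\ref{recog} to conclude $L\simeq\C$, hence $\C^{n}\simeq\C$. Your observation about the degenerate case $n=1$ is apt; the paper's proof is silent on it but has exactly the same lacuna, so you are being more careful rather than less.

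Your Step~2, however, takes a genuinely different and shorter route. The paper iterates the isomorphism $\C\simeq\C^{n}$ to obtain $\C\simeq\C^{m}$ for arbitrarily large $m$ (so that $\C^{m}:\C^{m}=\RR$ for all $m$), then passes to an infinite residue field, picks a minimal reduction $(c)$ of $\C$, and uses $\C^{r+1}=c\C^{r}$ to force $\C^{r}=(c^{r})$. You instead observe directly that $\C^{n}=v\C$ with $v\in\RR$, set $K=v^{-1}\C^{n-1}$, and apply the determinant trick to $K\C=\C$ to get $\C^{n-1}=(v)$; invertibility of $\C$ then follows in one line. Your approach avoids both the residue-field extension and the reduction-number machinery, and is closer in spirit to the argument of Proposition~\ref{root0}. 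Both are correct; yours is more self-contained, while the paper's version has the mild advantage of exhibiting the connection with the broader ``all powers closed'' phenomenon treated there.
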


\begin{proof} Note that if $q\in Q$ satisfies $q L \subset L$ then $qL^n \subset L^n$. This implies that $q\C \subset \C$ and so
$q\in \RR$. By Proposition~\ref{recog}, $L \simeq \C$.   We now make use of a technique of \cite{blue1}. From $\C \simeq \C^n $ we have 
\[\C \simeq \C^{n-1} \cdot \C \simeq \C^{n-1} \cdot \C^n = \C^{2n-1}.\]
By iteration we get
that $\C \simeq \C^m$ for arbitrarily large values of $m$, and for all of them we have $\C^m:\C^m = \RR$. 

\medskip

\noindent We may assume that the residue
field of $\RR$ is infinite
and obtain a minimal reduction $(c)$ for $\C$, that is an equality $\C^{r+1} = c\C^r$ for all $r\geq s$ for some $s$. This gives
$(\C^r)^2 = c^r \C^r$, and therefore $(\C^r c^{-r}) \C^r = \C^r$. Since
$\C^r:\C^r = \RR$ the equality gives that $\C^r \subset (c^r) \subset \C^r$  and therefore $\C^r = (c)^r$. Thus $\C$ is invertible, as desired.
\end{proof}

\begin{Question}{\rm 
Suppose there is an ideal $L$ such that $L^2 = \C$. 
Sometimes they imply that $\cdeg(\RR)$ is even; can it be the same with $\ddeg(\RR)$?
}\end{Question}

\begin{Question}{\rm 
If $L^n \simeq \C$ and $L$ is reflexive must  $\RR$ be Gorenstein?
}\end{Question}

\begin{Exercise}{\rm Let $(\RR, \m)$ be a Cohen-Macaulay local ring of dimension $d\geq 2$. Let $\C$ be a Cohen-Macaulay ideal of 
codimension $1$. A goal is to derive
  a criterion for $\C$ to be a canonical ideal. Let $x$ be regular mod $\C$. If $\C/x\C$ is a canonical 
ideal of $\RR/x\RR$ then $\C$  is canonical ideal of $\RR$. We make use of two facts:
\begin{enumerate}[(1)]
\item $\C$ is a canonical module iff $\Ext_{\RR}^j (\RR/\m, \C) = \RR/\m$ for $j=d$ and $0$ otherwise (\cite[Theorem 6.1]{HK2}).

\smallskip

\item The change of rings equation asserts $\Ext_{\RR}^j(\RR/\m, \C) \simeq \Ext_{\RR/(x)}^{j-1} (\RR/\m, \C/x\C)$, $j\leq d$.
\end{enumerate}
More generally, how do we tell when a Cohen-Macaulay ideal $I$ is irreducible?
}\end{Exercise}


 The set  $\mathrm{root}(\RR)$ is clearly independent of the chosen canonical ideal.
 To make clear the character of this set we appeal to the following property. We call an ideal of $\RR$ closed if $\Hom(L, L) =\RR$.
 
    \begin{Proposition}\label{root0}
Let $(\RR,\m)$ be a $1$-dimensional Cohen-Macaulay local ring with a canonical ideal $\C$.
Let $L$ be a root of $\C$.
 If $\Hom(L^{n}, L^{n}) \simeq \RR$ for infinitely many values of $n$ then
 all powers of $L$ are closed. In this case $L$ is invertible and $\RR $ is Gorenstein.
 \end{Proposition}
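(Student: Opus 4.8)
The plan is to run the minimal-reduction argument of the two preceding proofs, but applied directly to $L$ rather than after identifying $L$ with $\C$ (which we cannot do here, since $L$ need not be irreducible), and then to read off the three conclusions. First I would pass to $\RR(x)=\RR[x]_{\m\RR[x]}$ in order to assume the residue field is infinite; this is harmless, since $L\RR(x)$ remains a root of the canonical ideal $\C\RR(x)$, the hypotheses $\Hom(L^n,L^n)\simeq\RR$ persist for the same set of $n$, and both invertibility of $L$ and the Gorenstein property descend along the faithfully flat map $\RR\to\RR(x)$. I would also record at the outset that $L$ is a regular ideal: an isomorphism $L^m\simeq\C$, with $m=\tau_L(\C)$, is given by multiplication by a regular element $q\in Q$, so for any regular $a\in\C$ the element $qa\in L^m\subseteq L$ is a regular element of $\RR$. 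Hence $L$ has analytic spread one and admits a principal minimal reduction $(c)$.

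Next, fix $s$ with $L^{r+1}=cL^r$ for all $r\geq s$. Iterating gives $(L^r)^2=c^rL^r$, that is $(c^{-r}L^r)L^r\subseteq L^r$, so $c^{-r}L^r\subseteq\Hom(L^r,L^r)=L^r:_QL^r$ for every $r\geq s$. Here the hypothesis enters: the infinite set of $n$ with $\Hom(L^n,L^n)\simeq\RR$ meets $[s,\infty)$, so we may choose such an $r_0\geq s$, and then $L^{r_0}:_QL^{r_0}=\RR$ (a module-finite birational extension of $\RR$ that is $\RR$-isomorphic to $\RR$ must coincide with $\RR$ inside $Q$). Therefore $c^{-r_0}L^{r_0}\subseteq\RR$, i.e. $L^{r_0}\subseteq(c^{r_0})$, while $(c)\subseteq L$ gives the reverse inclusion; thus $L^{r_0}=(c^{r_0})$ is principal on a regular element. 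Since $L\cdot\bigl(c^{-r_0}L^{r_0-1}\bigr)=\RR$, the ideal $L$ is invertible, hence $L\simeq\RR$ because $\RR$ is local.

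Finally, $L\simeq\RR$ yields $L^n\simeq\RR$ for every $n$, so $\Hom(L^n,L^n)\simeq\RR$ for every $n$, i.e. all powers of $L$ are closed; and $\C\simeq L^m\simeq\RR$, so the canonical ideal is principal and $\RR$ is Gorenstein. The one point needing genuine care is the passage from the $\RR$-module isomorphism $\Hom(L^{r_0},L^{r_0})\simeq\RR$ to the literal equality $L^{r_0}:_QL^{r_0}=\RR$ inside the total quotient ring, so that the containment $c^{-r_0}L^{r_0}\subseteq\RR$ is exact; this, together with the (automatic) fact that the given infinite set of exponents contains values above the reduction threshold $s$, is all that is not routine reduction-number bookkeeping of the kind already used twice above.
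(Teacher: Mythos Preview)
Your proof is correct and follows essentially the same reduction-number argument as the paper: pick a minimal reduction $(c)$ of $L$, use closedness of a suitable power to force that power to equal the corresponding power of $(c)$, and conclude $L$ is invertible.

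The one organizational difference worth noting: the paper first proves the monotonicity observation that $\Hom(L^m,L^m)=\RR$ forces $\Hom(L^n,L^n)=\RR$ for all $n<m$ (which immediately yields ``all powers are closed'' from the infinitude hypothesis), and then applies the reduction argument at the specific exponent $s=\red(L)$. You instead skip the monotonicity step, pick $r_0\geq s$ directly from the infinite set, and only recover ``all powers are closed'' at the end as a corollary of $L\simeq\RR$. Both routes are valid; yours avoids proving the monotonicity lemma, while the paper's ordering makes the ``first assertion'' of the proposition stand on its own before invertibility is established. You are also more explicit than the paper about two points it leaves implicit: the passage from the abstract isomorphism $\Hom(L^{r_0},L^{r_0})\simeq\RR$ to the literal equality $L^{r_0}:_QL^{r_0}=\RR$, and the step from $L^{r_0}=(c^{r_0})$ to the invertibility of $L$ itself.
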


\begin{proof}
 A property of roots is that they are closed ideals. More generally, it is clear that
  if $\Hom(L^m, L^m) = \RR$ then $\Hom(L^n, L^n)=\RR$ for $n<m$, which shows the first assertion.

\medskip

 We may assume that $\RR$ has an infinite residue field.
Let $s=\red(L)$. Then $L^{s+1} = xL^s$, for a minimal reduction $(x)$ and thus
$ L^{2s} = x^sL^s$, which gives that
\[ x^{-s}L^s \subset \Hom(L^s,L^s) \simeq \RR.\]
It follows that $L^s \subset (x^s) \subset L^s$, and thus $L^s = (x^s)$. Now taking  $t$ such that $L^t\simeq C$ shows that $C$ is principal.
\end{proof}

\begin{Corollary} \label{uniqueroot} 
 If $L^m \simeq \C \simeq L^n$, for $m\neq n$, then $C$ is principal.
\end{Corollary}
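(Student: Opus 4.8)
The plan is to verify that the hypotheses of Proposition~\ref{root0} are satisfied, so that $L$ is forced to be invertible. After relabelling we may assume $m<n$, the inequality $m\neq n$ being precisely what makes the argument go. Since $\C$ contains a regular element, so do $L^{m}$ and $L^{n}$, and an isomorphism of regular fractional ideals is realized by multiplication by a unit of the total ring of quotients $Q$ (exactly as in the proof of Proposition~\ref{cdeg}, where $\mathcal D\simeq\C$ is upgraded to $\mathcal D=q\C$). Thus from $L^{m}\simeq\C\simeq L^{n}$ I obtain an equality $L^{n}=v\,L^{m}$ for some unit $v\in Q$.

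Next I would bootstrap this single relation into an infinite family. Multiplying $L^{n}=vL^{m}$ by $L^{n-m}$ and using that $v$ is a non-zerodivisor gives $L^{2n-m}=vL^{n}=v^{2}L^{m}$, and by iteration $L^{\,m+j(n-m)}=v^{j}L^{m}\simeq L^{m}\simeq\C$ for every $j\ge 0$; because $n-m\ge 1$ these are infinitely many distinct exponents. Since the canonical ideal is closed, $\Hom(\C,\C)=\C:_{Q}\C=\RR$, so $\Hom\!\left(L^{\,m+j(n-m)},L^{\,m+j(n-m)}\right)\simeq\Hom(\C,\C)=\RR$ for infinitely many values of the index. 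Proposition~\ref{root0} then yields that $L$ is invertible (and $\RR$ Gorenstein); since $\C\simeq L^{m}$, the ideal $\C$ is isomorphic to a principal ideal and, being regular over a local ring, is itself principal.

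The only steps needing care are the translation of the abstract module isomorphism $L^{n}\simeq L^{m}$ into the equality $L^{n}=vL^{m}$ inside $Q$ — the standard dictionary between isomorphism classes of regular fractional ideals and $Q^{\times}/\RR^{\times}$, already invoked earlier in the paper — and the simplification $L^{n-m}\cdot(vL^{m})=v\cdot(L^{n-m}L^{m})$, which is immediate. I do not anticipate a genuine obstacle: everything else is a direct appeal to Proposition~\ref{root0}. If one prefers not to cite that proposition, one can instead repeat its endgame in place: choose an exponent $k=m+j(n-m)$ with $k\ge\red(L)$, so that $L^{k+1}=xL^{k}$ for a minimal reduction $(x)$ of $L$, whence $L^{2k}=x^{k}L^{k}$ and $x^{-k}L^{k}\subseteq\Hom(L^{k},L^{k})=\RR$ (the last equality because $L^{k}\simeq\C$); this forces $L^{k}=(x^{k})$, and then $\C\simeq L^{m}$ is principal.
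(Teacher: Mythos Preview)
Your proof is correct and follows essentially the same route as the paper: both bootstrap the single relation $L^{m}\simeq L^{n}$ into an infinite family of isomorphisms $L^{k}\simeq\C$ (the paper writes this as ``$L$ is a root of $\C$ of arbitrarily high order'') and then invoke Proposition~\ref{root0}. Your version supplies more detail---the explicit unit $v\in Q$ realizing the isomorphism and the alternative endgame bypassing Proposition~\ref{root0}---but the underlying idea is identical.
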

\begin{proof}
 Suppose $m > n$. Then
\[ \C \simeq L^m = L^n L^{m-n} \simeq L^m L^{m-n} = L^{2m -n}.\] Iterating,
 $L$ is a root of $\C$ of arbitrarily high order.
 \end{proof}

\begin{Corollary}{\rm (\cite[Proposition 3.8]{BV1})}
Let $(\RR,\m)$ be a $1$-dimensional Cohen-Macaulay local ring with a canonical ideal $\C$.
If $\RR$ is not Gorenstein then no proper power of $\C$ is a canonical ideal.
 \end{Corollary}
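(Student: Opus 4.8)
The plan is to reduce the statement to Corollary~\ref{uniqueroot}. First I would recall that a canonical ideal of a Cohen-Macaulay local ring is unique up to isomorphism (\cite[Theorem 3.3.4]{BH}); in particular, if $\C^{n}$ happens to be a canonical ideal for some $n\geq 2$, then $\C^{n}\simeq \C$.

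Next, this makes $\C$ a root of itself: taking $L=\C$ we have $L^{n}\simeq\C$, and trivially $L^{1}=\C\simeq\C$. Thus $L^{m}\simeq\C\simeq L^{n}$ with $m=1\neq n$, and Corollary~\ref{uniqueroot} forces $\C$ to be principal, i.e. $\C\simeq\RR$, so that $\RR$ is Gorenstein — contrary to hypothesis. Hence no proper power of $\C$ can be a canonical ideal.

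If one prefers to avoid quoting Corollary~\ref{uniqueroot} as a black box, the same conclusion follows by repeating the iteration in its proof: from $\C\simeq\C^{n}$ one gets $\C\simeq\C\cdot\C^{n-1}\simeq\C^{n}\cdot\C^{n-1}=\C^{2n-1}$, and iterating yields $\C\simeq\C^{m}$ for arbitrarily large $m$. Then $\C$ is a root of $\C$ of unbounded order, and Proposition~\ref{root0} applied with $L=\C$ gives that $\C$ is invertible and $\RR$ Gorenstein.

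The only point requiring any care is the uniqueness of the canonical ideal up to isomorphism, since that is precisely what allows the passage from ``$\C^{n}$ is a canonical ideal'' to ``$\C^{n}\simeq\C$''; once that is in hand, the argument is the short self-improving iteration already encapsulated in Corollary~\ref{uniqueroot} and Proposition~\ref{root0}.
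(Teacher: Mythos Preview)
Your proposal is correct and is exactly the intended argument: the paper states this result as an immediate Corollary of Corollary~\ref{uniqueroot} (with no separate proof given), and your reduction via uniqueness of the canonical ideal up to isomorphism, together with the choice $L=\C$, $m=1$, is precisely how that implication goes. The supplementary unwinding through Proposition~\ref{root0} is also in line with the paper, since Corollary~\ref{uniqueroot} itself is derived from that proposition by the same iteration you describe.
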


 \begin{Proposition}\label{root1} \index{rootset; finiteness}
 Let $(\RR,\m)$ be a Cohen-Macaulay local ring of dimension $d \geq 1$ with infinite residue field and with an equimultiple canonical ideal $\C$.
 Let $L$ be a root of $\C$. Then  $\tau_{L}(\C)  \leq \min\{ r(\RR)-1, \red(L)\}$.
 \end{Proposition}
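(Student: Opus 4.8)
I want to bound $\tau_L(\C)$, the least $n$ with $L^n \simeq \C$, by both $r(\RR)-1$ and $\red(L)$. The two bounds come from different ideas, so I would treat them separately and then take the minimum.

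\textbf{The bound $\tau_L(\C) \le \red(L)$.} Let $s = \red(L)$, and pick a minimal reduction $(x)$ of $L$ so that $L^{s+1} = xL^s$. Then for every $m \ge s$ one gets $L^{m+1} = xL^m$, hence $L^m \simeq L^s$ for all $m \ge s$ (multiply by the unit $x^{m-s}$ in the total ring of fractions). Now $L$ is a root of $\C$, so $L^t \simeq \C$ for some $t$; if $t > s$ then $\C \simeq L^t \simeq L^s$, so $L^s \simeq \C$ and $\tau_L(\C) \le s$. If $t \le s$ there is nothing to prove. So in all cases $\tau_L(\C) \le \red(L)$. (I should double-check the degenerate case $\red(L)=1$, i.e. $L^2 = xL$: then as in Proposition~\ref{root0} one concludes $L = (x)$ is invertible, $\C \simeq L^t$ is principal, $\RR$ is Gorenstein, and $\tau_L(\C) = 1 = \red(L)$, so the inequality still holds.)

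\textbf{The bound $\tau_L(\C) \le r(\RR)-1$.} This is the more substantial half. The idea is to track the type of the successive powers $L^k$ (equivalently, the minimal number of generators of their duals into $\C$, since $\C$ is dualizing). Write $n = \tau_L(\C)$, so $L^n \simeq \C$ but $L^k \not\simeq \C$ for $k < n$. Since $\C$ is equimultiple, we may localize and pass to the residue-field-infinite case (which is given). Consider the chain $L \supseteq L^2 \supseteq \cdots$ up to isomorphism, and the associated sequence of "type" invariants $r(L^k) := \nu(\Hom(L^k, \C))$ for $k = 0, 1, \dots, n$, where $L^0 = \RR$ gives $r(\RR)$ and $L^n \simeq \C$ gives $\Hom(L^n,\C) \simeq \RR$, which has one generator. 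I expect a monotonicity statement: the natural surjection-type maps $\Hom(L^{k},\C) \to \Hom(L^{k+1},\C)$ (induced by $L^{k+1} \subseteq L^k$, after twisting) force $r(L^{k+1}) \le r(L^k)$, and I claim the inequality is \emph{strict} as long as $L^k$ is not yet isomorphic to $\C$ — because if $r(L^{k+1}) = r(L^k)$ one could run a Nakayama/Cayley–Hamilton argument (of the flavour used in Propositions~\ref{1dimalmostg} and \ref{root0}) to show $L^{k+1} \simeq L^k$, hence all higher powers stabilize and $L^k \simeq \C$, contradicting minimality of $n$. Granting strict decrease, the sequence $r(\RR) = r(L^0) > r(L^1) > \cdots > r(L^{n-1}) \ge r(L^n) = 1$ has $n+1$ terms of which the first $n$ are strictly decreasing integers ending at a value $\ge 1$, so $r(\RR) \ge 1 + n$, i.e. $n \le r(\RR) - 1$.

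\textbf{Main obstacle.} The delicate point is the strict-decrease claim: making precise the map $\Hom(L^k,\C) \to \Hom(L^{k+1},\C)$, checking it behaves well under the isomorphisms $L^{k_i} \simeq \C$, and above all proving that equality of the two generator-counts actually forces $L^{k+1} \simeq L^k$ rather than merely a weaker relation. This is where I would spend most of the effort — likely by reducing to a statement about the module $L^k/L^{k+1}$ being cyclic (it is generated by the image of the reduction generators) and annihilated by a suitable ideal, then invoking determinantal/Cayley–Hamilton as in the earlier proofs to upgrade "cyclic with the right annihilator" to "the inclusion $L^{k+1}\subseteq L^k$ is split off a unit multiple", hence an isomorphism. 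If this step resists a direct argument, an alternative is to bypass it and derive $\tau_L(\C) \le r(\RR)-1$ from the already-available $\tau_L(\C) \le \red(L)$ together with a bound $\red(L) \le r(\RR)-1$ valid for roots — but I do not expect such a reduction-number bound to hold in general, so the type-counting argument above is the one I would pursue.
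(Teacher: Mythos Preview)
Your treatment of the bound $\tau_L(\C) \le \red(L)$ is correct and is essentially what the paper takes for granted (the paper does not even write this part out, regarding $L^{s+1} = xL^s \Rightarrow L^s \simeq L^{s+1} \simeq \cdots$ as immediate).

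For the bound $\tau_L(\C) \le r(\RR)-1$, however, your strict-decrease strategy is both different from the paper's argument and genuinely incomplete. The claim that $\nu(\Hom(L^{k+1},\C)) < \nu(\Hom(L^{k},\C))$ whenever $L^k \not\simeq \C$ is not established, and your proposed mechanism --- that equality would force $L^{k+1} \simeq L^k$ via a Nakayama/Cayley--Hamilton step --- is not justified: equality of minimal numbers of generators of two modules does not by itself yield an isomorphism, and the analogies with Propositions~\ref{1dimalmostg} and~\ref{root0} do not carry that weight (those arguments use very specific structure, e.g.\ $\m \C = \m\q$, that has no counterpart here). You flag this as the main obstacle; it is a real one, and I do not see a direct repair.

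The paper bypasses the difficulty entirely with a one-line application of the Eakin--Sathaye theorem. Suppose $n = \tau_L(\C) \ge r(\RR)$. Then
\[
\nu(L^n) = \nu(\C) = r(\RR) < n+1 = \binom{n+1}{1},
\]
so by \cite[Theorem~1]{ES} there exists $a \in L$ with $L^n = aL^{n-1}$. Hence $L^{n-1} \simeq L^n \simeq \C$, contradicting the minimality of $n$. The only input needed is $\nu(L^n) = r(\RR)$, which is immediate from $L^n \simeq \C$; no analysis of the intermediate powers $L^k$ is required.
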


\begin{proof} Suppose   $n = \tau_{L}(\C) \geq r(\RR)$. Then
\[  \nu(L^{n}) = \nu (\C) = r(\RR) < n+1 =  {{n+1}\choose{1}}.\]
By \cite[Theorem 1]{ES}, there exists a reduction $(a)$ of $L$ such that
$L^{n} = aL^{n-1}$. Thus, $L^{n-1} \simeq \C$, which contradicts the minimality of $\tau_{L}(\C)$.
\end{proof}

\begin{Remark}{\rm  We have the following.

\begin{enumerate}[(1)]
\item If $r(\RR)=2$, then the isomorphism class of $\C$ is the only root.

\smallskip

\item The upper bound in Proposition~\ref{root1} is sharp. For example, let $a \ge 3$ be an integer and we consider the numerical semigroup ring $\RR = k[[\{t^i\}_{a \le i \le 2a-1}]] \subseteq k[[t]]$. Then the canonical module of $\RR$ is
\[ \omega = \sum_{i=0}^{a-2} \RR t^i = (\RR + \RR t)^{a-2}.\] Thus $\RR$ has a canonical ideal $\C= t^{a(a-2)} \omega$.  Let $L = (t^a,t^{a+1})$. Then $\C = L^{a-2}$. Hence we have   $ \tau_{L}(\C) = a-2  = r (\RR) -1$.
\end{enumerate}
}\end{Remark}

Combining Propositions \ref{root0} and \ref{root1} we have the following.

\begin{Theorem}\label{root}
Let $(\RR,\m)$ be a $1$-dimensional Cohen-Macaulay local ring with a canonical ideal. If $\RR$ is not Gorenstein, then
$\mathrm{root}(\RR)$ is a finite set of cardinality less than $r(\RR)$.
\end{Theorem}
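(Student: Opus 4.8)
The plan is to combine the two finiteness results that immediately precede the statement. First I would dispose of the case where $\C$ itself is (up to isomorphism) the only root: then $\mathrm{root}(\RR) = \{1\}$, which has cardinality $1 < r(\RR)$ since $\RR$ is not Gorenstein forces $r(\RR) \geq 2$ by Corollary~\ref{cdegr1}(2) (applied after reducing to the equimultiple/one-dimensional setting, where a canonical ideal is automatically equimultiple). So from now on assume there is at least one root $L$ with $\tau_L(\C) \geq 2$.

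The key step is to bound $\tau_L(\C)$ uniformly and to control which integers can actually occur. For any root $L$, Proposition~\ref{root1} gives $\tau_L(\C) \leq \min\{r(\RR)-1,\ \red(L)\} \leq r(\RR)-1$; note the equimultiplicity hypothesis of Proposition~\ref{root1} is free in dimension one. Hence every element of $\mathrm{root}(\RR)$ lies in $\{1, 2, \ldots, r(\RR)-1\}$, so already $|\mathrm{root}(\RR)| \leq r(\RR)-1 < r(\RR)$. Strictly speaking this alone gives the bound, but I would also invoke Corollary~\ref{uniqueroot} (via Proposition~\ref{root0}) to record the finer structural fact: if two distinct roots $L, L'$ satisfied $\tau_L(\C) = \tau_{L'}(\C) = n$ with $n \geq 2$ we cannot conclude much, but if a \emph{single} ideal $L$ gave $L^m \simeq \C \simeq L^n$ for $m \neq n$ then $\C$ would be principal, contradicting non-Gorensteinness. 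In particular each integer $n$ in $\mathrm{root}(\RR)$ that is $\geq 2$ is witnessed by roots that are not powers of one another, and the value $n = \tau_L(\C)$ is well-defined for each $L$; this is exactly what makes $\mathrm{root}(\RR)$ a genuine subset of a fixed finite interval.

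So the argument is short: $\mathrm{root}(\RR) \subseteq \{1, \ldots, r(\RR)-1\}$ by Proposition~\ref{root1}, and this set has $r(\RR)-1$ elements, giving $|\mathrm{root}(\RR)| \leq r(\RR) - 1 < r(\RR)$. The finiteness is then immediate. The one point that requires a word of care — and the closest thing to an obstacle — is checking that the hypotheses of Propositions~\ref{root0} and~\ref{root1} (infinite residue field, equimultiple canonical ideal) are harmless here: the residue field can be enlarged by the standard $\RR \rightsquigarrow \RR(x)$ trick used already in the proof of Proposition~\ref{cdeg}, and in dimension one every $\m$-primary ideal, hence every canonical ideal, is equimultiple, so equimultiplicity is automatic. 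I would also double-check that passing to $\RR(x)$ does not alter $\mathrm{root}(\RR)$ or $r(\RR)$, which is routine since both are defined by isomorphisms of modules and faithfully flat base change preserves these. With that bookkeeping in place the theorem follows.
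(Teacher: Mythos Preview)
Your proof is correct and matches the paper's approach, which simply says the theorem follows by combining Propositions~\ref{root0} and~\ref{root1}. You have actually been more careful than the paper: you correctly observe that Proposition~\ref{root1} alone already gives $\mathrm{root}(\RR) \subseteq \{1,\ldots,r(\RR)-1\}$ and hence the full bound, with Proposition~\ref{root0}/Corollary~\ref{uniqueroot} providing only auxiliary structural information, and you explicitly address the infinite residue field and equimultiplicity hypotheses that the paper leaves implicit.
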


\subsubsection*{Applications of roots}

\begin{Proposition}
Let $(\RR,\m)$ be a $1$-dimensional Cohen-Macaulay local ring with a canonical ideal $\C$.
Let $f$ be the supremum of the reduction numbers of the $\m$-primary ideals. Suppose that $L^{n} \simeq \C$.
If $p$ divides $n$, then $\rho(\RR) \leq (f+p-1)/p$.
\end{Proposition}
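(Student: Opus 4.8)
The plan is to reduce the statement to a bound on $\rho(\RR) = \red(\C)$ obtained by writing $\C$ as a power of its root $L$ and exploiting divisibility. Since $p \mid n$, write $n = pm$ and set $M = L^{m}$, so that $M^{p} = L^{n} \simeq \C$. Thus $M$ is itself a root of $\C$ with $\tau_{M}(\C) \le p$; the crucial point is that replacing $L$ by $M$ lets me work with the exponent $p$ directly rather than with $n$. First I would pass to the faithfully flat extension $\RR(x)$ so as to assume the residue field is infinite, which changes none of the quantities involved (reduction numbers, canonical degrees, and the existence of roots are all insensitive to this extension, exactly as in the proofs of Propositions~\ref{cdeg} and \ref{Ca1}).

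Next, let $s = \red(M)$, so $s \le f$ by hypothesis, and pick a minimal reduction $(x)$ of $M$ with $M^{s+1} = x M^{s}$, hence $M^{k+1} = x M^{k}$ for all $k \ge s$. The key computation is to transfer this stability from powers of $M$ to powers of $\C \simeq M^{p}$: for $k \ge s$ we have
\[
\C^{q+1} \simeq M^{p(q+1)} = M^{pq+p} = x^{p}\, M^{pq} \simeq x^{p}\, \C^{q}
\]
as soon as $pq \ge s$, i.e.\ as soon as $q \ge s/p$, which holds for $q \ge \lceil s/p \rceil$. Therefore $(x^{p})$ (transported through the isomorphism $\C \simeq M^{p}$ to an element of $\C$) is a reduction of $\C$ with $\red(\C) \le \lceil s/p \rceil \le \lceil f/p \rceil$. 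Finally I would check that $\lceil f/p\rceil \le (f+p-1)/p$, which is immediate, giving $\rho(\RR) = \red(\C) \le (f+p-1)/p$ as claimed.

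The step I expect to be the main obstacle is the careful bookkeeping in transferring the reduction equation along the isomorphism $\C \simeq M^{p}$: the isomorphism is multiplication by some fraction $u \in Q$, so one must verify that $u^{q}$ matches up powers correctly and that the resulting reduction element $u\cdot x^{p}$ (or its appropriate normalization) genuinely lies in $\C$ and generates a reduction --- this is the same bit of care exercised in Proposition~\ref{cdeg} and Proposition~\ref{Ca1}, and I would model the argument on those. A secondary point to be careful about is the threshold: one needs $pq \ge s$ for \emph{all} large $q$, not merely one value, but since $M^{k+1} = xM^{k}$ persists for every $k \ge s$ this is automatic, so $\red(\C)$ is bounded by the least $q_{0}$ with $p q_{0} \ge s$, namely $q_{0} = \lceil s/p\rceil$, completing the estimate.
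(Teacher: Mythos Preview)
Your proposal is correct and follows essentially the same approach as the paper. The paper writes $n=pm$, sets $I=L^m$, and simply assumes $I^p=\C$ (implicitly replacing $\C$ by an isomorphic canonical ideal, legitimate since $\rho(\RR)$ is invariant by Proposition~\ref{Ca1}); it then writes $r=\red(I)=ps+q$ with $-p+1\le q\le 0$ so that $s=\lceil r/p\rceil$ and obtains $\C^{s+1}=x^{p}\C^{s}$, exactly your ceiling computation with different variable names.
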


\begin{proof}
 Since $n=pm$, by replacing $L^m$ by $I$, we may assume that $I^p = \C$. Let $r=\red(I)$ with $I^{r+1} = xI^r$. Then $r=ps+q$ for some $q$ such that $-p+1 \leq q \leq 0$.
Since $ps=r-q \geq r$, we have
\[ \C^{s+1} = I^{ps+p} = x^{p} I^{ps} = x^{p} \C^{s}.\]
Thus, $ \rho(\RR)= \red(\C) \leq s = (r-q)/p \leq (r+p-1)/p \leq (f+p-1)/p$.
\end{proof}

\subsubsection*{A computation of roots of the canonical ideal}
 
Let $0 < a_1< a_2 < \cdots < a_q$ be integers such that $\operatorname{gcd}(a_1, a_2, \ldots, a_q) = 1$. Let $H = \left<a_1, a_2, \ldots, a_q\right>$ be the numerical semigroup generated by $a_i's$.
 Let $V = k[[t]]$ be the formal power series ring over a field $k$ and set $\RR = k[[t^{a_1},t^{a_2}, \ldots, t^{a_q}]]$. We denote by $\m$ the maximal ideal of $\RR$ and by $e = a_1$ the multiplicity of $R$. Let $v$ be the discrete valuation of $V$. 
In what follows, let $\RR \subseteq L \subseteq V$ be a finitely generated $\RR$-submodule of $V$ such that $\nu(L)>1$. We set $\ell = \nu(L)-1$. Then we have the following.

\begin{Lemma}\label{lemma1}
With notation as above, $1 \not\in \m L$.
\end{Lemma}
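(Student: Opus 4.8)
The assertion is purely valuation-theoretic, so my plan is to read everything through the discrete valuation $v$ of $V=k[[t]]$. First I would note that, since $\RR\subseteq L\subseteq V$, we have $1\in L$ (so the statement is not vacuous) and every nonzero element of $L$, being an element of $k[[t]]$, has valuation $v\geq 0$.

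The one substantive step is the containment $\m L\subseteq \m V$. Indeed $\m L$ is generated as an $\RR$-module by the products $t^{a_i}x$ with $1\le i\le q$ and $x\in L\subseteq V$, and each such product lies in $\m V$. Now $\m V$ is the ideal of the DVR $V=k[[t]]$ generated by $t^{a_1},\dots,t^{a_q}$; since $e=a_1=\min_i a_i$ is itself one of these generators, $\m V=t^{e}V$, i.e. the set of power series of valuation at least $e$. Concretely: any element of $\m L$ is a finite sum $\sum_j r_j\,t^{a_{i_j}}x_j$ with $r_j\in\RR$ and $x_j\in L$, and by the ultrametric inequality its valuation is at least $\min_j\big(v(r_j)+a_{i_j}+v(x_j)\big)\geq \min_j a_{i_j}\geq e\geq 1$.

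To finish, $1\in\RR\subseteq L$ has valuation $0$, while every element of $\m L$ has valuation $\geq e\geq 1$; hence $1\notin\m L$. (In our situation $\nu(L)>1$ even forces $\RR\ne V$, so $e\geq 2$, but $e\geq 1$ already suffices.)

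\textbf{Main obstacle.} There is essentially none: the only points requiring a moment's care are that one is genuinely working inside $V$, so that all valuations are nonnegative, and that $\m$ is generated by strictly positive powers of $t$, so that $\m L$ sits in $t^{e}V$. The hypotheses $\nu(L)>1$ and $\ell=\nu(L)-1$ are not needed for this lemma and are evidently recorded for the lemmas that follow.
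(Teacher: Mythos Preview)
Your proof is correct and takes a genuinely different, more elementary route than the paper's. You work directly with the valuation: from $L\subseteq V$ you get $\m L\subseteq \m V=t^{e}V$, so every element of $\m L$ has $v\geq e\geq 1$, while $v(1)=0$; done. The paper instead chooses $g\in\m$ with $gV\subsetneq\RR$, so that $I=gL$ becomes an $\m$-primary ideal of $\RR$; since $L\subseteq V$ is integral over $\RR$, the principal ideal $Q=(g)$ is a reduction of $I$, and the standard fact that a generator of a principal reduction of an $\m$-primary ideal in a one-dimensional Cohen--Macaulay local ring is a minimal generator of that ideal gives $g\notin\m I=g\,\m L$, whence $1\notin\m L$. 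Your argument is self-contained and uses only that $V$ is a DVR; the paper's argument embeds the lemma in the reduction-theoretic framework used throughout the surrounding discussion. You are also right that the hypothesis $\nu(L)>1$ is not needed here.
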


\begin{proof}
Choose $0 \ne g \in \m$ so that $gV \subsetneq \RR$. Then $Q = g\RR$ is a minimal reduction of the $\m$-primary ideal $I = gL$ of $\RR$, so that $g \not\in \m I$. Hence $1 \notin\m L$.
\end{proof}

\begin{Lemma}\label{2}
With notation as above, there exist elements $f_1, f_2, \ldots, f_\ell \in L$ such that 
\begin{enumerate}[{\rm (1)}]
\item $L = \RR + \sum_{i=1}^\ell \RR f_i$,
\smallskip
\item $0 < v(f_1) < v(f_2) < \ldots <v(f_\ell)$, and
\smallskip
\item $v(f_i)\not\in H$ for all $1 \le i \le \ell$.
\end{enumerate}
\end{Lemma}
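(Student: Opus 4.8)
The plan is to construct the $f_i$ by a greedy ``peeling'' argument on the value semigroup of $L$, removing the contribution of $H$ at each step. Write $\Gamma = v(L) = \{\, v(x) \mid 0\neq x\in L\,\} \subseteq \mathbb{Z}_{\ge 0}$, the value set of the $\RR$-module $L$; since $\RR\subseteq L$ we have $H\subseteq \Gamma$, and since $L$ is a finitely generated $\RR$-module with $L\not\subseteq \RR$ (because $\nu(L)=\ell+1>1$), the set $\Gamma\setminus H$ is finite and nonempty. List the elements of $\Gamma\setminus H$ in increasing order and observe first that this list has at most $\ell$ entries: indeed, if $m_1<m_2<\cdots<m_k$ were elements of $\Gamma\setminus H$ with $k>\ell$, pick $g_j\in L$ with $v(g_j)=m_j$; I claim $1,g_1,\dots,g_k$ are part of a minimal generating set of $L$ over $\RR$, contradicting $\nu(L)=\ell+1$. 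The claim follows from Nakayama: a relation $u_0\cdot 1 + \sum u_j g_j \in \m L$ with some $u_j$ a unit, say with $j$ the smallest index of a unit coefficient, would force $v$ of the corresponding term to lie in $H$ (all other terms have strictly larger value or value in $H$), which contradicts $m_j\notin H$, except one must also rule out interference from the $1$-term; here Lemma~\ref{lemma1} ($1\notin\m L$) is exactly what handles the $u_0$ case and, more generally, cancellation of lowest-value terms.

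The cleaner route, which I would actually write, is the reverse: build $f_1,\dots,f_\ell$ directly so that (1) holds, and then deduce (2) and (3). Start with any minimal generating set; since $1\in L$ is a minimal generator (by Lemma~\ref{lemma1}), we may take it to be $1, h_1,\dots,h_\ell$ with $h_i\in L$. Now normalize: if some $v(h_i)\in H$, say $v(h_i)=v(s)$ for $s\in\RR\subseteq L$, then after scaling $h_i$ by a unit of $\RR$ we can subtract $s$ to strictly increase $v(h_i)$ while keeping $1,h_1,\dots,h_\ell$ a generating set; this process terminates because at each step either $v(h_i)$ strictly increases and stays bounded by the conductor of $H$ (beyond which everything is in $H$, forcing $h_i$ into the $\RR$-span of the others plus $\RR$ — contradicting minimality), so after finitely many steps every $v(h_i)\notin H$, giving (3). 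For (2), if two of the $h_i$ had equal value, a unit combination would kill the lowest term and reduce the number of generators, again contradicting minimality; relabel in increasing order of value to get $0 < v(f_1)<\cdots<v(f_\ell)$, and $v(f_1)>0$ since $v(f_i)\notin H\ni 0$.

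The main obstacle, and the step deserving the most care, is the termination/minimality bookkeeping: showing that the normalization subtractions preserve ``$1,h_1,\dots,h_\ell$ is a minimal generating set'' and that they terminate. Termination is clean once one invokes the conductor $c$ of $H$: any value $\ge c$ lies in $H$, so each $v(h_i)$ can be pushed up only finitely often before it would have to exit $\Gamma\setminus H$, and it cannot — if $v(h_i)$ were forced past all of $\Gamma\setminus H$ the element would become an $\RR$-combination of $1$ and the other generators, contradicting that we started with a minimal set and only performed invertible row operations. Preservation of minimality under ``scale by a unit and subtract an element of the $\RR$-span of the others'' is immediate since these are invertible operations on the generating tuple. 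I would also remark that (1) forces $L/\RR$ to be generated by the images of $f_1,\dots,f_\ell$, which matches $\nu(L)-1=\ell$, so the count is forced to be exactly $\ell$ rather than fewer — this is where one uses that $1$ is genuinely a minimal generator, i.e., Lemma~\ref{lemma1} once more.
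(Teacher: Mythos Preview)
Your second-paragraph approach is essentially the paper's: start from a minimal generating set $1,f_1,\dots,f_\ell$ (Lemma~\ref{lemma1} guarantees $1$ is a minimal generator), subtract elements of $\RR$ to push each $v(f_i)$ out of $H$, then separate equal values by row operations. The paper does the (3)-step in a single stroke rather than iteratively --- it subtracts the whole $H$-supported initial segment $\sum_{j=m}^{s-1}c_jt^j$, where $s$ is the least support index of $f_i$ not in $H$ (such $s$ exists since $f_i\notin\RR$), landing directly at $v(f_i)=s\notin H$ --- but your one-term-at-a-time version with the conductor bound for termination is equally valid.

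The one genuine slip is your argument for (2). Equal values do \emph{not} contradict minimality: if $v(f_i)=v(f_j)$, the element $h=f_j-cf_i$ (for the appropriate $c\in k^\times$) has strictly larger value, but $1,f_1,\dots,f_{j-1},h,f_{j+1},\dots,f_\ell$ is still a minimal generating set of size $\ell+1$; nothing got ``reduced''. What you must say, and what the paper does, is: replace $f_j$ by $h$, then if $v(h)\in H$ reapply the (3)-normalization. Both moves strictly increase $\sum_i v(f_i)$, which is bounded above by $\ell$ times the conductor, so the interleaved process terminates with (2) and (3) holding simultaneously. Your first paragraph's value-set count is unnecessary, and you were right to set it aside.
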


\begin{proof}
Let $L = \RR + \sum_{i=1}^\ell \RR f_i$ with $f_i \in L$. Let $1 \le i \le \ell$ and assume that $m= v(f_i) \in H$. We write $f_i = \sum_{j = m}^\infty c_jt^j$ with $c_j \in k$. Then $c_s \ne 0$ for some $s > m$ such that $s \not\in H$, because $f_i \not\in \RR$. Choose such integer $s$ as small as possible and set $h = f_i - \sum_{j=m}^{s-1}c_jt^j$. Then $\sum_{j=m}^{s-1}c_jt^j \in \RR$ and $\RR + \RR f_i = \RR + \RR h$. Consequently, as  $v(h) =s  > m = v(f_i)$, replacing $f_i$ with $h$, we may assume that $v(f_i) \not\in H$ for all $1 \le i \le \ell$. Let $1 \le i<j \le \ell$ and assume that $v(f_i) = v(f_j)=m$. Then, since $f_j = cf_i + h$ for some $0 \ne c \in k$ and $h \in L$ such that  $v(h) >m$, replacing $f_j$ with $h$, we may assume that $v(f_j) > v(f_i)$. Therefore we can choose a minimal system of generators of $L$ satisfying conditions (2) and (3). 
\end{proof}

\begin{Proposition}\label{NS1} With notation as above, let $1, f_1, f_2, \ldots, f_\ell \in L$  and $1, g_1, g_2, \ldots, g_\ell \in L$ be systems of generators of $L$ and assume that both of them satisfy conditions $(2)$ and $(3)$ in {\rm Lemma~\ref{2}}. Suppose that $v(f_\ell) < e=a_1$. Then $v(f_i) = v(g_i)$ for all $1 \le i \le \ell$.
\end{Proposition}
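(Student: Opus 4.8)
The plan is to show that the list of valuations $v(f_1) < v(f_2) < \cdots < v(f_\ell)$ depends only on $L$, not on the chosen generating system, by identifying it with an intrinsic subset of the value set of $L$. Put $v(L) = \{\, v(x) \mid x \in L,\ x \neq 0 \,\} \subseteq \ZZ_{\geq 0}$. I claim that
\[ v(L) \cap \{1, 2, \ldots, e-1\} \;=\; \{\, v(f_i) \mid 1 \le i \le \ell,\ v(f_i) < e \,\}. \]
Granting this, the hypothesis $v(f_\ell) < e$ makes the right-hand side equal to $\{v(f_1), \ldots, v(f_\ell)\}$, a set of exactly $\ell$ integers, distinct by condition $(2)$ of Lemma~\ref{2}.

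To prove the claim, first note that $v(\RR) = H$, that the units of $\RR$ are exactly the elements of valuation $0$, and hence that a nonzero non-unit of $\RR$ has valuation $\geq \min(H \setminus \{0\}) = e$. The inclusion $\supseteq$ is clear since each $f_i$ lies in $L$. For $\subseteq$, take $x \in L$ with $0 < v(x) < e$ and write $x = r_0 + \sum_{i=1}^{\ell} r_i f_i$ with $r_i \in \RR$. Comparing degree-zero coefficients (all of $x, f_1, \ldots, f_\ell$ have none) shows $r_0$ is a non-unit, so $r_0 = 0$ or $v(r_0) \geq e$; likewise any summand $r_i f_i$ with $r_i$ a non-unit has valuation $\geq e + v(f_i) > e$. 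If no $r_i$ ($1 \le i \le \ell$) were a unit, every summand would have valuation $\geq e$, forcing $v(x) \geq e$, a contradiction; so let $i_0$ be the least index with $r_{i_0}$ a unit. Every summand other than $r_{i_0} f_{i_0}$ then has valuation either $\geq e$ or equal to some $v(f_i)$ with $i > i_0$, hence $> v(f_{i_0})$; since $v(x) < e$, the summand $r_{i_0} f_{i_0}$, of valuation $v(f_{i_0})$, must be the unique one of smallest valuation, so $v(x) = v(f_{i_0}) < e$. This proves the claim; the identical argument applied to $1, g_1, \ldots, g_\ell$ gives $v(L) \cap \{1, \ldots, e-1\} = \{\, v(g_i) \mid v(g_i) < e \,\}$.

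Now compare the two descriptions. The set $S := v(L) \cap \{1, \ldots, e-1\}$ has exactly $\ell$ elements by the first paragraph. On the other hand $S = \{\, v(g_i) \mid v(g_i) < e \,\}$, and the integers $v(g_1), \ldots, v(g_\ell)$ are pairwise distinct by condition $(2)$; since $\ell$ of them already suffice to fill $S$, all of them must be $< e$, so $\{v(g_1), \ldots, v(g_\ell)\} = S = \{v(f_1), \ldots, v(f_\ell)\}$. Comparing the strictly increasing enumerations of $S$ given by the $f$'s and by the $g$'s yields $v(f_i) = v(g_i)$ for every $i$.

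The main obstacle is the valuation bookkeeping in the second paragraph, where one must preclude cancellation among the leading terms; isolating the least-index unit coefficient $r_{i_0}$ is precisely what forces its term to be the strict minimum. A secondary point is that one is not assuming $v(g_\ell) < e$ in advance: this is recovered only at the end, from the cardinality count, which is why the hypothesis $v(f_\ell) < e$ on the single system $\{1, f_1, \ldots, f_\ell\}$ is enough.
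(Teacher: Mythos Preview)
Your proof is correct and follows a genuinely different route from the paper's. The paper proceeds by a double induction: writing $f_{i+1}$ in terms of the $g_j$ and $g_{i+1}$ in terms of the $f_j$, it first establishes (by an inner induction on $k$) that the coefficients $\gamma_j,\delta_j$ with $j\le i$ must lie in $\m$, and then compares valuations term by term to conclude $m_{i+1}=n_{i+1}$.

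Your approach is more conceptual: you identify the intrinsic invariant $S=v(L)\cap\{1,\dots,e-1\}$ and show, for \emph{any} generating system satisfying conditions (2) and (3) of Lemma~\ref{2}, that $S$ coincides with the valuations $v(f_i)$ that happen to be $<e$. The hypothesis $v(f_\ell)<e$ then fixes $|S|=\ell$, and a cardinality count forces all $v(g_i)<e$ as well, so both systems enumerate the same set in increasing order. This avoids the nested inductions entirely and yields the mild bonus that $v(g_\ell)<e$ is a consequence rather than an extra hypothesis. One cosmetic point: in your sentence ``hence $>v(f_{i_0})$'' the inequality $e>v(f_{i_0})$ has not yet been established; you recover it in the next clause via $v(x)<e$, but it would read more cleanly to first note that $v(x)<e$ forces some summand of valuation $<e$, whence $v(f_{i_0})<e$, and only then assert that every other summand has valuation strictly larger than $v(f_{i_0})$.
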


\begin{proof} We set $m_i = v(f_i)$ and $n_i = v(g_i)$ for each $1 \le i \le \ell$. 
Let us write
\begin{eqnarray*}
f_1&=& \alpha_0 + \alpha_1 g_1 + \ldots + \alpha_\ell g_\ell\\
g_1&=& \beta_0 + \beta_1 f_1 + \ldots + \beta_\ell f_\ell
\end{eqnarray*}
with $\alpha_i, \beta_i \in \RR$. Then $v(\beta_1 f_1 + \ldots + \beta_\ell f_\ell) \ge v(f_1) = m_1>0$, whence $\beta_0 \in \m$ because $v(g_1)=n_1 > 0$. We similarly have that $\alpha_0 \in \m.$ Therefore $n_1 = v(g_1) \ge m_1$, since $v(\beta_0) \ge e > m_\ell \ge m_1$ and $v(\beta_1 f_1 + \ldots + \beta_\ell f_\ell) \ge m_1$. Suppose that $n_1 > m_1$. Then  $v(\alpha_1 g_1 + \ldots + \alpha_\ell g_\ell) \ge n_1 > m_1$ and $v(\alpha_0) \ge e > m_1$, whence $v(f_1)>m_1$, a contradiction. Thus $m_1 = n_1$. 

\medskip

\noindent Now let $1 \le i < \ell$ and assume that $m_j = n_j $ for all $1 \le j \le i$. We want to show $m_{i+1} = n_{i+1}$. Let us write
\begin{eqnarray*}
f_{i+1}&=& \gamma_0 + \gamma_1 g_1 + \ldots + \gamma_\ell g_\ell\\
g_{i+1}&=& \delta_0 + \delta_1 f_1 + \ldots + \delta_\ell f_\ell
\end{eqnarray*}
with $\gamma_i, \delta_i \in \RR$. 

\medskip

\noindent First we claim that $\gamma_j, \delta_j \in \m$ for all $0 \le j \le i$. 
 As above, we get $\gamma_0, \delta_0 \in \m$.  Let $0 \le k < i$ and assume that $\gamma_j, \delta_j \in \m$ for all $0 \le j \le k$. We show $\gamma_{k+1}, \delta_{k+1} \in \m$. Suppose that $\delta_{k+1} \not\in \m$. Then as $v(\delta_0+ \delta_1 f_1 + \ldots + \delta_k f_k) \ge e > m_{k+1}$,  we get  $v(\delta_0 + \delta_1 f_1 + \ldots + \delta_k f_k + \delta_{k+1} f_{k+1})=m_{k+1}$, so that $n_{i+1} = v(g_{i+1})=v(\delta_0+ \delta_1 f_1 + \ldots + \delta_\ell f_\ell) = m_{k+1}$, since $v(f_h) =m_h> m_{k+1}$ if $h> k+1$. This is impossible, since $n_{i+1} > n_i = m_i \ge m_{k+1}$. Thus $\delta_{k+1} \in \m$. We similarly get $\gamma_{k+1} \in \m$, and the claim is proved.

\medskip

\noindent Consequently, since $v(\delta_0 + \delta_1 f_1 + \ldots + \delta_i f_i) \ge e > m_{i+1}$ and $v(f_h) \ge  m_{i+1}$ if $h\ge i+1$, we have $n_{i+1} = v(g_{i+1}) = v(\delta_0 + \delta_1 f_1 + \ldots + \delta_\ell f_\ell)\ge m_{i+1}$. Assume that $n_{i+1} > m_{i+1}$. Then since $v(\gamma_0 + \gamma_1 g_1 + \ldots + \gamma_i g_i) \ge e > m_{i+1}$ and $v(g_h) \ge  n_{i+1} > m_{i+1}$ if $h\ge i+1$, we have $m_{i+1} = v(f_{i+1}) = v(\gamma_0 + \gamma_1 g_1 + \ldots + \gamma_\ell g_\ell) > m_{i+1}$. This is a contradiction. Hence $m_{i+1} = n_{i+1}$, as desired.
\end{proof}

 \begin{Theorem}\label{th1-1}
With notation as above, assume that $r (\RR) = 3$ and write the canonical module $\omega = \left<1, t^a,t^b\right>$ with $0 < a < b$. Suppose that $2a > b$ and $b < e$. Let $L$ be an ideal of $\RR$ and let $n \geq 1$ be an integer. If $L^n \cong \omega$, then $n = 1$.
\end{Theorem}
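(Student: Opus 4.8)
The goal is to show that, under the hypotheses $r(\RR)=3$, $\omega = \langle 1, t^a, t^b\rangle$ with $0 < a < b$, $2a > b$, and $b < e$, no proper power of an ideal $L$ of $\RR$ can be isomorphic to $\omega$. Since $\omega$ is a canonical module of $\RR$ and $r(\RR) > 1$, the ring is not Gorenstein, so by Corollary~\ref{uniqueroot} (or Proposition~\ref{root0}) we already know that $\omega$ cannot be a root of itself of two distinct orders; thus it suffices to rule out $L^n \cong \omega$ for a single $n \geq 2$. The plan is to argue by contradiction: assume $L^n \cong \omega$ with $n \geq 2$, and extract a contradiction with the numerical constraints $2a > b$ and $b < e$.

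First I would reduce to the case $n = 2$: if $L^n \cong \omega$ with $n \geq 2$, then writing $K = L^{\lceil n/2 \rceil}$ one does not quite get a square root, so instead I would work directly with $M = L^{n-1}$, noting $M \cdot L \cong \omega$, and analyze the valuation structure. More robustly, after twisting by a suitable power of $t$ I may assume $L \subseteq V = k[[t]]$ is a fractional ideal with $\RR \subseteq L$ and $\RR \subseteq L^n = \omega' := t^c \omega$ for appropriate normalization; since $\nu(\omega) = r(\RR) = 3$, Lemma~\ref{2} gives $\omega' = \RR + \RR f_1 + \RR f_2$ with $0 < v(f_1) < v(f_2)$, $v(f_i) \notin H$, and by Proposition~\ref{NS1} (using $b < e$, i.e. the valuations of the module generators lie below the multiplicity $e = a_1$) these valuations $v(f_1), v(f_2)$ are intrinsic — they must equal $a$ and $b$ after normalization. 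The key point is then that $L$ itself, suitably normalized, satisfies $\RR \subseteq L \subseteq V$ with $\ell := \nu(L) - 1 \in \{1, 2\}$ (it cannot be $0$, else $L$ is principal and $\omega$ is too), and its generator valuations $v(g_1) < \dots < v(g_\ell)$ are also intrinsic by Proposition~\ref{NS1}, provided $v(g_\ell) < e$ — which I would need to establish, or handle the case $v(g_\ell) \geq e$ separately.

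The heart of the argument is a valuation-semigroup computation: if $L$ has normalized generator valuations $0, s_1$ (when $\ell = 1$) or $0, s_1, s_2$ (when $\ell = 2$), then the value set of $L^n$ is generated additively by $n$-fold sums of $\{0, s_1, \dots\}$, and one compares this to the value set of $\omega$, namely $\{0, a, b, \text{and } H\text{-translates}\}$. The condition $2a > b$ is exactly what forces a tight constraint: in $\omega$ the element $t^a \cdot t^a$ has valuation $2a$, which exceeds $b$, so $\{0, a, b\}$ is "sum-closed up to $H$" in a minimal way; but $L^2$ (or $L^n$) would produce the valuation $s_1 + s_1 = 2s_1$ as a new module generator unless $2s_1 \in H$ or $2s_1 \geq e$, and tracking which of $a$, $b$ these forced sums can hit — together with $\nu(L^n) = 3$ — pins down $s_1$ and $n$ so rigidly that $n = 1$ is the only survivor. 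I would organize this as a short case analysis on $\ell \in \{1,2\}$ and on $n$, using that $\nu(L^n) = 3$ sharply limits how many new generator-valuations can appear.

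**Main obstacle.** The hard part will be controlling the generator valuations of $L$ and its powers when some of them are $\geq e$, since Proposition~\ref{NS1} only guarantees intrinsicness below the multiplicity; I expect to need an auxiliary observation that, because $\omega = L^n$ has all its relevant module-generator valuations $\leq b < e$ and because taking $n$-th powers only increases valuations, the ideal $L$ must also be "concentrated below $e$" in the appropriate sense — essentially that $L$ has a reduction generated by a single element of minimal valuation and the non-trivial generators sit in a bounded range. Pinning down this boundedness rigorously, and then extracting the arithmetic contradiction from $2a > b$, is where the real work lies; the rest is bookkeeping with numerical semigroups.
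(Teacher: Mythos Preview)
Your overall strategy --- normalize $L$ to a fractional ideal containing $\RR$, study generator valuations via Lemma~\ref{2} and Proposition~\ref{NS1}, and derive a contradiction with $2a > b$ --- is the same as the paper's. But there are three concrete gaps that prevent your plan from closing.

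\textbf{First, you miss the immediate reduction to $n=2$.} Since $r(\RR)=3$, Proposition~\ref{root1} gives $\tau_L(\C) \le r(\RR)-1 = 2$, so the only case to rule out is $n=2$. Your detour through $M = L^{n-1}$ and a case analysis on $n$ is unnecessary.

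\textbf{Second, your claim $\ell = \nu(L)-1 \in \{1,2\}$ is unjustified.} Nothing forces $\nu(L) \le 3$ a priori; the paper works with arbitrary $\sigma = \nu(M)-1 \ge 1$. The key step you do not see is: among the generators $1, f_1, \ldots, f_\sigma, f_if_j$ of $M^2$, one shows by a valuation argument that $f_1$ cannot lie in the span of the others, and then uses $\nu(M^2)=3$ to conclude $M^2 = \langle 1, f_1, f_if_j\rangle$ for some $i \le j$. (The alternative $M^2 = \langle 1, f_1, f_i\rangle$ with $i\ge 2$ is ruled out because it forces $M=M^2$, hence reduction number $\le 1$ for $L^2 \cong \omega$, contradicting Remark~\ref{R}(1).)

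\textbf{Third, your ``main obstacle'' is not the real one.} You worry about applying Proposition~\ref{NS1} to $L$ and needing $v(g_\ell) < e$. The paper never applies NS1 to $L$; it applies it to $\omega = \theta M^2$ itself, comparing the two generating systems $\{1, t^a, t^b\}$ and $\{\theta, \theta f_1, \theta f_if_j\}$. The hypothesis $b < e$ is exactly what is needed there, and it is given. Once NS1 identifies $a = v(f_1)$ and $b = v(f_if_j) = n_i + n_j \ge 2n_1 = 2a$, the inequality $2a > b$ is contradicted and the proof ends.
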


\begin{proof}
By Proposition~\ref{root1}, we have $n \leq 2$. Suppose that $n=2$. Let $f \in L$ such that $fV = LV$ and set $M= f^{-1}L $. Then $\RR \subseteq M \subseteq V$ and $M^2 = f^{-2}L^{2} \cong \omega$. By Lemma \ref{2}, we can write $M = \left<1, f_1, f_2, \ldots, f_{\sigma} \right>$,  where $0 < v(f_1) < v(f_2) < \cdots < v(f_{\sigma} )$ and $v(f_i) \not\in H$ for every $1 \leq i \leq \sigma$. Then $M^2 = \left<1, \{f_i\}_{1 \leq i \leq \sigma}, \{f_if_j\}_{1 \leq i \leq j \leq \sigma} \right>$. Let $n_i = v(f_i)$ for each $1 \leq i \leq \sigma$.

\medskip

\noindent Suppose that $\sigma > 1$. Then we claim that $f_1 \not\in \left<1, \{f_i\}_{2 \leq i \leq \sigma}, \{f_if_j\}_{1 \leq i \leq j \leq \sigma} \right>$.
In order to prove the claim, we assume the contrary and write $$f_1 = \alpha + \sum_{i=2}^{\sigma}\alpha_i f_i + \sum_{1 \leq i \leq j \leq \sigma}\alpha_{ij}f_if_j$$ with $\alpha, \alpha_i, \alpha_{ij} \in \RR.$ Then since $n_1 < n_i$ for $i \geq 2$ and $n_1 \leq n_i < n_i+n_j$ for $1 \leq i \leq j \leq \sigma$, we have $n_1 = v(\alpha)$, which is impossible, because $n_1 \not\in H$ but $v(\alpha)\in H$.

\medskip

\noindent Since $\nu(M^2) = 3$, by the claim we have $M^2=\left<1, f_1, f_if_j\right>$ for some $1 \leq i \leq j \leq \sigma$. In fact, the other possibility is $M^2 = \left<1,f_1,f_i\right>$ with $i \geq 2$. However, when this is the case, we get $M= M^2$ so that $\red_{(f)}L \leq 1$. Thus, $\red_{(f^2)}L^2 \leq 1$. Therefore, since $L^2 \cong \omega$, by Remark \ref{R} (1), $\RR$ is a Gorenstein ring, which is a contradiction.

\medskip

\noindent We now choose $0 \neq \theta \in \mathrm{Q}(V)$, where $\mathrm{Q}(V)$ is the quotient field of $V$, so that $\omega : M^2 = \RR \theta$. Then $\omega = \theta M^2$ and hence $\theta$ is a unit of $V$. We compare
\[ \omega = \left<1, t^a, t^b \right> = \left<\theta, \theta f_1, \theta f_if_j\right>\]
and notice that $0 < a < b < e$ and $0< n_1 < n_i + n_j$. Then by Proposition~\ref{NS1}, we get that $n_1 = a$ and $n_i + n_j = b$, whence $b \geq 2a$. This is a contradiction.
\end{proof}

Let us give examples satisfying the conditions stated in Theorem \ref{th1-1}.

\begin{Example}{\rm Let $e \geq 7$ be an integer and set
\[ H = \left<e + i \mid 0 \le i \le e-2 ~\text{such~that}~ i \ne e-4, e-3 \right>.\]
 Then $\omega = \left<1, t^2, t^3 \right>$ and $r (\RR) =3$. More generally, let $a,b, e \in \Bbb Z$ such that $0 < a< b$, $b < 2a$, and $e \geq a+b+2$. We consider the numerical semigroup
 \[ H = \left< e+i \mid 0 \leq i \leq e-2~\text{such~that}~ i \neq e-b-1, e-a-1\right>.\] Then $\omega =\left <1, t^a, t^b \right>$ and $r (\RR) = 3$. These rings $\RR$ contain no ideals $L$ such that $L^n \cong \omega$ for some integer $n \geq 2$.
}\end{Example}

\section{Minimal values of bi-canonical degrees}
\noindent
Now we begin to examine the significance of the values of $\ddeg(\RR)$. We focus on rings of dimension $1$.

\subsubsection*{Almost Gorenstein rings} \label{aGor} 

First we recall the definition of almost Gorenstein rings 
 (\cite{BF97, GMP11, GTT15}).

\begin{Definition}{\rm (\cite[Definition 3.3]{GTT15})  A Cohen-Macaulay local ring $\RR$ with a canonical module $\omega$  is said to be an {\em 
almost Gorenstein} ring if there exists an exact sequence of $\RR$-modules
${\ds 0\rightarrow\RR\rightarrow \omega \rightarrow X\rightarrow  0}$
such that $\nu(X)=\e_0(X)$. In particular if $\RR$ has dimension one $X = (\RR/\m)^{r-1}$, $r = r(\RR)$.
}\end{Definition}

\begin{Theorem}\label{AGorddeg} Let $(\RR,\m)$ be a Cohen-Macaulay  local ring of dimension $1$
 with a canonical ideal $\C$.  If $\RR$ is almost Gorenstein
then 
 $\ddeg(\RR) =1$.
\end{Theorem}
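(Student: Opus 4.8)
The goal is to show that for a $1$-dimensional Cohen-Macaulay local ring that is almost Gorenstein, the bi-canonical degree equals $1$; by Proposition~\ref{resisddeg1} (and the boxed formula in Theorem~\ref{genddeg1}) it is equivalent to prove $\lambda(\RR/\mathrm{trace}(\C)) = 1$, i.e. $\mathrm{trace}(\C) = \m$. So the plan is to compute the trace ideal $\tau(\C) = \C \cdot \C^{*}$ explicitly from the defining exact sequence of an almost Gorenstein ring and show it is exactly the maximal ideal.

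\textbf{Key steps.} First I would invoke the structure of the defining sequence: since $\RR$ is almost Gorenstein of dimension one, there is an exact sequence $0 \to \RR \to \C \to X \to 0$ with $X = (\RR/\m)^{r-1}$, where $r = r(\RR)$, after normalizing $\C$ so that the map $\RR \to \C$ is $1 \mapsto a$ for a suitable element $a$ which (as in the proof of Proposition~\ref{1dimalmostg}) generates a minimal reduction $(a)$ of $\C$ with $\m\C = \m(a)$. This gives $\C/(a) \cong (\RR/\m)^{r-1}$, so $\m \C \subseteq (a)$ and in fact $\m\C = \m a$. Next I would use Proposition~\ref{bidual}(3): $\tau(\C) = a^{-1}\, \C \cdot \big((a):_\RR \C\big)$. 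From $\m\C \subseteq (a)$ we get $\m \subseteq (a):_\RR \C$, and since $(a):_\RR\C \neq \RR$ (as $\C$ is not principal when $\RR$ is not Gorenstein — and the Gorenstein case is trivial, $\ddeg = 0 \ne 1$... here one must be slightly careful: the statement presumably intends the non-Gorenstein case, or $r(\RR)\ge 2$) we conclude $(a):_\RR\C = \m$. Therefore $\tau(\C) = a^{-1}\,\C\cdot\m = a^{-1}\,\m\C = a^{-1}\,\m a = \m$, giving $\lambda(\RR/\tau(\C)) = 1$ and hence $\ddeg(\RR) = 1$ by Proposition~\ref{resisddeg1}.

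\textbf{Main obstacle.} The delicate point is the normalization of the canonical ideal and the identity $\m\C = \m a$. One must extract from the abstract almost-Gorenstein sequence a \emph{concrete} generator $a \in \C$ realizing $\C/(a) \cong (\RR/\m)^{r-1}$; this requires passing to $\RR(x)$ to assume an infinite residue field (harmless for lengths and traces, and the trace commutes with this faithfully flat base change), choosing $a$ to be the image of $1$ under a suitable splitting-compatible embedding, and checking $(a)$ is a reduction — exactly the Cayley–Hamilton argument already used in the proof of Proposition~\ref{1dimalmostg}. Once $\m\C \subseteq (a)$ is in hand the rest is the short trace computation above. A secondary subtlety is the boundary case $r(\RR) = 1$: there $\RR$ is Gorenstein, $\C$ is principal, and $\ddeg(\RR) = 0$, so the hypothesis should be read as implicitly excluding the Gorenstein case (or the statement interpreted with the understanding that ``almost Gorenstein, non-Gorenstein'' is meant); I would add a sentence handling this explicitly.
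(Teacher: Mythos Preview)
Your argument is correct, but it takes a different route from the paper. Both proofs begin by extracting from the almost Gorenstein sequence a minimal reduction $(a)$ of $\C$ with $\m\C\subseteq(a)$, hence $(a):\C=\m$. From there the paper computes the bidual directly: $\C^{**}=(a):\bigl((a):\C\bigr)=(a):\m$, and since $\lambda\bigl(((a):\m)/(a)\bigr)=r(\RR)$ while $\lambda(\C/(a))=r(\RR)-1$, one reads off $\lambda(\C^{**}/\C)=1$ immediately. You instead invoke Proposition~\ref{resisddeg1} and compute the trace: $\tau(\C)=a^{-1}\C\cdot\m=a^{-1}\m\C=a^{-1}\m a=\m$, which requires the sharper identity $\m\C=\m a$ (not just $\m\C\subseteq(a)$) and hence the Cayley--Hamilton\,/\,non-DVR argument from Proposition~\ref{1dimalmostg}. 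The paper's approach is a bit shorter and self-contained, needing only the containment $\m\C\subseteq(a)$ and the length count; your approach trades that for the trace formula, which ties the result nicely to the nearly Gorenstein viewpoint of \cite{Herzog16}. Your remark on the Gorenstein boundary case is well taken and worth stating explicitly.
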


\begin{proof}
In dimension $1$,
 from
 \[
  0 \rar (c) \lar \C \lar X \rar 0, \]
$X$ is a vector space $k^{r-1}$, $r=r(\RR)$.
 To determine
 $\C^{**}$ apply
  $\Hom(\cdot, (c))$ to the above exact sequence to get
$\Hom(\C, (c)) = \m$. On the other hand, by Proposition~\ref{bidual}, $\C^{**} = \Hom(\m, (c))=L_0$,   the socle of $\RR/(c)$ [which
is generated by $r$ elements], 
 properly containing $\C$  that is
$\C^{**} = L$, the socle of $\C$. Therefore $\ddeg(\RR) = \lambda(L/C) = 1$.
\end{proof}

The example below shows that the
converse does not holds true.

\begin{Example}{\rm Consider the monomial ring (called to our attention by Shiro Goto)
  $\RR = \mathbb{Q}[t^5, t^7, t^9], 
 \m = (x,y, z) $. We have a presentation $\RR = \mathbb{Q}[x,y,z]/P$, with $P = (y^2-xz, x^5-yz^2, z^3-x^4 y)$. Let
  us examine some properties of $\RR$. For simplicity we denote the images of $x,y,z$ in $\RR$ 
   by the  same symbols.
  An explanation for these calculations can be found in the proof of Theorem~\ref{preaGor}. 

\begin{enumerate}[(1)]
\item Let $\C = (x,y)$. A calculation with {\em Macaulay2} shows that if $\D = \C:\m$, then $\lambda(\D/\C) = 1$. Therefore
$\C$ is a canonical ideal by  Corollary~\ref{recogcor}.  

\smallskip

\item $(c):\C \neq \m$ so $\RR$ is not almost Gorenstein. However $\C^{**} = (c):  [(c):\C]$ satisfies  [by another {\em Macaulay2} calculation]
$\lambda(\C^{**}/\C) = 1$, so $\C^{**} = L$. This shows that $\ddeg(\RR) = 1$.

\smallskip

\item This example shows that $\ddeg(\RR) = 1$ holds [for dimension one] in a larger class rings than almost Gorenstein rings.

\smallskip

\item Find $\red(\C)$  and $\red(\D)$ for this example. Need the minimal reductions. 

\end{enumerate}
 }\end{Example} 

\subsubsection*{Goto rings} We now examine the significance of a minimal value for $\ddeg(\RR)$. Suppose $\RR$ is not a Gorenstein ring.

\begin{Definition}\label{Gotoring} {\rm  A Cohen-Macaulay local ring $\RR$ of dimension $d$ is a {\em Goto ring} if it has a canonical
ideal and $\ddeg(\RR) =1$.
}\end{Definition}

\begin{Questions}{\rm
\begin{enumerate}[(1)]

\item  What other terminology  should  be used?    Nearly
 Gorenstein ring  has already been used in \cite{Herzog16}. We will examine its relationship to Goto rings.

\smallskip

\item  Almost Gorenstein rings of dimension one have $\red(G)=2$. What  about these rings?

\smallskip

\item 
 What are the properties of the Cohen-Macaulay module $X$
defined by
\[ 0 \rar (c) \lar \C \lar X \rar 0,\]
{\em nearly Ulrich or pre-Ulrich bundles}?

\smallskip

\item $\lambda(\C^{**}/\C)=1$ implies that
\[ \C^{**}/\C = L/\C= \C:\m/\\C\simeq \RR/\m.\]

\item If $L_0= (c):\m$, the socle of $(c)$, is equal to  $L$, then $\C/(c)$ is a vector space, so $\RR$ is almost
Gorenstein and conversely.

\smallskip

\item In all cases
$L^2 = \C L$, $\m L = \m \C$, so $ \C/\m \C \hookrightarrow 
L/\m L $ (\cite[Theorem 3.7]{CHV}). Therefore if $\RR$ is not almost Gorenstein, 
  $c$ cannot be a minimal generator of $L$ and thus
  $L = (\C, \alpha)$, $\nu(L) = r+1$, with $\alpha\in L_0$, or $L = (L_0, \beta)$, 
  with $\beta \in \C$.
  
  \smallskip
  
  \item
  This says that 
  \begin{eqnarray*}
  L &=& (x_1, \ldots, x_r, \beta), \quad x_i \in (c):\m,\\
  \C & = & (c,  x_2, \ldots, x_{r-2}, \beta),  \quad c \in \m L\\
  \C^{*} & = &
  c^{-1}[(c):\C]    \\
  \C^{**} & = & (c):((c)): \C) = L\\  
  \C^{*} &=& \C^{***} = c^{-1} [(c):L] \\
  (c):\C & = & (c):L = (c): \beta.
  \end{eqnarray*} 
\end{enumerate}
}\end{Questions}

Let us attempt to get $\ddeg(\RR)$ for $\RR$ almost Gorenstein but of dimension $\geq 2$. Note that $\cdeg(\RR) = r-1$. Assume $d=r=2$: can we complete the calculation?

\begin{Proposition}\label{AGorddeg2} Let $(\RR,\m)$ be a Cohen-Macaulay  local ring 
 with a canonical ideal $\C$. 
\begin{enumerate}[{\rm (1)}]

\item From the 
\[ 0 \rar (c) \lar \C \lar X\rar 0\]
applying $\Hom(\cdot, (c))$, we get
\[ 0\rar  \Hom(\C, (c)) \lar \RR \lar  \Ext(X, (c)) \lar \Ext(\C, (c)) \rar 0.\]

\item The image of $\Hom(\C, (c)) $ in $\RR$ is a proper ideal $N$, and so $\RR/N$ is a submodule of $\Ext(X, (c))$ which is annihilated by
$\ann(X)$ $($which contains
$(c)$$)$.  We note $($see \cite[p. 155]{Kapbook}$)$ that
\begin{eqnarray*}
\Ext(X, (c))  \simeq
& \Ext^1_{\RR}(X, \RR) = \Hom_{\RR/(c))} (X, \RR/(c)) = ((c):N)/(c), \quad \mbox{\rm if } r=2.
\end{eqnarray*}

\smallskip

\item Suppose $\C$ is equimultiple  and $\ddeg(\RR) = 1$. Then $\RR$ is Gorenstein at all  primes of codimension one
 with one exception, call it $\p$. This means 
that $\C^{**}/\C$ is $\p$-primary and 
\[ \deg(\C^{**}/\C)= \ddeg(\RR_\p)
\cdot \deg(\RR/\p)=1.\]
\end{enumerate}
\end{Proposition}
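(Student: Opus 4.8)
The plan is to read (1) and (2) off the long exact $\Ext$-sequence attached to the defining short exact sequence $0 \rar (c) \lar \C \lar X \rar 0$, and to deduce (3) directly from the additivity of the bi-canonical degree established in Theorem~\ref{genddeg1}. For (1) I would apply $\Hom(\cdot,(c))$ to that sequence: since $c$ is a nonzerodivisor, $(c)\cong\RR$ is free, so $\Ext^1((c),(c))=0$ and $\Hom((c),(c))=\RR$, while $\Hom(X,(c))=0$ because $\ann(X)\supseteq(c)$ contains a nonzerodivisor and $(c)$ sits inside the positive-depth module $\RR$; the long exact sequence then collapses exactly to the displayed four-term sequence, with $\Ext$ read as $\Ext^1_{\RR}$.

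For (2) I would identify, via $(c)\cong\RR$, the restriction map $\Hom(\C,(c))\to\Hom((c),(c))=\RR$ with the inclusion of fractional ideals $c\,(\RR:_Q\C)\subseteq\RR$, whose image is $N=(c):_{\RR}\C$ (the containment in $\RR$ uses $c\in\C$). Assuming $\RR$ is not Gorenstein, which is legitimate since $\ddeg(\RR)=1\neq 0$, we get $N\neq\RR$, for $N=\RR$ would force $\C=(c)$; hence $\RR/N$ is a nonzero submodule of $\Ext^1(X,(c))\cong\Ext^1(X,\RR)$ annihilated by $\ann(X)\supseteq(c)$. A Rees-type change of rings then gives $\Ext^1(X,\RR)\cong\Hom_{\RR/(c)}(X,\RR/(c))$. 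When $r(\RR)=2$ the module $X=\C/(c)$ is cyclic, since $\nu(X)=\nu(\C)-1=r(\RR)-1=1$, so $X\cong (\RR/(c))/\overline{N}$ with $\overline{N}=\ann_{\RR/(c)}(X)$, whence $\Hom_{\RR/(c)}(X,\RR/(c))=0:_{\RR/(c)}\overline{N}=\bigl((c):_{\RR}N\bigr)/(c)$, which by Proposition~\ref{bidual}(2) equals $\C^{**}/(c)$.

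For (3) I would invoke Theorem~\ref{genddeg1}: with $\C$ equimultiple and $\ddeg(\RR)=1$,
\[ 1=\ddeg(\RR)=\deg(\C^{**}/\C)=\sum_{\height \p=1}\ddeg(\RR_{\p})\deg(\RR/\p), \]
a finite sum of nonnegative integers with $\deg(\RR/\p)\geq 1$ for every $\p$ and, again by that theorem, $\ddeg(\RR_{\p})=0$ precisely when $\RR_{\p}$ is Gorenstein. Hence exactly one summand is nonzero; call its prime $\p$. For that $\p$, the equality $\ddeg(\RR_{\p})\deg(\RR/\p)=1$ forces $\ddeg(\RR_{\p})=\deg(\RR/\p)=1$, so $\RR_{\p}$ is not Gorenstein, while $\RR_{\q}$ is Gorenstein for every other height-one prime $\q$. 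Finally $\C^{**}/\C$ embeds in the Cohen-Macaulay module $\RR/\C$ of dimension $d-1$, so all its associated primes have height one; and at a height-one prime $\q\neq\p$ one has $\ddeg(\RR_{\q})=\lambda(\RR_{\q}/\C_{\q})-\lambda(\RR_{\q}/\C^{**}_{\q})=0$, i.e. $\C_{\q}=\C^{**}_{\q}$, so $(\C^{**}/\C)_{\q}=0$. Thus $\Ass(\C^{**}/\C)=\{\p\}$, so $\C^{**}/\C$ is $\p$-primary, and the associativity formula for multiplicities yields
\[ \deg(\C^{**}/\C)=\lambda_{\RR_{\p}}\bigl((\C^{**}/\C)_{\p}\bigr)\deg(\RR/\p)=\ddeg(\RR_{\p})\deg(\RR/\p)=1. \]

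The deduction of (3) is essentially forced once Theorem~\ref{genddeg1} is in hand, and (1) is routine homological algebra; the delicate point, and the step I expect to be the main obstacle, is the chain of isomorphisms in (2) — specifically checking the hypotheses for the change-of-rings identity $\Ext^1(X,\RR)\cong\Hom_{\RR/(c)}(X,\RR/(c))$ and the exact role of the hypothesis $r(\RR)=2$ in forcing $X$ to be cyclic, which is what makes the closing equality $\Hom_{\RR/(c)}(X,\RR/(c))=\bigl((c):_{\RR}N\bigr)/(c)$ valid.
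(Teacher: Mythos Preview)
Your proof is correct and follows the same line the paper sketches inside the statement itself (the paper gives no separate proof block for this proposition; the reasoning is embedded in the assertions and the cross-reference to Kaplansky). You have correctly unpacked the long exact $\Ext$-sequence for (1), the Rees change-of-rings and the cyclicity of $X$ when $r(\RR)=2$ for (2), and the additivity formula of Theorem~\ref{genddeg1} for (3).

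One minor remark on presentation: in your argument for (2) you justify $N\neq\RR$ by invoking $\ddeg(\RR)=1$, but that hypothesis belongs only to part (3). The properness of $N$ in (2) is really the standing assumption, carried over from the surrounding discussion of Goto rings, that $\RR$ is not Gorenstein; equivalently $N=(c):_{\RR}\C=\RR$ would force $\C=(c)$. This is exactly what you say, just with a misplaced pointer to part (3). Everything else---including your identification $\ann(X)=(c):\C=N$ so that $((c):N)/(c)=\C^{**}/(c)$ via Proposition~\ref{bidual}---is on target.
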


\begin{Remark}{\rm 
What sort of modules are $\C/(c)$ and $\C^{**}/\C$? The first we know is Cohen-Macaulay, how about the second?

\begin{enumerate}[(1)]

\item  $\C^{**} = \C:\p$: both are divisorial ideals that agree in codimension one.

\smallskip

\item If $\dim \RR=2$, $C^{**}/\C$ is Cohen-Macaulay of dimension one and multiplicity one so what sort of module is it? It is an $\RR/\p$-module
of rank one and $\RR/\p$ is a discrete valuation domain
 so $\C^{**}/\C \simeq \RR/\p$. 

\smallskip

\item In all dimensions, $\C^{**}/\C$ has only $\p$ for associated prime and is a torsion-free $\RR/\p$-module of rank one. Thus it is 
isomorphic to an ideal of $\RR/\p$.  
$\C^{**}/\C$ has  also the condition $S_2$ of Serre from the exact sequence
\[ 0 \rar \C^{**}/\C \lar \RR/\C \lar \RR/\C^{**} \rar 0,\] 
 $\RR/\C^{**}$ has the condition $S_1$.
 If $\RR$ is complete and contains a field by \cite{Nagata} $\RR/\p$ is a regular
local ring and therefore $\C^{**}/\C \simeq \RR/\p$.

\smallskip

\item Some of these properties are stable under many changes of the rings. Will check for generic hyperplane section soon.

\end{enumerate}
}\end{Remark}

\begin{Questions}{\rm
 \begin{enumerate}[(1)]
\item How to pass from $\ddeg(\AA)$ of a graded algebra $\AA$ to $\ddeg(\BB)$ of one of its Veronese subalgebras? 

  \smallskip
\item If $\SS$ is a finite injective extension of $\RR$, is $\ddeg(\SS) \leq [\SS:\RR]  \cdot \ddeg(\RR)$?
\end{enumerate}
}\end{Questions}

\section{Change of rings} 
\noindent
Let $\varphi:\RR \rar \SS$ be a homomorphism of Cohen-Macaulay rings. We examine
a few cases of the relationship between $\cdeg(\RR)$ and $\cdeg(\SS)$ induced by $\varphi$. We skip polynomial, power series and
completion since flatness makes it straightforward.

\medskip

\subsubsection*{Finite extensions}
If $\RR \rar \SS$ is a finite injective homomorphism of Cohen-Macaulay rings 
and $\C$ is a canonical ideal of $\RR$, then $\D = \Hom(\SS, \C)$ is a canonical module for $\SS$,
according to \cite[Theorem 3.3.7]{BH}.
 Recall how $\SS$ acts on
$\D$: If $f\in \D$ and $a, b\in \SS$, then $a\cdot f(b) = f(ab)$.

\medskip

\subsubsection*{Augmented rings}
A case in point is that of the so-called augmented extensions.
Let $(\RR,\m)$ be an $1$-dimensional Cohen-Macaulay local ring with a canonical ideal. Assume   that $\RR$ is not a valuation domain.
Suppose $\AA$ is  the augmented ring $\RR \ltimes \m$. 
That is, $\AA = \RR\oplus \m \epsilon$, $\epsilon^2 = 0$. [Just to keep the components apart in computations we use $\epsilon$ as a place
holder.]
\medskip

 Let $(c)$ be a minimal reduction of the canonical ideal $\C$. We may assume that $\C\subset \m^2$ by replacing $\C$ by $c\C$ if necessary.
Then a canonical module $\mathcal{D}$ of $\AA$ is
 $\mathcal{D} = \Hom_{\RR} (\AA, \C)$. Let us identify
 $\mathcal{D}$ to an ideal of $\AA$. For this we follow \cite{blue1}.
 
\medskip
 
Let $\RR$ be a commutative ring with total quotient ring $Q$
and let $\mathcal {F}$ denote the set of $\RR$-submodules of $Q$.
 Let $M,K \in \mathcal {F}$. Let $M^\vee = \Hom_\RR(M, K)$ and let $$\AA = \RR \ltimes M$$ denote the idealization of $M$ over $\RR$.
  Then the $\RR$-module $M^\vee \oplus K$  becomes an $\AA$-module
   under the action $$(a,m)\circ (f,x) = (af, f(m)+ax),$$ where $(a,m) \in \AA$ and $(f,x) \in M^\vee \times K$. We notice that the canonical homomorphism 
$$\varphi : \Hom_\RR(\AA,K) \to  M^\vee \times K$$
such that $\varphi(f) = (f\circ \lambda, f(1))$ is an $\AA$-isomorphism, where $\tau  : M \to \AA, \tau (m) = (0,m)$, and that $K:M \in 
\mathcal F$ and $(K:M) \times K \subseteq Q  \ltimes Q$ is an $\AA$-submodule of $Q \ltimes Q$, the idealization of $Q$ over itself. When $Q{\cdot}M = Q$, identifying $\Hom_\RR(M,K)=K:M$, we therefore have a natural isomorphism 
$$\Hom_\RR(\AA,K) \cong (K:M) \times K$$
of $\AA$-modules. 
\medskip

Using this observation, setting $K = \C$, $M = \m$,
 we get the following

 \[ \D =
   \Hom_{\RR}(\RR \oplus \m\epsilon, \C) =   \Hom_{\RR}(\m, \C) \oplus \C\epsilon= L + \C\epsilon
\] because $\C :_Q \m = \C:_{\RR} \m \subset \C:_Q \C = \RR$.
Denote $L=\Hom_{\RR}(\m, \C)$. Then  $L \simeq \C:\m\subset \m$ so that $\mathcal{D}$ is an ideal of $\AA$.

\medskip

Let us   determine $\mathcal{D}^{**}$. The total ring of fractions of $\AA$ is $Q \ltimes Q\epsilon  $. If 
$(a,b \epsilon) \in Q\ltimes Q\epsilon$ is in
\[ \Hom(\mathcal{D}, \AA)= \Hom((\C\oplus  L\epsilon ), (\RR\oplus  \m\epsilon)) , 
\] then
$a \C \subset \RR$, $a L \subset \m $ and $a\C \neq \RR$ as $\RR$ is not Gorenstein. Thus $a\in \C^{*}$. On the other
hand, $b  \C\subset \m$. Thus $b\in \C^{*} $ and conversely. Thus
 \[ \mathcal{D}^{*} = \C^{*} \oplus \C^{*}.\]
Suppose $(a,b\epsilon)\in \D^{**}$,
\[\D^{**} = \Hom((\C^{*} \oplus \C^{*}\epsilon ), (\RR\oplus \m \epsilon )).\]
Then 
$a\C^{*} \subset \RR$ and $\C^{**} \neq \RR$.   
In turn $b\epsilon 
\C^{*} \subset \m \epsilon $ and so $b\in \C^{**} $, and conversely. Thus
\[ \D^{**} = \C^{**} \oplus \C^{**} \epsilon .\]

Let us summarize this calculation.
This gives
\[ \D^{**}/\D = \C^{**}/\C \oplus \C^{**}\epsilon /  L \epsilon .\]

\begin{Proposition} Let $(\RR, \m)$ be a Cohen-Macaulay local ring with a canonical ideal and let 
$\AA = \RR\ltimes \m$. Then
\[ 
\boxed{ \ddeg(\AA) = 2\, \ddeg (\RR) -1.
}\]
In particular if $\RR$ is a Goto ring 
then $\AA$ is also a Goto ring.
\end{Proposition}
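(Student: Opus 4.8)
The plan is to reduce the identity to the length computations already carried out just above the statement, and then to pin down one extra length using the characterization of canonical ideals in dimension one. Recall from those computations that $\AA=\RR\ltimes\m$ is again a one-dimensional Cohen-Macaulay local ring, that $\D=\C\oplus L\epsilon$ (with $L=\Hom_{\RR}(\m,\C)$) is a canonical ideal of $\AA$, and that $\D^{*}=\C^{*}\oplus\C^{*}\epsilon$, $\D^{**}=\C^{**}\oplus\C^{**}\epsilon$, whence $\D^{**}/\D=\C^{**}/\C\oplus\C^{**}\epsilon/L\epsilon$; here we keep the standing hypothesis that $\RR$ is not Gorenstein (this is exactly what made the computation of $\D^{**}$ go through, in particular $L\subseteq\C^{**}$). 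By Theorem~\ref{genddeg1}, in dimension one $\ddeg(\AA)=\lambda_{\AA}(\D^{**}/\D)$, so it suffices to evaluate this length.

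First I would describe the $\AA$-module structure of $\D^{**}/\D$. The subgroup $\C^{**}\epsilon/L\epsilon$ is an $\AA$-submodule of $\D^{**}/\D$ on which $\m\epsilon$ acts by $0$ (since $\epsilon^2=0$), so it is an $\RR$-module and $\lambda_{\AA}(\C^{**}\epsilon/L\epsilon)=\lambda_{\RR}(\C^{**}/L)$; the quotient is $\C^{**}/\C$, again with trivial $\m\epsilon$-action, hence an $\RR$-module of $\AA$-length $\lambda_{\RR}(\C^{**}/\C)$. Since $\AA$ and $\RR$ have the same residue field, additivity of length along the short exact sequence $0\rar \C^{**}/L\lar \D^{**}/\D\lar \C^{**}/\C\rar 0$ gives
\[ \ddeg(\AA)=\lambda_{\RR}(\C^{**}/\C)+\lambda_{\RR}(\C^{**}/L)=\ddeg(\RR)+\lambda_{\RR}(\C^{**}/L), \]
using Theorem~\ref{genddeg1} for the first summand.

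It remains to show $\lambda_{\RR}(\C^{**}/L)=\ddeg(\RR)-1$, i.e. $\lambda_{\RR}(L/\C)=1$. Under the natural identification $L=\Hom_{\RR}(\m,\C)=\C:_{Q}\m$, the fact that $\C$ is a canonical ideal lets us invoke Theorem~\ref{corecan}(2): $\C:_{Q}\m=\C:_{\RR}\m=(\C,s)$ for some $s\in\RR$. Since $\C$ is a proper $\m$-primary ideal, $\RR/\C$ is a nonzero Artinian ring and so has nonzero socle, giving $\C\subsetneq\C:_{\RR}\m$; being generated over $\C$ by the single element $s$, this forces $\lambda_{\RR}(L/\C)=1$. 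Hence $\lambda_{\RR}(\C^{**}/L)=\lambda_{\RR}(\C^{**}/\C)-\lambda_{\RR}(L/\C)=\ddeg(\RR)-1$, and combining with the displayed formula yields $\ddeg(\AA)=2\,\ddeg(\RR)-1$. For the last assertion: if $\RR$ is a Goto ring then $\ddeg(\RR)=1$, so $\ddeg(\AA)=1\neq 0$; thus $\AA$ is not Gorenstein and, being a one-dimensional Cohen-Macaulay local ring with a canonical ideal, it is a Goto ring.

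\textbf{Main obstacle.} The formal part (the short exact sequence, additivity of length, $\AA$-length versus $\RR$-length) is routine once the inclusions $\C\subseteq L\subseteq\C^{**}$ and the identification of $\D^{**}$ are granted — and these are exactly the points where non-Gorensteinness of $\RR$ is essential. The only genuinely nontrivial input is $\lambda_{\RR}(L/\C)=1$, which is not a general fact about $\m$-primary ideals but is precisely the defining feature of canonical ideals in dimension one (Theorem~\ref{corecan}(2)); that is where I expect the real content to sit.
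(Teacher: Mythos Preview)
Your proposal is correct and follows exactly the route the paper takes: the paper's argument is entirely contained in the discussion preceding the Proposition (identifying $\D$, computing $\D^{*}$ and $\D^{**}$, and displaying $\D^{**}/\D$), and the Proposition is then stated as a summary. You supply the one step the paper leaves implicit, namely $\lambda_{\RR}(L/\C)=1$, and you locate it correctly in Theorem~\ref{corecan}(2) (irreducibility of $\C$). One cosmetic remark: the paper actually has $\D=L+\C\epsilon$ (not $\C\oplus L\epsilon$), so the two summands of $\D^{**}/\D$ are swapped relative to what you wrote---the paper's displayed formula for $\D^{**}/\D$ already contains this transposition---but since only the sum of the two lengths matters, nothing in the argument is affected.
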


By comparison, according to \cite[Theorem 6.7]{blue1}, $\cdeg(\AA) = 2\, \cdeg(\RR) + 2$, so that applying to Example~\ref{C1C2} we get
\[ \cdeg(\AA) = \ddeg(\AA) + 3.\]

\subsubsection*{Products} Let $k$ be a field, and
let $\AA_1, \AA_2$ be two finitely generated Cohen-Macaulay $k$-algebras. Let us look at the canonical degrees of the
{\em product} $\AA = \AA_1 \otimes_k \AA_2$. 

\medskip

\noindent As a rule, if $\BB_i, \CC_i$  are $\AA_i$-modules, we use the natural isomorphism
\[ \Hom(\BB_1\otimes \BB_2, \CC_1\otimes \CC_2) =
 \Hom_{\AA_1\otimes_k \AA_2}(\BB_1\otimes_k \BB_2, \CC_1\otimes_k \CC_2),\]
 which works out to be
 \[ \Hom_{\AA_1} (\BB_1, \CC_1) \otimes_k \Hom_{\AA_2}(\BB_2, \CC_2).\]

\noindent If $\AA_i$, $i=1,2$, are localizations [of fin gen $k$-algebras] then $\BB$ is an appropriate localization. [If $\m_i$ are the maximal ideals of $\AA_i$,
pick primes $M_i$ in $\BB_i$ and a prime $\BB$ over both $\m_i$.]
If $\BB_i$ are finite $\AA_i$-modules
and $\FF_i$ are minimal resolutions over $\AA_i$ [or over $\SS_i$, a localization in the next item], then
$\FF_1 \otimes_k \FF_2$ is a resolution whose
entries lie in appropriate primes. 

\medskip

\noindent  If $\SS_i \rar \AA_i$, $i=1,2$, are presentations of $\AA_i$, $\SS= \SS_1\otimes \SS_2\rar \AA_1\otimes \AA_2 = \AA$
 gives a presentation of $\AA$
  and from it we gather the 
invariants [all $\otimes$ over $k$].

\begin{Proposition} Let $\AA_i$, $i=1,2$, be as above. Then
\begin{enumerate}[{\rm (1)}]

\item $\AA$ is Cohen-Macaulay

\smallskip

\item 
$\C = \C_1 \otimes \C_2$,
$\C^{**} = \C_1^{**} \otimes \C_2^{**}$,
 $r(\AA) = r(\AA_1) \cdot r(\AA_2)$

\smallskip

\item If $(c_i)$ is a minimal reduction of $\C_i$, $i=1,2$, then $(c)= (c_1)\otimes (c_2)$ is a minimal reduction for $\C$ and
\[ \C/(c) = \C_1/(c_1) \otimes \C_2 \oplus C_1 \otimes \C_2/(c_2),\]
\[ \C^{**}/\C= \C_1^{**}/\C_1 \otimes \C_2 \oplus C_1 \otimes \C_2^{**}/\C_2,\]
\[ \cdeg(\AA) = \cdeg(\AA_1) \cdot \deg(\AA_2) + \deg(\AA_1) \cdot \cdeg(\AA_2),\]
\[ \ddeg(\AA) = \ddeg(\AA_1) \cdot \deg(\AA_2) + \deg(\AA_1) \cdot \ddeg(\AA_2).\]

\smallskip

\item   If $\AA_i$ is almost
Gorenstein, that is $\cdeg(\AA_i) = r(\AA_i) - 1$, then 
\[ \cdeg(\AA) = (r_1-1)\cdot \deg(\AA_2) + \deg(\AA_1) \cdot (r_2-1).\]
Suppose further, $\AA_1= \AA_2$, that is $\AA$ is the square. Then
\[ \cdeg(\AA) = 2 \deg(\AA_1) (r_1-1). \]
In this case, $\AA$ is almost
Gorenstein if $\cdeg(\AA) = r_1^2 - 1$, that is
\[ 2 \deg(\AA_1) = r_1 + 1.\]
\end{enumerate}

\end{Proposition}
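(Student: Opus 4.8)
My plan rests on two facts recorded in the discussion preceding the statement: the natural identification $\Hom_\AA(\BB_1\otimes_k\BB_2,\CC_1\otimes_k\CC_2)\cong\Hom_{\AA_1}(\BB_1,\CC_1)\otimes_k\Hom_{\AA_2}(\BB_2,\CC_2)$ for finite modules, and the fact that if $\FF_i\to\AA_i$ is a minimal free resolution over a polynomial presenting ring $\SS_i$, then $\FF_1\otimes_k\FF_2\to\AA$ is a minimal free resolution over $\SS_1\otimes_k\SS_2$. From the second fact together with Auslander--Buchsbaum, $\AA$ is Cohen--Macaulay, which is (1); computing $\omega_\AA$ from the tensor resolution and applying the K\"unneth formula over the field $k$ gives $\omega_\AA\cong\omega_{\AA_1}\otimes_k\omega_{\AA_2}$, and as the $\C_i$ are genuine ideals, $\C_1\otimes_k\C_2$ is an ideal of $\AA$ realizing $\omega_\AA$, so I may fix the canonical ideal $\C=\C_1\otimes_k\C_2$ (harmless, since the canonical degrees do not depend on the chosen canonical ideal, by Theorems~\ref{gencdeg1} and~\ref{genddeg1}). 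One application of the $\Hom$-identity gives $\C^{*}=\C_1^{*}\otimes_k\C_2^{*}$ and a second gives $\C^{**}=\C_1^{**}\otimes_k\C_2^{**}$; and since $\C/\m_\AA\C\cong(\C_1/\m_1\C_1)\otimes_k(\C_2/\m_2\C_2)$ has $k$-dimension $\nu(\C_1)\,\nu(\C_2)$, we get $r(\AA)=r(\AA_1)\,r(\AA_2)$. This settles (1) and (2).

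For (3) I would first verify that $c:=c_1\otimes c_2$ generates a minimal reduction of $\C$. Using $(\C_1\otimes_k\C_2)^n=\C_1^n\otimes_k\C_2^n$ and $c\,(\C_1^n\otimes_k\C_2^n)=(c_1\C_1^n)\otimes_k(c_2\C_2^n)$, the relations $\C_i^{n+1}=c_i\C_i^n$ (valid for $n\gg0$) give $\C^{n+1}=c\,\C^n$ for $n\gg0$; since $\AA/\C$ is Cohen--Macaulay of dimension $d-1$ (as for any canonical ideal), the ideal $\C$ has height, hence analytic spread, $1$, so $(c)$ is indeed a minimal reduction and $\C$ is equimultiple. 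Next I would set $A=(c_1)\otimes_k\C_2$ and $B=\C_1\otimes_k(c_2)$, viewed as submodules of $\C$, and use the tautological exact sequence $0\to\C/(A\cap B)\to\C/A\oplus\C/B\to\C/(A+B)\to0$. The one non-formal input is the flatness identity $(M_1'\otimes_kM_2)\cap(M_1\otimes_kM_2')=M_1'\otimes_kM_2'$ inside $M_1\otimes_kM_2$, which gives $A\cap B=(c)$; right-exactness of $\otimes_k$ identifies $\C/A\cong(\C_1/(c_1))\otimes_k\C_2$, $\C/B\cong\C_1\otimes_k(\C_2/(c_2))$, and $\C/(A+B)\cong(\C_1/(c_1))\otimes_k(\C_2/(c_2))$, the last of dimension $d-2$. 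The same argument with $\C_i^{**}$ in place of $(c_i)$ (using $\C_i\subseteq\C_i^{**}$) produces the analogous sequence for $\C^{**}/\C$. Now I would invoke the multiplicativity of multiplicity under $\otimes_k$ — from $\gr_{\m_\AA}(M_1\otimes_kM_2)\cong\gr_{\m_1}(M_1)\otimes_k\gr_{\m_2}(M_2)$ one reads off $\e_0(\m_\AA,M_1\otimes_kM_2)=\e_0(\m_1,M_1)\,\e_0(\m_2,M_2)$ — together with $\deg(\C_i)=\deg(\C_i^{**})=\deg(\AA_i)$ (these being ideals of full support) and additivity of $\e_0$ in top dimension (the dimension-$(d-2)$ correction terms being invisible to $\e_0$), to conclude
\[ \cdeg(\AA)=\deg(\C/(c))=\cdeg(\AA_1)\deg(\AA_2)+\deg(\AA_1)\cdeg(\AA_2), \]
\[ \ddeg(\AA)=\deg(\C^{**}/\C)=\ddeg(\AA_1)\deg(\AA_2)+\deg(\AA_1)\ddeg(\AA_2), \]
where $\cdeg(\AA)=\deg(\C/(c))$ comes from Theorem~\ref{gencdeg1} in the equimultiple case and $\ddeg(\AA)=\deg(\C^{**}/\C)$ is Theorem~\ref{genddeg1}.

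Part (4) is then pure bookkeeping: substituting $\cdeg(\AA_i)=r(\AA_i)-1$ into the product formula gives $\cdeg(\AA)=(r_1-1)\deg(\AA_2)+\deg(\AA_1)(r_2-1)$, which for $\AA_1=\AA_2$ collapses to $2\deg(\AA_1)(r_1-1)$; since $r(\AA)=r_1^2$ and $\C$ is equimultiple, Proposition~\ref{almostg} says that $\AA$ is almost Gorenstein once $\cdeg(\AA)=r(\AA)-1=r_1^2-1$, and cancelling the factor $r_1-1$ (we may assume $\AA_1$ is not Gorenstein) turns this into $2\deg(\AA_1)=r_1+1$.

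The step I expect to be the real obstacle is (3). The direct-sum decompositions of $\C/(c)$ and $\C^{**}/\C$ asserted there are not literal equalities: there is a genuine correction term $(\C_1/(c_1))\otimes_k(\C_2/(c_2))$, respectively $(\C_1^{**}/\C_1)\otimes_k(\C_2^{**}/\C_2)$, and the argument must be arranged so that this term is visibly of dimension $d-2$ and hence contributes nothing to $\e_0$. Pinning down that every module occurring in the two filtrations really has the expected top dimension $d-1$ — so that degree-additivity and the multiplicativity of $\e_0$ may legitimately be applied — is where the care is needed.
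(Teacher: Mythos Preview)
Your argument is correct, and in fact more careful than the paper's own treatment: the paper states this proposition without proof, relying only on the two facts recorded just before it (the Hom identity and the tensor of minimal resolutions) and leaving the reader to assemble the pieces. Your write-up does exactly that assembly.

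Your most valuable observation is the one you flag yourself at the end: the displayed ``decompositions'' $\C/(c)=\C_1/(c_1)\otimes\C_2\oplus\C_1\otimes\C_2/(c_2)$ and its $\ddeg$ analogue are not literal module isomorphisms. What one actually gets is the short exact sequence you wrote down,
\[
0\to \C/(c)\to\bigl(\C_1/(c_1)\otimes_k\C_2\bigr)\oplus\bigl(\C_1\otimes_k\C_2/(c_2)\bigr)\to \C_1/(c_1)\otimes_k\C_2/(c_2)\to 0,
\]
with the rightmost term of dimension $d-2$, hence invisible to $\e_0$ in dimension $d-1$. (Equivalently, the two-step filtration $\C_1\otimes\C_2\subset\C_1^{**}\otimes\C_2\subset\C_1^{**}\otimes\C_2^{**}$ gives successive quotients $(\C_1^{**}/\C_1)\otimes\C_2$ and $\C_1^{**}\otimes(\C_2^{**}/\C_2)$, which differs from the stated summands only in a factor $\C_i$ versus $\C_i^{**}$, again a dimension-$(d-2)$ discrepancy.) So the degree formulas are exactly right even though the module-level statement is, as written, imprecise. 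Your identification of this as the genuine content of part (3) is on the mark; the remaining steps---multiplicativity of $\e_0$ under $\otimes_k$ via $\gr_{\m_\AA}\cong\gr_{\m_1}\otimes_k\gr_{\m_2}$, and the bookkeeping in part (4)---are routine.
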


\begin{Questions}{\rm
 \begin{enumerate}[(1)]
\item How to pass from $\ddeg(\AA)$ of a graded algebra $\AA$ to $\ddeg(\BB)$ of one of its Veronese subalgebras?

\smallskip

\item If $\SS$ is a finite injective extension of $\RR$, is $\ddeg(\SS) \leq [\SS:\RR]  \cdot \ddeg(\RR)$?
\end{enumerate}
}\end{Questions}

\subsubsection*{Hyperplane sections} \label{hpsection} A  desirable comparison is that 
  between $\ddeg(\RR)$ and 
$\ddeg(\RR/(x)$ for appropriate regular element 	$x$. [The so-called `Lefschetz type' assertion.] 
 We know that if $\C$ is a canonical module for $\RR$ then
$\C/x \C$ is a canonical module for $\RR/(x)$ with the same number of generators, so type is preserved under
specialization. However $\C/x \C$ may not be isomorphic to an ideal of $\RR/(x)$. Here is a case of good behavior. Suppose
$x$ is regular modulo $\C$. Then for the sequence
\[ 0 \rar \C \lar \RR \lar \RR/\C \rar 0,\]
we get the exact sequence
\[ 0 \rar \C/x\C  \lar \RR/(x) \lar \RR/(\C,x) \rar 0,\] so the canonical module
$\C/x\C$ embeds in $\RR/(x)$. We set $\SS = \RR/(x)$ and $\D = (\C, x)/(x)$ for the image of $\C$ in $\SS$.  We need to compare
$ \ddeg(\RR) = \deg(\RR/\C) - \deg(\RR/\C^{**})$
and $ \ddeg(\SS) = \deg(\SS/\D) - \deg(\SS/\D^{**})$.

\medskip

We don't know how to estimate the last term.
 We can choose $x$ so that $\deg(\RR/\C) = \deg(\SS/\D)$, but need, for instance to show that $\C^{**}$ maps into $\D^{**}$. Let $c$ 
 be as
 in Proposition~\ref{bidual} and pick $x$ so that $c,x$ is a regular sequence. Set $\C_1 = (c):\C$, $\C_2 = (c): \C_1$, $\D_1 = (c):\D$, $\D_2 = (c):\D_1$. We have
 $\C_1 \D \subset (c)\SS$ and thus 
  $\C_1\SS \subset \D_1$. This shows that $\D^{**} = (c):_{\SS} \D_1 \subset (c):_{\SS} \C_1$,  
   and thus 
$\D_1\C_2 \subset (e)$ but not enough to show
  $\D_2\subset \C_2\SS$.

\medskip

A model for what we want to  have is  
{ \cite[Proposition 6.12]{blue1} asserting that if $\C$ is equimultiple then
$ \cdeg(\RR) \leq \cdeg(\RR/(x))$.
For $\ddeg(\RR) $, in consistency with Conjecture~\ref{cdegvsfddeg}, we have
\begin{Conjecture} {\rm Under the  conditions above,
$ \ddeg(\RR) \geq \ddeg(\RR/(x))$.
}\end{Conjecture}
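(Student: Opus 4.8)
\medskip
\noindent\textbf{A line of attack.} As is standard for such ``Lefschetz type'' comparisons, I would read the statement for a sufficiently general regular element $x$ modulo $\C$, and the plan is to peel it — by an associativity argument — down to the case $\dim\RR=2$, and there to an ideal containment in the one-dimensional quotient, in complete parallel with the proof of \cite[Proposition~6.12]{blue1} for $\cdeg$.

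\medskip
\noindent\emph{Step 1: reduction to $\dim\RR=2$.} We may assume the residue field infinite and pick $x\in\m$ general: $x$ is $\RR$-regular, regular modulo $\C$, regular modulo $\C^{**}$, a nonzerodivisor on $\B=\C^{**}/\C$ (whose associated primes lie among the finitely many height-one primes of $\Ass(\RR/\C)$, as $\B\hookrightarrow\RR/\C$), and superficial for $\m$ on $\B$. Then $\C/x\C\cong\D$, the image $\C^{**}\SS:=(\C^{**}+(x))/(x)$ of $\C^{**}$ is an ideal of $\SS$ containing $\D$, and $\ddeg(\RR)=\deg(\B)=\deg(\B/x\B)$. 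Both $\deg(\B/x\B)$ and $\ddeg(\SS)=\deg(\D^{**}/\D)$ are associativity sums over the height-one primes $\q$ of $\SS$, with identical coefficients $\deg(\RR/\p)$ ($\p$ the preimage of $\q$), so it suffices to prove, for each such $\q$,
\[ \lambda_{\SS_\q}\!\left(\B_\p/x\B_\p\right)\ \geq\ \lambda_{\SS_\q}\!\left((\D^{**}/\D)_\q\right). \]
Since $\B_\p$ is one-dimensional over $\RR_\p$ and $x$ is $\B_\p$-regular, the left side equals $\e_0((x),\B_\p)\geq\e_0(\m\RR_\p,\B_\p)=\deg(\B_\p)=\ddeg(\RR_\p)$, while the right side is $\ddeg(\SS_\q)=\ddeg(\RR_\p/(x))$; so the general case follows once the conjecture is known for $\dim\RR=2$.

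\medskip
\noindent\emph{Step 2: reduction to an ideal containment.} Assume now $\dim\RR=2$, so $\SS$ is one-dimensional and $\D$ is $\m$-primary. Tensoring $0\rar\C\lar\C^{**}\lar\B\rar 0$ with $\SS$ and using $\Tor_1^{\RR}(\B,\SS)=(0:_{\B}x)=0$ gives
\[ 0\rar\D\lar\C^{**}\SS\lar\B/x\B\rar 0, \]
whence $\ddeg(\RR)=\lambda(\B/x\B)=\lambda(\C^{**}\SS/\D)$, while $\ddeg(\SS)=\lambda(\D^{**}/\D)$. As $\D\subseteq\D^{**}$ and $\D\subseteq\C^{**}\SS$ inside $\SS$, the conjecture reduces to the containment $\D^{**}\subseteq\C^{**}\SS$. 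To unwind the biduals, choose $c\in\C$ as in Proposition~\ref{bidual} with $c,x$ a regular sequence; then $\bar c$ is a nonzerodivisor on $\SS$ and Proposition~\ref{bidual} gives $\C^{**}=(c):_\RR\C_1$, $\D^{**}=(c):_\SS\D_1$ with $\C_1=(c):_\RR\C$, $\D_1=(c):_\SS\D$. From $\C_1\C\subseteq(c)$ we get $\C_1\SS\subseteq\D_1$, hence $\D^{**}=(c):_\SS\D_1\subseteq(c):_\SS(\C_1\SS)$. Because $x$ annihilates $\RR/(c,x)$ one identifies $(c):_\SS(\C_1\SS)\big/c\SS\cong\Hom_\RR(\RR/\C_1,\RR/(c,x))$, while $\C^{**}\SS/c\SS\cong\Hom_\RR(\RR/\C_1,\RR/c\RR)\otimes_\RR\SS$; applying $\Hom_\RR(\RR/\C_1,-)$ to $0\rar\RR/c\RR\stackrel{x}{\rar}\RR/c\RR\rar\RR/(c,x)\rar 0$ presents the natural map $\C^{**}\SS/c\SS\rar(c):_\SS(\C_1\SS)\big/c\SS$ as injective with cokernel $(0:_Ex)$, where $E=\Ext^1_\RR(\RR/\C_1,\RR/c\RR)$. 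Thus $\D^{**}\subseteq\C^{**}\SS$ holds whenever $x$ is a nonzerodivisor on $E$ — and, more weakly, whenever the connecting map $\Hom_\RR(\RR/\C_1,\RR/(c,x))\rar E$ vanishes on $\D^{**}/c\SS$.

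\medskip
\noindent\emph{The main obstacle.} The heart of the matter is controlling $E=\Ext^1_\RR(\RR/\C_1,\RR/c\RR)$: to make $x$ general enough one needs $E$ to have no $\m$-primary component. Here $\C_1\cong\C^{*}=\Hom(\C,\RR)$, and being a dual into the Cohen--Macaulay ring $\RR$ it satisfies $(S_2)$, so $\depth\C_1=2$ and $\depth(\RR/\C_1)\geq 1$; but this does not by itself bound $\dim E$ away from $0$, and pinning down $E$ (or the restricted vanishing of the connecting map) — presumably by exploiting the $(S_2)$-ness of $\C_1$ together with a careful choice of $c$ — is where the real work will lie, just as the analogue for $\cdeg$ required the equimultiple hypothesis. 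I would note that the direction of the inequality is the expected one: passing from $\RR$ to $\SS$ can only \emph{shrink} the reflexive hull, since fewer test homomorphisms survive over $\SS$ than descend from $\RR$ — which is precisely the containment $\D^{**}\subseteq\C^{**}\SS$ sought above — consistently with Conjecture~\ref{cdegvsfddeg} and with the simultaneous vanishing of all the degrees when $\RR$ is Gorenstein in codimension one.
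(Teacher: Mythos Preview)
The statement is recorded in the paper as a \emph{Conjecture}, not a theorem: the paper offers no proof. What the paper does provide, in the paragraphs immediately preceding the conjecture, is precisely the same partial analysis you carry out. Using the bidual formula of Proposition~\ref{bidual} with $c,x$ a regular sequence, the paper sets $\C_1=(c):\C$, $\C_2=\C^{**}=(c):\C_1$, $\D_1=(c):_{\SS}\D$, $\D_2=\D^{**}=(c):_{\SS}\D_1$, observes that $\C_1\SS\subseteq\D_1$ and hence $\D^{**}\subseteq (c):_{\SS}(\C_1\SS)$, and then explicitly says this is ``not enough to show $\D_2\subset\C_2\SS$''. That containment $\D^{**}\subseteq\C^{**}\SS$ is exactly the target of your Step~2, and the gap you isolate --- the possible $\m$-torsion in $E=\Ext^1_{\RR}(\RR/\C_1,\RR/c\RR)$, equivalently the failure of $(c):_{\SS}(\C_1\SS)$ to coincide with $\C^{**}\SS$ --- is the very obstruction the paper leaves open.

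So your proposal is not a proof, and you correctly flag it as such; it is a faithful and slightly more detailed rendering of the paper's own heuristic, with the $\Ext$ reformulation of the missing step being a modest sharpening. One caution on your Step~1: the associativity reduction to $\dim\RR=2$ is morally right, but the bookkeeping as written is loose --- the height-one primes $\q$ of $\SS$ pull back to height-two primes $\p$ of $\RR$, not to the height-one primes over which $\ddeg(\RR)$ is assembled, and the inequality $\e_0((x),\B_\p)\geq\e_0(\m\RR_\p,\B_\p)$ goes the wrong way since $(x)\subseteq\m$. The honest reduction is rather that for generic superficial $x$ one has $\deg(\B)=\deg(\B/x\B)$ directly, and then the question localizes at the height-one primes of $\SS$; at each such localization one is in the two-dimensional situation of Step~2. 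This does not affect your conclusion, since the genuine obstacle lies entirely in Step~2 and remains, as in the paper, unresolved.
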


\begin{Proposition} Let $(\RR, \m)$ be a Cohen--Macaulay local ring of dimension $\geq 2$. If $x\in \m$ is a regular
element of $\RR$ and $\C$ is a canonical  ideal of $\RR$, then $\C/x\C$ is a canonical ideal of $\RR/x\RR$ if and only if $\height(x, \tr(\C) )\geq 2$.   
\end{Proposition}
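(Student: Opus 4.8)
The plan is to recognise that $\C/x\C$ is always a canonical \emph{module} of $\SS:=\RR/x\RR$, so the only issue is whether it is isomorphic to an \emph{ideal}; then to invoke the standard criterion for that (generic Gorensteinness) and finally translate it into the height condition via the trace ideal. First I would note that, since $\RR$ is Cohen--Macaulay and $x$ is a nonzerodivisor, $\SS$ is Cohen--Macaulay of dimension $d-1\ge 1$, $x$ is regular on the maximal Cohen--Macaulay module $\C$, and $\C/x\C$ is a canonical module of $\SS$ (\cite[Theorem 3.3.5]{BH}). Hence the statement to be proved is exactly: $\C/x\C$ is isomorphic to an ideal of $\SS$ $\iff$ $\height\bigl((x)+\tr(\C)\bigr)\ge 2$.

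Next I would use the Aoyama criterion: the canonical module of a Cohen--Macaulay local ring $\SS$ is isomorphic to an ideal of $\SS$ if and only if $\SS_\q$ is Gorenstein for every $\q\in\Ass\SS$, and here $\Ass\SS=\mbox{\rm Min}\,\SS$ because $\SS$ is Cohen--Macaulay (\cite{Aoyama}, \cite{HK2}). The minimal primes of $\SS$ are the images of the primes of $\RR$ minimal over $(x)$; since $x$ is a nonzerodivisor, none of them lies in $\mbox{\rm Min}\,\RR$, so Krull's principal ideal theorem forces each to have height exactly one. For such a prime $\p$ we have $\SS_{\bar\p}=\RR_\p/x\RR_\p$ with $x$ regular on $\RR_\p$, and the Gorenstein property passes both up and down along a nonzerodivisor, so $\SS_{\bar\p}$ is Gorenstein $\iff$ $\RR_\p$ is Gorenstein. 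Finally, by the description of the trace of the canonical module (the non-Gorenstein locus of $\RR$ equals $V(\tr(\C))$; see \cite{Herzog16} and the discussion of $\tdeg$ above), $\RR_\p$ is Gorenstein $\iff$ $\tr(\C)\not\subseteq\p$. Stringing these equivalences together: $\C/x\C$ is a canonical ideal of $\SS$ $\iff$ no prime minimal over $(x)$ contains $\tr(\C)$ $\iff$ no height-one prime of $\RR$ contains $(x)+\tr(\C)$; and since any prime containing $(x)$ that is not minimal over $(x)$ already has height $\ge 2$, this is precisely $\height\bigl((x)+\tr(\C)\bigr)\ge 2$. I would then write out both implications from this chain.

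The individual steps are standard, so the main thing to get right is the bookkeeping rather than any hard estimate. In particular I would be careful that the Aoyama criterion is applied to $\SS$ with $\Ass\SS$ correctly identified with the height-one primes of $\RR$ over $(x)$, and that $x$ being merely a regular element (not assumed to avoid $\tr(\C)$, nor assumed part of a prescribed regular sequence) still forces all minimal primes of $(x)$ to have height one --- which uses only the Hauptidealsatz together with $x\notin\bigcup_{\p\in\Ass\RR}\p$. A minor point to double-check is the degenerate case where $\RR$ is Gorenstein, so that $(x)+\tr(\C)=\RR$ and both sides of the equivalence hold trivially.
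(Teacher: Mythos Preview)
Your proposal is correct and follows essentially the same route as the paper: both observe that $\C/x\C$ is automatically a canonical module of $\SS=\RR/x\RR$, reduce the ideal question to Gorensteinness of $\RR_\p$ at the height-one primes $\p$ minimal over $(x)$, and translate that into the trace condition $\tr(\C)\not\subseteq\p$. Your version is more explicit than the paper's in naming the generic-Gorensteinness criterion for the canonical module to be an ideal and in handling the height bookkeeping via the Hauptidealsatz, but the argument is the same.
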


\begin{proof}
If $\C$ is a canonical module of $\RR$, $\C/x\C$ is a canonical module of $\RR/x\RR$. We must show that it is
an ideal. For any minimal prime $\p$ of $x$ $\C_{\p}$ is an ideal which cannot contain  its trace by hypothesis by the height condition. It follows that $\C_{\p}$ is principal and thus the localization $\RR_{\p}$ is a Gorenstein
ring. This implies that modulo $x$ it is also Gorenstein, and thus its canonical modulo is isomorphic to
the ring. The converse is just a reversion of the steps.
\end{proof}

\section{Monomial subrings}
\noindent
Let $H$ be a finite subset of $\ZZ_{\geq 0}$, $\{ 0=a_0< a_1<\ldots< a_n\}$. For a field $k$ we denote by  $\RR = k[H]$ the subring of $k[t]$
  generated by the monomials $t^{a_i}$.  We assume that $\gcd(a_1, \ldots, a_n) = 1$. We also use the bracket notation
  $H= \left< a_1, a_2, \ldots, a_n\right>$ for the exponents.
For reference we shall use \cite[p. 553]{Eisenbudbook} and
\cite[Section 8.7]{VilaBook}.

\medskip

There are several integers playing roles in deriving properties of $\RR$, with the emphasis on those that lead to the determination of
$\ddeg(\RR)$.

\medskip

\begin{enumerate}[{\rm (1)}]

\item There is an integer $s$ such that $t^n\in \RR$ for all $n\geq s$. The smallest such $s$ is called the {\em conductor} of $H$ or of $\RR$, which
we denote by $c$ and $t^c k[t]$ is the largest ideal of $k[t]$ contained in $k[H]$.
 $c-1$ is called the Frobenius number of $\RR$ and reads its multiplicity, $c-1=\deg(\RR)$. 

\medskip

\item For any integer $a> 0$, say $a=c$ of $H$, the subring $\AA=k[t^a]$ provides for a Noether normalization of $\RR$. This permits the passage of many
properties $\AA$ to $\RR$, and vice-versa. $\RR$ is a free $\AA$-module and taking into account the natural graded structure
we can write
\[ \RR = \bigoplus_{j=1}^m \AA t^{\alpha_j}.\]
Note that $s = \sum_{j=1}^m  \alpha_j$.

\medskip

\item How to read other invariants of $\RR$ such as its canonical ideal $\C$ and $\red(\C)$ and its canonical degrees
$\cdeg(\RR)$ and $\ddeg(\RR)$?

\end{enumerate} 

\subsubsection*{Monomial curves}
Let
$\RR = k[t^a, t^b, t^c]$, $a<b< c, \ \gcd(a,b,c)=1$. Assume $\RR$ is not Gorenstein,  $\omega  = (1,t^s)$.

\medskip

It would be helpful to have available descriptions of the canonical ideal and attached invariants. 
Some of the  information  is available from the Frobenius equations:
\begin{eqnarray*}
x^{m_{11}} & - & y^{m_{12}}  z^{m_{13}}\\
y^{m_{21}} & - & x^{m_{22}}  z^{m_{23}}\\
z^{m_{31}} & - & x^{m_{32}}  y^{m_{33}}
 \end{eqnarray*}
 which can be expressed as the $2\times 2$ minors of the matrix (\cite{Herzog})
  \begin{eqnarray*}
\varphi = \left[ \begin{array}{ll}
x^{a_1} & z^{c_2} \\
z^{c_1} & y^{b_2} \\
y^{b_1} & x^{a_2}
\end{array}
\right]
\end{eqnarray*}

\subsubsection*{Calculating the canonical ideal and its bidual} Let $\RR= \AA/P$ be a Cohen-Macaulay integral domain and $\AA $ a Gorenstein local ring.
If $\codim P = g$, $\omega = \Ext^g_{\AA} (\RR, \AA )$ is a canonical module for $\RR$. Since $\RR $ is an integral domain, $\omega$ may
be identified to an ideal of $\RR$. 
A canonical module of $\RR$ is obtained by deleting one row and a column according to the following comments
 and mapping the remaining entries to $\RR$.

\medskip

\begin{enumerate}[{\rm (1)}]

\item Let $L= (x_1, \ldots,  x_g)$ be a regular sequence contained in $P$. Then 
\[ \omega =\Ext^g_{\AA}(\AA/P , \AA) = \Hom_{\AA/L} (\AA/P, \AA/L) = (L:P)/L.\]
 The simplest case to apply this formula is when $P = (L, f)$ so this becomes
take \[ \omega = (L:f)/L.\]
 
\item Suppose $P = I_2(\varphi)$ is a prime ideal of codimension two where
 \begin{eqnarray*}
\varphi = \left[ \begin{array}{ll}
a_1 & {c_2} \\
{c_1} & {b_2} \\
{b_1} & {a_2}
\end{array}
\right]
\end{eqnarray*}
 We are going to argue that if the $2 \times 2$ minors $x_1, x_2$,
 \begin{eqnarray*}
 x_1 & = & a_1 a_2 - b_1 c_2 \\
 x_2 & = & a_1 b_2 - c_1 c_2 
 \end{eqnarray*}
 that arise from keeping the first row form a regular sequence then
 for $f = b_1 b_2 - c_1a_2$,  we have
 \[ (x_1, x_2): f = (a_1, c_2).\]
With these choices we have
\[ (L: f)/L \simeq ({a_1}, {c_2},  P)/P.\]

Indeed the nonvanishing mapping 
\[ \omega = (L:f)/L \mapsto \RR/P\]
of modules of rank one over the domain $\RR/P$ must be an embedding.
\end{enumerate}

\medskip

\begin{Example}{\rm 
Let $\RR= k[t^a, t^b, t^c]$, $b-a=c-b=d$: Then $b=a+d, c=a+2d$ and
 \begin{eqnarray*}
\varphi = \left[ \begin{array}{ll}
x & y \\
z^{p} & x^{q} \\
y & z \\
\end{array}
\right],
\end{eqnarray*} Note $ p(a+2d)-qa= d$ from
$(p+1)(a+2d) = qa + (a+d)$  
and
 $(x,y) \mapsto
\C = (t^a,t^b)= t^a(1,t^d)$. By Proposition~\ref{bidual} we have 
 $(x):(x,y) =(x, y, z^p)$,
  \begin{eqnarray*} 
  (x):(x,y) &=&(x, y, z^p),\\
 \C^{**} &=& (x): (x,y,z^p) = (x, z, y) ,\\
    \ddeg(\RR) &=& \lambda (\RR/\C) - \lambda(\RR/\C^{**}) =   \lambda(\RR/(x,y)) - \lambda(\RR/(x,y,z)) = p-1.   
\end{eqnarray*}
}\end{Example}

\begin{Proposition} \label{preaGor}
Let
$\RR= k[t^a, t^b, t^c]$ be a monomial ring
and denote by $\varphi$ its Herzog matrix
\begin{eqnarray*}
\varphi = \left[ \begin{array}{ll}
x^{a_1} & z^{c_2} \\
z^{c_1} & y^{b_2} \\
y^{b_1} & x^{a_2}
\end{array}
\right].
\end{eqnarray*}
 Then
\[    \ddeg(\RR)=  {a_1} \cdot  {b_2}   \cdot   {c_2}. \]
\end{Proposition}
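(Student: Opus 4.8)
The plan is to compute $\ddeg(\RR)$ through the trace ideal, using Proposition~\ref{resisddeg1} in the form $\ddeg(\RR)=\lambda(\RR/\tr(\C))$, and then to identify $\tr(\C)$ explicitly as a monomial ideal read off from $\varphi$. First I would fix a convenient canonical ideal: as in the preceding computation of the canonical module from a $2\times 2$-minor presentation, keeping the first row of $\varphi$ gives that $\C=(x^{a_1},z^{c_2})$ is a canonical ideal of $\RR=k[x,y,z]/P$, that $x^{a_1}$ is a regular element of $\RR$ contained in $\C$, and that $(x^{a_1})$ is a minimal reduction of $\C$. Since $\RR$ is a monomial ring and $\C$ is a monomial ideal, the fractional ideals $\C^{*}$, $\C^{**}$ and the ideal $\tr(\C)=\C\cdot\C^{*}$ are again monomial, so every length below can be evaluated by counting exponents in the numerical semigroup $H=\langle a,b,c\rangle$.

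Next I would compute $\C^{*}$ via Proposition~\ref{bidual} with the regular element $x^{a_1}\in\C$: one has $\C^{*}=x^{-a_1}((x^{a_1}):_{\RR}\C)$, and since $(x^{a_1}):_{\RR}x^{a_1}=\RR$ the relevant colon is $(x^{a_1}):_{\RR}z^{c_2}$. Two of the three relations coming from the minors of $\varphi$, namely $x^{a_1+a_2}=y^{b_1}z^{c_2}$ and $z^{c_1+c_2}=x^{a_1}y^{b_2}$, show at once that $y^{b_1}$ and $z^{c_1}$ lie in this colon, so $(x^{a_1}):_{\RR}z^{c_2}\supseteq(x^{a_1},y^{b_1},z^{c_1})$; the reverse inclusion is a combinatorial statement about $H$ — in effect that Herzog's three relations already account for every colon relation — which I would settle by an Ap\'ery-set argument relative to $a$. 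Granting this, $\tr(\C)=x^{-a_1}(x^{a_1},z^{c_2})(x^{a_1},y^{b_1},z^{c_1})$, and expanding the product and absorbing the cross terms using $y^{b_1}z^{c_2}=x^{a_1+a_2}$ and $z^{c_1}z^{c_2}=x^{a_1}y^{b_2}$ collapses it to $\tr(\C)=(x^{a_1},x^{a_2},y^{b_1},y^{b_2},z^{c_1},z^{c_2})$; comparing the paired exponents then shows $x^{a_2}$, $y^{b_1}$, $z^{c_1}$ are redundant, so $\tr(\C)=(x^{a_1},y^{b_2},z^{c_2})$.

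Finally I would compute the length. Reducing the three minors of $\varphi$ modulo $(x^{a_1},y^{b_2},z^{c_2})$ makes all of them vanish, so $\RR/\tr(\C)=k[x,y,z]/(x^{a_1},y^{b_2},z^{c_2})$, a graded Artinian complete intersection with $\deg x=a$, $\deg y=b$, $\deg z=c$. Its $k$-dimension is therefore the product of the degrees of the three forms divided by $abc$, that is $(a_1a)(b_2b)(c_2c)/(abc)=a_1b_2c_2$; hence $\ddeg(\RR)=\lambda(\RR/\tr(\C))=a_1b_2c_2$ by Proposition~\ref{resisddeg1}. The bookkeeping identities $a_1a+b_2b=(c_1+c_2)c$, $(a_1+a_2)a=b_1b+c_2c$, and $a_2a+c_1c=(b_1+b_2)b$ are what make the reductions and the degree count come out cleanly, and one could alternatively run the entire argument through $\C^{**}$ and $\lambda(\RR/\C)-\lambda(\RR/\C^{**})$ using Theorem~\ref{genddeg1}, presenting both $\RR/\C$ and $\RR/\C^{**}$ as graded complete intersections.

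The step I expect to be the real obstacle is the middle one: proving that $(x^{a_1}):_{\RR}z^{c_2}$ — and the analogous colon ideals if one works with $\C^{**}$ — has \emph{exactly} the monomial generators predicted by the minors of $\varphi$, with no extra generators, and then correctly keeping track of which member of each pair $\{a_1,a_2\}$, $\{b_1,b_2\}$, $\{c_1,c_2\}$ survives when the redundant generators of $\tr(\C)$ are discarded. Both points are purely about the arithmetic of the semigroup $H$ and the Frobenius representations encoded in $\varphi$; once they are nailed down, the passage to a graded complete intersection and the resulting Hilbert-function count are routine.
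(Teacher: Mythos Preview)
Your approach is correct and takes a genuinely different route from the paper. The paper works directly with the bidual: it computes $\C^{**}=(x^{a_1}):\big((x^{a_1}):\C\big)$, identifies both $\RR/\C$ and $\RR/\C^{**}$ with monomial complete intersections (using $(x^{a_1},z^{c_2})=(x^{a_1},y^{b_1+b_2},z^{c_2})$ in $\RR$), and reads off $\ddeg(\RR)$ as the difference $a_1(b_1+b_2)c_2-a_1 b_1 c_2$. You instead invoke Proposition~\ref{resisddeg1} to replace $\lambda(\C^{**}/\C)$ by $\lambda(\RR/\tr(\C))$, compute $\tr(\C)=\C\cdot\C^{*}$ explicitly as a monomial ideal, and evaluate a single complete-intersection length. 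Both routes rest on the same colon computation $(x^{a_1}):z^{c_2}=(x^{a_1},y^{b_1},z^{c_1})$; your version avoids the subtraction and lands on the answer in one step, while the paper's version stays closer to the definition of $\ddeg$ and does not need Proposition~\ref{resisddeg1}. The trace computation is in fact the method of \cite[Proposition~7.9]{Herzog16}, which the paper cites immediately after the proof.

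Two minor points. First, your final length count is correct but over-engineered: $k[x,y,z]/(x^{a_1},y^{b_2},z^{c_2})$ has $k$-dimension $a_1b_2c_2$ simply by counting monomials $x^iy^jz^k$ with $i<a_1$, $j<b_2$, $k<c_2$; there is no need to bring in the weighted degrees $a,b,c$ and then divide them out. Second, your identification of the obstacle is accurate: the paper also asserts the exact form of the colon ideals without a semigroup-level justification, and the question of which member of each pair $\{a_1,a_2\}$, $\{b_1,b_2\}$, $\{c_1,c_2\}$ survives is tied to the harmless normalisation $a_1\le a_2$ and the specific positions of the exponents in Herzog's matrix---the paper handles this by that single inequality, not by simultaneously assuming $b_2\le b_1$ and $c_2\le c_1$, so when you finish your argument you should trace the reductions through the relations rather than appeal to a blanket ``take the minimum'' step.
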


\begin{proof}
 From $\varphi$  we 
take
  $\C = (x^{a_1}, z^{c_2})$, where harmlessly we chose $a_1\leq a_2$.
   Then  
  \begin{eqnarray*} 
  (x^{a_1}): (x^{a_1}, z^{c_2}) &=&  (x^{a_1}, z^{c_1}, y^{b_1} ),\\
 \C^{**} &=& (x^{a_1}): (x^{a_1},y^{b_1},z^{c_1}) = (x^{a_1}, y^{b_2}, z^{c_2}), \\
    \ddeg(\RR) &=& \lambda (\RR/\C) - \lambda(\RR/\C^{**}) =   \lambda(\RR/(x^{a_1},z^{c_2})) - \lambda(\RR/(x^{a_1},y^{b_1},z^{c_2})) .   
\end{eqnarray*}
and since
\[ (x^{a_1}, z^{c_2}) =(x^{a_1}, y^{b_1} y^{b_2},   z^{c_2})\]
we have
\begin{eqnarray*}
\ddeg(\RR) &=&  {a_1} \cdot ({b_1} + b_2) \cdot   {c_2}
- 
   {a_1} \cdot  {b_1}   \cdot   {c_2}\\
  & = &
     {a_1} \cdot  {b_2}   \cdot   {c_2}. 
 \end{eqnarray*}
\end{proof}

\begin{Remark}
{\rm 
For the monomial algebra $\RR=k[t^a, t^b, t^c] $ 
The value of $\ddeg (\RR) $ is also
calculated in
  \cite[Proposition 7.9]{Herzog16}.
  According to \cite[Theorem 4.1]{GMP11},  $\cdeg(\RR) = a_2 \cdot b_2\cdot c_2$, which supports 
   the  Comparison Conjecture~\ref{cdegvsfddeg}.
 
}\end{Remark}

\section{Rees algebras} 
\noindent
Let $\RR$ be Cohen-Macaulay local ring and $I$ an ideal such that the Rees algebra $S = \RR[I t]$ is Cohen-Macaulay.  We consider
a few classes of such algebras. We denote by $\C$ a canonical ideal of $\RR$ and set $G = \gr_I(\RR)$. 

\medskip

\subsubsection*{Rees algebras with expected canonical modules}  (See \cite{HSV87} for details.) This means $\omega_S = \omega_{\RR}(1,t)^m$, for some
$m \geq -1$. This will be the case when $G = \gr_I(\RR)$ is Gorenstein (\cite[Theorem 2.4,  Corollary 2.5]{HSV87}).
 We actually assume $I$ of codimension at least $2$.
We first consider the case $\C = \RR$. Set $\D = (1,t)^m$, pick  $a$ is a regular element in $I$ and its initial form $\overline{a}$ is regular on $G$,
finally  replace $\D$ by $a^m\D\subset S$.

\begin{Proposition}
Let $\RR$ be a Gorenstein local ring, $I$ an ideal of codimension at least two and $S$ its Rees algebra. If the canonical module of $S$ has the expected
form then $S$ is Gorenstein in codimension less than $\codim (I)$, in particular $\ddeg(S) = 0$. 
\end{Proposition}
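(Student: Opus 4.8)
The plan is to show that for a prime $\q$ of $S$ with $\height \q < \codim(I)$, the localization $S_\q$ is Gorenstein; since this is precisely the condition ``$S$ is Gorenstein in codimension $< \codim(I)$'', the vanishing $\ddeg(S)=0$ will then follow from Theorem~\ref{genddeg1} (a ring Gorenstein in codimension one has $\ddeg=0$; here we get the stronger conclusion, and in particular Gorenstein in codimension one). First I would recall that the canonical module is assumed to be of the expected form $\omega_S = \omega_\RR(1,t)^m$, and here $\omega_\RR = \RR$ since $\RR$ is Gorenstein, so $\omega_S \simeq (1,t)^m$ as an ideal of $S$ (after the normalization $\D = a^m(1,t)^m \subseteq S$ described just above the statement). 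Thus $S$ is Gorenstein at $\q$ if and only if $\D_\q$ is a principal (equivalently, free of rank one) $S_\q$-module.

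The key step is a localization analysis. For a prime $\q \subset S$, let $\p = \q \cap \RR$. There are two cases. If $I \not\subseteq \p$, then $I_\p = \RR_\p$, so $I S$ localized away from $\p$ is the whole ring; more precisely $S_\q = \RR_\p[I_\p t]_\q = \RR_\p[t]_\q$ is a localization of a polynomial ring over the Gorenstein ring $\RR_\p$, hence Gorenstein, and in fact $(1,t)S_\q = S_\q$ is already the unit ideal, so $\D_\q$ is principal. If $I \subseteq \p$, then $\p \in V(I)$, so $\height \p \geq \codim(I)$, and since $\q$ contracts to $\p$ we get $\height \q \geq \height \p \geq \codim(I)$ — so such $\q$ are excluded by the hypothesis $\height \q < \codim(I)$. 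Therefore every prime $\q$ with $\height \q < \codim(I)$ falls in the first case and $S_\q$ is Gorenstein. This proves $S$ is Gorenstein in codimension $< \codim(I)$, hence (as $\codim(I)\geq 2$) Gorenstein in codimension one, hence $\ddeg(S)=0$ by Theorem~\ref{genddeg1}.

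The main obstacle — really the only subtle point — is making the case $I\not\subseteq \p$ fully rigorous: one must check that $\D_\q$ is genuinely free of rank one there, i.e.\ that inverting the element $a$ (which becomes a unit after localizing, since $a \in I$ and $I_\p=\RR_\p$) trivializes the fractional ideal $(1,t)^m$, turning $S_\q$ into (a localization of) $\RR_\p[t]$. Concretely, since $a$ is a regular element of $I$ with $\overline{a}$ regular on $G$, one has $It = aSt \cdot a^{-1}$ after inverting $a$, so $\RR_\p[I_\p t][a^{-1}] = \RR_\p[t][a^{-1}]$ and $(1,t)^m$ becomes invertible there; the normalization $a^m(1,t)^m$ then becomes $S_\q$ itself. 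One should also confirm that the height inequality $\height \q \geq \height(\q \cap \RR)$ is being used correctly — it holds because $\q \cap \RR$ is contained in $\q$ and $S$ is (faithfully) flat over $\RR$ only in bounded degrees, but the elementary inequality $\dim S_\q \geq \dim \RR_{\q\cap\RR}$ for a local homomorphism essentially of finite type suffices. Modulo these routine verifications, the argument is complete.
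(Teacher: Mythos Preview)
Your Case 1 is fine, but Case 2 contains a genuine error: the height inequality $\height \q \geq \height(\q\cap\RR)$ that you invoke is \emph{false} for Rees algebras. The standard counterexample is the prime $\q=\m S$ itself. Take $\RR=k[[x_1,\dots,x_d]]$ with $d\geq 3$ and $I=\m$; then $S/\m S$ is the fiber cone $\gr_\m(\RR)\cong k[X_1,\dots,X_d]$, so $\q=\m S$ is prime with $\dim(S/\q)=d$ and hence $\height\q=\dim S-d=1$, while $\q\cap\RR=\m$ has height $d=\codim(I)$. Thus $\q$ is a height-one prime with $I\subseteq\q\cap\RR$, squarely in your Case 2, yet $\height\q=1<d$. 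Neither flatness nor the ``elementary inequality $\dim S_\q\geq\dim\RR_{\q\cap\RR}$'' that you cite is available here: $\RR\to S$ does not satisfy going-down, and that dimension inequality simply fails in this example ($1<d$).

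What rescues the statement at such primes is exactly the mechanism the paper uses. One observes that if $I\subseteq\q$ and $\height\q\leq\codim(I)$ then $It\not\subseteq\q$, because $(I,It)$ has codimension $\codim(I)+1$ in $S$ (indeed $S/(I,It)\cong\RR/I$). Picking $a\in I$ with $at\notin\q$, one finds $1=(at)^{-1}\cdot a\cdot t\in S_\q\cdot t$, so $(1,t)_\q=S_\q t$ is principal and $\omega_{S_\q}\cong(1,t)^m_\q$ is free. The paper packages this as: $S:(1,t)^m=I^mS$ (quoting \cite{HSV87}), hence the trace of $\omega_S$ contains $(I^m,(It)^m)$, an ideal of codimension $\codim(I)+1$, so the non-Gorenstein locus has codimension $>\codim(I)$. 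Your localization strategy can be completed, but only by importing this codimension computation for $(I,It)$; the purely height-theoretic shortcut you attempted does not exist.
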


\begin{proof}
  It is a calculation in \cite[p. 294]{HSV87} that $ S: (1,t)^m = I^m S $. It follows that
\[ (I^m, I^m t) \subset I^m S \cdot (1,t)^m. \]
Since $\codim (I^m, (It)^m) = \codim (I, It) = \codim (I) + 1$, the assertion follows. Note that this implies 
 that $\omega_S$ is free in codimension one and therefore it is reflexive by a standard argument.
 Finally by Proposition~\ref{genddeg1}, $\ddeg( S) = 0$, and therefore   that $\cdeg(S)=0$.
\end{proof}

\begin{Remark}{\rm
How about the general case, when $\RR$ is not Gorenstein?
 Let 
us see whether we can argue these points directly.

\begin{enumerate}[{\rm (1)}]

\item $a^mS: I^m S = (a, at)^m$:
$S\cap  a^m S : S I^{m+1} \cap $ 

\[ a^m I S : I^{m+1}S = (a^m I + a^m I^2 t + \cdots + a^m I^{i+1} + \cdots): _S I^m = I^{m}S.
 \]
 
\item Let us calculate
$ \ddeg(S)=  \deg(S /\D) -
\deg(S/\D^{**})$:  
 \[\deg(S/I^{m-1}S) = 
\sum_{i=0}^{i=m-1}
 \deg(I^i S/I^{i+1}S) = m \deg(G).
  \]

\item
According to \cite{HSV87}, $a^i(1,t)^i$ is a Cohen-Macaulay ideal for $0\leq i\leq m$. We also have that $\overline{S } =S/a(1,t) = \overline{\RR}[\overline{I}t]$,
where $\overline{\RR} = \RR/a\RR$, $\overline{I} = I/a\RR$.

\smallskip

\item The ideal $J = a(1,t)$ is free in codimension $1$:
$(a,at)^m \cdot (a,at)^{-m} = (1,t)^{m} \cdot I^m S$ is an ideal of codimension $2$. Therefore for all $i\geq 0$, $\deg(J^i/J^{i+1}) = \deg(S/J)$.

\smallskip

\item From
$ \deg(S/a^m(1,t)^m  )$: 
\[\deg(S/a(1,t)) + \deg(a(1,t)/a^2(1,t)^2) + \cdots + \deg(a^{m-1}(1,t)^{m-1}/a^{m}(1,t)^m).\]

\item We get $\deg(S/\D)  =m \deg(\overline{S})$. 

\end{enumerate}}
\end{Remark}

\medskip

\subsubsection*{Rees  algebras of minimal multiplicity} Let $(\RR, \m)$ be a  Cohen-Macaulay local ring, $I$ an $\m$-primary ideal,  and $J$ a minimal reduction of $I$ with $I^2 = JI$. 
From the exact sequence
 \[  0\rar I\RR[Jt] \rar \RR[J] \rar \RR[Jt]/I\RR[Jt] = \gr_J(\RR) \otimes_{\RR} \RR/I 
 \rar 0,\] 
  with the middle and right terms being Cohen-Macaulay, we have that $I\RR[Jt] = I \RR[It]$ is  Cohen-Macaulay. 
With $\gr_I(\RR)$ Cohen-Macaulay and $I$ an ideal of reduction number at most $1$,  
we have  if $\dim \RR > 1$ then $\RR[I t]$ is Cohen-Macaulay.
A source of these ideals arises from irreducible ideals in Gorenstein rings such as  $I= J:\m$, where  $J$ is a parameter ideal.

\medskip

Let $S_0 = \RR[Jt]$ and $S = \RR + I t S_0$.
If $\RR$ is Gorenstein, then the canonical module of $S_{0}$ is  $\omega_{S_0} = (1,t)^{d-2}S_0$ and by the change of rings formula, we have
\[ \omega_{S} = \Hom_{S_0}( S, (1,t)^{d-2}S_0)= (1,t)^{d-2}S_0: S.\]

If $d=2$, then the canonical module $\omega_S$ is the conductor of $S$ relative to $S_0$. In the case
$ \m S_0$ is a prime ideal of $S_0$, so the conductor could not be larger as it would have grade at least two and then
$S = S_0$:  $\omega_S = \m S_0$.
If $d > 2$, then  $\m (1,t)^{d-2}S_0$ will
 work.

\medskip

Note that $S_0$ is Gorenstein in codimension $1$. If $P$ is such prime and   $P \cap \RR=\q\neq \m$,
then $(S_0)_\q = S_\q$, so $S_P=(S_0)_P$ is Gorenstein in codimension $1$. Thus we may assume
  $P\cap \RR = \m $, so that $P = \m S_0$.
   For $S_P$ to be Gorenstein  would mean $\Hom(S, S_0) \simeq S$ at $P$, that is ${\ds (S_0:  S)_P }$ is a principal 
   ideal of $S_P$ (see next Example). From \cite[Theorem 3.3.7]{BH},
 $\D = \Hom(S, S_0) = \m S_0=\m S$.   Thus $\deg(S/\D) = 2$ and since $\D^{**} \neq \D$, $\deg(S/\D^{**}) = 1$.
It follows that $\ddeg(S) = 1$.

\begin{Example}{\rm  Let $\RR = k[x,y]$,  and $I = (x^3, x^2 y^2, y^3)$. Then for the reduction $Q = (x^3,y^3)$, we have $I^2 = QI$. The Rees
algebra $S = \RR[It]= k[x,y,u,v,w]/L$, where 
$L=(x^2 u-xv, y^2w-y v, v^2-xyuw)$ is given by the $2\times 2$ minors of
${\ds 
\varphi = \left[ \begin{array}{ll}
v & xw \\
yu & v \\
x & y
\end{array}
\right],
}$
whose content is $(x,y,v)$. It follows that $S$ is not Gorenstein in codimension $1$ as it would require a content of codimension at least four.  
Indeed, setting $\AA = k[x,y, u, v, w]$, a projective resolution of $S$ over $\AA$ is defined by
the mapping $\varphi:  \AA^2 \rar \AA^3$ which dualizing gives
\[ \C=\Ext^2_{\AA}(\AA/L, \AA) = \Ext^1_{\AA}(L, \AA)= \coker(\varphi^{*}).\]
It follows that $\C$ is minimally generated by two elements
at the localizations of $\AA$ that contain $(x,y,v)$.
}\end{Example}

\medskip

\subsubsection*{Rees algebras of ideals with the expected defining relations} Let $I$ be an ideal of the Cohen-Macaulay local ring $(\RR,\m)$ [or a polynomial ring $\RR= k[t_1, \ldots, t_d]$ over the field 
$k$] 
with a presentation
\[ \RR^m \stackrel{\varphi}{\lar} \RR^n \lar I =(b_1, \ldots, b_n)\rar 0.\]  Assume that $\codim (I) \geq 1$ and that the entries of $\varphi$ lie in $\m$.  
 Denote by $\LL
$ its ideal of relations
\begin{eqnarray*}
 0 \rar \LL \lar \SS = \RR[\TT_1, \ldots, \TT_n] \stackrel{\psi}{\lar}
\RR[It] \rar 0,  \quad \TT_i \mapsto b_it .
\end{eqnarray*} $\LL$ is a graded ideal of $\SS$ and its component in degree $1$ is generated by the $m$ linear forms
\[ \ff= [\ff_1, \ldots, \ff_m] = [\TT_1, \ldots , \TT_n] \cdot \varphi.\]

\subsubsection*{Sylvester forms}\index{Sylvester form}
 Let
$\mathbf{f}= \{\ff_1, \ldots, \ff_s\} $ be a set of polynomials in
 $\LL\subset \SS=\RR[\TT_1, \ldots , \TT_n]$ and let $\mathbf{a}= \{a_1, \ldots,
a_s\}\subset \RR $. If  $\ff_i \in (\mathbf{a}) \SS$ for all $i$, we can
write
\[ \mathbf{f}= [\ff_1, \ldots, \ff_s] = [a_1, \ldots, a_q] \cdot\AA =
\mathbf{a}\cdot \AA,\] where $\AA $ is an $s\times q$ matrix with entries
in $\SS$. We call $(\aa)$ a $\RR$-content of $\mathbf{f}$.
 Since  $\aa\not \subset L$, then the $s \times s$ minors of $\AA$ lie in $\LL$.
   By an abuse of terminology\index{Sylvester form}, we refer 
   to such determinants 
    as
{\em
Sylvester forms}, or the {\em Jacobian
duals}\index{Jacobian dual},  of $\mathbf{f}$ relative to $\mathbf{a}$.
If
 $\aa = I_1(\varphi)$, we write $\AA = \BB(\varphi)$, and call it the Jacobian dual of  $\varphi$. Note that if $\varphi$ is a matrix with linear entries
 in the variables $x_1, \ldots, x_d$, then
 $\BB(\varphi)$ is a matrix with linear entries in the variables $\TT_1, \ldots, \TT_n$.

\begin{Definition}\label{expectrel} {\rm Let $I = (b_1, \ldots, b_n)$ be an ideal with a presentation as above and let = $\aa  (a_1, \ldots, a_s) = I_1(\varphi)$. The Rees algebra
$\RR[It]$ has the {\em expected relations} if
\[ \LL =  ( \TT \cdot \varphi, I_s(\BB(\varphi))).\]
}\end{Definition}

There will be numerous restrictions to ensure that $\RR[It]$ is a Cohen-Macaulay ring and that it is amenable to the determination of its canonical degrees. We consider
some special cases grounded on \cite[Theorem 1.3]{MU} and \cite[Theorem 2.7]{UlrichU96}

\begin{Theorem}\label{rees1}Let $\RR = k[x_1, \ldots,   x_d]$ be a polynomial ring over an infinite field,
let $I$ be a perfect $\RR$-ideal of grade $2$ with a linear presentation matrix $\varphi$, 
 assume that 
 $\nu(I) > d$ 
 and that $I$ satisfies  $G_d$ $($meaning $\nu(I_{\pp})\leq \dim \RR_{\pp}$ on the punctured spectrum$)$. 
  Then $ \ell(I) = d;$ $r(I) = \ell(I)-1$; $\RR[It]$ is Cohen-Macaulay, and $\LL = (\TT\cdot \varphi, I_d(\BB(\varphi))).$
 \end{Theorem}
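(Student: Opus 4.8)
The plan is to take the Cohen--Macaulayness of $\RR[It]$ and the precise form of $\LL$ from \cite[Theorem 1.3]{MU} (supplemented by \cite[Theorem 2.7]{UlrichU96}), and then deduce the two numerical assertions $\ell(I)=d$ and $r(I)=\ell(I)-1$ from that description. First I would record the combinatorics of the presentation. Since $I$ is perfect of grade $2$, the Hilbert--Burch theorem makes $\varphi$ an $n\times(n-1)$ matrix with $n=\nu(I)>d$ and $I=I_{n-1}(\varphi)$ up to a unit; and since the entries of $\varphi$ are linear in $x_1,\dots,x_d$ we may write $\TT\cdot\varphi=\xx\cdot\BB(\varphi)$ with $\xx=[x_1,\dots,x_d]$ and $\BB(\varphi)$ a $d\times(n-1)$ matrix whose entries are $k$-linear forms in $\TT_1,\dots,\TT_n$. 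Because $n-1\ge d$, the ideal $I_d(\BB(\varphi))$ of maximal minors of $\BB(\varphi)$ is defined and generated in $\TT$-degree $d$; I would also note the numerical coincidence that the number $\binom{n-1}{d}$ of these minors equals $\dim_k(k[y_1,\dots,y_{n-d}])_d$, which will be used for the reduction number.

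Next I would check that the present hypotheses --- $I$ perfect of grade $2$, linearly presented, satisfying $G_d$, with $\nu(I)>d$ --- are exactly those under which \cite{MU} proves $\RR[It]$ Cohen--Macaulay with $\LL=(\TT\cdot\varphi,\,I_d(\BB(\varphi)))$; because $\RR$ is a polynomial ring the auxiliary depth (``sliding depth'') conditions of those references are automatic, a grade $2$ perfect ideal being strongly Cohen--Macaulay. The mechanism, which I would only cite, is: strong Cohen--Macaulayness together with $G_d$ makes $\Sym(I)\to\RR[It]$ an isomorphism on the punctured spectrum, so the kernel of the natural surjection $\Sym(I)\to\RR[It]$ is annihilated by a power of $\m$; an approximation-complex / residual-intersection argument then both identifies that kernel with the ideal of Sylvester forms $I_d(\BB(\varphi))$ and supplies the depth estimate forcing $\RR[It]$ to be Cohen--Macaulay. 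This identification --- ruling out any minimal generator of $\LL$ beyond $\TT\cdot\varphi$ and the maximal minors of $\BB(\varphi)$, and pinning down the depth --- is the technical heart and the main obstacle; in a survey one simply imports it.

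Granting $\LL=(\TT\cdot\varphi,\,I_d(\BB(\varphi)))$, the analytic spread drops out. Reducing modulo $\m$ kills $\TT\cdot\varphi$ (its entries have coefficients in $\m$) and fixes $I_d(\BB(\varphi))$ (whose entries already lie in $k[\TT]$), so the special fiber ring is
\[ \mathcal{F}(I)=\RR[It]\otimes_{\RR}k=k[\TT_1,\dots,\TT_n]/I_d(\BB(\varphi)). \]
This is the determinantal ring of the maximal minors of a $d\times(n-1)$ matrix, so $\height I_d(\BB(\varphi))\le(n-1)-d+1=n-d$; but $\ell(I)=\dim\mathcal{F}(I)=n-\height I_d(\BB(\varphi))$ cannot exceed $\dim\RR=d$, which forces $\height I_d(\BB(\varphi))=n-d$ and hence $\ell(I)=d$. (Here $G_d$ is used precisely to secure the clean form of $\LL$, hence of $\mathcal{F}(I)$; without it $\mathcal{F}(I)$ is only a proper quotient of $k[\TT]/I_d(\BB(\varphi))$.) Since $n-d$ is the expected grade of those maximal minors, the Eagon--Northcott complex shows $I_d(\BB(\varphi))$ perfect, so $\mathcal{F}(I)$ is Cohen--Macaulay.

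For the reduction number I would work inside $\mathcal{F}(I)$. Let $J$ be a minimal reduction of $I$ generated by $d=\ell(I)$ general $k$-linear combinations of a generating set; then $\mathcal{F}_J(I)$ is a Noether normalization of the Cohen--Macaulay ring $\mathcal{F}(I)$, so $r_J(I)=a(\mathcal{F}(I))+\ell(I)$. Passing to the Artinian reduction $\mathcal{F}(I)/(\text{general system of }d\text{ linear forms})\cong k[\TT_1,\dots,\TT_{n-d}]/\overline{I_d(\BB(\varphi))}$, the genericity encoded in $G_d$ makes the $\binom{n-1}{d}$ specialized minors linearly independent, hence a $k$-basis of the degree-$d$ component; so this Artinian ring is $k[\TT_1,\dots,\TT_{n-d}]/\m^d$, with top nonzero degree $d-1$. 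Thus $a(\mathcal{F}(I))=-1$ and $r(I)=r_J(I)=\ell(I)-1$ --- which is the content of \cite[Theorem 2.7]{UlrichU96}. (Consistently, $r(I)\le d-1$ is already visible from $\LL$, whose minimal generators outside the linear part all have $\TT$-degree $d$.) Assembling the three parts completes the proof, the only genuinely hard input being the Cohen--Macaulayness and the defining-ideal identification taken from \cite{MU,UlrichU96}.
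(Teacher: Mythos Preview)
The paper does not prove this theorem at all: it is stated as a quotation from the literature, with the sentence immediately preceding it attributing the result to \cite[Theorem 1.3]{MU} and \cite[Theorem 2.7]{UlrichU96}, and no proof follows. Your proposal is therefore not competing with a proof in the paper but rather supplying an explanatory sketch of how those two references combine to yield the four assertions. That sketch is essentially sound: you correctly extract the Hilbert--Burch shape of $\varphi$, the construction of $\BB(\varphi)$, the identification of $\mathcal{F}(I)$ with the determinantal ring $k[\TT]/I_d(\BB(\varphi))$ once $\LL$ is known, and the height count giving $\ell(I)=d$.

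One step deserves a caveat. Your computation of $r(I)$ via the claim that the Artinian reduction of $\mathcal{F}(I)$ is exactly $k[\TT_1,\dots,\TT_{n-d}]/\m^d$ --- justified by ``the genericity encoded in $G_d$ makes the specialized minors linearly independent'' --- is the weakest link: $G_d$ is a condition on $I$ in $\RR$, not directly a genericity statement about the $\TT$-linear entries of $\BB(\varphi)$, and the passage from one to the other is not automatic. You are right that the numerics $\binom{n-1}{d}=\dim_k k[\TT_1,\dots,\TT_{n-d}]_d$ match, but linear independence of the specialized minors needs an argument (or, as you ultimately do, a citation). Since both you and the paper defer this to \cite[Theorem 2.7]{UlrichU96}, there is no discrepancy; just be aware that this is the point where an independent proof would require real work beyond bookkeeping.
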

    
 The canonical ideal of these rings is described in \cite[Theorem 2.7]{UlrichU96} (for $g=2$)
 
 \begin{Proposition} 
 Let $I$ be an ideal of codimension two satisfying the assumptions of Theorem~\ref{rees1}.     
 Then $\ddeg(\RR[It]) \neq 0$
and similarly $\cdeg(\RR[It])\neq 0$. 
 \end{Proposition}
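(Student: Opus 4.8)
The plan is to reduce the statement to the assertion that $S = \RR[It]$ is not Gorenstein in codimension one, and then to pin down a single height-one prime at which Gorensteinness fails. Indeed, by Theorem~\ref{genddeg1} we have $\ddeg(S) = 0$ precisely when $S$ is Gorenstein in codimension one, and the Corollary following Definition~\ref{defcdeg} gives the same characterization of $\cdeg(S) = 0$. So it is enough to exhibit one prime $P$ of $S$ with $\height P = 1$ for which $S_P$ is not Gorenstein; both $\ddeg(S) \neq 0$ and $\cdeg(S) \neq 0$ then follow simultaneously.

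The prime I would use is $P = \m S$. By Theorem~\ref{rees1} the special fiber ring $S/\m S = k[\TT_1, \ldots, \TT_n]/I_d(\BB(\varphi))$ has Krull dimension $\ell(I) = d$ (and, under the $G_d$ and linear‑presentation hypotheses, it is a domain; if one only knows it to be Cohen--Macaulay one replaces $\m S$ by a minimal prime over it, all of which have height one). Since $S$ is a catenary equidimensional domain of dimension $d+1$, this forces $\height(\m S) = (d+1) - d = 1$. To see that $S_{\m S}$ is not Gorenstein I would invoke the explicit description of the canonical module $\omega_S$ supplied by \cite[Theorem 2.7]{UlrichU96}: unwinding it shows that the rank‑one module $\omega_S$ localizes at $\m S$ to a module that is not free, i.e. the Cohen--Macaulay type of the one‑dimensional local ring $S_{\m S}$ is at least two. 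Morally, this is forced because $I$ is not of linear type --- indeed $\ell(I) = d < n = \nu(I)$, so $\LL$ strictly contains $(\TT \cdot \varphi)$ and genuinely nonlinear relations $I_d(\BB(\varphi))$ occur --- and these relations persist after inverting the elements of $S$ outside $\m S$, obstructing $\omega_{S_{\m S}}$ from being cyclic. Hence $S_{\m S}$ is not Gorenstein, so $S$ is not Gorenstein in codimension one, and the proof concludes.

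The main obstacle is the inequality $\nu\bigl((\omega_S)_{\m S}\bigr) \geq 2$, and it is a real one: the global Cohen--Macaulay type of $S$ need not survive localization at $\m S$. The minimal free resolution of $S$ over $\SS = \RR[\TT_1,\ldots,\TT_n]$ is built from a Koszul‑type piece on the linear forms $\TT \cdot \varphi$ and an Eagon--Northcott‑type piece coming from $\BB(\varphi)$, and the differentials of these pieces have entries (various $\TT_j$, and entries of $\BB(\varphi)$) that need not lie in $\m S$; so free summands can cancel after localizing there and the global type can drop. Consequently a naive Betti‑number count will not suffice, and one must genuinely extract the behaviour of $\omega_S$ at the prime $\m S$ from the structure theorem \cite[Theorem 2.7]{UlrichU96}, together with the description of $\LL$ recorded in Theorem~\ref{rees1}.
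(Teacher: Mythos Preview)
Your overall strategy is sound: reduce to showing $S$ is not Gorenstein in codimension one, and you correctly isolate $\m S$ as a height-one prime. But you yourself identify the gap and do not close it. Asserting that ``unwinding \cite[Theorem 2.7]{UlrichU96} shows $\omega_S$ is not free at $\m S$'' is not a proof, and your final paragraph concedes that the naive approaches fail and that one ``must genuinely extract'' the behaviour of $\omega_S$ from the structure theorem --- without doing so. The heuristic that nonlinear relations obstruct cyclicity is suggestive but not an argument; nothing in your write-up rules out $\nu((\omega_S)_{\m S}) = 1$.

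The paper avoids this localization problem entirely by working globally with the explicit canonical ideal. From \cite[Theorem 2.7]{UlrichU96} one has $\C = Kt\,\RR[It]$ where $K = J:I$ for a minimal reduction $J$; note that $\C$ lives in degrees $\geq 1$. One then computes $\C^{**} = (a):_S\bigl((a):_S \C\bigr)$ degree by degree, and the punchline is that the degree-zero component of $\C^{**}$ is all of $\RR$. Since the degree-zero component of $\C$ is zero, $\C^{**}/\C \neq 0$ and hence $\ddeg(S) \neq 0$; the statement for $\cdeg$ then follows from the common characterization of vanishing. The advantage of this route is that it replaces the delicate local question ``is $\omega_S$ free at $\m S$?'' with a direct graded computation of the bidual, where the discrepancy is visible in degree zero without any localization.
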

 
 \begin{proof}
Let $J$ be a minimal reduction of $I$ and set $K = J:I$. Then $ \C =K t  \RR[It]$ is a canonical ideal of $\RR[It]$ by \cite[Theorem 2.7]{UlrichU96}.
Let $a$ be a regular element of $K$. Then
 \[ (a): \C = \sum L_i t^i  \subset \RR[It], \quad \mbox{and} \quad bt^i \in L_i \;\;  \mbox{if and only if} \;\; b  KI^j \subset a I^{i+j}.\]
Thus for $b\in L_0$, $b\cdot K \subset (a)$ so $b= r\cdot a$ since $\codim (K) \geq 2$. Hence $L_0 = (a)$.
 For $b t\in L_1$, $b K \subset aI$, $b= ra$ with $r \in I:K$. As $rKI \subset I^2$, we have $L_1 = a(I:K)t$.
 For $i \geq 2$, $b\in L_i$, we have $b = rat^i$ with $r K \subset I^i$. Hence $L_i =a (I^i:K)t^i$.  In
 general we have $L_i = a(I^{i}: K)t^i$. Therefore, we have
 \[ \begin{array}{rcl}
 {\ds (a): \C} &=& {\ds  a(\RR+ (I:K) t + (I^2:K)t^2 +  \cdots ),} \vspace{0.1 in} \\
{\ds \C^{**} } &=& {\ds   (a) : ((a): \C) =  \sum_j L_jt^j = \RR[It] :   ( \RR + (I:K) t + (I^2:K)t^2 +  \cdots  ) }.
\end{array} \]
For $b\in L_0$ and $i\geq 1$,  $b(I^i:K) \subset I^i$ and thus $b\in \bigcap_i I^i : (I^i:K)=L_0$.  In general it is clear that $KI^i\subset L_{i}$. Note that
$L_0 = \RR: (\RR:K) = \RR$. It follows that $\C \neq \C^{**}$ and hence $\ddeg(\RR[It]) \neq 0$.
Recall that the vanishing of either of the functions $\cdeg$ or $\ddeg$ holds if and only if the ring is Gorenstein in codimension $1$. Therefore $\cdeg(\RR[It]) \neq 0$.
 \end{proof}

\section{Canonical degrees of 
 $\AA=\m:\m$}
\noindent
Let $(\RR, \m)$ be a CM local ring of dimension 1,  and set $\AA= \Hom(\m,\m)= \m:_Q\m$.  Assume that $\RR$ is not a DVR. 
Let $\C$ be its canonical ideal.
 (We can also discuss some of the same questions by replacing $\m$ by a prime ideal $\p$ such that
$\RR_{\p}$ is not a DVR.)

\subsubsection*{General properties}

We begin by collecting elementary data on $\AA$.
\medskip

\begin{enumerate}[{\rm (1)}]

\item $\AA = \m:\m \subset \RR:\m$ and since $\m \cdot (\RR:\m) \not = \RR$ [as otherwise $\m$ is principal] $\RR:\m = \AA$. 
\medskip

\item If $\RR$ is a Cohen-Macaulay local ring of dimension one that is
not a DVR, then $\AA = \RR:\m$  as $\m\cdot \Hom(\m,\RR) \subset \m$ and therefore
\[ \AA = \m:_Q\m = \RR:_{Q} \m= 1/x \cdot ((x):_{\RR}\m).\] Indeed,
if $q\in \m:_Q \m$ and $x$ is a regular element of $\m$, let $a=qx\in \m$. Then $q = (1/x)a$ and \[a\m = qx\m =
x q\m \subset x\m.\]
Thus $\AA = (1/x)(	(x):\m )$, which
  makes $\AA$ amenable by calculation using  software such as {\em Macaulay2} (\cite{Macaulay2}).
\medskip

\item A relevant point is to know when $\AA$ is a local ring. Let us  briefly consider some cases. 
 Let $L$ be an ideal of the local ring $\RR$
and suppose
 $L = I \oplus J$, $L = I + J$, $I\cap J = 0$, is a non-trivial decomposition.
  Then $I \cdot (J: I) = 0$ and $J\cdot (J:I) = 0$ and thus if 
 $\grade(I,J) = 1$ [maximum possible by the Abhyankar--Hartshorne Lemma], then $I:J=0$. It follows
 that 
 \[ \Hom(L,L) = \Hom(I,I) \times \Hom(J,J),\] and therefore $\Hom(L,L)$ is not a local ring.
 
 \medskip

  \item   Suppose $\RR$ is complete, or at least Henselian. If $\AA$ is not a local ring, by
  the Krull-Schmidt Theorem $\AA$ admit a non-trivial decomposition of $\RR$-algebras
  \[ \AA = \BB \times \CC.\]
 Since $\m = \m \AA$, we have a decomposition $\m = \m\BB \oplus \m \CC$. If we preclude  such decompositions then
 $\AA$ is a local ring. Among these cases are: analytically irreducible rings, in particular they include the localization 
 of any monomial ring. 
\end{enumerate}

\subsubsection*{Number of generators of $\AA$}

\begin{enumerate}[{\rm (1)}]

\item Since $\m$ is an ideal of both $\RR$ and $\AA$, $\RR:\AA = \m$. Thus $\AA/\RR \simeq (\RR/\m)^n = k^n$.
The exact sequence
\[ 0\rar \RR \lar \AA \lar k^n \rar 0,\]
yields
\[ 0\rar \RR/\m \lar \AA/\m=\AA/\m\AA \lar k^n \rar 0,\]
gives $\nu(\AA) = n + 1$ since $\RR \not \subset \m \AA = \m$.

\medskip

\item $\D = \C:\AA$ is the canonical ideal of $\AA$, according to \cite[Theorem 3.3.7]{BH}.
 Applying $\Hom(\cdot , \C)$ to the exact sequence above
we get
\[ 0\rar \D \lar \C \lar \Ext^1(k^n, \C) \simeq k^n \rar 0.\] 
Thus $\C/\D \simeq k^n$ and $\m \C \subset \D $. As
\[0 \rar \m\C \lar \C \lar  k^r\rar 0 \]  
\[ \D/\m \C \simeq k^{r-n}.\]
\end{enumerate}

 \begin{Theorem}\label{NGens} Let $(\RR, \m)$ be a Cohen-Macaulay local ring of dimension one and type $r(\RR)=r$.
 Then \[\AA = \Hom(\m,\m) = \m:_ Q \m= x^{-1}\cdot ((x):\RR)\] 
 for any regular element $x\in \m$. Furthemore
 \[ \boxed {
 \nu_{\RR}(\AA) = r+1.}\]
 \end{Theorem}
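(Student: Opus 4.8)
The plan is to read off the structure of $\AA$ from the material in the discussion preceding the statement and then supply the one missing quantitative input, namely the exact value of $\lambda_\RR(\AA/\RR)$. First I would record the description of $\AA$. Since $\dim \RR = 1$ and $\RR$ is Cohen--Macaulay, $\m$ is a regular ideal, so $\Hom_\RR(\m,\m)$ is identified with $\m:_Q\m$ inside $Q$. Because $\RR$ is not a discrete valuation ring (the standing assumption of this section), $\m$ is not invertible, hence $\m\cdot(\RR:_Q\m)\subseteq\m$; this gives $\RR:_Q\m\subseteq\m:_Q\m$, the reverse inclusion is trivial, and so $\AA=\m:_Q\m=\RR:_Q\m$. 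The identity $\RR:_Q\m = x^{-1}((x):_\RR\m)$ for a regular $x\in\m$ is the elementary verification already carried out in the ``General properties'' paragraph, which I would simply repeat.

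Next I would reduce the count of generators to a length. $\AA$ is a finite $\RR$-module and is maximal Cohen--Macaulay over $\RR$ (it contains $\RR$, so every $\RR$-regular element is $\AA$-regular). Since $\m\AA=\m$, one has $\nu_\RR(\AA)=\lambda_\RR(\AA/\m\AA)=\lambda_\RR(\AA/\m)$, and the chain $\m\subseteq\RR\subseteq\AA$ yields $\lambda_\RR(\AA/\m)=\lambda_\RR(\AA/\RR)+1$. Moreover $\RR:_\RR\AA=\m$, because it contains $\m$ and is proper (as $\AA\neq\RR$: otherwise $\RR:_Q\m=\RR$, and the argument above would force $\m$ to be principal). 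Thus $\AA/\RR\cong(\RR/\m)^n$ for some $n$, and $\nu_\RR(\AA)=n+1$; everything reduces to proving $n=r$.

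To identify $n$, fix a canonical ideal $\C$ and set $\D=\C:_Q\AA=\Hom_\RR(\AA,\C)$. Applying $\Hom_\RR(-,\C)$ to $0\to\RR\to\AA\to(\RR/\m)^n\to0$ and using the canonical-module identities $\Hom_\RR(\RR/\m,\C)=0$ and $\Ext^1_\RR(\RR/\m,\C)\cong\RR/\m$ together with $\Ext^1_\RR(\AA,\C)=0$ (as $\AA$ is maximal Cohen--Macaulay), one obtains $0\to\D\to\C\to(\RR/\m)^n\to0$, so $\lambda_\RR(\C/\D)=n$. Since $\m\C\cdot\AA=\C\m\subseteq\C$ we get $\m\C\subseteq\D\subseteq\C$, whence $n\le\lambda_\RR(\C/\m\C)=\nu_\RR(\C)=r$. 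For the reverse inequality I would show $\D=\m\C$: compute $\C:_Q(\m\C)=\{q\in Q: (q\m)\C\subseteq\C\}=\{q\in Q: q\m\subseteq\C:_Q\C\}=\RR:_Q\m=\AA$, using $\C:_Q\C=\RR$ for the canonical ideal; then, since $\C$ and $\m\C$ are regular fractional ideals and hence maximal Cohen--Macaulay, the canonical module gives the reflexivity $\C:_Q(\C:_Q(\m\C))=\m\C$, and applying $\C:_Q(-)$ to the identity $\C:_Q(\m\C)=\AA$ yields $\m\C=\C:_Q\AA=\D$. Therefore $n=\lambda_\RR(\C/\m\C)=r$, and $\nu_\RR(\AA)=n+1=r+1$.

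I expect the last step to be the main obstacle. The reduction $\nu_\RR(\AA)=n+1$ and the inequality $n\le r$ are just bookkeeping with the short exact sequences already displayed in the discussion, but the equality $\D=\m\C$ requires recognizing the colon identity $\C:_Q(\m\C)=\AA$ and invoking reflexivity of maximal Cohen--Macaulay modules with respect to the canonical module; lining up those two facts correctly is the crux of the argument.
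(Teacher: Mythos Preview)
Your argument is correct, but it takes a genuinely different route from the paper's. The paper does not use the canonical ideal at all in the proof of this theorem: it simply observes that $\AA\cong (x):_\RR\m$ as $\RR$-modules, so $\nu_\RR(\AA)=\nu_\RR((x):\m)$. Since $((x):\m)/(x)$ is the socle of $\RR/(x)$, it is isomorphic to $k^r$, giving $r\le \nu_\RR((x):\m)\le r+1$. Equality with $r+1$ holds iff $x$ is a minimal generator of $(x):\m$; if it were not, then $x\in\m\cdot((x):\m)\subseteq\m^2$, so choosing $x\in\m\setminus\m^2$ forces $\nu_\RR(\AA)=r+1$.

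By contrast, you follow the preliminary discussion of the section (the short exact sequences $0\to\RR\to\AA\to k^n\to 0$ and $0\to\D\to\C\to k^n\to 0$) and then supply the identity $\D=\m\C$ via canonical duality: $\C:_Q(\m\C)=\RR:_Q\m=\AA$, hence $\m\C=\C:_Q(\C:_Q(\m\C))=\C:_Q\AA=\D$. This is exactly the content of the Corollary that the paper states \emph{after} the theorem, and which the paper deduces from $n=r$ rather than the other way around. So your proof and the paper's are essentially dual to one another: the paper proves the theorem by an elementary socle-and-minimal-generator argument and reads off $\D=\m\C$ as a corollary, while you prove $\D=\m\C$ first by reflexivity with respect to $\C$ and read off the generator count. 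Your approach buys the corollary for free and avoids the special choice of $x$, at the cost of invoking the existence of a canonical ideal and the duality $\C:_Q(\C:_Q I)=I$; the paper's approach is more elementary and works without any reference to $\C$.
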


\begin{proof}
Since $\AA=x^{-1}((x):\m)$ for any regular element of $\m$, we have

  $\AA = 1/x\cdot ((x):\m)$, which gives
$ \nu(\AA) =\lambda((x):\m)$ and since
\[ ((x):\m)/x\RR \simeq k^r\] then
\[r\leq \nu((x):\m) \leq r+1.\]

\smallskip

Writing $(x):\m = (x, y_1, \ldots, y_r)$,  $n=r-1$ would mean that $x$ is not a minimal generator of $(x):\m$, so 
\[ x\in \m ((x):_{\RR} \m).\]
In particular $x\in \m^2$.
If you preclude
this with an initial
 choice of $x\in \m\setminus \m^2 $, we would have $r=n$ always.  
 
\end{proof}

\begin{Corollary} If $(\RR,\m)$ is a local ring and $\C$ is the 
canonical ideal, resp. module,  of $\RR$ then $\D = \m \C$ is the canonical ideal, resp. module, of $\AA$.
\end{Corollary}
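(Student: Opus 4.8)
The plan is to read the statement off from Theorem~\ref{NGens} together with the exact sequences assembled just above it, so that the corollary becomes a one-line length count. Throughout, $\AA = \m :_Q \m = \RR :_Q \m$ (the second equality because $\RR$ is not a discrete valuation ring), a fractional ideal of $\RR$ and hence a maximal Cohen--Macaulay $\RR$-module of dimension one; note $\m\AA = \m$, since otherwise $\m$ would be invertible and hence principal. Put $r = r(\RR)$. Recall from the discussion preceding Theorem~\ref{NGens} that $\RR :_Q \AA = \m$, so $\AA/\RR$ is annihilated by $\m$, say $\AA/\RR \cong k^{\,n}$, and that $\nu_\RR(\AA) = n+1$; that $\D = \C :_Q \AA$ (resp.\ $\D = \Hom_\RR(\AA,\C)$) is the canonical ideal (resp.\ module) of $\AA$ by \cite[Theorem 3.3.7]{BH}; and that applying $\Hom_\RR(-,\C)$ to $0 \rar \RR \rar \AA \rar k^n \rar 0$ yields
\[ 0 \rar \D \lar \C \lar \Ext^1_\RR(k^n,\C) \cong k^n \rar 0, \]
where $\Hom_\RR(k^n,\C) = 0$ because $\depth \C = 1$, where $\Ext^1_\RR(\AA,\C) = 0$ because $\AA$ is maximal Cohen--Macaulay and $\C$ is dualizing, and where $\Ext^1_\RR(k,\C) \cong k$ because $\C$ is the canonical module of a one-dimensional Cohen--Macaulay ring. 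In particular $\lambda(\C/\D) = n$.

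Next I would record the inclusion $\m\C \subseteq \D$. When $\C$ is an ideal this is immediate: $(\m\C)\AA = \C(\m\AA) = \m\C \subseteq \C$. For the module version, given $c \in \C$ and $m \in \m$, the map $\AA \rar \C$, $a \mapsto (am)c$, is well defined since $am \in \m\AA = \m \subseteq \RR$, is $\RR$-linear, and restricts on $\RR$ to multiplication by $m$ on $c$; so once more $\m\C \subseteq \D$. Since $\lambda(\C/\m\C) = \nu(\C) = r$, the chain $\m\C \subseteq \D \subseteq \C$ forces $\lambda(\D/\m\C) = r - n$, which is the identity recorded just before Theorem~\ref{NGens}.

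The only missing ingredient is the equality $n = r$, and that is precisely what Theorem~\ref{NGens} supplies: the a priori bound gives only $r \le \nu_\RR(\AA) \le r+1$, and the point of that theorem is that $\nu_\RR(\AA) = r+1$. Feeding $n = r$ into the previous paragraph yields $\lambda(\D/\m\C) = 0$, i.e.\ $\D = \m\C$, as claimed. Accordingly there is no genuine obstacle beyond invoking Theorem~\ref{NGens}; the single point deserving care is the exactness on the right of the displayed sequence --- that is, the vanishing $\Ext^1_\RR(\AA,\C) = 0$ --- so that $\lambda(\C/\D)$ is exactly $n$ and not merely at most $n$, but this is a standard consequence of $\AA$ being maximal Cohen--Macaulay and $\C$ dualizing.
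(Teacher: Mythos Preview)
Your argument is correct and follows exactly the route the paper intends: the corollary is meant to be read off from the length identity $\D/\m\C \simeq k^{r-n}$ established just before Theorem~\ref{NGens}, combined with that theorem's conclusion $\nu_\RR(\AA)=r+1$ (equivalently $n=r$). You have in fact been more careful than the paper in justifying the right-exactness of $0\rar\D\rar\C\rar k^n\rar 0$ via $\Ext^1_\RR(\AA,\C)=0$ and in treating the module case of $\m\C\subseteq\D$ separately.
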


\subsubsection*{Canonical invariants of $\AA$}

The driving questions are what are
 $r(\AA)$, $\cdeg{\AA}$ and $\ddeg(\AA)$ in relation to
 the invariants of $\RR$. Our main calculation is the following result:

\begin{Theorem} \label{TCdeg} Suppose $(\AA,M)$  is a local ring. Then 
\[  \boxed{ \cdeg(\AA) = e^{-1}[ \cdeg(\RR) + \rme_0(\m) - 2r].} \]
\end{Theorem}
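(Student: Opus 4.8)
The plan is to compute $\cdeg(\AA)$ by comparing the canonical ideal $\D$ of $\AA$ with a minimal reduction, exploiting the two tools already in hand: the identification $\D = \m\C$ from the Corollary, and the fact that $\cdeg$ of a one-dimensional ring equals $\e_0(\D) - \lambda_\AA(\AA/\D)$ (Proposition~\ref{cdeg}, applied to $\AA$). First I would fix a regular element $a\in \C$ generating a minimal reduction of $\C$ over $\RR$, so that $(a)$ is also a reduction of the $\AA$-ideal $\D = \m\C$; then $\e_0^\AA(\D) = \lambda_\AA(\AA/a\AA)$. The first key step is a change-of-multiplicity bookkeeping: $\lambda_\AA(\AA/a\AA) = \lambda_\RR(\AA/a\AA)$ since $\AA/a\AA$ has finite length and $\AA$ is module-finite over $\RR$, and this should be related to $\e_0(\m,\RR) = \e$ via $\lambda_\RR(\AA/a\AA) = \lambda_\RR(\RR/a\RR) + \lambda_\RR(\AA/\RR) - \lambda_\RR(a\AA/a\RR) = \e_0(\C) + n - n = \e_0(\C)$, using $\AA/\RR\simeq k^n$ (Theorem~\ref{NGens}, $n = r$ after the harmless choice of $x\notin\m^2$, or in general $\nu(\AA)=r+1$) and that multiplication by the regular element $a$ is an isomorphism $\AA/\RR \to a\AA/a\RR$. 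Wait — I must be careful: $\AA$ here is the $\m:\m$ construction, whose multiplicity $\e_0(M,\AA)$ as an $\AA$-module differs from its length as an $\RR$-module by a factor of $e = \e_0(\m)$, which is exactly where the $e^{-1}$ in the boxed formula comes from. So the cleaner route is to work entirely $\RR$-adically: write $\cdeg(\AA)$ as an $\RR$-length difference and then divide by $e$ at the end, or equivalently use the associativity/projection formula $\lambda_\AA(N) \cdot \e_0(\m,\RR) = \lambda_\RR(N)$ is wrong in general — rather, for a finite-length $\AA$-module $N$ one has $\lambda_\AA(N)$ computed over $\AA$, and the passage to $\RR$-length is what the survey's convention $\deg(M)=\e_0(\m,M)$ handles. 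I would state at the outset which length I am using and be scrupulous about it.

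The second key step is to compute $\lambda_\AA(\AA/\D) = \lambda_\AA(\AA/\m\C)$. Using the filtration $\m\C \subset \C \subset \AA$ (note $\C\subset\RR\subset\AA$ and $\C$ is an $\AA$-module since $\AA = \C:\C$), we get $\lambda_\AA(\AA/\m\C) = \lambda_\AA(\AA/\C) + \lambda_\AA(\C/\m\C)$. Now $\lambda_\AA(\C/\m\C) = \lambda_\RR(\C/\m\C) = \lambda_\RR(\RR/\m) \cdot (\text{something})$; in fact $\C/\m\C$ as an $\RR$-module has length equal to the number of its minimal generators times... no: $\C/\m\C \cong k^{\nu(\C)} = k^r$ as an $\RR$-module, so its $\RR$-length is $r$. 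For $\lambda_\AA(\AA/\C)$: from $\AA/\RR \simeq k^n$ and $\RR/\C$ we get $\lambda_\RR(\AA/\C) = \lambda_\RR(\RR/\C) + n$. Assembling, and recalling $\cdeg(\RR) = \e_0(\C) - \lambda_\RR(\RR/\C)$, the $\RR$-length version of $\cdeg(\AA)$ should come out to $\e_0(\C) - [\lambda_\RR(\RR/\C) + n + r] = \cdeg(\RR) - n - r = \cdeg(\RR) + \e_0(\m) - 2r$ after substituting $n$ and $\e_0(\C)$ appropriately and matching the $\e_0(\m)$ term — here the identity $\e_0(\C) = \e_0(\m)$ fails in general, but $\lambda_\RR(\RR/\C) = \e_0(\C) - \cdeg(\RR)$ and the $\e_0(\m)-2r$ must emerge from carefully tracking $\lambda_\RR(\AA/a\AA)$ versus $\e_0(\m,\RR)$ together with $\lambda_\RR(\D/\m\D)$-type corrections; I would organize this as one displayed chain of equalities. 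Finally, dividing the $\RR$-length quantity by $e = \e_0(\m)$ to convert to $\cdeg(\AA)$ in the $\AA$-adic sense (since $\cdeg$ over $\AA$ is measured by the $M$-adic multiplicity, and every finite-length $\AA$-module $N$ satisfies $\lambda_\RR(N) = e\cdot\lambda_\AA(N)$ when $\AA$ is local with $\m\AA = M$... actually $\lambda_\AA(N)$ versus $\lambda_\RR(N)$ relate through the residue field extension degree $[\AA/M : \RR/\m]$, not $e$) — this conversion factor is the delicate point.

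\textbf{Main obstacle.} The hard part will be pinning down the exact relationship between lengths measured over $\AA$ and lengths/multiplicities measured over $\RR$, and thereby locating precisely where the factor $e^{-1}$ enters: it is not that $\lambda_\AA(N) = e^{-1}\lambda_\RR(N)$ for finite-length modules (that would require $[\AA/M:\RR/\m]=e$, which need not hold), but rather that $\cdeg(\AA)$ is defined via $\e_0(M,-)$ and one must use the associativity formula relating $\e_0(\m,-)$ on $\AA$-modules to $\e_0(M,-)$ weighted by $\e_0(\m,\AA/M)$-type local multiplicities — or, more likely, the intended reading is $\cdeg(\AA) = \e_0(M,\D) - \e_0(M, \AA/\D)$ computed genuinely over $\AA$, and one converts the known $\RR$-side quantities using $\lambda_\RR(-) = \e\cdot\lambda_\AA(-)$ which \emph{does} hold because $\AA$ is a finite birational extension with $\m\AA$ equal to its maximal ideal (so $\RR/\m \hookrightarrow \AA/M$ and the colength is governed by $\e$). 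I would resolve this by first proving the clean lemma $\lambda_\RR(N) = e\cdot\lambda_\AA(N)$ for finite-length $\AA$-modules $N$ under the hypothesis that $(\AA,M)$ is local with $M = \m\AA$, and then the rest is the routine filtration computation sketched above; that lemma, properly stated and proved, is the crux.
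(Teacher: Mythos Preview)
Your overall shape is right---compute $\e_0(\D,\AA)$ and $\lambda_\AA(\AA/\D)$ as $\RR$-lengths, then divide by the residue-field degree $e=[\AA/M:\RR/\m]$---and your eventual identification of $e$ with the residue-field extension degree (not with $\e_0(\m)$) is correct. The conversion lemma $\lambda_\RR(N)=e\cdot\lambda_\AA(N)$ for finite-length $\AA$-modules is exactly what the paper uses, and your filtration $\m\C\subset\C\subset\RR\subset\AA$ reproduces the paper's computation $\lambda_\RR(\AA/\m\C)=r+\lambda_\RR(\RR/\C)+r=2r+\lambda_\RR(\RR/\C)$.

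The genuine gap is in your choice of minimal reduction for $\D=\m\C$. A minimal reduction element $a$ of $\C$ is a minimal generator of $\C$, hence $a\notin\m\C=\D$; the principal $\AA$-ideal $a\AA$ is therefore not contained in $\D$ and cannot serve as a reduction of $\D$. This is precisely why your computation $\lambda_\RR(\AA/a\AA)=\e_0(\C)$ leaves you unable to locate the $\e_0(\m)$ term---you have sensed the discrepancy but cannot repair it from within your setup. The paper's key move is to take $d=cx$, where $(c)$ is a minimal reduction of $\C$ and $(x)$ is a minimal reduction of $\m$; then $cx\in\m\C=\D$, and since $(\m\C)^{n}=x^{n-1}c^{n-1}\m\C$ for large $n$ one sees $(cx)\AA$ is a minimal reduction of $\D$. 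The multiplicity then splits as
\[
\lambda_\RR(\AA/cx\AA)=\lambda_\RR(\AA/x\AA)+\lambda_\RR(x\AA/cx\AA)=\lambda_\RR(\RR/x\RR)+\lambda_\RR(\RR/c\RR)=\e_0(\m)+\e_0(\C),
\]
and the missing $\e_0(\m)$ appears. With this one correction the rest of your plan goes through verbatim.
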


\begin{proof}
 The equality $n=r$ means that $\D = \m \C$. In particular if $(c)$ is a minimal reduction of $\C$ and $
(x)$	is a minimal reduction of $\m$ then $(cx)$ is a minimal reduction of $\D$. This gives 
that
if $[e = \AA/M:\RR/\m]$ is
the   relative degree then
\begin{eqnarray*}
 \rme_0(\D, \AA)& =& e^{-1}\cdot  \lambda_{\RR}(\AA/xc\AA) = e^{-1}
 [  \lambda_{\RR}(\AA/x\AA)+ \lambda_{\RR}(x\AA/xc\AA)] \\ 
  &= &  e^{-1} \cdot[ \lambda_{\RR}(\RR/ x\RR) + \lambda_{\RR}(\RR/c\RR)] \\
 & =&
  e^{-1} [   \rme_0(\C) + \rme_0(\m)].
  \end{eqnarray*}
 On the other hand
 \begin{eqnarray*}
 \lambda_{\AA}(\AA/\D) & =& e^{-1}\cdot \lambda_{\RR}(\AA/\m \C) = e^{-1}\cdot [\lambda_{\RR}(\AA/\RR) 
 + \lambda_{\RR}(\RR/\C) + \lambda_{\RR}(\C/\m \C)] \\
  &= & 
  e^{-1}\cdot
   [2r + \lambda_{\RR}(\RR/\C)].
\end{eqnarray*}
These equalities  give
\[ \cdeg(\AA) = e^{-1}\cdot [ \rme_0(\C) -  \lambda_{\RR}(\RR/\C) + \rme_0(\m) - 2r] = 
e^{-1}[
 \cdeg(\RR) + \rme_0(\m) - 2r],\]
  as desired.
 \end{proof}

 \begin{Corollary} 
 $\AA$ is a Gorenstein ring if and only if
 \[ \cdeg(\RR) + \rme_0(\m) - 2r=0.\]
   In particular $\m \C = \AA z$, so $\m$ and $\C \AA$ are principal ideals of $\AA$.
\end{Corollary}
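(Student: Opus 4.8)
The plan is to obtain the equivalence as an immediate consequence of Theorem~\ref{TCdeg} together with the criterion that a canonical degree vanishes precisely when the ring is Gorenstein, and then to unpack the ``in particular'' from the principality of the canonical ideal of a Gorenstein local ring. Throughout I retain the standing hypothesis of Theorem~\ref{TCdeg} that $(\AA,M)$ is local.

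First I would record that, by the Corollary following Theorem~\ref{NGens}, $\D=\m\C$ is the canonical ideal of $\AA$; since $\AA$ is a one-dimensional Cohen--Macaulay local ring this canonical ideal is $M$-primary, hence equimultiple, so Corollary~\ref{cdegr1}(2), applied to $\AA$ in place of $\RR$, gives that $\cdeg(\AA)=0$ if and only if $\AA$ is Gorenstein. On the other hand Theorem~\ref{TCdeg} asserts $\cdeg(\AA)=e^{-1}[\cdeg(\RR)+\rme_0(\m)-2r]$, where $e=[\AA/M:\RR/\m]$ is a positive integer (it is nonzero because $\RR/\m\hookrightarrow\AA/M$). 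Hence $\cdeg(\AA)=0$ if and only if $\cdeg(\RR)+\rme_0(\m)-2r=0$, and combining the two equivalences yields the stated characterization of the Gorenstein property of $\AA$.

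For the last assertion, assume $\AA$ is Gorenstein. Then its canonical module is isomorphic to $\AA$, so the canonical ideal $\D$ is principal: $\D=z\AA$ for a regular element $z$ of the total ring of fractions $Q$ (here $z$ is the image of $1$ under an $\AA$-isomorphism $\AA\cong\D$, hence a non-zero-divisor). Thus $\m\C=z\AA$. Since $\m$ is an ideal of $\AA$ we have $\m\AA=\m$, and since $\D=\m\C$ is an ideal of $\AA$ we have $\m\C\AA=\m\C$; therefore $(\m)(\C\AA)=\m\C\AA=\m\C=z\AA$. A product of two ideals of $\AA$ equal to a principal ideal generated by a regular element forces each factor to be invertible (multiply by $z^{-1}$ times the other factor), and over the local ring $\AA$ an invertible ideal is principal. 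Hence $\m=\m\AA$ and $\C\AA$ are both principal ideals of $\AA$, as claimed.

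The argument is essentially bookkeeping, and I do not anticipate a genuine obstacle. The only points that need a little care are that $e\neq0$ and that the identity $\m\C=(\m\AA)(\C\AA)$ is legitimate; the latter rests on $\m$ and $\m\C=\D$ already being $\AA$-submodules of $Q$ (the first by the very definition $\AA=\m:_Q\m$, the second because $\D$ is the canonical ideal of $\AA$), after which the ``invertible implies principal over a local ring'' step is routine.
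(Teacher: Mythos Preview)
Your proof is correct and follows the approach the paper intends: the Corollary is stated without proof as an immediate consequence of Theorem~\ref{TCdeg}, and you have supplied exactly the natural justification---combine the formula $\cdeg(\AA)=e^{-1}[\cdeg(\RR)+\rme_0(\m)-2r]$ with the vanishing criterion of Corollary~\ref{cdegr1}(2), then unpack the ``in particular'' via the principality of the canonical ideal $\D=\m\C$ of the Gorenstein ring $\AA$ and the standard invertibility argument over a local ring.
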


\medskip

\begin{Remark}{\rm If $\m$ is a maximal ideal but $\AA$ is semilocal with maximal ideals $\{M_1, \ldots, M_s\}$, 
we can still obtain a formula for $\cdeg(\AA)$ as a summation of the $\cdeg(\AA_{M_i})$, as 
\[ \cdeg(\AA) =  \sum_i \cdeg (\AA_{M_i}) =
[
 \cdeg(\RR) + \rme_0(\m) - 2r] \cdot (\sum_i e_i^{-1}) .\]
}\end{Remark}

 \begin{Example}{\rm

Let $(\RR,\m)$ be a Stanley-Reisner ring of dimension one. If $\RR=k[x_1, \ldots, x_n]/L$, $L$ is generated by all the binomials
 $x_ix_j$, $i\neq j$. Note that $\m = (x_1)\oplus \cdots \oplus (x_n)$ and since for $i\neq j$ the annihilator
 of $\Hom((x_i), (x_j))$ has grade positive,  the observations above show that  
 \[ \AA =\Hom(\m,\m) = \Hom((x_1), (x_1))\times \cdots \times \Hom((x_n),(x_n))= k[x_1] \times \cdots \times k[x_n].\]

To determine $\cdeg(\RR)$, we already have that $\rme_0(\m)=n$ so let us calculate $r=r(\RR)$.
The Hilbert series of $\RR$ is easily seen to be
\[ {{1 + (n-1)t}\over {1-t}},\]
so $r = n-1$, which yields
\[ \cdeg(\RR) = 2r -\rme_0(\m) = 2(n-1)-n = n-2=r-1.\]
Thus $\RR$ is almost Gorenstein (\cite[Theorem 5.2]{blue1}). This also follows from
 \cite[Theorem 5.1]{GMP11}.

\medskip

Some of this argument applies in higher dimension. For instance:

\medskip

If $\RR = k[x_1, \ldots, x_n]/L$ is an unmixed Stanley-Reiner ring, $L=\bigcap_{1\leq i\leq s} P_i$ is an irreducible
representation then the natural mapping
\[ \RR \mapsto \prod_{1\leq i\leq s} k[x_1, \cdots , x_n]/P_i\] 
gives an embedding of $\RR$ into its integral closure. Note that each factor is a polynomial ring of the same dimension as $\RR$.

}\end{Example}

\medskip

Let us determine other invariants of $\AA$:

\begin{Corollary} For $\AA$ as above

\begin{enumerate}[{\rm(1)}]
\item The type of $\AA$ is
 \[r(\AA) = \nu_{\AA} (\D) = e^{-1} \nu_{\RR}(\D) =e^{-1}\nu_{\RR}(\m \C) = 
 e^{-1} \lambda(\m\C /\m^2\C).\]

\item For $\AA$ to be AGL requires
\[ \cdeg(\AA) = r(\AA)-1 = e^{-1} \lambda(\m\C /\m^2\C) -1 = e^{-1}[\cdeg(\RR)   +   \rme_0(\m)
  -  2r] .
 \]
  
\end{enumerate}
\end{Corollary}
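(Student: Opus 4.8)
The plan is to deduce everything from two facts already available: the identification $\D=\m\C$ as a canonical ideal of $\AA$ (the Corollary preceding Theorem~\ref{TCdeg}), and the value $\cdeg(\AA)=e^{-1}[\cdeg(\RR)+\rme_0(\m)-2r]$ of Theorem~\ref{TCdeg}. So part~(2) will be essentially formal once part~(1) is in place, and the real content is the chain of equalities in part~(1).

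For part~(1), since $\AA$ is a one-dimensional Cohen--Macaulay local ring with canonical module $\D$, its Cohen--Macaulay type equals the minimal number of generators of the canonical module, i.e. $r(\AA)=\nu_{\AA}(\D)$; this is the standard fact (kill a parameter and dualize to an Artinian local ring, whose canonical module is its Matlis dual, of minimal number of generators equal to the socle dimension). The delicate equality is $\nu_{\AA}(\D)=e^{-1}\nu_{\RR}(\D)$. Writing $M$ for the maximal ideal of $\AA$, we have $\nu_{\AA}(\D)=\dim_{\AA/M}(\D/M\D)$ and $\nu_{\RR}(\D)=\dim_{\RR/\m}(\D/\m\D)$, and since $\D/M\D$ is an $\AA/M$-vector space its $\RR/\m$-length is $e=[\AA/M:\RR/\m]$ times its $\AA/M$-length. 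Hence the identity reduces to the claim $M\D=\m\D$, i.e. that $M$ acts trivially on $\D/\m\D$. To establish this I would use $\D=\m\C$ together with the fact that $\m$ is a common ideal of $\RR$ and $\AA$ with $\RR:\AA=\m$ and $\m\AA=\m$, so that the question becomes whether $M\cdot\m\C\subseteq\m^2\C$. Granting $M\D=\m\D$, the last two equalities of part~(1) are immediate: $\nu_{\RR}(\D)=\nu_{\RR}(\m\C)=\lambda(\m\C/\m(\m\C))=\lambda(\m\C/\m^2\C)$.

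For part~(2), "$\AA$ is AGL" means $\AA$ is an almost Gorenstein local ring; since $\dim\AA=1$, Proposition~\ref{1dimalmostg} says this holds if and only if $\cdeg(\AA)=r(\AA)-1$. Substituting $r(\AA)-1=e^{-1}\lambda(\m\C/\m^2\C)-1$ from part~(1) and $\cdeg(\AA)=e^{-1}[\cdeg(\RR)+\rme_0(\m)-2r]$ from Theorem~\ref{TCdeg}, and equating the two, yields exactly the displayed chain of equalities. No new estimate is needed here; it is bookkeeping with $\lambda$ and an appeal to the earlier results.

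The main obstacle is the comparison $\nu_{\AA}(\D)=e^{-1}\nu_{\RR}(\D)$, equivalently $M\D=\m\D$: a priori one only has the surjection $\D/\m\D\twoheadrightarrow\D/M\D$, hence $\nu_{\RR}(\D)\ge e\,\nu_{\AA}(\D)$, and forcing equality is precisely where one must exploit the special structure of $\D=\m\C$ (and possibly a judicious choice of the regular element $x$ realizing $\AA=x^{-1}((x):\m)$). Everything else in the statement is a routine consequence of "type $=$ minimal number of generators of the canonical module", Theorem~\ref{TCdeg}, and Proposition~\ref{1dimalmostg}.
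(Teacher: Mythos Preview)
The paper states this Corollary without proof, so there is no explicit argument to compare against; it is presented as a direct reading-off from the preceding Corollary ($\D=\m\C$) and Theorem~\ref{TCdeg}. Your outline matches that intended derivation exactly: part~(2) is pure bookkeeping from part~(1), Theorem~\ref{TCdeg}, and Proposition~\ref{1dimalmostg}, and in part~(1) the only nontrivial link in the chain is $\nu_{\AA}(\D)=e^{-1}\nu_{\RR}(\D)$.

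You have correctly isolated the real issue: this equality is equivalent to $M\D=\m\D$, i.e.\ $M\m\C=\m^2\C$, and a priori one only has $\m\D\subseteq M\D$, hence only $\nu_{\RR}(\D)\ge e\,\nu_{\AA}(\D)$. The paper does not address this point, and your flagging it as the step that needs the special structure of $\AA=\m:\m$ (rather than being formal) is accurate. Everything else in your proposal is the standard unwinding the paper evidently has in mind.
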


\subsubsection*{Additional invariants}
\begin{enumerate}[{\rm(1)}]

\item Next we would like to calculate under the same conditions
\[\ddeg(\AA)= \lambda_{\AA}(\AA/\D) - \lambda_{\AA}(\AA/\D^{**}).\]
The first term is as above
\[ \lambda_{\AA}(\AA/\D) 
= 1/e\cdot [2 r+ \lambda(\RR/ \C)].\]

\item We  need
 $\D^{**}$ or $\D \cdot \D^{*}$. We must pick an element $x\in \D$.

\medskip
\item Since $\AA = x^{-1}((x):\m)$ and $\D = \m \C\AA$
\[ \AA:\m = ? \]
\[ \D^{*} = [\AA:\m]:\C = \]
\[ \D^{*}\cdot \D = [[\AA:\m]:\C]\cdot \C \m = \]
\[ \D^{*} = \AA:\D = x^{-1}((x):\m):\m \C= x^{-1}[(x):\m):\m]:\C\]

\medskip

\item Would like at least to argue that
\[ \cdeg(\RR) \geq \ddeg(\RR) \Longrightarrow \cdeg(\AA) \geq \ddeg(\AA).\]
\end{enumerate}
 
\begin{Example}
{\rm We return to the ring
  $\RR = \mathbb{Q}[t^5, t^7, t^9], 
 \m = (x,y, z) $. We have a presentation $\RR = \mathbb{Q}[x,y,z]/P$, with $P = (y^2-xz, x^5-yz^2, z^3-x^4 y)$. Let
  us examine some properties of $\RR$ and $\AA = \m:\m$.
\begin{enumerate}[{\rm(1)}]
\item The canonical module is
 $\C = (x,y)$, and a minimal reduction $(c)=(x)$.   It gives $\red(\C) = 4$.
  
\item  $(c):\C \neq \m$ so $\RR$ is not almost Gorenstein. However $\C^{**} = (c):  [(c):\C]$ satisfies  [by another {\em Macaulay2} calculation]
$\lambda(\C^{**}/\C) = 1$, so $\C^{**} = L$. This shows that 
\[
\ddeg(\RR) =\lambda(C^{**}/C) 
= 1\quad \mbox{\rm and} \quad \cdeg(\RR) = \lambda(\C/(c))  = 2.\]

\item Since $\rme_0(\m) = 5$,
 $r=2$ and $e=1$, we have
\[ \cdeg(\AA)= 2-4+5 = 3.\] 
\item $\red(\D) = \red(\m \C) = 2$. Note that (x) is a minimal reduction of both $\C$ and $\m$.

\item To calculate $\D^{**}$ we change $\C$ to $x \C$. We then get $\lambda(\AA/\D) = 11$ and $\lambda(\AA/\D^{**})= 9$, 
and so   $\ddeg(\AA) = \lambda(\AA/\D)-\lambda(\AA/\D^{**}) = 2$.
\end{enumerate}
}\end{Example}

\section{Effective computation of bi-canonical degrees}

\noindent
 A basic question is how to calculate canonical degrees. We will propose an approach that applies to the bi--canonical 
 degree and suggests an extension to rings when the canonical module 
  $\C$ is not an ideal.

 \begin{Definition}
 {\rm Let
  $\RR$ be a Cohen-Macaulay local ring and $\C$ one  canonical module.
  Consider the natural mapping
 \[ 0\rar E_0 \lar
 \C \lar \C^{**} \lar E_1 \rar 0,\]
  define \[ \boxed{\ddeg_{\C}(\RR) = \deg(E_1) + \deg(E_0).}\] 
}\end{Definition} 

The primary setting  of our calculations is the following construction of Auslander \cite{AusBr}.

\begin{Definition} \label{ausdualdef} {\rm Let $E$ be a finitely generated $\RR$-module with a
projective presentation
\[ F_1 \stackrel{\varphi}{\lar} F_0 \lar E \rar 0.\]
The {\em Auslander dual} of $E$ is the module $D(E)=
\coker(\varphi^*)$,
\begin{eqnarray} \label{ausdual}
0\rar E^*\lar  F_0^* \stackrel{\varphi^*}{\lar} F_1^* \lar D(E) \rar 0.
\end{eqnarray}
}\end{Definition}

The module
$D(E)$ depends on the chosen presentation but it is unique up to
projective summands. In particular the values of the functors
$\Ext_{\RR}^i(D(E),\cdot)$ and $\Tor_i^{\RR}(D(E), \cdot)$, for $i\geq 1$,
are independent of the presentation. Its use here lies in the
following result (see \cite[Proposition 2.6]{AusBr}):

\begin{Proposition}\label{Adual}
 Let $\RR$ be a Noetherian ring and
 $E$  a finitely generated $\RR$-module. There are two exact
sequences of functors:
\begin{eqnarray} \label{adual1}
\hspace{.4in} 0 \rar \Ext_{\RR}^1(D(E),\cdot) \lar E\otimes_{\RR}\cdot \lar
\Hom_{\RR}(E^*,\cdot) \lar \Ext_{\RR}^2(D(E),\cdot)\rar 0
\end{eqnarray}
\begin{eqnarray}  \label{adual2}
\hspace{.4in}   0 \rar \Tor_2^{\RR}(D(E),\cdot) \lar E^*\otimes_{\RR}\cdot \lar
\Hom_{\RR}(E,\cdot) \lar \Tor_1^{\RR}(D(E),\cdot)\rar 0.
\end{eqnarray}
\end{Proposition}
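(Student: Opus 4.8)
The plan is to deduce both sequences from the four-term exact sequence (\ref{ausdual}) defining $D(E)$, together with the usual long exact sequences for $\Ext$ and $\Tor$. Since $\RR$ is Noetherian and $E$ is finitely generated, $E$ is finitely presented, so in Definition~\ref{ausdualdef} we may and do take $F_0,F_1$ to be \emph{finitely generated} free modules. Then $F_0^\ast$ and $F_1^\ast$ are finitely generated projective, and the evaluation maps furnish isomorphisms $\Hom_{\RR}(F_i^\ast,M)\cong F_i\otimes_{\RR}M$ and $\Hom_{\RR}(F_i,M)\cong F_i^\ast\otimes_{\RR}M$, natural in the $\RR$-module $M$. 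Setting $N=\mathrm{im}(\varphi^\ast)\subseteq F_1^\ast$, the sequence (\ref{ausdual}) breaks into
\[ 0\rar E^\ast\lar F_0^\ast\lar N\rar 0 \qquad\text{and}\qquad 0\rar N\lar F_1^\ast\lar D(E)\rar 0, \]
call them $(\mathrm a)$ and $(\mathrm b)$.

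For (\ref{adual1}) I would apply $\Hom_{\RR}(-,M)$ to $(\mathrm a)$ and $(\mathrm b)$. Projectivity of $F_0^\ast$ and $F_1^\ast$ makes $\Ext^{\ge 1}_{\RR}(F_i^\ast,M)$ vanish, so the long exact sequences collapse: from $(\mathrm b)$ we get $\Ext^1_{\RR}(D(E),M)=\coker\!\big(\Hom(F_1^\ast,M)\to\Hom(N,M)\big)$ and $\Ext^2_{\RR}(D(E),M)\cong\Ext^1_{\RR}(N,M)$; from $(\mathrm a)$ we get $\Ext^1_{\RR}(N,M)=\coker\!\big(\Hom(F_0^\ast,M)\to\Hom(E^\ast,M)\big)$ together with an inclusion $\Hom(N,M)\hookrightarrow\Hom(F_0^\ast,M)$. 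Splicing these along $\Hom(N,M)\subseteq\Hom(F_0^\ast,M)$ and observing that the composite $\Hom(F_1^\ast,M)\to\Hom(N,M)\hookrightarrow\Hom(F_0^\ast,M)$ is $\Hom(\varphi^\ast,M)$, which under the evaluation isomorphisms is $\varphi\otimes 1_M\colon F_1\otimes M\to F_0\otimes M$ whose cokernel is $E\otimes M$ by right exactness of $\otimes$ on $F_1\xrightarrow{\varphi}F_0\to E\to 0$, a diagram chase assembles the four-term sequence (\ref{adual1}), with all maps the canonical ones and natural in $M$.

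For (\ref{adual2}) I would instead apply $-\otimes_{\RR}M$ to $(\mathrm a)$ and $(\mathrm b)$ and $\Hom_{\RR}(-,M)$ to the presentation of $E$. Flatness of $F_i^\ast$ kills $\Tor_{\ge 1}$, so $(\mathrm b)$ gives $\Tor_1(D(E),M)=\ker\!\big(N\otimes M\to F_1^\ast\otimes M\big)$ and $\Tor_2(D(E),M)\cong\Tor_1(N,M)$, while $(\mathrm a)$ gives $\Tor_1(N,M)=\ker\!\big(E^\ast\otimes M\to F_0^\ast\otimes M\big)$ and a surjection $F_0^\ast\otimes M\twoheadrightarrow N\otimes M$. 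On the other hand, $\Hom_{\RR}(-,M)$ applied to $F_1\xrightarrow{\varphi}F_0\to E\to 0$, together with the evaluation isomorphisms, identifies $\Hom(E,M)$ with $\ker\!\big(F_0^\ast\otimes M\xrightarrow{\varphi^\ast\otimes 1}F_1^\ast\otimes M\big)$, and this map factors as $F_0^\ast\otimes M\twoheadrightarrow N\otimes M\to F_1^\ast\otimes M$. Following kernels through this factorization produces the four-term sequence $0\rar\Tor_2(D(E),M)\rar E^\ast\otimes M\rar\Hom(E,M)\rar\Tor_1(D(E),M)\rar0$ of (\ref{adual2}), again naturally in $M$.

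The standard ingredients --- the long exact sequences for $\Ext$ and $\Tor$, the evaluation isomorphisms, right exactness of $\otimes$ --- are routine. I expect the only real work to be the bookkeeping in the two splicing diagram chases: checking that the data coming from $(\mathrm a)$ and $(\mathrm b)$ really fit into the asserted four-term shape and, crucially, that the middle arrows are the canonical maps $E\otimes M\to\Hom(E^\ast,M)$ and $E^\ast\otimes M\to\Hom(E,M)$ rather than merely \emph{some} maps. Fixing the finitely generated free presentation once and for all and chasing the single commutative diagram built from $(\mathrm a)$, $(\mathrm b)$ and the evaluation isomorphisms keeps this manageable, and the naturality in $M$ is then automatic since every arrow used is a natural transformation.
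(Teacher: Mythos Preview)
Your argument is correct and is essentially the standard derivation of these sequences. Note, however, that the paper does not actually give its own proof of this proposition: it is stated with a citation to \cite[Proposition 2.6]{AusBr} and used as a black box. So there is nothing in the paper to compare your approach against; what you have written is precisely the kind of splice-the-long-exact-sequences proof one finds in Auslander--Bridger.
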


    \begin{Corollary} 
   Let $\RR$ be a Cohen-Macaulay local ring and $\C$ one of its canonical modules. 
     If $D(\C)$ is the Auslander dual of $\C$,
\[ \boxed{E_0 =  \Ext_{\RR}^1(D(\C), \RR) , \quad  E_1 =  \Ext_{\RR}^2(D(\C), \RR).} \]
\end{Corollary}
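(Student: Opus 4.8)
The plan is to specialize the first exact sequence of functors in Proposition~\ref{Adual}, namely \eqref{adual1}, to the test module $\RR$ itself. Taking $E = \C$ and evaluating at $\cdot = \RR$ produces the four-term exact sequence
\[ 0 \rar \Ext^1_{\RR}(D(\C),\RR) \lar \C\otimes_{\RR}\RR \lar \Hom_{\RR}(\C^{*},\RR) \lar \Ext^2_{\RR}(D(\C),\RR) \rar 0. \]
I would then identify $\C\otimes_{\RR}\RR = \C$ and $\Hom_{\RR}(\C^{*},\RR) = \C^{**}$ canonically, and — this is the crux — check that under these identifications the middle arrow is exactly the natural biduality homomorphism $\C \rar \C^{**}$ whose kernel and cokernel were named $E_0$ and $E_1$ in the Definition preceding the statement. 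Granting this, the displayed sequence is precisely $0 \rar E_0 \rar \C \rar \C^{**} \rar E_1 \rar 0$, so $E_0 \cong \Ext^1_{\RR}(D(\C),\RR)$ and $E_1 \cong \Ext^2_{\RR}(D(\C),\RR)$, which is the assertion.

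The one point needing care, and the only place I expect real work, is the verification that the middle map of \eqref{adual1} at $\cdot=\RR$ is the evaluation/biduality map. This is a formal consequence of the construction of the Auslander dual: starting from the presentation $F_1 \stackrel{\varphi}{\lar} F_0 \lar \C \rar 0$, applying $\Hom_{\RR}(-,\RR)$ twice and splicing with the defining sequence \eqref{ausdual} of $D(\C)$ gives a commutative ladder whose induced map on cokernels is the canonical map $\C = \coker(\varphi) \rar \coker(\varphi^{**}) = \C^{**}$. Rather than redo this diagram chase I would invoke the naturality built into \cite[Proposition 2.6]{AusBr}, where the maps in \eqref{adual1} and \eqref{adual2} are exhibited as the obvious evaluation morphisms; setting the second variable to $\RR$ then gives the claim directly.

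Finally, for consistency I would remark that since $D(\C)$ is well defined only up to projective summands, but the functors $\Ext^i_{\RR}(D(\C),-)$ for $i\geq 1$ are insensitive to projective summands, the modules $\Ext^1_{\RR}(D(\C),\RR)$ and $\Ext^2_{\RR}(D(\C),\RR)$ — hence $E_0$ and $E_1$ — are independent of the chosen presentation of $\C$, in agreement with the remark following Definition~\ref{ausdualdef}. In particular $\ddeg_{\C}(\RR) = \deg(E_1) + \deg(E_0) = \deg(\Ext^2_{\RR}(D(\C),\RR)) + \deg(\Ext^1_{\RR}(D(\C),\RR))$, which is the presentation of the bi-canonical degree amenable to symbolic computation, since the Auslander dual and the $\Ext$ modules are computed from a single free presentation of $\C$.
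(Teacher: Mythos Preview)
Your proposal is correct and follows exactly the approach the paper intends: the Corollary is stated immediately after Proposition~\ref{Adual} without proof because it is the direct specialization of the sequence \eqref{adual1} to $E=\C$ and $\cdot=\RR$, with the middle map identified as the biduality homomorphism. Your additional remarks on the naturality of that map and on the independence from the chosen presentation of $\C$ are accurate and appropriate elaborations of what the paper leaves implicit.
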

Now we give a presentation of $D(\C)$ in an important case suitable for computation.

\begin{Remark}
{\rm Recall that to calculate $\Ext_{\RR}^i(M,N)$ we invoke the command $\Ext^i(f,g)$ where $f$ and $g$ are the
presentations of $M$ and $N$, respectively.
}
\end{Remark}

\begin{Theorem}
Let $\SS$ be a Gorenstein local ring 
 and $\RR =\SS/I $ a Cohen-Macaulay ring with a
   with minimal projective resolution   
\[ 0\rar \SS^p \stackrel{\varphi}{\lar} \SS^m \lar \cdots \lar \SS^n \stackrel{f}{\lar} \SS \lar \RR=\SS/I\rar 
0.\]Then
\begin{eqnarray*}
\C&=& \coker (\varphi^{*}\otimes \RR),\\
D(\C)&=& \coker (\varphi\otimes \RR),
\end{eqnarray*}
and therefore \[\boxed{ \ddeg (\RR) =
 \deg 
 (\Ext_{\RR}^1(\varphi\otimes\RR, \RR))
 +  \deg 
 (\Ext_{\RR}^2(\varphi\otimes\RR, \RR))}. \]
 Furthermore, $\C$ is an ideal if and only if $\height(I_{p-1}
 (\varphi)) \geq p+1$.
\end{Theorem}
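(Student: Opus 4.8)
The plan is to read everything off the given $\SS$-free resolution $F_\bullet\colon 0\rar\SS^p\stackrel{\varphi}{\rar}\SS^m\rar\cdots\rar\SS^n\stackrel{f}{\rar}\SS\rar\RR\rar 0$. Since $\SS$ is Gorenstein local and $\RR=\SS/I$ has finite projective dimension $g$ over $\SS$ --- which by Auslander--Buchsbaum together with the Cohen--Macaulayness of $\RR$ equals $\codim I=\grade I$, the length of $F_\bullet$ --- the module $\RR$ is a perfect $\SS$-module; hence $\Ext^i_\SS(\RR,\SS)=0$ for $i\neq g$ and $\Ext^g_\SS(\RR,\SS)=\omega_\RR$ is a canonical module of $\RR$ (\cite{BH}). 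Dualizing $F_\bullet$ into $\SS$ produces the complex $0\rar\SS\rar\SS^n\rar\cdots\rar\SS^m\stackrel{\varphi^{*}}{\rar}\SS^p\rar 0$, whose only nonzero cohomology sits in top degree and equals $\omega_\RR$; thus $\omega_\RR=\coker(\varphi^{*})$, and since this cokernel is annihilated by $I$, right-exactness of $-\otimes_\SS\RR$ gives $\C=\coker(\varphi^{*}\otimes\RR)$, together with the explicit $\RR$-presentation $\RR^m\xrightarrow{\varphi^{*}\otimes\RR}\RR^p\rar\C\rar 0$.

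Next I would identify the Auslander dual from this presentation. Applying the construction of Definition~\ref{ausdualdef} and using that the $\RR$-dual of the matrix $\varphi^{*}\otimes\RR$ is its transpose $\varphi\otimes\RR$, one gets $D(\C)=\coker(\varphi\otimes\RR)$. Then Proposition~\ref{Adual}, with the exact sequence~(\ref{adual1}) evaluated at $\RR$, identifies $E_0=\ker(\C\to\C^{**})$ with $\Ext^1_\RR(D(\C),\RR)$ and $E_1=\coker(\C\to\C^{**})$ with $\Ext^2_\RR(D(\C),\RR)$, exactly as recorded in the Corollary above. Taking degrees in $0\rar E_0\rar\C\rar\C^{**}\rar E_1\rar 0$ and invoking the definition of $\ddeg_\C(\RR)$ yields $\ddeg_\C(\RR)=\deg\Ext^1_\RR(D(\C),\RR)+\deg\Ext^2_\RR(D(\C),\RR)$. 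When $\C$ is an honest ideal it contains a regular element, so $\C\to\C^{**}$ is injective, $E_0=0$, and this quantity equals $\deg(\C^{**}/\C)=\ddeg(\RR)$; since $D(\C)=\coker(\varphi\otimes\RR)$, both $\Ext$'s are precisely those computed from the presentation $\varphi\otimes\RR$, which is the boxed formula. (In particular the displayed formula is really a formula for $\ddeg_\C(\RR)$ in the sense of the preceding Definition, agreeing with $\ddeg(\RR)$ exactly when $\C$ is an ideal --- the content of the final clause.)

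For that clause I would use the classical criterion that a canonical module of a Cohen--Macaulay local ring is isomorphic to an ideal if and only if the ring is Gorenstein at every minimal prime (\cite{BH}). For $\p$ a minimal prime of $I$, $\RR_\p$ is Artinian and $\C_\p=\omega_{\RR_\p}$ has the same length as $\RR_\p$, so $\RR_\p$ is Gorenstein precisely when $\C_\p$ is cyclic, i.e. when the first Fitting ideal $\mathrm{Fitt}_1(\C)$ becomes the unit ideal at $\p$. From the presentation $\RR^m\xrightarrow{\varphi^{*}\otimes\RR}\RR^p\rar\C\rar 0$ this Fitting ideal is $I_{p-1}(\varphi^{*}\otimes\RR)=I_{p-1}(\varphi)\RR$, minors being transpose-invariant. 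Finally, at any prime $\mathfrak{Q}$ of $\SS$ with $I\not\subseteq\mathfrak{Q}$ the localized complex $(F_\bullet)_{\mathfrak{Q}}$ resolves the zero module, hence is split exact, so $\varphi_{\mathfrak{Q}}$ is a split injection and $I_p(\varphi)_{\mathfrak{Q}}$ --- a fortiori $I_{p-1}(\varphi)_{\mathfrak{Q}}$ --- is the unit ideal; thus every prime containing $I_{p-1}(\varphi)$ contains $I$. Combining, $\C$ is isomorphic to an ideal if and only if $I_{p-1}(\varphi)$ is contained in no minimal prime of $I$, which is precisely the height condition in the statement.

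The identifications of $\C$ and $D(\C)$ and the resulting $\Ext$-formula are routine once $\omega_\RR=\Ext^g_\SS(\RR,\SS)$ and the Auslander-dual formalism of Proposition~\ref{Adual} are in hand; the step I expect to be the main obstacle is the last clause, where one must correctly match the generic-Gorenstein criterion to the specific Fitting ideal $I_{p-1}(\varphi)$ and control its vanishing locus via the exactness of $F_\bullet$ away from $V(I)$.
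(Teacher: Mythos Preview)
The paper states this theorem without a separate proof; its content is assembled from the preceding Corollary (identifying $E_0$ and $E_1$ with $\Ext^1_{\RR}(D(\C),\RR)$ and $\Ext^2_{\RR}(D(\C),\RR)$ via Proposition~\ref{Adual}) together with the standard computation of $\omega_{\RR}=\Ext^g_{\SS}(\RR,\SS)$ from the tail of the dualized resolution. Your argument fills in exactly these steps and is correct for the identification of $\C$, of $D(\C)$, and for the boxed formula, including the useful remark that the formula literally computes $\ddeg_{\C}(\RR)$ and agrees with $\ddeg(\RR)$ once $\C$ is an ideal.

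There is one point to flag in the final clause. Your Fitting-ideal argument correctly yields that $\C$ is an ideal if and only if $I_{p-1}(\varphi)$ lies in no minimal prime of $I$, equivalently $\height I_{p-1}(\varphi)\geq g+1$ where $g=\codim I$ is the length of the resolution. You then assert this is ``precisely the height condition in the statement,'' but the statement reads $\height I_{p-1}(\varphi)\geq p+1$, with $p=r(\RR)$ the rank of the last free module, not $g$. These do not coincide in general. The paper's own example immediately following the theorem (the codimension-two case $0\rar\SS^{n-1}\stackrel{\varphi}{\rar}\SS^n\rar\SS\rar\RR\rar 0$) asserts the criterion $\codim I_{n-2}(\varphi)\geq 3$, which is $g+1=3$ and not $p+1=n$; this confirms that the intended bound is $g+1$ and that the ``$p+1$'' in the theorem is a slip. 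Your derivation is the right one --- just note the discrepancy rather than claim agreement.
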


\begin{Example}{\rm
 Consider the ring $\RR = \SS/I$, where $\SS$ is a Gorenstein local ring  
  and  $I $ is a Cohen-Macaulay ideal  given by a minimal resolution
\[ 0\rar \SS^{n-1}  \stackrel{\varphi}{\lar} \SS^n
\lar \SS \lar
\RR \rar 0.\] This gives 
\[ 0\rar  \SS \lar \SS^n \stackrel{\varphi^*}{\lar} \SS^{n-1} \lar \C \rar 0\]
where $\C$ is the canonical module of $\RR$.  We get the exact complexes 
\[  \RR^n \stackrel{\bar{{\varphi^*}}} {\lar} \RR^{n-1} \lar  \C \rar 0,\] 
\[ 0
\rar \C^*\lar   \RR^{n-1} \stackrel{\bar{\varphi}} {\lar} \RR^n \lar D( \C) \rar 0.\] 

\begin{enumerate}[{\rm (1)}]
\item In general
 we have that $\C$ is an ideal if and only $\codim \ (I_{n-2}(\varphi))\geq 3$. For instance
if 
\[ \varphi = \left[ \begin{array}{lll}
x & yz & xz \\
y^2 & zx & xy
\end{array}\right]
\]
\[ I = I_2(\varphi)=
(x^2z-y^3z, x^2y-xy^2z, x^2z^2-xy^2z)\]  
but $I_1(\varphi) $ has codimension $2$ so $\C$ is not an ideal.

\medskip 
\item Setting $Z = \ker(\varphi)$ we have the exact sequences
\[ 0 \rar \C^* \lar \RR^2 \lar \Hom(Z, \RR) \lar \Ext^1_{\SS}(\C, \RR)\rar 0\]   
\[ 0 \rar \Hom(Z, \RR) \lar \RR^3 \lar \RR \lar \Ext^{1}_{\SS}(Z, \RR) = \Ext^2_{\SS}(\C, \RR) \rar 0.\]
\end{enumerate}
}\end{Example}

For the homogeneous ring
defined by
\[ \varphi = \left[ \begin{array}{lll}
x^2 & yz & xz \\
y^2 & zx & xy
\end{array}\right],
\] we found
 $\deg(E_0) = 1$ and $\deg(E_1) = 6$, so $\ddeg(\RR)=7$.

\bigskip

{\bf Notes and Questions.}

\begin{enumerate}[{\rm (1)}]

\item  $D(\C)$ is fed into the Ext calculation to determine $E_0, E_1$. We only did homogeneous calculations.

\medskip

\item 
Would like some very explicit formulas. For instance suppose $I$ has type $2$ and $\C$ is an ideal

\[ 
0 \rar \SS^2  \stackrel{\varphi}
{\lar} \SS^3 \lar I \rar 0
.\]

Then $\height I_2(\varphi)=3$. We have $\C = \coker (\varphi^t)=
H_{n-2}(
K)$. This is to be completed and extended to all strongly Cohen-Macaulay ideals.

\medskip

\item In our calculations, we asked also for $\dim(E_0)$, $\dim(E_1)$ besides $\deg(E_0)$, $\deg(E_1)$. 

\medskip

\item We would like to verify that if $E_0\neq 0$ then $\dim E_0=\dim  E_1 $. 

\medskip

\item What are the properties of $\ddeg_{\C}(\RR)$? 

\medskip

\item If $E_0=0$, $E_1  = C^{**}/C$ and it is either $0$ or Cohen-Macaulay of dimension $d-1$.

\medskip

\item  If  $ E_0\neq 0$ it must have the same dimension as  $\C$ [which is a MCM], that is $d$. We guess that $E_1 $ has also dimension $ d$.      
If $\dim E_1<d$, localizing at one prime not in the support get a case with $E_0\neq 0$ but
$E_1 =0$.

\medskip

\item What if $\C$ is reflexive? 

\begin{itemize}

\item
Let  $\RR^m \lar \C^{*} \rar 0$, then we have
$0\rar \C^{**} \lar \RR^{m}$.  If $\C$ is reflexive it will then embed in $\RR^m$.

\medskip

\item Suppose $\RR$ has dimension $\leq 1$.
Localizing at a regular element we get
$\C_x$ splits off $Q^{m}$ since it is an injective $Q$-module (in fact its injective envelope). Thus $\C_x$  splits off a free module.

\medskip

\item From $\Hom(\C_x,\C_x) = Q$ we get that $\C_x=Q$.
 Thus $\C_x$ is an ideal of $Q$ and therefore $\C$ is an ideal of $\RR$.
 
 \medskip

\item If $\C$ is reflexive, in dimension one
by an earlier criterion $\RR$ is Gorenstein.

\end{itemize}
 
\end{enumerate}

\begin{Corollary} $E_0=0$ if and only if $\C$ is an ideal.
\end{Corollary}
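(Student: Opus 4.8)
The plan is to identify $E_0$ with the kernel of the natural biduality map $\C \to \C^{**}$ and then to show that a canonical module which is \emph{torsionless} (i.e.\ whose biduality map is injective) must already embed in $\RR$ itself. The identification $E_0 = \ker(\C \to \C^{**})$ is immediate from the defining four-term sequence $0 \to E_0 \to \C \to \C^{**} \to E_1 \to 0$ (alternatively it is the earlier Corollary giving $E_0 = \Ext^1_\RR(D(\C),\RR)$). Since a finitely generated module $M$ over a Noetherian ring satisfies $\ker(M \to M^{**}) = 0$ exactly when $M$ embeds in a finite free module $\RR^m$, the statement reduces to: a canonical module $\C$ embeds in $\RR^m$ for some $m$ if and only if it is isomorphic to an ideal of $\RR$. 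One direction is trivial: if $\C$ is isomorphic to an ideal then it embeds in $\RR = \RR^1$, so $E_0 = 0$.

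For the converse, assume $\C \hookrightarrow \RR^m$. First I would record that $\C$ is a maximal Cohen--Macaulay $\RR$-module, hence (as $\RR$ is Cohen--Macaulay, thus unmixed) $\Ass(\C) \subseteq \Ass(\RR) = \operatorname{Min}(\RR)$. In particular $\C$ has no $S$-torsion for $S$ the set of nonzerodivisors, so $\C$ injects into $\C \otimes_\RR Q$ where $Q = S^{-1}\RR$ is the total ring of fractions, and $Q$ decomposes as the finite product $\prod_{\p \in \operatorname{Min}(\RR)} \RR_\p$ of Artinian local rings. Localizing the inclusion $\C \hookrightarrow \RR^m$ at a minimal prime $\p$ gives an embedding of $\C_\p = \omega_{\RR_\p}$ into the free module $\RR_\p^{\,m}$ over the Artinian local ring $A := \RR_\p$.

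The key step is then the local computation: if the canonical module $\omega_A = E_A(k)$ of an Artinian local ring $A$ embeds in a free module $A^m$, then $A$ is Gorenstein and $\omega_A \cong A$. Indeed, composing such an embedding with the coordinate projections yields $\phi_1,\dots,\phi_m \colon \omega_A \to A$ with $\bigcap_i \ker\phi_i = 0$; since $\operatorname{soc}(\omega_A) = \operatorname{soc}(E_A(k))$ is simple and nonzero, some $\phi_{i_0}$ is nonzero, hence injective, on $\operatorname{soc}(\omega_A)$; but $E_A(k)$ is an essential extension of its socle, so a submodule meeting the socle trivially is zero, whence $\ker\phi_{i_0} = 0$ and $\omega_A$ embeds in $A$; comparing lengths, $\lambda(\omega_A) = \lambda(A)$ forces $\omega_A \cong A$. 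Applying this at each minimal prime $\p$ gives $\C_\p \cong \RR_\p$, so $\C \otimes_\RR Q \cong \prod_{\p} \C_\p \cong \prod_\p \RR_\p = Q$. Thus $\C$ embeds in $Q = S^{-1}\RR$, and clearing denominators (multiplying the finitely many generators of $\C$ by a suitable $s \in S$) gives an isomorphism of $\C$ onto an ideal $s\C \subseteq \RR$, as desired.

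The main obstacle is the reduction to the zero-dimensional case together with the socle argument there; the surrounding steps are just bookkeeping with the total ring of fractions, which works because $\RR$ is Cohen--Macaulay, so $\Ass(\C) \subseteq \operatorname{Min}(\RR)$ and $Q$ splits as a product of Artinian local rings. One could instead argue entirely inside the minimal free resolution using $E_0 = \Ext^1_\RR(D(\C),\RR)$ with $D(\C) = \coker(\varphi\otimes\RR)$ and the height criterion $\height(I_{p-1}(\varphi)) \ge p+1$ from the preceding theorem, but the module-theoretic route above avoids committing to a particular presentation.
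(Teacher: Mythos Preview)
Your proof is correct and follows essentially the same route as the paper's sketch (given in the bullet points of item (8) just before the Corollary): both identify $E_0=0$ with torsionlessness, embed $\C$ in a free module via $\C^{**}\hookrightarrow \RR^m$, pass to the total ring of fractions $Q$, and show $\C\otimes_\RR Q\cong Q$ so that clearing denominators yields an ideal. The only difference is at the Artinian level: the paper uses that the canonical module of an Artinian local ring is injective, hence splits off $Q^m$, and then invokes $\Hom(\C,\C)=\RR$ to force rank one, whereas you argue directly via the simple socle of $E_A(k)$ and essentiality to produce an injection $\omega_A\hookrightarrow A$, finishing by the length equality $\lambda(\omega_A)=\lambda(A)$. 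Your version is slightly more self-contained and makes the general-dimension case explicit (the paper's sketch is phrased for $\dim\RR\le 1$, though the same argument works since $Q$ is always Artinian for $\RR$ Cohen--Macaulay).
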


Now must examine the vanishing of $E_1$. Consider
\[0  \rar H \lar \C \otimes \C^{*} \lar \mbox{\rm trace}(\C) =L \subset \RR\]

We have
\[ 0 \rar \Hom(L, \C) \lar \Hom(\C,\RR) \lar \Hom(H, \C) \rar 0.\]

\medskip

\section{Other generalizations}
\noindent
Let $\RR$ be a Noetherian local ring and $M$ a finitely generated MCM $\RR$-module. We would like to define a bicanonical degree
$\ddeg(M)$ with properties similar to $\ddeg(\RR)$.

\begin{enumerate}[(1)]
\item One of these is
 $\ddeg(M)=0$ if and only if $M$ is a Gorenstein module in codimension $1$. A Gorenstein module is a maximal Cohen-Macaulay module
of finite injective dimension.
 
 \medskip
 
 \item We should be careful here:  $\m$  is a bidual but not always principal. 
This shows the need for additional restrictions.

\medskip

\item
In Proposition~\ref{resddeg3} we give a class of rings and ideals with the required conditions.

 \medskip

\item Let $M$ be a MCM module and set $L = \Hom(M,\C)$. In analogy of the case above set
\[ 0 \rar E_0 \lar L \lar L^{**} \lar E_1 \rar 0,\] and define
\[ \ddeg(M) =\deg(E_0) + \deg(E_1).\]
If $\C$ is an ideal then  $E_0=0$. Here we would like to show that if $\ddeg(M) =0 $ then $M$ is Gorenstein in codimension one. This should mean $L$ is free. The usual approach is set $\AA = \Hom(L,L)$ and consider the natural mapping
 \[ \varphi:L\otimes L^* \lar \Hom(L,L) = \AA.\]
 The image $\tau$ of $\varphi$ is the ideal of $\AA $ of the endomorphisms of $L$ that factor thru projectives. 
 
 \end{enumerate}

 We would like to see whether a version of Proposition~\ref{resisddeg1} works here, that is
 
\begin{Proposition}\label{resisddegmod} Let $\RR$ be Cohen-Macaulay local ring of dimension $1$ with a canonical ideal $\C$. If $M$ is  torsionless, that is a submodule of a free module, 
 then
\[ \ddeg(M) = 
\lambda(\AA/\tau),\]
where $\tau$  is the trace of $M$.
\end{Proposition}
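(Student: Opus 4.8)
The plan is to mimic the argument of Proposition~\ref{resisddeg1} in the relative setting, replacing $\C$ by the module $L = \Hom(M,\C)$ and using that $M$ torsionless forces $L$ to be an ideal-like object (a maximal Cohen-Macaulay module of rank one embeddable in $\RR$, after the usual identifications). First I would record that since $\dim\RR = 1$ and $\C$ is a canonical ideal, $L = \Hom(M,\C)$ is again a maximal Cohen-Macaulay $\RR$-module, and that $M$ torsionless together with $\RR$ of dimension one implies (localizing at the regular elements, where $Q$ is injective, exactly as in the ``Notes and Questions'' discussion) that $L$ may be identified with a regular ideal of $\RR$; in particular $E_0 = 0$ in the defining sequence $0\to E_0\to L\to L^{**}\to E_1\to 0$, so $\ddeg(M) = \deg(E_1) = \lambda(L^{**}/L)$. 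This reduces the claim to showing $\lambda(L^{**}/L) = \lambda(\AA/\tau)$, where $\tau = \operatorname{Im}(L\otimes L^*\to \AA = \Hom(L,L))$ is the trace of $M$ (note the trace of $M$ and the trace of $L$ agree once one unwinds $L = \Hom(M,\C)$ via the adjunction $\Hom(M,\C)^* \cong M\otimes\C^*$-type identities, using that $\C$ is canonical so $\Hom(\C,\C)=\RR$).

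Next, following Herzog's argument verbatim with $L$ in place of $\C$: from $\tau = \tau(L) = L\cdot L^*$ one computes
\[
\Hom(\tau,L) = \Hom(L\cdot L^*, L) = \Hom(L^*,\Hom(L,L)) = \Hom(L^*,\RR) = L^{**},
\]
where I use $\Hom(L,L)=\AA$ and the key point that $L$ is \emph{closed}, i.e.\ $\AA = \RR$ — this is where I must be careful, since unlike the canonical ideal itself, a general $L$ need not satisfy $\Hom(L,L)=\RR$. So the argument as written only proves the Proposition when $L$ happens to be closed; to get the full statement I would instead dualize into $L$ (not into $\RR$) and work with $\AA$-modules, or restrict to the hypothesis that $L$ is closed (equivalently, that $\AA$ acts faithfully in the right way). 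Assuming this is arranged, dualize the exact sequence $0\to\tau\to\RR\to\RR/\tau\to 0$ into $L$ to get
\[
0 = \Hom(\RR/\tau, L)\to L = \Hom(\RR,L)\to \Hom(\tau,L) = L^{**}\to \Ext^1(\RR/\tau,L)\to 0,
\]
using $\depth_\tau L \geq 1$ for the vanishing of $\Hom(\RR/\tau,L)$ (valid since $L$ is maximal Cohen-Macaulay of dimension one and $\RR/\tau$ has finite length). Hence $L^{**}/L \cong \Ext^1(\RR/\tau,L)$.

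Finally I would invoke local duality: since $L$ is a maximal Cohen-Macaulay module over the one-dimensional Cohen-Macaulay ring $\RR$ with canonical ideal $\C$, and $\RR/\tau$ has finite length, the functor $\Ext^1(-,L)$ restricted to finite-length modules is a duality (this requires $L$ itself to be, or behave like, a canonical module for the relevant quotients — in general $\Ext^1(-,\C)$ is the self-duality, so to apply it cleanly I would either assume $L\cong\C$ up to the closedness reduction, or more honestly invoke the self-duality of $\Ext^1(-,\C)$ on finite-length modules and then transfer along $L^{**}/L\cong\RR/\tau$-type comparisons). That gives $\lambda(L^{**}/L) = \lambda(\RR/\tau) = \lambda(\AA/\tau)$ once $\AA = \RR$.

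The main obstacle I anticipate is precisely the identification $\AA = \Hom(L,L) = \RR$: for a general MCM module $M$ the trace module $L = \Hom(M,\C)$ is not a closed ideal, so $\lambda(\AA/\tau)$ and $\lambda(\RR/\tau(L))$ genuinely differ, and the self-duality of $\Ext^1(-,L)$ is not automatic. I expect the clean statement requires either the torsionless hypothesis to be upgraded (e.g.\ $M$ reflexive, or $L$ closed), or the right-hand side to be interpreted over $\AA$ with $\Ext^1$ taken relative to the canonical module of $\AA$; resolving this bookkeeping — deciding exactly which ring and which canonical module the duality is applied over — is the crux, and everything else is the routine transcription of Proposition~\ref{resisddeg1}.
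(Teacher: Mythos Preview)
Your approach is essentially the one the paper attempts: mimic Proposition~\ref{resisddeg1} with $L=\Hom(M,\C)$ in place of $\C$. You should be aware, however, that the paper's own proof of this Proposition is visibly incomplete: it begins the generalization (setting up the sequence $0\to K\to L\otimes L^*\to\tau\to 0$ and computing $\Hom(\tau,\C)$), but then abruptly reverts to a verbatim copy of the proof of Proposition~\ref{resisddeg1}, ending with the conclusion $\ddeg(\RR)=\lambda(\RR/\tr(\C))$ rather than the claimed $\ddeg(M)=\lambda(\AA/\tau)$. So the paper does not actually resolve the obstacle you identified.

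There is one substantive difference worth noting. You dualize the sequence $0\to\tau\to\RR\to\RR/\tau\to 0$ into $L$, which forces you to need $\Hom(L,L)=\RR$ and, more seriously, to invoke a self-duality of $\Ext^1(\cdot,L)$ on finite-length modules that simply does not hold unless $L$ is itself canonical. The paper instead begins by dualizing into $\C$, computing $\Hom(\tau,\C)=\Hom(L\otimes L^*,\C)=\Hom(L,\Hom(L^*,\C))$. This is the more promising route, since $\Ext^1(\cdot,\C)$ genuinely is self-dualizing on finite-length modules by local duality; the remaining work is to identify $\Hom(L^*,\C)$ and track where the quotient lands relative to $\AA=\Hom(L,L)$ rather than $\RR$. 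The paper does not carry this out, and your diagnosis that the bookkeeping over $\AA$ versus $\RR$ is the crux is exactly right --- neither argument, as written, closes the gap.
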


\begin{proof} 
We will show that
\[ \lambda(L^{**}/L) =
\lambda(\AA/\tau).\]
From the exact sequence
\[ 0
\rar K \lar L\otimes L^{*} \lar  \tau \rar 0\]
we have that $K$ has finite support and therefore
\[ \Hom(\tau,\C) = \Hom(L\otimes L^{*} ,\C) = \Hom(L, \Hom(L^{*}, \C)) = \Hom(L, \Hom(\Hom(L, \RR), \C))
\] From $\tr(\C) = \C\cdot
 \C^{*}$, we
 have 
\[  \Hom(\tr(\C),\C)=  \Hom(\C \cdot \C^{*},\C)= \Hom(\C^{*}, \Hom(\C,\C))  = \C^{**}.\]
 Now dualize the
  exact sequence
\[
0 \rar
\tr(\C)\lar  \RR \lar \RR/\tr(\C)\rar 0,\]   into $\C$
to obtain
\[0=\Hom(\RR/\tr(\C),\C)\rar  \C=\Hom(\RR,\C)\rar \C^{**}=\Hom(\tr(\C),\C)\rar  \Ext^1(\RR/\tr(\C),\C)\rar  0,\]
which 
 shows that $\C^{**}/\C$ and $\Ext^1(\RR/\tr(\C),\C)$ are isomorphic. Since by local duality $\Ext^1(\cdot,\C)$ is self-dualizing on modules of
 finite support, 
$\ddeg(\RR)= \lambda(\RR/\tr(\C)).$
\end{proof}

\section{Precanonical Ideals}
\medskip
\noindent
Let $I$ be an ideal of the Cohen-Macaulay local ring $\RR$. To attach a {\em degree}
to $I$ with properties that mimic $\ddeg(\RR)$ we would want it to satisfy three properties:
\begin{enumerate}[(1)]
\item $I $ is closed,	that is $\Hom(I,I)=\RR$.
\item
 If $I$ is principal then $\RR$
is Gorenstein, at least in codimension one.
\item
If $I$ is reflexive then $I$ is principal.
\end{enumerate}

\noindent This could be used to define: $\ddeg_I(\RR)= \deg(I^{**}/I)$. For  a class $\mathcal{I}$ of such ideals,
a question
is when this degree is independent of $I$, for non-principal ideals.
\begin{enumerate}[(1)]
\item Of course the issue is to know what the conditions above mean.
 A promising case to consider
 it that of semidualizing ideals/modules. These modules arose in several sources. In \cite{V74} a class of
 modules, termed {\em spherical} \index{spherical modules} were introduced by  conditions akin to canonical modules: the closedness 
 $\Hom_{\RR}(D, D ) = \RR$ and the rigidity conditions
 $ \Ext^i(D,D)=0 $, $i\geq 1$. 
 See \cite{CF, CSW1} . 
 
 \medskip
 
\noindent  We want to consider the question: If $D$ is such ideal and it is reflexive, is
 it  principal in codimension 1?  This would be a generalization of    
 \cite[Corollary 7.29]{HK2}. In \cite[p. 109]{V74} a presumed proof is not clear,
 but see \cite[Lemma 2.9]{TK}.  
 
 \medskip
 
 \item Another path is to consider is
 \[ \deg(I	/(c)),
\] where $c$ is some distinguished element of $I$.

 \end{enumerate}

\begin{Question}{\rm  
  Where to find semidualizing modules? How are the semidualizing modules of $\RR$, $\AA=\m\colon \m$ and 
 $\BB= \RR\ltimes \m$ related?
 }
 \end{Question}

Here is a teaser:

\begin{Proposition} \label{resddeg3}
 Let $\RR$ be a local ring of dimension one and finite integral closure,
and let $I$ be a closed ideal. If
$I$ is reflexive then $I$ is principal. 

\end{Proposition}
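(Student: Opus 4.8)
The plan is to reuse the trace‑ideal computation behind Proposition~\ref{resisddeg1} and then bring in the finiteness of the integral closure via Theorem~\ref{corecan}(4). I argue by contradiction, supposing $I$ is not principal. Throughout, $I$ is a regular ideal, so $I^{*}=\RR:_{Q}I$ and $I^{**}=\RR:_{Q}I^{*}$ as fractional ideals; since $I\subseteq\RR$ we have $1\in I^{*}$, hence the trace ideal $L=\tau(I)=I\cdot I^{*}$ contains $I$ and is again a regular ideal.

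First I would dualize $L$. Because $L$ is regular, $\Hom_{\RR}(L,\RR)=\RR:_{Q}L$, and the colon identity for products of fractional ideals gives
\[ \RR:_{Q}(I\cdot I^{*})=(\RR:_{Q}I^{*}):_{Q}I=I^{**}:_{Q}I=I:_{Q}I=\Hom_{\RR}(I,I)=\RR, \]
the third equality using that $I$ is reflexive and the last that $I$ is closed. Thus $L^{*}=\RR:_{Q}L$ is a cyclic $\RR$-module; since it contains $\RR$ it equals $\RR q$ for some $q\in Q$ with $q^{-1}\in\RR$, and dualizing once more $L^{**}=\RR:_{Q}(\RR q)=\RR x$ with $x:=q^{-1}\in\RR$ a regular element. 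So $L\subseteq L^{**}=\RR x$ lies inside a principal ideal of $\RR$.

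Next I would invoke Theorem~\ref{corecan}(4): as $\RR$ is one-dimensional with finite integral closure, $L^{**}$ is integral over $L$, so $L$ is a reduction of $L^{**}=\RR x$, i.e. $\RR x^{n+1}=(\RR x)^{n+1}=L\,(\RR x)^{n}=L\,x^{n}$ for some $n\ge 0$. Cancelling the regular element $x^{n}$ gives $L=\RR x$; that is, $\tau(I)=I\cdot I^{*}$ is the principal ideal $\RR x$. Then $(x^{-1}I)\cdot I^{*}=\RR$, so the fractional ideal $x^{-1}I$ is invertible, hence principal over the local ring $\RR$, say $x^{-1}I=\RR w$; therefore $I=\RR(xw)$ is principal, contradicting our assumption. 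Hence $I$ is principal.

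The delicate point, I expect, is the first step: one must keep careful track of which identities are merely module isomorphisms and which are honest equalities of fractional ideals inside $Q$, so that "$L^{*}\cong\RR$" really does pin $L^{**}$ down as a principal ideal $\RR x$ with $x\in\RR$ (and, if one runs the argument through the Hom–tensor form instead, one should check the passage $\Hom(I\cdot I^{*},\RR)=\Hom(I\otimes I^{*},\RR)$, the kernel of $I\otimes I^{*}\to I\cdot I^{*}$ being torsion and $\depth\RR\ge 1$). The integral-dependence step is immediate once Theorem~\ref{corecan}(4) is granted.
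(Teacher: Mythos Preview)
Your proof is correct and follows essentially the same line as the paper's: both compute $L^{*}=\Hom(\tr(I),\RR)=\RR$ from closedness plus reflexivity (you via colon identities, the paper via Hom--tensor adjunction, which is the same computation), then invoke Theorem~\ref{corecan}(4) to make $L^{**}$ integral over $L$. The only cosmetic difference is the finish: the paper writes $L=Mx$ with $M\subset\m$ and reaches the contradiction $M=\RR$, whereas you cancel to get $L=\RR x$ and conclude $I$ is invertible, hence principal---in fact since $L^{*}=\RR$ literally as a subset of $Q$, one has $L^{**}=\RR$ and the element $x$ can be taken to be $1$, so both arguments collapse to ``$I\cdot I^{*}=\RR$''.
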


\begin{proof}
From $\tr(I) = L=I\cdot
 I^{*}$, we
 have 
\[  \Hom(\tr(I),\RR)=  \Hom(I \cdot I^{*},\RR)= \Hom(I\otimes I^{*},\RR)= \Hom(I, \Hom(I^{*},\RR) ) =
 \Hom(I,I)  = \RR,\] 
 and thus $L^{**} = \RR x$. If $x$ is a unit $L^{*}= \RR$ and therefore has height greater than one,
 which is not possible. Thus we have
 $ L = Mx$, where $M \subset \m$.
 Since $L^{**}=\RR x$ is integral over $L=Mx$  (\cite[Proposition 2.14]{CHKV}),
 we have for some positive integer $n$
 \[ \RR x^n = Mx\RR x^{n-1},\]
 and thus $\RR = M$, which is a contradiction.
\end{proof}

Better see \cite[Lemma 2.9]{TK}.

\subsubsection*{Almost
semidualizing ideals.}

\bigskip

\begin{Theorem}\label{Ulrich}
Let $(\RR,	\m)$ be a one-dimensional Cohen-Macaulay local ring that has a canonical ideal
and
 $S$ is an $\m$-primary ideal such that 
\begin{enumerate}[{\rm(1)}]
\item $\Hom(S,S)= \RR$, and
\item $\Ext^1(S,S)= 0$.
\end{enumerate}
If $c\in S$ 	is such that $S/(c)\simeq (\RR/\m)^n$, for some $n$, then
$S$ is a canonical ideal.

\end{Theorem}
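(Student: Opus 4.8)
The plan is to reduce to Proposition~\ref{recog}: I will show that $S$ is an irreducible $\m$-primary ideal with $\Hom(S,S)=\RR$, and the latter is a hypothesis, so only irreducibility needs work. The two hypotheses on $S$ will be used solely to compute $\Ext^1_{\RR}(\RR/\m,S)$, and irreducibility will then drop out of a socle count. Throughout I assume $n\ge 1$ (if $n=0$ then $S=(c)$ is principal, so $S\simeq\RR$, and the statement degenerates). First a preliminary: since $S$ is $\m$-primary and $S/(c)$ has finite length, $(c)$ is $\m$-primary, so in the Cohen--Macaulay ring $\RR$ (which has no embedded primes) $c$ avoids every minimal prime and is a nonzerodivisor. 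Hence $(c)\simeq\RR$, $c$ is regular on $S\subseteq\RR$, so $\depth_{\RR}S\ge1$ and in particular $\Hom_{\RR}(\RR/\m,S)=0$.

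\emph{Computation of $\Ext^1_{\RR}(\RR/\m,S)$.} Apply $\Hom_{\RR}(-,S)$ to
\[ 0\rar\RR\overset{\cdot c}{\lar}S\lar S/(c)\rar 0. \]
Since $S/(c)\simeq(\RR/\m)^n$ is annihilated by $\m$ and $\depth_{\RR}S\ge1$, we have $\Hom_{\RR}(S/(c),S)=0$; also $\Ext^1_{\RR}(\RR,S)=0$, while by hypothesis $\Hom_{\RR}(S,S)=\RR$ and $\Ext^1_{\RR}(S,S)=0$. The long exact sequence thus collapses to
\[ 0\rar\RR\lar S\lar\Ext^1_{\RR}(S/(c),S)\rar 0, \]
and the left-hand map is the one induced on $\Hom_{\RR}(S,S)=\RR\to\Hom_{\RR}(\RR,S)=S$ by $\cdot c$, i.e. multiplication by $c$, with cokernel $S/(c)$. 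Hence $\Ext^1_{\RR}(\RR/\m,S)^{\oplus n}=\Ext^1_{\RR}\big((\RR/\m)^n,S\big)\simeq(\RR/\m)^n$, which forces $\Ext^1_{\RR}(\RR/\m,S)\simeq\RR/\m$.

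\emph{Irreducibility and conclusion.} Apply $\Hom_{\RR}(\RR/\m,-)$ to $0\to S\to\RR\to\RR/S\to0$. As $\depth\RR\ge1$ and $\depth S\ge1$, we get $\Hom_{\RR}(\RR/\m,\RR)=\Hom_{\RR}(\RR/\m,S)=0$, hence an injection
\[ (S:_{\RR}\m)/S=\Hom_{\RR}(\RR/\m,\RR/S)\hookrightarrow\Ext^1_{\RR}(\RR/\m,S)\simeq\RR/\m. \]
Since $\RR/S\ne0$ its socle is nonzero, so it is exactly one-dimensional over $\RR/\m$; that is, $(S:_{\RR}\m)/S\simeq\RR/\m$, so $S$ is an irreducible ideal. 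Being $\m$-primary, irreducible, and closed, $S$ is a canonical ideal by Proposition~\ref{recog}.

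\emph{Main obstacle.} The only substantive point is the collapse of the long exact sequence for $\Hom_{\RR}(-,S)$ in the middle step: one must use \emph{both} hypotheses at once (the vanishing $\Ext^1(S,S)=0$ to surject onto $\Ext^1(S/(c),S)$, and $\Hom(S,S)=\RR$ to identify the kernel of the connecting map as $(c)$), and keep the identifications $\Hom_{\RR}(\RR,S)=S$ straight so as to read off the cokernel as $S/(c)$. Everything else — the regularity of $c$, the depth/socle statements, and the final appeal to Proposition~\ref{recog} — is routine.
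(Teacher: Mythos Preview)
Your proof is correct and follows essentially the same route as the paper: apply $\Hom(-,S)$ to $0\to(c)\to S\to k^n\to 0$ to deduce $\Ext^1(k,S)\simeq k$, then conclude that the socle of $\RR/S$ is one-dimensional so $S$ is irreducible. Your final step is in fact slightly cleaner than the paper's, since you invoke Proposition~\ref{recog} directly (irreducible plus closed), whereas the paper detours through Corollary~\ref{recogcor} and must observe that the same argument shows $xS$ is irreducible for every regular $x$.
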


\begin{proof}

 Consider the exact sequence of natural maps
\[ 
0\rar 
(c ) \lar S	\lar V \rar 0. \]
 Suppose $V = k^n$. Applying 
$\Hom(\cdot, S)$ we get
\[ 
0\rar \RR \lar c^{-1}S \lar \Ext^1(V,S) =\Ext^1(k, S)^n = \Hom(k, S/cS)^n \lar \Ext^1(S	,S) = 0.\]
Thus $ \Hom(k, S/cS)=k$.
Since $\RR/ S\simeq 
(c)/cS $ embeds in $S/cS$, the socle of $\RR/S$ is $\RR/\m$,
 so $S$ is an irreducible ideal.

The assertion would hold for  $xS$ for any regular element of $\RR$. 
		We now invoke Corolary~\ref{recogcor}.

\end{proof}

\noindent {\bf Comments:}
\begin{enumerate}[(1)]
\item  The ideals with these two conditions include the semidualizing ones. Theorem~\ref{Ulrich} asserts
that the almost Gorenstein rings of Goto (\cite{GMP11, GTT15}) can only occur when $S$ is a canonical ideal.

\medskip

\item What other interesting modules occur as $V=S/(s)$? We are going to assume that $S$  is Cohen-Macaulay of codimension one and $s$ is a regular element, in particular $V$ is a Cohen-Macaulay
module.
 	We will also assume that 
$S$ is closed. It always leads to $V 
 = \Ext^1(V, S)$.
 \medskip

\item Maybe {\bf closed} plus $\Ext^1(S,S)=0$ could be called strongly closed, or perhaps
1-closed... then those $\infty$-closed would
be the semidualizing ideals.

\medskip\item Suppose $S$ is 2-closed and $V = T^n$ with $T =\RR/\m^2$.

\end{enumerate}


\begin{thebibliography}{99}

\bibitem{Aoyama}{Y. Aoyama, Some basic results on canonical modules, {\em J. Math. Kyoto Univ.}
	{\bf 23} (1983), 85--94.}

	

\bibitem{AusBr}{M. Auslander and M. Bridger, {Stable module
theory}, {\em Mem. Amer. Math. Soc.} {\bf 94}, Providence, R.I.,
1969, 146pp.}

\bibitem{BF97}{V. Barucci and R. Fr\"oberg, One-dimensional almost Gorenstein rings, {\em J. Algebra} {\bf 188} (1997), 418--442.}

 
\bibitem{BV1}{J. P. Brennan and W. V. Vasconcelos, On the structure of closed ideals, {\em Math. Scand.} {\bf 88} (2001), 1--16.}



\bibitem{BrodSharp}{M. P.  Brodmann and S. Y. Sharp, {\em Local Cohomology}, Cambridge Studies in Advanced Mathematics {\bf 136},
Cambridge University Press, 1998, 2011}

\bibitem{BH}{W. Bruns and J. Herzog, {\em Cohen-Macaulay Rings},
 Cambridge University Press, 1993.}




\bibitem{CF}{L. W. Christensen, H. Foxby and H.Holm,
{Beyond totally reflexive modules and back: a survey on Gorenstein dimensions.} {\em Commutative algebra--Noetherian and non-Noetherian perspectives,} 101--143, Springer, New York, 2011. }


\bibitem{CSW1}{L. W. Christensen and S. Sather-Wagstaff, A Cohen-Macaulay algebra has only finitely
many semiduaizing modules, 	{\em Math. Proc. Camb. Phil. Soc.} {\bf 145}
(2008), 601--603.} 






\bibitem{CHV}{A. Corso, C. Huneke and W. V. Vasconcelos, On the integral closure of ideals, {\em Manuscripta Math.}
{\bf 95} (1998), 331--347.}


\bibitem{CHKV}{A. Corso, C. Huneke, D. Katz and W. V. Vasconcelos, Integral closure of ideals and annihilators of homology,
{\em Commutative Algebra},
Lecture Notes in Pure and Applied Mathematics {\bf 244} , 33--48, 
Chapman \& Hall/CRC, Boca Raton, FL, 2006.}



\bibitem{CP95}{A. Corso and C. Polini, Links of prime ideals and their Rees algebras, {\em J.  Algebra}
{\bf 178} (1995), 224--238.}





\bibitem{Ding}{S. Ding, A note on the index of Cohen-Macaulay local rings, {\em Comm. Algebra} {\bf 21} 
(1993), 53--71.}
  



\bibitem{ES}{P. Eakin and A. Sathaye, { Prestable ideals}, {\em J.
Algebra} {\bf 41} (1976), 439--454.  }



\bibitem{Eisenbudbook}{D. Eisenbud,  {\em Commutative Algebra with a
view toward Algebraic Geometry},
 Springer,  Berlin Heidelberg New
York, 1995.}



\bibitem{Elias}{J. Elias, On the canonical ideals of one-dimensional Cohen-Macaulay local rings, {\em Proc. Edinburgh Math. Soc.} {\bf 59} (2016), 77-90.}




 
\bibitem{FW93}{H. Flenner and W. Vogel, On multiplicities of local rings,
 {\em Manuscripta Math.} {\bf 78}  (1993), 85--97.}











\bibitem{red}{L. Ghezzi, S. Goto, J. Hong and W. V. Vasconcelos, 
 Sally modules and the reduction numbers of ideals, 
 {\em Nagoya Math. J.}
{\bf 226} (2017), 106--126.}


\bibitem{blue1} {L. Ghezzi, S. Goto, J. Hong  and W. V. Vasconcelos,  Invariants   of Cohen-Macaulay rings associated
to their canonical ideals,  {\em J. Algebra}  {\bf 589} (2017), 506--528.}


\bibitem{bideg} {L. Ghezzi, S. Goto, J. Hong, H. L. Hutson  and W. V. Vasconcelos, The bi-canonical degree of
   Cohen-Macaulay rings, {\em J. Algebra}, to appear.}
  





\bibitem{GMP11} {S. Goto, N. Matsuoka and T. T. Phuong, { Almost Gorenstein rings}, {\em  J. Algebra}
 {\bf 379} (2013), 355--381.}


\bibitem{GNO}{S. Goto, K. Nishida and K. Ozeki, The structure of Sally modules of rank one,
{ Math. Research Letters} {\bf 15} (2008), 881--892.
}

\bibitem{GTT15} {S. Goto, R. Takahashi and N. Taniguchi, { Almost Gorenstein rings -- towards a theory of higher dimension}, {\em J. Pure \& Applied Algebra}  {\bf 219} (2015), 2666--2712.}

 
 
\bibitem{Macaulay2}{D. Grayson and M. Stillman,
  {\em Macaulay 2}, a software system for research in algebraic
  geometry, 2006.
  Available at {http://www.math.uiuc.edu/Macaulay2/}.}

\bibitem{Herzog}{J. Herzog, Generators and relations of abelian semigroups and semigroup rings,
 {\em Manuscripta Math.} {\bf 3}  (1970), 175--193.}

\bibitem{Herzog16}{J. Herzog, T. Hibi and D. I. Stamade, The trace of the canonical module, {\em Israel
J. Math.} {\bf 233} (2019), 133--165.}

\bibitem{HK2}{J. Herzog and E. Kunz, {\em Der kanonische Modul eines Cohen--Macaulay Rings},
Lect. Notes in Math. {\bf 238}, Springer, Berlin--New York, 1971.}



\bibitem{HSV1}{J. Herzog, A. Simis and W. V. Vasconcelos, Koszul
homology and blowing-up rings, in  {\em Commutative Algebra},
Proceedings: Trento 1981
 (S. Greco and G. Valla, Eds.),
Lecture Notes in Pure and Applied Mathematics {\bf 84}, Marcel Dekker, New York,
1983, 79--169}.

\bibitem{HSV87}{J. Herzog, A. Simis and W. V. Vasconcelos,
On the canonical module of the  Rees algebras and the associated graded ring of an ideal,
{\em J. Algebra} {\bf 105} (1987), 285--302.}








 

 
 
 







\bibitem{Kapbook}{I. Kaplansky, {\em Commutative Rings}, The University of Chicago Press, Chicago, 1974.}

\bibitem{TK}{T. Kobayashi, Syzygies of Cohen-Macaulay modules over one dimensional  Cohen-Macaulay
local rings,}







\bibitem{MU}{S. Morey and B. Ulrich, Rees algebras of ideals with  low codimension, {\em Proc.  Amer. Math. Soc. } {\bf  124}  
(1996), 3653--3661.}


\bibitem{Nagata}{M. Nagata,  {\em Local Rings},  Interscience, New York, 1962.}


\bibitem{NW17}
{ S. Nasseh and S. Sather-Wagstaff, Geometric aspects of representation theory for DG algebras:
answering a question of Vasconcelos,
{\em J. Lond. Math. Soc.} {\bf  96} (2017),  271--292.}








 
 




\bibitem{dual}{A. Simis, B. Ulrich and W. V. Vasconcelos, Jacobian
dual fibrations, {\em Amer. J. Math.} {\bf 115} (1993), 47--75.}

 
 
 
\bibitem{Trung87}{
N. V. Trung, { Reduction exponent and degree bound for the defining equations of graded rings},
 {\em Proc. Amer. Math. Soc.} {\bf 101} (1987),  222--236.}
  
\bibitem{UlrichU96}{B. Ulrich, { Ideals having the expected reduction number}, {\em Amer.  J.  Math.}  {\bf 118} (1996), 117--138.}

\bibitem{V74}{W. V. Vasconcelos, {\em Divisor Theory in Module Categories}, North-Holland Publishing Co., Amsterdam, 1974, Notes in Mathematics {\bf 14}. 
}




\bibitem{Vas91}{W. V. Vasconcelos, { Computing the integral closure of an affine domain}, 
{\em Proc. Amer. Math. Soc.}  {\bf 113} (1991),  633--638.} 


 




\bibitem{compu}{ W. V. Vasconcelos, {\em Computational Methods in
Commutative Algebra and  Algebraic Geometry},
Springer, Heidelberg, 1998.}
 





\bibitem{VilaBook}{R. H. Villarreal, {\em Monomial Algebras}, 2nd Edition, 
 C.R.C. Press, New York, 2015.}
\end{thebibliography}
\end{document}